\numberwithin{equation}{section}
\theoremstyle{plain}
\newtheorem{thm}{Theorem}[subsection]
\newtheorem{prop}[thm]{Proposition}
\newtheorem{lem}[thm]{Lemma}
\newtheorem{cor}[thm]{Corollary}
\newtheorem{ex}[thm]{Example}
\theoremstyle{definition}
\newtheorem{defn}[thm]{Definition}
\newtheorem{rmk}[thm]{Remark}
\newcommand{\A}{\mathbb{A}}
\newcommand{\C}{\mathbb{C}}
\newcommand{\bK}{\mathbb{K}}
\newcommand{\Ksep}{K^{\mathrm{sep}}}
\newcommand{\Q}{\mathbb{Q}}
\newcommand{\Z}{\mathbb{Z}}
\newcommand{\cC}{\mathcal{C}}
\newcommand{\cCe}{\mathcal{C}_1}
\newcommand{\cCo}[1]{\mathcal{C}^{#1}}
\newcommand{\cD}{\mathcal{D}}
\newcommand{\cF}{\mathcal{F}}
\newcommand{\cT}{\mathcal{T}}
\newcommand{\ab}{\mathrm{ab}}
\newcommand{\Aut}{\mathrm{Aut}}
\newcommand{\End}{\mathrm{End}}
\newcommand{\Gal}{\mathrm{Gal}}
\newcommand{\GL}{\mathrm{GL}}
\newcommand{\Hom}{\mathrm{Hom}}
\newcommand{\id}{\mathrm{id}}
\newcommand{\Ker}{\mathrm{Ker}}
\newcommand{\Map}{\mathrm{Map}}
\newcommand{\Mat}{\mathrm{Mat}}
\newcommand{\Obj}{\mathrm{Obj}\, }
\newcommand{\op}{\mathrm{op}}
\newcommand{\Presh}{\mathrm{Presh}}
\newcommand{\Sets}{\mathrm{Sets}}
\newcommand{\frh}{\mathfrak{h}}
\newcommand{\frU}{\mathfrak{U}}
\newcommand{\inj}{\hookrightarrow}
\newcommand{\surj}{\twoheadrightarrow}
\newcommand{\resp}{resp.\ }
\newcommand{\xto}[1]{\xrightarrow{#1}}
\newcommand{\wt}[1]{\widetilde{#1}}
\newcommand{\wh}[1]{\widehat{#1}}
\newcommand{\advertisement}[1]{}
\renewcommand{\mod}{\,\mathrm{mod}\,}
\newcommand{\cP}{\mathcal{P}}
\newcommand{\Pre}{\mathrm{Pre}}
\newcommand{\Shv}{\mathrm{Shv}}
\newcommand{\Lat}{\mathbf{Lat}}
\newcommand{\Pair}{\mathbf{Pair}}
\newcommand{\quotobj}{quotient object }
\newcommand{\quotobjs}{quotient objects }
\newcommand{\quotobjp}{quotient object}
\newcommand{\quotobjsp}{quotient objects}
\newcommand{\quot}[2]{{#2}\backslash{#1}}
\newcommand{\frV}{\mathfrak{V}}
\newcommand{\Isom}{\mathrm{Isom}}
\newcommand{\Mor}{\mathrm{Mor}}
\newcommand{\Ci}{{(\mathcal{C}_0,\iota_0)}}
\newcommand{\Cip}{{(\mathcal{C}_0,\iota_0)}}
\newcommand{\MSet}
{(M_{(\cC_0,\iota)}\text{-set})_{\mathrm{sm}}}
\newcommand{\ProC}{\mathrm{Pro}\text{-}\cC}
\newcommand{\BG}{\mathrm{BG}}
\title[sites for smooth representations]{Sites whose 
topoi are the smooth representations of locally prodiscrete monoids}
\author[Satoshi KONDO, Seidai YASUDA]
       {Satoshi KONDO$^{1,2}$, Seidai YASUDA$^3$       \\
       $^1${Faculty of Mathematics, National Research University Higher School of Economics, Russia}
\\
       $^2$Kavli Institute of the Physics and Mathematcis of the Universe, University of Tokyo (WPI), Japan
\\
       $^3$Department of Mathematics, Graduate School of Science, Osaka University, Japan
}
\address{Corresponding author: 
Satoshi Kondo \\
National Research University 
Higher School of Economics\\
Faculty of Mathematics\\
6 Usacheva Str.\\
Moscow 119048\\ 
Russia\\
Phone/FAX: +7 (495) 772-95-90\\
Email: satoshi.kondo@gmail.com\\
Kavli Institute for the Physics and Mathematics of the Universe\\
University of Tokyo\\
5-1-5 Kashiwanoha\\
Kashiwa 277-8583\\ Japan\\  
Tel: +81-4-7136-4940\\
Fax: +81-4-7136-4941\\
}
\address{Seidai Yasuda\\
Department of Mathematics\\
Graduate School of Science\\
Osaka University\\
1-1 Machikaneyama Toyonaka Osaka 
560-0043 JAPAN\\
Email:s-yasuda@math.sci.osaka-u.ac.jp}
\date{\today}
\begin{document}

\begin{abstract}
We define a class of sites such that the associated
topos is equivalent to the category of smooth sets (representations)
of some locally prodiscrete monoids (to be defined).
Examples of locally prodiscrete monoids include profinite groups 
and finite adele valued points
of algebraic groups.
This is a generalization of the fact that the topos associated 
with the \'etale site of a scheme is equivalent to the category of 
sets with continuous
action by the \'etale fundamental group.

We then define a subclass of sites such that
the topos is equivalent to the category of 
discrete sets with a continuous action of 
a locally profinite group.\\
Keywords: Galois category; smooth representations; profinite groups; topoi; sites
\end{abstract}

\maketitle
\section{Introduction}
We begin by presenting our motivation in Section~\ref{sec:motivation}, 
followed by a rough description of the earlier sections of 
our work in Section~\ref{sec:rough intro}.
We provide a more detailed description, with precise statements of 
our theorems, in Sections~\ref{sec:intro thm 1} and~\ref{sec:intro thm 2}.
In Section~\ref{sec:topos Galois theory}, we comment on other generalizations 
of the Galois theory.  
In Section~\ref{sec:table of results}, we give a summary of all of our results.
Many sections were added after suggestions of the referee and 
the structure of the long paper is not so streamlined.   This section is to serve as 
an index of results in this paper.

There is a considerable overlap with our work and Caramello's work.   
In Section~\ref{sec:Caramello intro},  we address this issue.  
In Section~\ref{sec:intro Galois monoids},
we discuss the types of topological monoids 
that appear in our work.
In Section~\ref{sec:on future paper}, 
we provide a brief description of our future paper.   
Lists of contents for each section are
given in Section~\ref{sec:list of contents}.

\subsection{}
\label{sec:motivation}
Our main aim in this paper is to study the 
representation theory 
of algebraic groups with values in $p$-adic fields or in finite adeles using topos theory by establishing basic abstract theory and developing basic tools.
Some of these results are already known and there may be overlap with
existing work, but it will be necessary for us to record all of our results in
one place.   See Section~\ref{sec:on future paper} for more on
our motivation.

\subsection{}
\label{sec:rough intro}
Our result may be regarded as a generalization of Galois theory as seen in 
the following
statements. 
Let $K$ be a field.   There exists a separable closure $\Ksep$ that is the union of finite separable field extensions of $K$ contained in $\Ksep$, which also equals the union of finite Galois extensions (in $\Ksep$) of $K$.   These finite Galois groups form a projective system for which the projective limit (a profinite group) is defined as the absolute Galois group $G_K$ of $K$.    Let $\cC$ be the category of finite separable field extensions of $K$.   Its morphisms are the ring homomorphisms preserving $K$, and we equip the category
$\cC$ with the atomic topology $J$.
Then, the topos $\Shv(\cC,J)$ of sheaves on the site $(\cC, J)$ 
is equivalent to the category of sets with continuous action of 
the absolute Galois group $G_K$.

In this paper, we give a set of conditions for a site, and we call any site satisfying these 
conditions a {\it $Y$-site}.
We then define a grid for a $Y$-site.  
Generally, a grid is an analogue of 
a system of finite separable extensions contained in a separable closure of a field.
Given a $Y$-site and a grid for the site, one can construct a monoid with specified subgroups, 
which we call the {\it{absolute Galois monoid}}.   
Our theorem (see Theorems~\ref{thm:intro1} and~\ref{thm:intro2} for the precise statements) 
says that if a $Y$-site satisfies 
a certain cardinality condition,
then there exists a grid for this $Y$-site, and therefore the absolute Galois monoid,
and
the category of sheaves on the site is equivalent to the category of {\it smooth sets} (to be defined in this paper) of 
the absolute Galois monoid.

For example, 
as the site (described above) of finite separable extensions of a field satisfies all the conditions of a $Y$-site 
and the finiteness condition is satisfied, our theorem may be applied.
The absolute Galois monoid is the usual absolute Galois group
of the field, and the specified subgroups determine the usual profinite topology on 
the group.    The smooth sets are nothing but those sets with continuous action
of the Galois group, and the equivalence is the usual one.

Our work is a generalization in the following sense.
We assume neither the existence of a final object in $\cC$
nor that the Grothendieck topology $J$ is atomic.
The absolute Galois monoid, which is a generalization of the absolute (profinite) 
Galois group,
may not be a group if $J$ is not atomic.
Even when $J$ is atomic and the monoid is indeed a group,
it may not be profinite.    Under the finiteness assumption, 
the absolute Galois monoid is always locally profinite and if there 
exists a final object, it is profinite.

\subsection{}
\label{sec:intro thm 1}
In this subsection, we outline the contents of 
this article leading to the statement of 
our main theorem.  
We must provide many new conditions and 
new definitions, and our goal here is to explain the motivation and ideas 
behind them.   
Concepts in {\it italics} are to be defined 
in this article, while phrases in quotations~`-' represent ideas
intended to help the readers that may not be mathematically rigorous.

We start this article by defining 
{\it semi-localizing collections}.
(We note that the term {\it collection} is used only 
to avoid set theoretic complications: see Section~\ref{sec:collection}.)
A {\it semi-localizing collection} is a collection of 
morphisms in a category 
satisfying three conditions 
(Definition \ref{defn:semi-localizing}).
As noted in Remark~\ref{rmk:semi-localizing},
these conditions are the first three of the four conditions provided by  
Gabriel and Zisman \cite{GZ} to admit a 
`right calculus of fractions'.
From a semi-localizing collection $\cT$,
we can construct (Lemma~\ref{lem:JcT})
a Grothendieck topology 
$J_\cT$.  We call a topology of such form an $A$-topology
(Definition~\ref{defn:A-topology}).
We arrived at this notion when considering 
a class of Grothendieck topologies such that 
one needs to look only at the coverings of the form 
$\{X \to Y\}$ and not at those of the form 
$\{X_i \to Y\}_{i \in I}$
with cardinality of $I$ greater than 1.
In this topology, the set of coverings is 
a subset of the set of morphisms of the category.
This makes the description of the topology easier 
than the more general topologies.

We will be using the collection 
$\cT(J)=\wh{\cT} \supset \cT$
(see Definition~\ref{defn:Tee hat} for $\wh{\cT}$), 
which is a `saturation' in some sense--again semi-localizing--giving the same 
topology.
The idea is that the collection  $\cT(J)$
is ``the set of all coverings.''   
The atomic topology is an example of an $A$-topology
(see Section~\ref{sec:atomic topology}), which is the case when $\cT$ is the collection
of all morphisms of the given category.
This restriction to $A$-topology corresponds to, in concept, a restriction to ``categories where all objects are connected."
 
We define a {\it Galois covering} 
(Definition~\ref{defn:Galois_cov})
to be a morphism $X \to Y$ in a category 
$\cC$ such that there exists a group $G$
for which 
$\Hom_\cC(Z,Y) \to \Hom_\cC(Z,X)$
is a {\it pseudo $G$-torsor} for each object $Z$
(see Definition~\ref{defn:pseudo G-torsor}).
We say that a site equipped with an 
$A$-topology associated with a semi-localizing 
collection $\cT$ {\it has enough Galois coverings}
(Definition~\ref{defn:enough Galois})
if $\wh{\cT}=\wh{\cT'}$ where 
$\cT'$ is the collection of all Galois coverings.

An {\it $E$-category} 
is a category in which all the morphisms
are epimorphisms (Definition~\ref{defn:E-categories}).   
We then define a {\it $B$-site} 
to be a site $(\cC, J)$ where the underlying category
$\cC$ is an $E$-category; has a topology $J$, 
which is an $A$-topology;
and for which the following condition is satisfied:
For any diagram $Z \xto{g} Y \xto{f} X$ in $\cC$, the composite
$g \circ f$ belongs to $\cT(J)$ if and only if $f$ and $g$ belong to 
$\cT(J)$.
This condition may not seem pleasant, and the notion may be better  described if the category 
has coproducts or is equipped with the notion of 
`$\pi_0$', but we have not assumed so.   Note that 
the usual finite \'etale site of a scheme does 
not satisfy this condition, but if we impose the condition that 
all schemes are connected, it does.  
One outcome of these definitions is the following:
if we assume that there are enough
Galois coverings, the sheafification functor 
from the category of presheaves on a $B$-site
can be described using Galois coverings
(see Section~\ref{sec:sheafify}).

Our principal objects of study are {\it $Y$-sites}
(Definition~\ref{defn:Y-sites}).
A {\it $Y$-site} is defined to be a 
$B$-site satisfying two additional 
conditions (set-theoretic conditions are not 
discussed in this paragraph).   
One is that, given any two objects $X_1, X_2$
in the underlying category,
there exist morphisms
$f_1:Y \to X_1$ and $f_2:Y \to X_2$ both 
in $\cT(J)$.
This condition can be thought of 
as a form of the existence of an initial
object, or the existence of a `universal 
covering' if the site has some topological 
meaning.
The second condition is that $\cT(J)$ has enough Galois coverings.

We regard partially ordered sets (posets) 
as categories in a natural manner.
We introduce {\it quasi-posets} as those categories
that are equivalent to posets (Definition~\ref{defn:quasi-posets}); 
however these are used 
only in the proof of the existence of grids.

A {\it grid} $(\cC_0, \iota_0)$ 
of a $Y$-site is a pair of a poset $\cC_0$ 
and a functor $\iota_0$ from this
poset to the underlying category of the site
satisfying some conditions
(Definition~\ref{defn:grids}).
An {\it edge object} is defined to be an object of $\cC_0$ such that 
every morphism to it is mapped to 
$\cT(J)$ by $\iota_0$
(Definition~\ref{defn:edge objects}).     
Using the endomorphisms of the poset $\cC_0$
and some natural isomorphisms,
we can construct a monoid $M_{(\cC_0, \iota_0)}$ that we call 
{\it the absolute Galois monoid}
(Section~\ref{sec:MC}).
For each edge object $X$, 
we also have a subgroup $\bK_X$
of the monoid (Section~\ref{sec:submonoids}); roughly speaking, it is defined as 
the `stabilizer' of the object.
The term `grid' may not be a common one.  
In the easiest case, a grid is something like 
`the set of all finite field extensions of a field,
that is contained 
in a fixed separable closure'.   Such a grid forms a lattice, 
and, although we wanted to call our grid a lattice, 
we have not done so as technically it may not
be a lattice in general.

This absolute Galois monoid and its subgroups 
allow us to define the notion of {\it smooth sets}.  
A {\it smooth $M_{(\cC_0, \iota_0)}$-set} is defined to be 
a set with the action of the absolute Galois monoid 
satisfying  the following condition:
for an element in the set, the stabilizer contains 
the group of the form $\bK_X$ 
for some edge object $X$ (Definition~\ref{defn:smooth sets}).

We may also equip the absolute Galois monoid with 
a structure of a topological monoid using these subgroups
(Section~\ref{sec:topological monoid structure}).
In this case, by definition, the category of smooth $M_\Cip$-sets
is canonically equivalent to the category of 
discrete sets with continuous action of the
topological monoid $M_\Cip$.

When the topology is the atomic topology,
$M_\Cip$ is a group and all objects are edge objects.

We took the term `smooth' 
from the representation 
theory of locally profinite groups,
which is known for its applications in number 
theory (see Remark~\ref{rmk:smooth}).   
Examples of locally profinite groups 
include profinite groups, 
the finite adele 
valued points of an algebraic group,
and the nonarchimedean 
local field valued points
of an algebraic group.   
One of our original
motivations was to describe the categories 
of smooth representations of these groups 
using sheaf theory.

Another entity that we construct from 
a grid is the 
{\it fiber functor} $\omega_{(\cC_0, \iota_0)}$, which is a functor from the topos of sheaves on 
the $Y$-site to the category of sets that 
factors through the category of smooth sets.
A sheaf is sent via this fiber functor 
to the colimit of sections 
over the edge objects, 
which is actually isomorphic 
to the colimit over the entire grid.
This action of the absolute Galois monoid may be 
thought of as a generalization of 
the action of the finite adeles 
on the limit of elliptic modular curves or Drinfeld modular varieties.  
We will consider these examples in a future paper.

Before coming to our theorem, we need to mention 
one concept: {\it cardinality conditions}
(Section~\ref{sec:cardinality}).  
There are two kinds of cardinality conditions.
Cardinality Condition (1) is 
that the hom sets of the 
underlying category of the site are finite.
There is also another type of cardinality condition.
The first kind 
is used primarily in the form:
the projective limit of nonempty finite sets with surjective 
transitive maps is nonempty and the limit surjects onto 
each finite set.    

One important proposition 
(Proposition~\ref{cor:grid existence}) says that,  
under certain cardinality conditions, 
there exists a grid.   The proof of this can be divided into 
two parts.   The main part is the first half (Proposition~\ref{prop:pregrid existence}), which
says that there exists what we call a {\it pregrid}.
The idea for the proof of the first half is taken from 
the proof of the existence of an algebraic 
closure of a field.  The proof of the 
second half appears to be new.

We now come to our main theorem.
Suppose we are given a $Y$-site.
Assume that a certain cardinality condition holds.
Note that, by the proposition, there then exists a grid and
we can construct the absolute Galois monoid and 
its subgroups.   Hence we have smooth sets
and we also have the associated fiber functor.
Our theorem (see Theorem~\ref{thm:Galois_main}
for the precise statement) 
says: 
\begin{thm}
\label{thm:intro1}
Under these assumptions,
the fiber functor gives an
equivalence between the topos and the category 
of smooth sets.
\end{thm}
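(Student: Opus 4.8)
The plan is to produce an explicit quasi-inverse to $\omega_\Cip$ and to verify the two required natural isomorphisms, adapting the template of Grothendieck's Galois theory to the colimit-over-the-grid description of the fiber functor. The candidate quasi-inverse $\beta$ sends a smooth $M_\Cip$-set $S$ to the sheaf whose value on an edge object $X$ is the set $S^{\bK_X}$ of $\bK_X$-invariants, with transition maps induced by the inclusions $\bK_{X'}\subseteq\bK_X$ attached to morphisms $X'\to X$ in the grid. Since the topology is an $A$-topology and the $B$-site has enough Galois coverings, the sheafification of a presheaf is computed through Galois coverings (Section~\ref{sec:sheafify}), so the first task is to check that $X\mapsto S^{\bK_X}$ already satisfies descent along a Galois covering $Y\to X$ with group $G$, i.e.\ that $S^{\bK_X}=(S^{\bK_Y})^{G}$; this is immediate from $\bK_X/\bK_Y\cong G$ at the level of the grid, so $\beta(S)$ is a genuine sheaf.

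First I would establish the key \emph{recovery lemma}: for every sheaf $\cF$ and every edge object $X$ there is a natural isomorphism $\cF(X)\xto{\sim}\omega_\Cip(\cF)^{\bK_X}$. The fiber functor is the colimit $\omega_\Cip(\cF)=\varinjlim_X \cF(X)$ over edge objects, and the defining conditions of a $Y$-site make the system of edge objects cofiltered, so this colimit is filtered. The sheaf axiom, in the form of Galois descent along $Y\to X$, gives $\cF(X)=\cF(Y)^{G}$, and passing to the colimit over finer Galois coverings identifies the $\bK_X$-invariants of $\omega_\Cip(\cF)$ with $\cF(X)$; here smoothness guarantees that every $\bK_X$-invariant element of the colimit is already detected at a finite Galois level. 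The cardinality condition enters precisely at this point: it ensures the relevant inverse limits of finite sets are nonempty and surject onto each finite stage, so that no invariant sections are lost and, in particular, $\omega_\Cip$ is faithful.

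Granting the recovery lemma, full faithfulness of $\omega_\Cip$ follows formally. For sheaves $\cF,\cG$, applying $\bK_X$-invariants to a morphism $\omega_\Cip(\cF)\to\omega_\Cip(\cG)$ of smooth $M_\Cip$-sets yields, via the recovery lemma, a family of maps $\cF(X)\to\cG(X)$ compatible along the grid; since a sheaf on a $B$-site with enough Galois coverings is determined by its sections over edge objects and such compatible families extend uniquely to sheaf morphisms, this produces a unique $\cF\to\cG$ realizing the given map, so $\Hom_{\Shv}(\cF,\cG)\xto{\sim}\Hom_{M_\Cip}(\omega_\Cip\cF,\omega_\Cip\cG)$. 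For essential surjectivity, given a smooth $M_\Cip$-set $S$ the recovery lemma applied to $\cF=\beta(S)$ gives $\omega_\Cip(\beta(S))=\varinjlim_X S^{\bK_X}$, and smoothness of $S$---every element is fixed by some $\bK_X$---identifies this colimit with $S$. Hence $\omega_\Cip$ is fully faithful and essentially surjective, i.e.\ an equivalence.

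The main obstacle I expect is the recovery lemma, and specifically the interchange of the filtered colimit defining $\omega_\Cip$ with the passage to $\bK_X$-invariants. This is where the $A$-topology, the enough-Galois-coverings hypothesis, and the cardinality condition must be used in concert: one needs that the Galois coverings of a fixed edge object form a cofinal filtered system whose inverse limit of automorphism groups is exactly $\bK_X$, that the sheaf condition is equivalent to Galois descent so that $\cF(X)=\cF(Y)^{G}$, and that finiteness makes the resulting tower of finite invariant-sets behave like a profinite limit with nonempty, surjective transition maps. All remaining verifications---that $\beta$ is functorial, that the displayed isomorphisms are natural, and the compatibility $\beta\circ\omega_\Cip\cong\id$---are formal once this lemma is in place.
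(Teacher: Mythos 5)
Your skeleton matches the paper's: the ``recovery lemma'' you isolate is precisely Corollary~\ref{cor:KX-invariant} ($F(\iota_0(X))\xto{\cong}\omega_\Cip(F)^{\bK_X}$), essential surjectivity does proceed via $T\mapsto (T^{\bK_{E_X}})_X$, and smoothness does give $\varinjlim_X S^{\bK_X}=S$. But there is a genuine gap at the very first step: your candidate quasi-inverse $\beta(S)$ is only defined on the grid $\cC_0$ with transition maps ``induced by the inclusions $\bK_{X'}\subseteq\bK_X$''. The grid is a \emph{poset} mapping to $\cC$ via $\iota_0$; its morphisms do not realize the general morphisms of $\cC$ (not even the isomorphisms identifying two edge objects with the same image), so this prescription does not determine a presheaf on $\cC$, and ``compatible along the grid'' is strictly weaker than compatible with all restriction maps. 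The paper has to choose, for each object $X$ of $\cC$, an edge object $E_X$ and an isomorphism $\beta_X:\iota_0(E_X)\xto{\cong}X$, and then define restriction along an arbitrary $f:X\to Y$ in $\cC$ as multiplication by an element $(\alpha,\gamma_\alpha)\in M_\Cip$ with prescribed value on $E_Y$; the existence of such elements is Lemma~\ref{lem:sufficiently many}, proved by a nonemptiness argument for a limit of $\Gal(f)$-torsors, and well-definedness and functoriality require Lemmas~\ref{lem:7-9-3}--\ref{lem:7-9-5}. The same mechanism is what makes fullness work: equivariance of a map of $M_\Cip$-sets only yields compatibility with restrictions because every restriction map is realized by a monoid element, so fullness is not ``formal'' given the recovery lemma.

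Two further points are misjudged. First, you call $S^{\bK_X}=(S^{\bK_Y})^G$ ``immediate from $\bK_X/\bK_Y\cong G$''; in the paper this is the exact sequence $1\to\bK_{E_X}\to\bK_{Y'}\to\Gal(f')\to 1$ of Section~\ref{sec:ess_surj}, whose surjectivity is Lemma~\ref{lem:Gal gp surjective} and is exactly where the cardinality conditions are consumed (via surjectivity of $H_X\to\Gal(f)$ for the pro-group $H_X=\varprojlim_{f\in I_X}\Gal(f)$); likewise the identification $H_X\cong\bK_X$ underlying your recovery lemma is the nontrivial Lemma~\ref{lem:psi_isom}, built from the functors $\theta_{Z,Z',h}$, not a definition of $\bK_X$. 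Second, you locate the cardinality hypotheses in faithfulness (``no invariant sections are lost''), but faithfulness holds in the paper \emph{without} any cardinality assumption (Section~\ref{sec:faithful}, via separatedness of sheaves along coverings); the cardinality conditions are needed only for fullness and essential surjectivity, through Lemmas~\ref{lem:sufficiently many} and~\ref{lem:Gal gp surjective}. Until you supply the extension of $\beta(S)$ from the grid to all of $\cC$ --- i.e.\ some substitute for Lemma~\ref{lem:sufficiently many} --- the proposed quasi-inverse does not exist as a functor to $\Shv(\cC,J)$, and the argument does not close.
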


\subsection{}
\label{sec:intro thm 2}
Let us restrict ourselves to atomic topologies
and assume cardinality Condition~(1),
that is, the finiteness of the hom sets, holds.
Then  
as an application of our theorem, 
we obtain a 
`reconstruction' theorem
 (Theorem~\ref{thm:reconstruction}) 
as follows.   
\begin{thm}
\label{thm:intro2}
Suppose there is a given $Y$-site 
whose topology is the atomic topology.
Assume that Condition (1) of the cardinality 
conditions holds.   Then there exists 
a locally profinite group $G$ such that 
the topos is equivalent 
to the category of discrete sets with 
continuous $G$-action.
\end{thm}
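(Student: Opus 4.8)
The plan is to derive the statement from the main equivalence of Theorem~\ref{thm:intro1} together with the special structure forced by the atomic topology. First I would note that Condition~(1) guarantees the existence of a grid $\Cip$, so Theorem~\ref{thm:intro1} applies and the fiber functor $\omega_\Cip$ identifies the topos of sheaves on the $Y$-site with the category of smooth $M_\Cip$-sets. Equipping $M_\Cip$ with its topological monoid structure, for which the subgroups $\bK_X$ form a fundamental system of neighborhoods of the identity (Section~\ref{sec:topological monoid structure}), the category of smooth $M_\Cip$-sets is by definition the category of discrete sets with continuous $M_\Cip$-action. Writing $G := M_\Cip$, the proof thus reduces to showing that $G$, endowed with this topology, is a locally profinite group.

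The second step is to invoke the fact, special to the atomic case, that $M_\Cip$ is a \emph{group} and that every object of $\cC_0$ is an edge object. This promotes $M_\Cip$ from a topological monoid to a topological group $G$: the distinguished submonoids $\bK_X$ are then genuine subgroups, they are indexed by all objects $X$, and the continuous-action category coincides with the usual category of discrete continuous $G$-sets. It now remains only to verify the three defining properties of local profiniteness, namely that $G$ is Hausdorff and admits a fundamental system of neighborhoods of the identity consisting of compact open subgroups.

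Openness holds by construction of the topology, and each $\bK_X$ is a subgroup. For the Hausdorff property I would argue that $\bigcap_X \bK_X = \{e\}$: an element of $M_\Cip$ belonging to every $\bK_X$ fixes every object of the grid and hence acts trivially, so it is the identity by the faithful construction of $M_\Cip$ from the endomorphisms of $\cC_0$. The heart of the matter is the compactness of a fixed $\bK_X$. Here I would exhibit it as a filtered inverse limit $\bK_X = \varprojlim_Y \bK_X/\bK_Y$, taken over the objects $Y$ of $\cC_0$ dominating $X$ (for which $\bK_Y \subseteq \bK_X$); each quotient $\bK_X/\bK_Y$ embeds into one of the finite hom sets and is therefore finite by Condition~(1). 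Using the cardinality condition in its projective-limit form---that an inverse system of nonempty finite sets with surjective transition maps has nonempty limit surjecting onto each term---one identifies $\bK_X$ with this inverse limit of finite groups, so $\bK_X$ is profinite and its profinite topology agrees with the subspace topology inherited from $G$. This exhibits a compact open subgroup and completes the verification. (When the site has a final object $X_0$ one has $\bK_{X_0}=G$, and the same computation shows that $G$ itself is profinite, recovering the sharper statement in that case.)

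The main obstacle is exactly this compactness step: constructing the inverse system of finite quotients $\bK_X/\bK_Y$, checking that the transition maps are well defined, compatible, and surjective, and confirming that the resulting profinite topology on $\bK_X$ matches the topology it carries as an open subgroup of $G$. This is where the finiteness of the hom sets and the nonemptiness of inverse limits of finite sets are essential; by contrast, once $M_\Cip$ is known to be a group with all objects edge objects, the remaining group-theoretic bookkeeping is formal.
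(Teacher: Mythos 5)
Your overall route coincides with the paper's proof of Theorem~\ref{thm:reconstruction}: Condition (1) yields a grid by Proposition~\ref{cor:grid existence}; Theorem~\ref{thm:Galois_main} identifies the topos with smooth $M_\Cip$-sets; the atomic topology makes every object an edge object, so $M_\Cip$ is a group by Lemma~\ref{lem:7-9-6}; and the topology of Section~\ref{sec:topological monoid structure} turns smooth sets into discrete continuous $G$-sets. Your Hausdorff argument via $\bigcap_X \bK_X=\{e\}$ is correct (the paper gets this implicitly, since an open profinite subgroup forces the whole group to be Hausdorff). But the step you yourself flag as the heart of the matter---compactness of $\bK_X$---has a genuine gap as proposed. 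Injectivity of $\bK_X \to \varprojlim_Y \bK_X/\bK_Y$, finiteness of the quotients, and the (automatic) surjectivity of the transition maps between coset spaces give a nonempty compact limit, but they do \emph{not} give surjectivity of $\bK_X$ onto that limit, and surjectivity is exactly what compactness requires here: the image of $\bK_X$ is dense (it surjects onto each finite level), so it is closed---equivalently, $\bK_X$ is compact---if and only if the map is onto. The principle ``an inverse system of nonempty finite sets has nonempty limit'' produces a compatible thread of cosets; it does not produce an element of $M_\Cip$, i.e.\ an endofunctor of $\cC_0$ together with a natural isomorphism, realizing that thread. Building such an element is precisely the gluing construction of Section~\ref{sec:defn phiX} (the functors $\theta_{Y,Y,\beta_f^{-1}}$ patched via Corollary~\ref{cor:C0Y}), and the cardinality hypothesis alone does not substitute for it.

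The paper avoids this issue by transporting structure rather than completing $\bK_X$ by hand: Lemma~\ref{lem:psi_isom} provides an isomorphism $\psi_X : H_X \xto{\cong} \bK_X$, where $H_X = \varprojlim_{f \in \Obj I_X} \Gal(f)$ is profinite outright under Condition (1), and Lemma~\ref{prop:Galois is locally profinite} then checks that the inclusion $\bK_X \subset M_\Cip$ is continuous and open. Your inverse system is in fact the same object in disguise: for $f:Y \to X$ in $I_X$, the exact sequence \eqref{seq:7-9} identifies $\bK_X/\bK_Y$ with $\Gal(f)$ (in particular $\bK_Y$ is normal in $\bK_X$, which you tacitly assumed by calling the quotients ``finite groups''), the surjectivity $\bK_X \twoheadrightarrow \Gal(f)$ being Lemma~\ref{lem:Gal gp surjective}. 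Note also that your finiteness claim for $\bK_X/\bK_Y$ is justified only along the cofinal subsystem $I_X$ of Galois coverings (Lemmas~\ref{lem:bKX1}, \ref{lem:7-9-5}, and \ref{lem:I_X cofinal}); for an arbitrary morphism $Y \to X$ the coset of $(\alpha,\gamma_\alpha)$ is not pinned down inside a single finite hom set. If you cite Lemma~\ref{lem:psi_isom} (or reprove its surjectivity half), your argument closes and matches the paper's; as written, that key step is missing.
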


When the topology is the atomic topology, 
the absolute 
Galois monoid becomes a group.  
If there exists a grid, 
we can equip the group with the structure
of a topological group such that the subgroups
$\{\bK_X\}$ for objects $X$ of the grid is
a fundamental system of neighborhoods 
of the unit.
If cardinality 
Condition (1) holds, then there exists a grid
as noted above.  We can further show that
the topological group is locally profinite.
By definition, the category of smooth sets
is equal to the category of discrete sets 
with continuous action of this topological group.
Hence Theorem~\ref{thm:intro2} follows from
Theorem~\ref{thm:intro1}.
\subsection{Grothendieck's Galois theory and its generalizations}
\label{sec:topos Galois theory}
Grothendieck's interpretation of Galois theory takes 
roughly the following form: 
Suppose a topos with conditions and 
some additional data are given.
Then one can construct a profinite group 
(the Galois group).   The theorem establishes that the topos
is equivalent to the category of representations of the profinite group.

There are many other generalizations of Grothendieck's Galois theory.
Joyal and Tierney \cite{JT} 
put only very mild conditions on a topos and 
proved that it was equivalent to the classifying topos of some localic group.
There is also a more constructive proof by Dubuc for when the topos is pointed atomic \cite{DRep}.
There are many other works (many of them categorical or logical) that  we do not cite here; instead, we refer to Dubuc's informative survey-type article \cite{Dlocalic}.

The main differences in our work 
are that the groups considered are localic rather than topological (see Caramello~\cite[p.652]{Caramello} for the difference between the two kinds of groups) and that topoi are considered rather than sites.

Of course our work is not totally unrelated.   We owe much of the formulation to these preceding works.

\subsection{Summary of our results}
\label{sec:table of results}
At referees' suggestions, we were able to improve this paper.
As we added many results later 
during revision, 
the paper is not so streamlined.
The aim of this section is to give a summary of 
results, using Table~\ref{table:results}.
For the contents of the earlier sections, 
the 
reader is referred to Section~\ref{sec:intro thm 1}
and Theorem~\ref{thm:intro2}.
We review mainly the contents of later sections here.

The main object of study in our work is $Y$-site.
The main question is when and how its associated topos 
is equivalent to the category $BM$ of 
continuous $M$-sets for some topological monoid $M$.
The other objects of study are grids (and the associated 
absolute Galois monoids), 
fiber functors $\omega$ (any functor from the 
associate topos to some $BM$) 
and topological monoids $M$.
Recall that if there exists a grid, then, the grid 
gives rise to a fiber functor
to $BM$ where $M$ is the absolute Galois monoid associated
with the grid.   We 
can consider the behavior (i.e., faithfulness, fullness, essential surjetivity) 
of the associated fiber functor.   If there exists a fiber functor (without 
a grid), we can ask if there exists a grid giving rise to this given functor.
Let us describe the results on these matters.

The first block in Table~\ref{table:results} concerns the 
implications of the setup where we
are given a $Y$-site satisfying some cardinality conditions.    
It is the result of the earlier sections 
that there exists a grid and the fiber functor $\omega$ 
induces an equivalence of categories between 
the associated topos and the category of continuous $M$-sets
where $M$ is the associated absolute Galois monoid.   
In this case, we prove that the topos has enough points
by showing that the fiber functor gives a point of the topos.
We can compare our setup and Caramello's setup in this case.
We construct an ultrahomogeneous object, which is the key
object in the input of Caramello's main theorem.   We 
refer to Section~\ref{sec:Caramello intro} for more on this.    
We can understand the cardinality conditions as 
sufficient conditions for the `vanishing' of higher 
derived limits.   It is these higher derived limits 
that can better describe the necessary conditions 
for the statements of our theorems to hold.
We give the simplest examples where the absolute Galois monoids
are the additive group of integers and the monoid of natural numbers.
We also have the example that motivated us.

In the second block in Table~\ref{table:results}, 
we have results where we start with some topological 
monoid (group) $M$ and ask if there exists a $Y$-site and
a grid such that the associated absolute Galois monoid
is the given group.
Indeed, a monoid $M$ is a locally prodiscrete monoid
if and only if $M$ is the absolute Galois
monoid associated with a grid for some $Y$-site.
We show that in this case the fiber functor 
does give an equivalence.   

In the third block in Table~\ref{table:results},
we are given a $Y$-site and ask if there exists a grid
for this $Y$-site.   We have a certain invariant 
in the 2nd derived limit of some pro-group associated
with the $Y$-site.   Then, we can formulate precisely
the necessary and sufficient condition for the existence
of a grid using certain subset of the derived limit.
We give an example where the subset is a proper subset,
and using it, we give an example of a $Y$-site for which
there does not exist a grid.

The fourth block in Table~\ref{table:results}
concerns the setup where we are given a $Y$-site and a grid.
(There is an associated fiber functor $\omega$.)
We show that the fiber functor $\omega$ is automatically faithful
without any assumption.   We describe precisely 
the set of equivalence classes of (pinned) grids 
using the first derived limits.    We do not have a necessary
condition but only a sufficient condition for the fiber 
functor $\omega$ to be full and essentially surjective.   
We give an example of a $Y$-site where $\omega$ is not 
full.   

The fifth block in Table~\ref{table:results}
is where we are given a $Y$-site and a fiber functor.
The referee called for the existence 
of a grid in this situation.   We answer in the affirmative
when the functor is an equivalence to $BM$
where $M$ is a locally prodiscrete monoid.

The sixth block in Table~\ref{table:results}
is concerned with the `enough Galois' property.
The question we ask is if a $B$-site has a topos
equivalent to $BG$ for $G$ profinite,
then is the $B$-site a $Y$-site (that is, 
`enough Galois' property holds).    
We answer this in the negative by constructing 
certain profinite groups.    The treatment is 
very (profinite) group theoretic.

\begin{table}[htb]
\caption{Summary of results}
\label{table:results}
  \begin{tabular}{|l|}
\hline
Given $Y$-site+cardinality conditions: 
\\
(1) $\exists$ grid+$\omega$ equivalence (Thm.\ \ref{thm:Galois_main})
\\
(2) topos has enough points (\S\ref{sec:enough points})
\\
(3) $\exists$ Caramello's ultrahomogeneous object
(\S \ref{sec:Caramello tech})
\\
(4) `triviality' of $R^1\varprojlim, R^2\varprojlim$ (\S \ref{sec:higher derived limits})
\\
\hline 
Two examples of $Y$-sites+cardinality conditions (\S \ref{sec:examples})
\\
\hline \hline
Given (locally) profinite group: $\exists$ $Y$-site+grid+$\omega$ equivalence 
(\S \ref{sec:atomic topological})
\\
Given locally prodiscrete group: $\exists$ $Y$-site+grid+$\omega$ equivalence 
(\S \ref{sec:locally prodiscrete})
\\
Given locally prodiscrete monoid: $\exists$ $Y$-site+grid+$\omega$ equivalence 
(\S \ref{sec:locally prodiscrete monoids})
\\
\hline
Given $M$ topological monoid: \\
$M$ is an absolute Galois monoid $\Leftrightarrow$ $M$ is locally prodiscrete (\S \ref{sec:locally prodiscrete monoids})
\\
\hline \hline
Given $Y$-site:  
\\
(1) `trivial' $R^2\varprojlim   \Leftrightarrow \exists$ grid  (Prop.\ \ref{cor:grid existence}, 
\S\ref{sec:grid and R2})
\\
(2) Example of `nontrivial' $R^2\varprojlim$, $\nexists$ grid (\S \ref{sec:grid and R2})
\\
\hline \hline
Given $Y$-site+grid(+fiber functor $\omega$):  
 \\
(1) $\omega$ faithful (Thm.\ \ref{thm:Galois_main}, \S \ref{sec:faithful})
\\
(2) `trivial' $R^1\varprojlim   \Leftrightarrow$ uniqueness of grid (\S\ref{sec:grid uniqueness}, 
\S\ref{sec:uniqueness and R1})
\\
(3) `trivial' $R^1\varprojlim   \Rightarrow$ fullness, ess.\ surj.\ of $\omega$ (Thm.\ \ref{thm:Galois_main}, \S\ref{sec:full ess surj})
\\
(4) Example of `nontrivial' $R^1\varprojlim$, $\omega$ not full (\S\ref{sec:fiber not full})
\\
\hline \hline
Given $Y$-site+$\omega$ equivalence to $BM$ ($M$: locally prodiscrete monoid):
\\
$\exists$ grid giving $\omega$ (\S \ref{sec:fiber to grid})
\\
\hline \hline
Example of non-$Y$ $B$-site equivalent to $BG$ for $G$ profinite (\S \ref{sec:on enough Galois})
\\
\hline
     \end{tabular}
\end{table}

\subsection{Relations to Caramello's work}
\label{sec:Caramello intro}
Caramello's generalization (\cite{Caramello}, \cite{CaramelloFraisse}) of Galois theory focuses on sites.   There is a high degree of overlap between this article and Caramello's papers which precede our work.   Let us point out the differences.

Both ours and Caramello's work are concerned with the following aim: to provide criteria for a site such that the associated topos is equivalent to the category of continuous representations of some topological monoid (or some topological group).   There are, however, four main differences.   The first is that she works with atomic topology only while we work with a slightly more general Grothendieck topology (namely, the $A$-topology).   This means that we treat monoids that may not be groups, while she treats only groups.   The second is the method of proof; she uses logic, while our proof is entirely categorical.   The third is that we define and assume `enough Galois' condition while there is nothing similar in her work.   The fourth is the assumptions that are used for our results.   We use cardinality conditions or (enough Galois property and) higher derived limits, while she uses 
the assumptions in the Fra\"{i}ss\'e-Kubi\'s theory.   Let us discuss this last 
issue in the following paragraphs.

Let us recall briefly the way our results and her results are formulated.   We start with a $Y$-site.    We consider grids; these are similar to her $\cC$-homogeneous and $\cC$-universal objects.   Further, she considers $\cC$-ultrahomogeneous objects.  
(See Section~\ref{sec:Caramello tech} where we construct an ultrahomogeneous object 
from our setup).  
Using the enough Galois property, we obtain certain pro-groups (see Section~\ref{sec:higher derived limits} for more details).  Our theorem reads, roughly, under the enough Galois property and some `vanishing' of the higher derived limits of those pro-groups, a grid exists, thereby obtaining a fiber functor to a category $BM$ for some topological monoid $M$, and the fiber functor gives an equivalence.   Her theorem reads, roughly, under some conditions coming from the Fra\"{i}ss\'e-Kubi\'s theory, 
a $\cC$-ultrahomogeneous object exists, thereby obtaining a fiber functor which is 
an equivalence.     

For the existence of a $\cC$-homogeneous and 
$\cC$-universal 
object \cite[Theorem 2.8]{CaramelloFraisse}  
(this is more or less equivalent to the existence of a grid), 
she assumes the dominance conditions of  
Fra\"{i}ss\'e-Kubi\'s, i.e., that the underlying category $\cC$ is 
$\kappa$-bounded and admits a dominating family $\cF$
of morphisms with $|\cF| \le \kappa$.   
Under this condition, one can use transfinite induction 
to prove the existence theorem
(see Theorem 3.7 of Kubi\'{s} \cite{Kubis} for the detail).

The key input to the proof of 
the existence of an ultrahomogeneous object
of \cite[Theorem 2.8]{CaramelloFraisse}
is \cite[Lemma 2.7]{CaramelloFraisse},
which is similar to our 
Lemma~\ref{lem:sufficiently many}.
We assume certain cardinality conditions for our lemma, 
while
she restricts to continuous $\kappa$-chains.


Because we assume the enough Galois property, it is not possible to directly compare her work and ours.    We have never tried to work without the assumption.   For example, we have not studied $B$-sites with Fra\"{i}ss\'e-Kubi\'s type assumptions.

\subsection{On the absolute Galois monoids}
\label{sec:intro Galois monoids}
Let us focus on the topological monoids that we encounter.
From a $Y$-site and a grid, we obtain the associated absolute
Galois group, which is a topological monoid.    
We show that
a topological monoid is isomorphic to the associated
absolute monoid for some grid if and only if 
it is locally prodiscrete.     

This does not seem to be general enough in that 
we do not expect the category $BM$ 
of continuous $M$-sets  for some topological monoid $M$
to be equivalent to $BM'$ for some locally prodiscrete monoid $M'$.
The referee suggested that we look for an alternative definition of grids
to cover general topological monoids,
but we have not found one.

It is true that $BG$ for any topological group $G$
is equivalent to some topos as shown in \cite[p.154, Thm 2]{MM}.
In their theorem, the site seems not to be a $Y$-site
in general.   However, even so, one can construct a grid (i.e., 
a category and a functor satisfying the conditions of a grid)
in that case.    The limitation on our absolute Galois groups 
seems to be caused by our assupmtion that $Y$-site
has enough Galois coverings.
For the setup, we may consider a $B$-site and a grid, and it 
may be 
interesting to search for conditions (other than `enough Galois') 
on a $B$-site for which the statements of our theorems hold more 
generally.

\subsection{On our future paper}
\label{sec:on future paper}
Let us give a brief description of our future paper.
The following paragraphs may help explain our motivation, 
other than Galois theory, for writing this article.

This paper grew out of the example in Section~\ref{sec:motivation example}.   
We develop some abstract theory to include this particular example.  
We regard finiteness conditions as something natural (while less general from the point of view of extension of Galois theory to general topological groups) because 
this particular example satisfies them.

In our future paper, we will further develop abstract theory on $Y$-sites.   
Recall the example of the site of finite separable field extensions of a field $K$.
One property of the category is that for any Galois extension $L$ with Galois group $G$ and a subgroup $H \subset G$, the fixed part $L^H$ is also an object.
We will give a similar construction of such saturation on a $Y$-site.

Note also that in the usual \'etale site, where connectedness is not assumed, finite coproducts exist.   It is also possible to add finite coproducts in $Y$-site.   This will no longer be a $Y$-site but we will see that it is still manageable.    

We will also consider not only sheaves but presheaves with transfers.   For the definition of transfers, it is quite important that one can define the degree of a morphism, which usually equals the cardinality of the Galois group for a Galois covering.   This can be made possible only under the finiteness assumption.

\subsection{}
We remark here on the use of universes.
Usually, an author fixes a universe $\frU$ 
and suppresses 
the appearance by declaring that everything belongs
to the fixed universe.   
However, the reader will occasionally find  in this paper the phrase 
``essentially $\frU$-small,'' which is used because the primary 
example
is the \'etale
site of a scheme in $\frU$, 
the underlying category
of which is essentially $\frU$-small.
There is a standard technique
for changing the universe to a larger 
one so that 
proving statements in an arbitrary fixed 
universe suffices (which enables one to 
suppress the appearance of the universe).  However, 
it was not clear 
to us if the use of such a technique should
eliminate the phrase.\\

\subsection{}
\label{sec:list of contents}
We now provide the list of contents of each
section.  More technical details are given 
at the beginning of each section.

In Section~\ref{sec:topos}, we first recall some general 
definitions and constructions from sheaf theory in order 
to make this paper self-contained.  
Basic notions such as Grothendieck topology, sieve, and
sheaf are recalled.
We then define what it means for a collection 
of morphisms in a category to be 
{\it semi-localizing}.  
It is shown that a Grothendieck topology 
is associated with a semi-localizing collection,
which we call {\it $A$-topology}.
After showing certain general properties of 
$A$-topology, we produce a fairly explicit 
criterion for a presheaf 
to be a sheaf in $A$-topology.

In Section~\ref{sec:Galois coverings},
we define 
{\it Galois covering}
and what it means for a site to have 
{\it enough Galois coverings}.   
We spend few pages on the generality on quotient 
objects in a category.  
This will be useful in our future 
paper, in which we consider sheaves with
values in a category other than the category
of sets.

In Section~\ref{sec:B-sites},
we define $B$-sites (Definition~\ref{defn:B-site}) 
and give some of its properties.
In Section~\ref{sec:sheafify}, we give an explicit description
of the sheafification functor on $B$-sites
when there are enough Galois coverings.
This will be used in the proof of our main
theorem.

The aim of Section~\ref{sec:grids}
is to state our main theorem
(Theorem~\ref{thm:Galois_main}).
We define a $Y$-site as a $B$-site with some additional 
conditions.   We define {\it cardinality conditions},
{\it grid}, the {\it absolute 
Galois monoid}, the {\it smooth sets}, 
and 
the {\it fiber functor} $\omega_{(\cC_0, \iota_0)}$
associated with a grid.
Theorem~\ref{thm:Galois_main} 
establishes that the fiber functor 
induces an equivalence of categories
between the topos and the category of 
smooth representations of the absolute 
Galois monoid under the cardinality 
conditions.

In Section~\ref{sec:grid existence}, 
we prove the existence
of a grid under the {\it cardinality conditions}.
We first prove the existence of the {\it pregrid},
and then of the {\it grid}.
The proof for the {\it pregrid} follows the same type of formulation 
as for the proof of the existence of an algebraically 
closed field of a field; the construction of the {\it grid} follows a novel approach.
We note that when the topology is the atomic
topology, the pregrid is already a grid.

In Section~\ref{sec:proof of theorem}, 
we begin the proof of our main theorem.
We show that the fiber functor is fully faithful.
In Section~\ref{sec:ess_surj}, we show the essential 
surjectivity, finishing the proof of Theorem~\ref{thm:Galois_main}.

In Section~\ref{sec:grid uniqueness}, we show that the grid if existed is essentially unique under some finiteness condition
and is not unique if we drop the finiteness assumption.

In Section~\ref{sec:enough points}, we show that
the fiber functor has a left adjoint, thereby showing that 
we have a point of the topos.
As an application of our main theorem, 
we see that the topos has enough points.

In Section~\ref{sec:atomic topological}, 
we show how to equip the absolute Galois monoid
with the structure of a topological monoid.
We then 
give a precise form of Theorem~\ref{thm:intro2}.

In Section~\ref{sec:locally prodiscrete},
we give examples of Y-sites and grids 
that do not meet the cardinality conditions,
making in such cases the absolute Galois monoids 
locally prodiscrete groups.

We give more examples of $Y$-sites with grids in Section~\ref{sec:examples}.
The simplest example in which the absolute Galois monoid
is the monoid
of non-negative integers is given in 
Section~\ref{sec:monoid example}.
The example in Section~\ref{sec:motivation example}
gave us the motivation to write this article; its details and an application will be given in a 
future paper.

In Section~\ref{sec:Caramello tech}, we show how our $Y$-site and a grid give rise to an ultrahomogeneous 
object 
used in Caramello's work \cite{Caramello}.

In Section~\ref{sec:fiber not full}, we give an example where 
the fiber functor is not full, when we do not assume the cardinaility 
condition.

In Section~\ref{sec:locally prodiscrete monoids},
we start with an admissible topological monoid,
and construct a $Y$-site and a grid 
whose associated absolute Galois monoid is related
to the given monoid.   It is also shown that 
a topological monoid $M$ equals the absolute Galois 
monoid for some grid if and only if $M$ is locally prodiscrete.

In Section~\ref{sec:fiber to grid}, we construct a grid 
out of a fiber functor, which is an equivalence,
to the category of continuous $M$-sets where $M$ is
a locally prodiscrete monoid.

In Section~\ref{sec:on enough Galois}, 
we give examples of $B$-sites which do 
not have enough Galois
coverings, yet the toposes are isomorphic to the category
of $G$-sets for some profinite groups $G$.

In Section~\ref{sec:higher derived limits},
we introduce higher derived limits of pro-groups
and describe our theorems
in terms of them.

{\bf Acknowledgment} 
During this research, the first author was supported as a Twenty-First Century COE Kyoto Mathematics Fellow and was partially supported by the JSPS Grant-in-Aid for Scientific 
Research 17740016 21654002 and by the World Premier International Research Center Initiative (WPI Initiative), MEXT, Japan. The second author was partially supported by the JSPS Grant-in-Aid
for Scientific Research 15H03610, 24540018, 21540013, 16244120.

The first author thanks Kei Hagihara and Florian Heiderich 
for useful conversations.  
The notion of grids was conceived during our stay at Hokkaido University.   
We thank Masanori Asakura and Kei Hagihara who made our visits possible.
We thank Yuichiro Hoshi for the advice on finiding examples of some profinite groups.
We thank the referee for posing interesting problems which imporoved our paper better.

\section{$A$-topology}
\label{sec:topos}
First, we recall the definitions of 
Grothendieck topology and sieve.

A collection of morphisms in a category 
is defined (Definition~\ref{defn:semi-localizing})
to be {\it semi-localizing},
when it satisfies the first three of the four axioms 
for the collection to admit `right calculus of 
fractions' in the sense of Gabriel and 
Zisman~\cite{GZ}.
We arrived at this definition when considering 
the class of Grothendieck topologies that is generated
by coverings of the form $\{X_i \to Y\}_{i \in I}$
where the cardinality of $I$ is one.

\subsection{Presheaves}
\label{sec:collection}

Throughout the paper we fix once for all 
a Grothendieck universe $\frU$.

Recall from \cite[EXPOSE I, 1.0, and D\'efinition 1.1]{SGA4} that 
a set $X$ is called {\em $\frU$-small} 
if $X$ is isomorphic to an element in $\frU$,
and that a category $\cC$ is called an $\frU$-category if for any
objects $X$, $Y$ of $\cC$, the set $\Hom_\cC(X,Y)$ is $\frU$-small.
From now on, unless otherwise stated, a set is assumed to be $\frU$-small 
and a category is assumed to be a $\frU$-category.
We use the terminology ``collection"
to refer to a set that is not
necessarily $\frU$-small.
A category $\cC$ is called {\em $\frU$-small}
if the collection of objects of $\cC$ is a set.

\subsubsection{ }
Let $\cC$ and $\cD$ be categories.
We call a contravariant functor from $\cC$ to $\cD$
a {\em presheaf 
on $\cC$ with values in $\cD$}.
When $\cD$ is the category of $\frU$-small sets 
(\resp $\frU$-small abelian groups,
\resp $\frU$-small rings with units), a presheaf on
$\cC$ with values in $\cD$ is called a presheaf
(\resp an {\em abelian presheaf},
 \resp a {\em presheaf of rings}
) on $\cC$.

In this article, we will only consider presheaves of sets.
In our future article, we will consider some more general categories.

\subsubsection{ }
Let $\cC$ be a category and let $X$ be an object
of $\,\cC$. We let $\frh_\cC(X)=\Hom_\cC(-,X)$ 
denote the presheaf on $\cC$ that associates, 
to each object $Y$ of $\,\cC$,
the set $\Hom_\cC(Y,X)$. The presheaf $\frh_\cC(X)$
on $\cC$ is called the {\em presheaf represented by $X$}.

We denote by $\frh_{\cC} : \cC \to \Presh(\cC)$ 
 the functor that associates, to each object
$X$ of $\cC$, the presheaf $\frh_{\cC}(X)=\Hom_{\cC}(-,X)$
represented by $X$ on $\cC$.
It follows from Yoneda's lemma that the functor
$\frh_{\cC}$ is fully faithful.

\subsubsection{ }
A category $\cC$ is called {\em essentially $\frU$-small} 
if $\cC$ is equivalent to a $\frU$-small category, or equivalently,
if there exists a set $S$ of objects of $\cC$ such that
any object of $\cC$ is isomorphic to an object that 
belongs to $S$.
Let $\cC$ be an essentially $\frU$-small category and
let $\cD$ be a category.
Then the presheaves on $\cC$ with values in $\cD$
form a category, which we denote by $\Presh(\cC,\cD)$. 
When $\cD$ is the category of $\frU$-small sets, we
simply write $\Presh(\cC)$ for the category $\Presh(\cC,\cD)$.

\subsubsection{ }
Let $\cC$ be an essentially $\frU$-small category
and let $F$ be a presheaf on $\cC$.
Let $X$ be an object of $\cC$. We denote by
$y_{F,X}$ the map
\begin{equation}\label{eq:yoneda}
y_{F,X}:\Hom_{\Presh(\cC)}(\frh_\cC(X),F)
\to F(X)
\end{equation}
which sends a morphism 
$\phi:\frh_\cC(X) \to F$ of presheaves on $\cC$
to the image of $\id_X \in \Hom_{\cC}(X,X)$
under the map $\phi(X):\Hom_{\cC}(X,X)
\to F(X)$.
It follows from Yoneda's lemma that the map
$y_{F,X}$ is bijective.

\subsubsection{ }\label{sec:IFX}
Let $\cC$ and $\cD$ be categories and
let $F:\cC \to \cD$ be a covariant functor.
For an object $X$ of $\cD$, we denote by
$I^F_X$ the following category.
The objects of $I^F_X$ are the pairs 
$(Y,f)$ of an object $Y$ of $\cC$ and 
a morphism $f:F(Y) \to X$ in $\cD$.
For two objects $(Y_1,f_1)$ and $(Y_2,f_2)$
of $I^F_X$, the morphisms from $(Y_1,f_1)$
to $(Y_2,f_2)$ in $I^F_X$ are the morphisms
$g:Y_1 \to Y_2$ in $\cC$ satisfying
$f_1 = f_2 \circ F(g)$.

\subsubsection{ }
Let $\cC$ be an essentially $\frU$-small category.
Let us choose a full subcategory 
$\cC' \subset \cC$ such that $\cC'$ is $\frU$-small
and that the inclusion functor $\cC' \inj \cC$
is an equivalence of categories.
We call such a full subcategory $\cC'$ of $\cC$
a $\frU$-small skeleton of $\cC$.
We denote by $\frh_{\cC}|_{\cC'} : \cC' \to
\Presh(\cC)$ the composite of the inclusion
functor $\cC' \inj \cC$ with the functor
$\frh_{\cC}$.

For an object $G$ of $\Presh(\cC)$, let
$I_G$ denote the category $I_G^{\frh_\cC|_{\cC'}}$.
By definition, the objects of $I_G$ are the pairs
$(X,\xi)$ of an object $X$ of $\cC'$
and a morphism $\xi: \frh_\cC(X) \to G$
in $\Presh(\cC)$.

 For two objects $(X_1,\xi_1)$ and $(X_2,\xi_2)$
 of $I_G$, the morphisms from $(X_1,\xi)$ to
 $(X_2,\xi_2)$ in $I_G$ are the morphisms
 $f:X_1 \to X_2$ in $\cC'$ satisfying
 $\xi_1 = \xi_2 \circ \frh_\cC(f)$.

This shows that the category $I_G$ is $\frU$-small.
For an object $(X,\xi)$ of $I_G$, we
let $y_{G,X}(\xi)$ denote the element of $G(X)$
that is the image of $\xi$ under the
bijection $y_{G,X}:
\Hom_{\Presh(\cC)}(\frh_\cC(X),G)
\xto{\cong} G(X)$ in \eqref{eq:yoneda}.

Let $g:G \to H$ be a morphism in $\Presh(\cC)$.
For an object $(X,\xi)$ of $I_G$, let
$g_{X,\xi} \in H(X)$ denote the image of
$y_{G,X}(\xi) \in G(X)$ under the map $G(X) \to H(X)$
given by $g$.
By varying $(X,\xi)$, we obtain an element
$(g_{X,\xi})$ in the limit
$\varprojlim_{(X,\xi) \in \Obj I_G} H(X)$.
It then can be checked easily that the map
\begin{equation}\label{eq:I_G}
\Hom_{\Presh(\cC)}(G,H) 
\to \varprojlim_{(X,\xi) \in \Obj I_G} H(X)
\end{equation}
that sends $g$ to $(g_{X,\xi})$ is bijective.

\subsection{Sieves and Grothendieck topologies}

\subsubsection{ }
Let us recall the notion of sieve (cf.\ 
\cite[EXPOSE I, D\'efinition 4.1]{SGA4}, 
\cite[Arcata, (6.1)]{SGA4h}).
Let $\cC$ be a category and let $X$ be an object of $\cC$.
A sieve on $X$ is a full subcategory $R$ of the overcategory
$\cC_{/X}$ satisfying the following condition: let $f:Y \to X$
be an object of $\cC_{/X}$ and suppose that there exist
an object $g: Z \to X$ of $R$ and a morphism $h:Y \to Z$
in $\cC$ satisfying $f=g \circ h$. Then $f$ is an object of $R$.

For a sieve $R$ on $X$, we denote by $\frh_\cC(R)$ 
the following subpresheaf of $\frh_\cC(X)$: 
for each object $Y$ of $\cC$,
the subset $\frh_\cC(R)(Y) \subset \frh_\cC(X)(Y) = \Hom_\cC(Y,X)$ 
consists of the morphisms $f:Y \to X$ in $\cC$ 
such that $f$ is an object of $R$.

\subsubsection{ }
Let $\cC$ and $\cD$ be categories,
let $X$ be an object of $\cC$, 
and let $Y$ be an object of $\cD$.
Suppose that a covariant functor $F: \cC_{/X}
\to \cD_{/Y}$ is given.
For a sieve $R$ on $Y$, we denote by $F^* R$
the full subcategory of $\cC_{/X}$ whose objects
are those objects $f:Z \to X$ of $\cC_{/X}$ 
such that $F(f)$ is an object of $R$.
It is then easy to check that $F^* R$ 
is a sieve on $X$.

Let $G:\cC \to \cD$ be a covariant functor.
Suppose that $G(X) = Y$ and that $F$ is equal to
the covariant functor $\cC_{/X} \to \cD_{/Y}$ 
induced by $G$. In this case we
denote the sieve $F^* R$ on $X$ by $G^* R$.

Let $f:X \to Z$ be a morphism in $\cC$. 
Suppose that $\cC=\cD$, $Y=Z$, and
$F$ is equal to the covariant functor 
$\cC_{/X} \to \cC_{/Z}$ 
which sends an object $g:W \to X$ of $\cC_{/X}$
to the object $f \circ g$ of $\cC_{/Z}$.
In this case we denote the sieve $F^* R$ 
on $Y$ by $R \times_Z X$ and call it the pullback
of $R$ with respect to the morphism $f$.

\subsubsection{ }
For a morphism $f:Y \to X$ in a category $\cC$, we let $R_f$ denote the
full subcategory of $\cC_{/X}$ whose objects are the morphisms
$g:Z \to X$ in $\cC$ such that $g=f \circ h$ for some
morphism $h:Z \to Y$ in $\cC$. It is then easy to
check that $R_f$ is a sieve on $X$.

More generally, suppose that $X$ is an object of a category $\cC$ and
that a family $(f_i:Y_i \to X)_{i \in I}$ of objects of $\cC_{/X}$ 
indexed by a set $I$ is given.
We then let $R_{(f_i)_{i \in I}}$ denote the full subcategory of 
$\cC_{/X}$ whose objects are the morphisms 
$g : Z \to X$ in $\cC$ such that $g = f_i \circ h_i$
for some $i\in I$ and for some morphism $h_i: Z \to Y_i$
in $\cC$. It is then easy to
check that $R_{(f_i)_{i \in I}}$ is a sieve on $X$.

\subsubsection{ }
\label{sec:topology}
Let us recall the notion of Grothendieck topology 
(cf.\ 
\cite[EXPOSE II, D\'efinition 1.1]{SGA4}, 
\cite[Arcata, (6.2)]{SGA4h}).
Let $\cC$ be a category.
\begin{defn}
\label{defn:topology}
A Grothendieck topology 
$J$ on $\cC$ is an assignment
of a collection $J(X)$ of sieves on $X$ to each object $X$ of $\cC$
satisfying the following conditions:
\begin{enumerate}
\item For any object $X$ of $\cC$, the overcategory
$\cC_{/X}$ is an element of $J(X)$.
\item For any morphism $f: Y \to X$ in $\cC$
and for any element $R$ of $J(X)$, the sieve
$R \times_X Y$ on $Y$ is an element of $J(Y)$.
\item Let $X$ be an object of $\cC$, and let $R$, $R'$ be 
two sieves on $X$. Suppose that $R$ is an element of $J(X)$
and that for any object $f:Y \to X$ of $R$, the sieve
$R' \times_X Y$ on $Y$ is an element of $J(Y)$.
Then $R'$ is an element of $J(X)$.
\end{enumerate}
\end{defn}

Let $J$ be a Grothendieck topology on $\cC$ and
let $X$ be an object of $\cC$.
We say that a morphism $f:F \to \frh_\cC(X)$
of presheaves on $\cC$ is a covering of $X$ with respect to $J$ 
if the image of $f$ is equal to the subpresheaf 
$\frh_\cC(R)$ of $\frh_\cC(X)$ for some sieve 
$R$ on $X$ which belongs to $J(X)$.
We say that a morphism $f:F \to G$ of presheaves on $\cC$ 
is a covering with respect to $J$
if for any object
$X$ of $\cC$ and for any element $\xi \in G(X)$, the
first projection from the fiber product 
$\frh_\cC(X) \times_G F$ of the diagram
$$
\frh_\cC(X) \xto{y_{G,X}^{-1}(\xi)} G \xleftarrow{f} F
$$
to $\frh_\cC(X)$ is a covering of $X$ with respect to $J$.
When $G = \frh_\cC(X)$ for some object $X$ of $\cC$,
it follows from Condition~(2)
in Definition~\ref{defn:topology} that
$f$ is a covering with respect to $J$ if and only if
$f$ is a covering of $X$ with respect to $J$.
Let $(f_i:Y_i \to X)_{i \in I}$ be a family of 
objects of $\cC_{/X}$ indexed by a set $I$.
We say that $(f_i)_{i \in I}$ is a family covering $X$
with respect to $J$ if the sieve $R_{(f_i)_{i \in I}}$
on $X$ belongs to $J(X)$.

\subsection{Semi-localizing collections}
\label{sec:semi-localizing}

\begin{defn}\label{defn:semi-localizing}
Let $\cC$ be a category.
We say that a collection $\cT$ of morphisms in $\cC$ is semi-localizing
if it satisfies the following conditions:
\begin{enumerate}
\item For any object $X$ of $\cC$, the identity morphism $\mathrm{id}_X$ 
belongs to $\cT$.
\item The collection $\cT$ is closed under composition.
\item Let $Y_1 \xto{f_1} X \xleftarrow{f_2} Y_2$ be a diagram in $\cC$.
Suppose that $f_1$ belongs to $\cT$. Then there exist an object $Z$ in
$\cC$ and morphisms $g_1 : Z \to Y_1$ and $g_2 :Z \to Y_2$ such that
$g_2$ belongs to $\cT$ and $f_1 \circ g_1 = f_2 \circ g_2$.
\end{enumerate}
\end{defn}

\begin{rmk}
\label{rmk:semi-localizing}
The three conditions above are taken from \cite[I.2.2]{GZ}.  In their book, Gabriel and Zisman give a list
of four conditions on a collection of morphisms in a category.   They say that a collection admits a right
calculus of fractions when the four conditions are met.   In \cite[III.2.6]{GM}, Gelfand and Manin call
such collections ``localizing."  Our
conditions are the first three of the four conditions.  Therefore we say that  the collection is ``semi-localizing."
 We note also that in Definition 10.3.4 of the
textbook \cite{W}, Condition (3) is called the \O re condition.
\end{rmk}

\subsubsection{}
\begin{defn}
\label{defn:JcT}
Let $\cC$ be a category and let $\cT$ be a collection
of morphisms in $\cC$.
For an object $X$ of $\cC$, we let $J_\cT(X)$ denote the
collection of sieves $R$ on $X$ such that there exists an
object $f :Y \to X$ of $R$ that belongs to $\cT$.
\end{defn}

\begin{lem}\label{lem:JcT}
Let $\cT$ be a semi-localizing collection of morphisms in 
a category $\cC$. 
Then the assignment $J_\cT$ of the collection 
$J_\cT(X)$ to each object $X$ in $\cC$
is a Grothendieck topology on the category $\cC$.
\end{lem}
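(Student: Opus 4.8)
The plan is to verify directly the three axioms of Definition~\ref{defn:topology} for the assignment $J_\cT$, matching each axiom to exactly one of the three defining conditions of a semi-localizing collection. The whole verification is elementary once the pullback sieves are unwound; the one place where something must actually be \emph{constructed} is the stability under pullback (axiom (2)), and that is precisely where the \O re condition enters, so I expect it to be the crux.

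For axiom (1), I would observe that for any object $X$ the full overcategory $\cC_{/X}$ is the maximal sieve on $X$ and contains $\id_X : X \to X$ as an object. Since $\id_X \in \cT$ by Condition~(1) of Definition~\ref{defn:semi-localizing}, the sieve $\cC_{/X}$ has an object lying in $\cT$, hence $\cC_{/X} \in J_\cT(X)$. This is immediate.

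For axiom (2), let $f : Y \to X$ be a morphism and $R \in J_\cT(X)$, so there is an object $g : Z \to X$ of $R$ with $g \in \cT$. Unwinding the definition of the pullback sieve, a morphism $h : W \to Y$ is an object of $R \times_X Y$ precisely when $f \circ h$ is an object of $R$; thus I must produce a morphism into $Y$ that lies in $\cT$ and whose composite with $f$ lands in $R$. Here I would apply Condition~(3) of Definition~\ref{defn:semi-localizing} to the diagram $Z \xto{g} X \xleftarrow{f} Y$ (with $g \in \cT$), obtaining an object $W$ and morphisms $g_1 : W \to Z$, $g_2 : W \to Y$ with $g_2 \in \cT$ and $g \circ g_1 = f \circ g_2$. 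Since $R$ is a sieve and $g \in R$, the composite $g \circ g_1$ is an object of $R$; therefore $f \circ g_2 = g \circ g_1$ is an object of $R$, so $g_2$ is an object of $R \times_X Y$, and $g_2 \in \cT$ gives $R \times_X Y \in J_\cT(Y)$.

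For axiom (3), the local character, suppose $R \in J_\cT(X)$ and $R'$ is a sieve on $X$ with $R' \times_X Y \in J_\cT(Y)$ for every object $f : Y \to X$ of $R$. I would choose an object $g : Z \to X$ of $R$ with $g \in \cT$; then by hypothesis $R' \times_X Z \in J_\cT(Z)$, so there is an object $h : W \to Z$ of $R' \times_X Z$ with $h \in \cT$. By definition of the pullback, $g \circ h$ is an object of $R'$, and since $g, h \in \cT$, Condition~(2) of Definition~\ref{defn:semi-localizing} (closure under composition) gives $g \circ h \in \cT$. Hence $R'$ contains an object lying in $\cT$, so $R' \in J_\cT(X)$. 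Axioms (1) and (3) are thus routine consequences of the identity and composition conditions; the genuine content of the lemma is axiom (2), where the single $\cT$-morphism recorded by $R$ must be transported across the pullback, which is exactly the \O re condition.
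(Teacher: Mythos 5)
Your proposal is correct and follows essentially the same route as the paper's proof: axiom (1) from $\id_X \in \cT$, axiom (2) by applying the \O re condition (Condition~(3) of Definition~\ref{defn:semi-localizing}) to the diagram $Z \xto{g} X \xleftarrow{f} Y$ and using the sieve property of $R$, and axiom (3) by composing the two $\cT$-morphisms via closure under composition. Your identification of axiom (2) as the step where the \O re condition does the real work matches the structure of the paper's argument exactly.
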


\begin{proof}
We prove that $J_\cT$ satisfies the three conditions in
Definition~\ref{defn:topology}.

Let $X$ be an object of $\cC$. It follows from Condition (1)
in Definition \ref{defn:semi-localizing} that the identity
morphism $\id_X:X \to X$ in $\cC$ belongs to $\cT$.
This shows that the sieve $\cC_{/X}$ of $X$ belongs to $J_\cT(X)$.
Hence $J_\cT$ satisfies Condition (1) in 
Definition~\ref{defn:topology}.

Let $f:Y \to X$ be a morphism in $\cC$ and let $R$ be a sieve on $X$
that belongs to $J_\cT(X)$.
By the definition of $J_\cT(X)$, there exist an object $Z$ of $\cC$
and a morphism $g:Z \to X$ in $\cC$ such that $g$ belongs to $\cT$
and that $g$ is an object of $R$.
It follows from Condition (3)
in Definition \ref{defn:semi-localizing} that there exist an
object $W$ of $\cC$ and morphisms $g':W \to Y$ and
$f':W \to Z$ in $\cC$
such that $g'$ belongs to $\cT$ and that 
$f \circ g' = g \circ f'$.
Because $f\circ g' = g \circ f'$ is an object of $R$,
the morphism $g'$ is an object of $R \times_X Y$.
Because $g'$ belongs to $\cT$, the sieve $R \times_X Y$ on $Y$
belongs to $J_\cT(Y)$.
This shows that 
$J_\cT$ satisfies Condition~(2) in Definition~\ref{defn:topology}.

Let us turn to the proof of (3).  
Suppose $R$ belongs to $J_\cT(X)$.  Then there exists an object
$f:Y \to X$ of $R$ that belongs to $\cT$.
As $R' \times_X Y$ belongs to $J_\cT(Y)$,
there exists a morphism $g:Z \to Y$ in $\cC$ such that
$g$ belongs to $\cT$ and that the composite $f \circ g$
is an object of $R'$.
It follows from Condition (2)
in Definition \ref{defn:semi-localizing}, that $f \circ g$ 
belongs to $\cT$.
This shows that $R'$ belongs to $J_\cT(X)$.
Hence $J_\cT$ satisfies Condition~(3) 
in Definition~\ref{defn:topology}.
This completes the proof.
\end{proof}

\subsubsection{ }
\begin{defn}
\label{defn:Tee hat}
Let $\cC$ be a category.
For a collection $\cT$ of morphisms in $\cC$,
we let $\wh{\cT}$ denote the set of morphisms $f:Y \to X$ in $\cC$ 
such that there exists a morphism $g:Z \to Y$ satisfying $f \circ g \in \cT$.
\end{defn}

\begin{lem}\label{lem:composite}
Let $\cT$ be a semi-localizing collection of morphisms in $\cC$.
Then the collection $\wh{\cT}$ contains $\cT$ and is semi-localizing.
\end{lem}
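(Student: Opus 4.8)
The plan is to record the inclusion $\cT \subseteq \wh{\cT}$ first, and then to verify the three axioms of Definition~\ref{defn:semi-localizing} for $\wh{\cT}$ by reducing each to the corresponding axiom for $\cT$. The inclusion is immediate: given $f:Y \to X$ in $\cT$, the morphism $g = \id_Y$ satisfies $f \circ g = f \in \cT$, so $f \in \wh{\cT}$ by Definition~\ref{defn:Tee hat}. In particular, since $\id_X \in \cT$ by Condition~(1) for $\cT$, we obtain $\id_X \in \wh{\cT}$, which is Condition~(1) for $\wh{\cT}$.

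For Condition~(3) (the \O re condition) for $\wh{\cT}$, I would begin with a cospan $Y_1 \xto{f_1} X \xleftarrow{f_2} Y_2$ with $f_1 \in \wh{\cT}$ and replace $f_1$ by a morphism known to lie in $\cT$. By definition of $\wh{\cT}$ there is $g:W \to Y_1$ with $f_1 \circ g \in \cT$. Applying Condition~(3) for $\cT$ to the cospan $W \xto{f_1 \circ g} X \xleftarrow{f_2} Y_2$ yields an object $Z$ together with morphisms $a:Z \to W$ and $b:Z \to Y_2$ such that $b \in \cT$ and $(f_1 \circ g)\circ a = f_2 \circ b$. Setting $g_1 = g \circ a$ and $g_2 = b$ then gives $f_1 \circ g_1 = f_2 \circ g_2$, while $g_2 = b \in \cT \subseteq \wh{\cT}$, as required.

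The step I expect to require the most care is closure under composition (Condition~(2)), since here two independently chosen witnesses must be reconciled. Given composable $u:A \to B$ and $v:B \to C$ in $\wh{\cT}$, choose $g:Z \to A$ with $u \circ g \in \cT$ and $g':Z' \to B$ with $v \circ g' \in \cT$. These witnesses sit over $B$ in incompatible ways, namely via $u \circ g$ and via $g'$, so I would align them using Condition~(3) for $\cT$: applied to the cospan $Z \xto{u \circ g} B \xleftarrow{g'} Z'$, whose left leg lies in $\cT$, it produces an object $U$ and morphisms $a:U \to Z$, $b:U \to Z'$ with $b \in \cT$ and $(u \circ g)\circ a = g' \circ b$. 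Taking $h = g \circ a:U \to A$, one computes $(v \circ u)\circ h = v \circ (u \circ g \circ a) = v \circ (g' \circ b) = (v \circ g')\circ b$, which belongs to $\cT$ by Condition~(2) for $\cT$, as both $v \circ g'$ and $b$ do. Hence $v \circ u \in \wh{\cT}$. The essential mechanism throughout is thus the \O re condition for $\cT$, which refines any pair of morphisms into a common cone whenever one leg already lies in $\cT$; no idea beyond this refinement is needed to complete the verification.
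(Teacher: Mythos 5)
Your proof is correct and follows essentially the same route as the paper's: the inclusion $\cT \subseteq \wh{\cT}$ via identities, the \O re condition for $\wh{\cT}$ by replacing $f_1$ with a $\cT$-witness, and closure under composition by aligning the two witnesses with Condition~(3) for $\cT$ and then invoking Condition~(2) for $\cT$. If anything, your composition step is slightly cleaner: the paper inserts an extra (redundant) morphism $h:V \to W$ before concluding, whereas you observe directly that $(v \circ g') \circ b \in \cT$.
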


\begin{proof}
As the identity morphisms are contained in $\cT$, we see that $\cT \subset \wh{\cT}$ holds.
In particular, Condition (1) is satisfied.

Let $f: Y \to X$ and $g:Z \to Y$ be morphisms
in $\cC$ that belong to $\wh{\cT}$.
There then exist objects $Y'$, $Z'$ of $\cC$ 
and morphisms $f':Y' \to Y$ and $g':Z' \to Z$ in $\cC$ 
such that the composites $f \circ f'$ and $g \circ g'$ are morphisms
in $\cC$ that belong to $\cT$.
Because $\cT$ is semi-localizing, there exist an
object $W$ of $\cC$ and morphisms $g'':W \to Y'$ and
$f'':W \to Z'$ in $\cC$ such that $g''$ belongs to $\cT$
and that $f' \circ g'' = g \circ g' \circ f''$.
As $\cT$ is semi-localizing, there exist an
object $V$ of $\cC$ and a morphism $h:V \to W$
such that the composite $f \circ f' \circ g'' \circ h$
is a morphism in $\cC$ belonging to $\cT$.
As $(f \circ g) \circ (g' \circ f'' \circ h)
= f \circ f' \circ g'' \circ h$ is a morphism in $\cC$
that belongs to $\cT$, it follows from the definition
of $\wh{\cT}$ that we have $f \circ g \in \wh{\cT}$.
This shows that Condition (3) is satisfied.

Let $Y_1 \xto{f_1} X \xleftarrow{f_2} Y_2$ be a diagram in $\cC$ and
suppose that $f_1$ belongs to $\wh{\cT}$. Let us take a morphism
$f_3:Y_3 \to Y_1$ in $\cC$ such that $f_1 \circ f_3$ belongs to $\cT$.
As the collection $\cT$ satisfies Condition (3) 
in Definition \ref{defn:semi-localizing}, there exist an object $Z$ in
$\cC$ and morphisms $g_2 : Z \to Y_2$ and $g_3 :Z \to Y_3$ such that
$g_2$ belongs to $\cT$ and $(f_1 \circ f_3) \circ g_3 = f_2 \circ g_2$.
As $\cT \subset \wh{\cT}$, the morphism $g_2$ belongs to $\wh{\cT}$
and we have $f_1 \circ (f_3 \circ g_3) = f_2 \circ g_2$.
This shows that Condition (3) 
in Definition \ref{defn:semi-localizing} is satisfied for $\wh{\cT}$.
\end{proof}

\subsubsection{ }\label{sec:CT}

\begin{lem}\label{lem:whcT}
Let $\cT$ be a collection of morphisms in a category $\cC$. Then, for any object $X$ of $\cC$, we have $J_\cT(X) = J_{\wh{\cT}}(X)$.
\end{lem}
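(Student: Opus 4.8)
The plan is to prove the equality of the two collections of sieves by establishing the two inclusions separately. A useful observation at the outset is that neither direction requires $\cT$ to be semi-localizing, so the hypothesis of Lemma~\ref{lem:composite} is not needed here; only the bare definitions (Definition~\ref{defn:JcT} and Definition~\ref{defn:Tee hat}) and the closure property of sieves are used.

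For the inclusion $J_\cT(X) \subseteq J_{\wh{\cT}}(X)$, I would first record that $\cT \subseteq \wh{\cT}$ holds for any collection $\cT$: given $f:Y \to X$ in $\cT$, the identity $\id_Y:Y \to Y$ satisfies $f \circ \id_Y = f \in \cT$, so $f \in \wh{\cT}$ by Definition~\ref{defn:Tee hat}. Consequently, if a sieve $R$ on $X$ belongs to $J_\cT(X)$, then by Definition~\ref{defn:JcT} it contains an object $f:Y \to X$ with $f \in \cT \subseteq \wh{\cT}$, and hence $R$ belongs to $J_{\wh{\cT}}(X)$.

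For the reverse inclusion $J_{\wh{\cT}}(X) \subseteq J_\cT(X)$, I would take $R \in J_{\wh{\cT}}(X)$, so that $R$ contains an object $f:Y \to X$ with $f \in \wh{\cT}$. By the definition of $\wh{\cT}$ there is a morphism $g:Z \to Y$ with $f \circ g \in \cT$. The crucial step is to observe that $f \circ g$ is itself an object of $R$: since $R$ is a sieve and $f$ is an object of $R$, the factorization $f \circ g = f \circ g$ (with $g:Z \to Y$) forces $f \circ g$ to lie in $R$ by the sieve closure condition. As $f \circ g$ belongs to $\cT$, the sieve $R$ then contains an object belonging to $\cT$, so $R \in J_\cT(X)$ by Definition~\ref{defn:JcT}.

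Combining the two inclusions yields $J_\cT(X) = J_{\wh{\cT}}(X)$. There is no genuine obstacle in this argument; the only point demanding care is the invocation of the closure of $R$ under precomposition in the second inclusion, which is precisely what upgrades the witnessing object from $f$ (merely in $\wh{\cT}$) to $f \circ g$ (in $\cT$) while keeping it inside $R$.
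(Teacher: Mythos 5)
Your proof is correct and follows essentially the same route as the paper: the inclusion $J_\cT(X) \subseteq J_{\wh{\cT}}(X)$ from $\cT \subseteq \wh{\cT}$, and the reverse inclusion by using the sieve closure of $R$ under precomposition to replace the witness $f \in \wh{\cT}$ by $f \circ g \in \cT$. Your explicit remark that $\cT \subseteq \wh{\cT}$ holds for an arbitrary collection (via $f = f \circ \id_Y$), so that no semi-localizing hypothesis is needed, is a welcome clarification of a point the paper leaves implicit.
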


\begin{proof}
Let $X$ be an object of $\cC$.
As $\cT \subset \wh{\cT}$, we have $J_\cT(X) \subset J_{\wh{\cT}}(X)$.
Hence it suffices to prove $J_{\wh{\cT}}(X) \subset J_\cT(X)$.
Let $R$ be a sieve belonging to $J_{\wh{\cT}}(X)$.
Then there exists an object 
$f:Y \to X$ of $R$ that belongs to $\wh{\cT}$.
It follows from the definition of $\wh{\cT}$ that there
exists a morphism $g:Z \to Y$ in $\cC$ such that the
composite $f \circ g$ belongs to $\cT$.
As $f \circ g$ is an object of $R$, it follows that
the sieve $R$ belongs to $J_\cT(X)$.
This proves that $J_{\wh{\cT}}(X) \subset J_\cT(X)$.
This completes the proof.
\end{proof}

\subsection{$A$-topologies}
\begin{defn}
\label{defn:A-topology}
We say that a Grothendieck topology $J$
on a category $\cC$ is an {\it $A$-topology}
if there exists a semi-localizing
collection $\cT$
of morphisms in $\cC$ such 
that
 $J=J_\cT$.
Such a collection $\cT$ is called a {\it basis of the $A$-topology} $J$.
\end{defn}

\begin{defn}
\label{defn:Tee jay}
For a Grothendieck topology $J$ on a category $\cC$,
we let $\cT(J)$ denote the collection of morphisms $f:Y \to X$
in $\cC$ such that $R_f$ belongs to $J(X)$.
\end{defn}

\begin{prop}\label{prop:cTJ}
Let $J=J_\cT$ be an $A$-topology on a
category $\cC$. 
Then $\cT(J)=\wh{\cT}$, and it is a basis of the $A$-topology.
\end{prop}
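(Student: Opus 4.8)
The plan is to reduce the whole statement to unwinding the two definitions involved and then to quote the two lemmas already proved about $\wh{\cT}$. First I would establish the set-theoretic equality $\cT(J)=\wh{\cT}$ directly. Fix a morphism $f:Y \to X$ in $\cC$. By Definition~\ref{defn:Tee jay}, $f \in \cT(J)$ means $R_f \in J(X) = J_\cT(X)$, and by Definition~\ref{defn:JcT} this in turn means that $R_f$ contains an object $g:Z \to X$ with $g \in \cT$. Since the objects of the sieve $R_f$ are by construction exactly the morphisms of the form $f \circ h$ for some $h:Z \to Y$, the condition $g \in \cT$ becomes: there exists $h:Z \to Y$ with $f \circ h \in \cT$. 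But this is verbatim the defining condition for $f \in \wh{\cT}$ in Definition~\ref{defn:Tee hat}. Hence $f \in \cT(J)$ if and only if $f \in \wh{\cT}$, which gives $\cT(J)=\wh{\cT}$.

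For the second assertion, that $\cT(J)$ is a basis of the $A$-topology $J$, I would simply assemble the two lemmas already available. By Lemma~\ref{lem:composite} the collection $\wh{\cT}$ is semi-localizing; and by Lemma~\ref{lem:whcT} we have $J_{\wh{\cT}}(X) = J_\cT(X)$ for every object $X$, i.e.\ $J_{\wh{\cT}} = J_\cT = J$. Substituting $\cT(J)=\wh{\cT}$ from the first part, we conclude that $\cT(J)$ is a semi-localizing collection with $J_{\cT(J)} = J$, which is exactly what Definition~\ref{defn:A-topology} requires of a basis.

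There is no serious obstacle here; the content is purely definitional, and the two lemmas do all the real work. The only point needing care is matching the quantifier structure in the two formulations: one must note that the membership test for $R_f \in J_\cT(X)$ ranges over the objects of $R_f$, each of which carries at least one factorization through $f$, so that the existential ``some object of $R_f$ lies in $\cT$'' collapses exactly to the existential ``some composite $f \circ h$ lies in $\cT$'' that defines $\wh{\cT}$. Once this bookkeeping is in place, both the equality and the basis property follow immediately from Lemmas~\ref{lem:composite} and~\ref{lem:whcT}.
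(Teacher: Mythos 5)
Your proposal is correct and takes essentially the same approach as the paper: its proof likewise observes that $\cT(J)=\wh{\cT}$ follows directly from the definitions and then invokes Lemma~\ref{lem:composite} for the semi-localizing property and Lemma~\ref{lem:whcT} for $J_{\cT(J)}=J_{\wh{\cT}}=J_\cT=J$. The only difference is that you spell out the definitional unwinding (objects of $R_f$ are exactly the composites $f\circ h$, so the existentials match) which the paper compresses into a single sentence, and your bookkeeping there is accurate.
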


\begin{proof}
It follows from the definition of $\cT(J)$
that we have $\cT(J) = \wh{\cT}$.
Hence from Lemma~\ref{lem:composite}, 
we conclude that $\cT(J)$ is semi-localizing.
Using Lemma~\ref{lem:whcT}, we have 
$J = J_{\cT(J)}$.
This proves the claim.
\end{proof}

It follows immediately from the definition that
any  
basis of an $A$-topology 
$J$ is contained in $\cT(J)$.

\subsubsection{Semi-cofiltered, atomic topology}
\label{sec:atomic topology}
\begin{defn}
\label{defn:semi-cofiltered}
We say that a category $\cC$ is {\it 
semi-cofiltered}
if the collection $\Mor(\cC)$ of the morphisms
in $\cC$ is semi-localizing.
\end{defn}
Set $\cT=\Mor(\cC)$.   
Then, Conditions (1)(2) of Definition~\ref{defn:semi-localizing} are satisfied automatically.
Hence,
a category $\cC$ is semi-cofiltered if and only if
$\cT$ satisfies Condition (3).
In \cite[A.2.1.11 (h)]{J}, 
the terminology ``the right \O re condition" is suggested for this Condition (3).

When $\cC$ is semi-cofiltered,
we call, following \cite{BD} and \cite[p.\ 115]{MM}, 
the Grothendieck topology $J_{\Mor(\cC)}$ on $\cC$
the atomic topology on $\cC$.

\subsection{Sheaves for $A$-topology}

\subsubsection{ }
Let $\cC$ be an essentially $\frU$-small category and
let $J$ be a Grothendieck topology on $\cC$.
Let $F$ be a presheaf on $\cC$.
For an object $X$ of $\cC$ and for an element $R$ of $J(X)$,
we let
$$
c_{F,X,R} : F(X) \xto{y_{F,X}^{-1}} 
\Hom_{\Presh(\cC)}(\frh_\cC(X),F)
\to \Hom_{\Presh(\cC)}(\frh_\cC(R),F)
$$
denote the map given by the composition with the inclusion
$\frh_\cC(R) \to \frh_\cC(X)$.

We say that a presheaf $F$ on $\cC$ is {\it $J$-separated}
(\resp a {\it $J$-sheaf}, or simply a {\it sheaf} if the topology is
clear from the context) if the map $c_{F,X,R}$ is injective
(\resp bijective) for every object $X$ of $\cC$ and for
every element $R$ of $J(X)$.
We denote by $\Shv(\cC, J) \subset \Presh(\cC)$ 
the full subcategory of $J$-sheaves on $\cC$.
We will use separated presheaves in our future paper.

\begin{prop}\label{prop:sheaf_criterion1}
Let $\cT$ be a semi-localizing collection of morphisms in 
an essentially $\frU$-small category $\cC$.
Then a presheaf $F$ on $\cC$ is a $J_\cT$-sheaf if and only if
the following condition is satisfied:
\begin{description}
\item[(*)] 
For any object $X$ of $\cC$ and for any morphism $f:Y \to X$ 
in $\cC$ which belongs to $\cT$, the map
$c_{F,X,R_f}$ is bijective.
\end{description}
\end{prop}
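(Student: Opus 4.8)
The plan is to prove the two implications separately, the ``only if'' direction being immediate and the ``if'' direction resting on Condition~(3) of Definition~\ref{defn:semi-localizing} (the \O re condition). For the forward direction, suppose $F$ is a $J_\cT$-sheaf and let $f:Y\to X$ belong to $\cT$. Then $f$ is itself an object of $R_f$ which belongs to $\cT$, so by Definition~\ref{defn:JcT} the sieve $R_f$ lies in $J_\cT(X)$. The sheaf hypothesis applied to $R_f$ says precisely that $c_{F,X,R_f}$ is bijective, which is condition (*).

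For the converse, I would fix an object $X$ and a sieve $R\in J_\cT(X)$, and choose an object $f:Y\to X$ of $R$ with $f\in\cT$. Since $R$ is a sieve containing $f$, every morphism factoring through $f$ lies in $R$, so $R_f\subseteq R$, giving inclusions of subpresheaves $\frh_\cC(R_f)\subseteq\frh_\cC(R)\subseteq\frh_\cC(X)$. Restriction along these factors $c_{F,X,R_f}$ as $c_{F,X,R}$ followed by the restriction map $\rho:\Hom_{\Presh(\cC)}(\frh_\cC(R),F)\to\Hom_{\Presh(\cC)}(\frh_\cC(R_f),F)$. Because (*) makes the composite $c_{F,X,R_f}$ bijective, hence injective, injectivity of $c_{F,X,R}$ follows at once. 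The substantive work is therefore surjectivity of $c_{F,X,R}$.

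A preliminary observation I would record is that (*) forces $F(b)$ to be injective for every $b:W\to Z$ in $\cT$. Indeed, unwinding the limit description \eqref{eq:I_G}, the map $c_{F,Z,R_b}$ sends $t\in F(Z)$ to the family $(F(g')(t))_{g'\in R_b}$; since every object of $R_b$ factors through $b$, equality of two such families is equivalent to equality after applying $F(b)$, so injectivity of $c_{F,Z,R_b}$ (a consequence of (*)) is the same as injectivity of $F(b)$. Now, given $\phi:\frh_\cC(R)\to F$ with components $(\phi_g)_{g\in R}$ as in \eqref{eq:I_G}, its restriction $\rho(\phi)$ is hit by the bijection $c_{F,X,R_f}$, so there is $s\in F(X)$ with $F(g')(s)=\phi_{g'}$ for all $g'\in R_f$. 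To see that $s$ restricts to $\phi$ on all of $R$, take an arbitrary $g:Z\to X$ in $R$ and apply Condition~(3) to the diagram $Y\xto{f}X\xleftarrow{g}Z$ with $f\in\cT$: this yields $W$ and morphisms $a:W\to Y$, $b:W\to Z$ with $b\in\cT$ and $f\circ a=g\circ b$. Since $f\circ a$ factors through $f$ it lies in $R_f$, so $\phi_{g\circ b}=F(g\circ b)(s)=F(b)(F(g)(s))$, while compatibility of $\phi$ gives $\phi_{g\circ b}=F(b)(\phi_g)$; thus $F(b)(\phi_g)=F(b)(F(g)(s))$, and injectivity of $F(b)$ gives $\phi_g=F(g)(s)$. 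As $g\in R$ was arbitrary, $c_{F,X,R}(s)=\phi$, proving surjectivity.

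The step I expect to be the main obstacle is this last surjectivity argument: matching a section $s$ produced over the single generator $f$ against the given $\phi$ over all of $R$. The identity $f\circ a=g\circ b$ supplied by the \O re condition is exactly what transports agreement from $R_f$ to an arbitrary $g\in R$, and the separatedness reformulation ``$F(b)$ injective'' is what lets one cancel $b$. Isolating these two ingredients is the crux; the remaining bookkeeping through the limit \eqref{eq:I_G} is routine.
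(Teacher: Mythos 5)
Your proof is correct and follows essentially the same route as the paper's: both factor $c_{F,X,R_f}$ through $c_{F,X,R}$ via the inclusion $\frh_\cC(R_f)\subseteq\frh_\cC(R)$, and both use the \O re condition together with the separatedness consequence of (*) (injectivity of restriction along a morphism in $\cT$) to transport agreement on $R_f$ to an arbitrary $g\in R$. The only cosmetic difference is packaging: the paper proves injectivity of the restriction map $\Hom_{\Presh(\cC)}(\frh_\cC(R),F)\to\Hom_{\Presh(\cC)}(\frh_\cC(R_f),F)$ and combines it with bijectivity of the composite, whereas you verify injectivity and surjectivity of $c_{F,X,R}$ directly by the same diagram chase.
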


\begin{proof}
The ``only if" part is easy as, for any
$f:Y \to X$ in $\cT$, it follows from the definition of
$J_\cT$ that the sieve $R_f$ belongs to $J_\cT$.

We now prove the ``if" part.
Let $F$ be a presheaf on $\cC$ that satisfies the condition~(*).
Let $X$ be an object of $\cC$ and let 
$R$ be an element of $J_\cT(X)$.
We prove that the map $c_{F,X,R}$ is bijective.
It follows from the definition of $J_\cT(X)$ that
there exist an object $Y$ of $\cC$ and a morphism
$f:Y \to X$ in $\cC$ such that $f$ belongs to $\cT$
and $f$ is an object of $R$.
Then $R_f$ is a full subcategory of $R$ and hence
$\frh_\cC(R_f)$ is a subpresheaf of $\frh_\cC(R)$.
Let 
$$
c: \Hom_{\Presh(\cC)}(\frh_\cC(R),F)
\to \Hom_{\Presh(\cC)}(\frh_\cC(R_f),F)
$$
denote the map given by the composition with the inclusion
$\frh_\cC(R_f) \to \frh_\cC(R)$.
We then have $c_{F,X,R_f} = c \circ c_{F,X,R}$.
By assumption, the map $c_{F,X,R_f}$ is bijective.
Hence, it suffices to prove that the map $c$ is injective.

Suppose that the map $c$ is not injective.
There then exist two elements 
$\alpha_1,\alpha_2 \in \Hom_{\Presh(\cC)}(\frh_\cC(R),F)$
such that $\alpha_1 \neq \alpha_2$ and $c(\alpha_1) = c (\alpha_2)$.
For $i=1,2$ and for an object $Z$ of $\cC$,
we let $\alpha_i(Z): 
h_\cC(R)(Z) \to F(Z)$ 
denote the
map induced by $\alpha_i$ on the sections over $Z$.
As $\alpha_1 \neq \alpha_2$, there exists an object $g:Z \to X$ in $R$
such that $\alpha_1(Z)(g) \neq \alpha_2(Z)(g)$.
As $\cT$ is semi-localizing, there exists an object $W$ in $\cC$ and
morphisms $f' : W \to Z$ and $g':W \to Y$ such that $f'$ belongs to $\cT$
and that $g \circ f' = f \circ g'$.

As $g \circ f' = f \circ g'$, the composite $g \circ f'$ is an
object of $R_f$. As $c(\alpha_1) = c(\alpha_2)$, the two elements
$\alpha_1(Z)(g), \alpha_2(Z)(g) \in F(Z)$ are mapped to the same element
of $F(W)$ under the pullback map $F(Z) \to F(W)$ with respect to $f'$.  
This shows that the images of $\alpha_1(Z)(g), \alpha_2(Z)(g) \in F(Z)$ 
under the map $c_{F,Z,R_{f'}}$ coincide.
As $f'$ belongs to $\cT$, the map $c_{F,Z,R_{f'}}$ is bijective.
Hence we have $\alpha_1(Z)(g) = \alpha_2(Z)(g)$, which
leads to a contradiction. This completes the proof.
\end{proof}

\begin{lem}\label{lem:equalizer}
Let $\cC$ be a category and let
$f: Y \to X$ be a morphism in $\cC$.
Let us consider the sieve $R_f$ on $X$.
Let $e_f : \frh_\cC(Y) \to \frh_\cC(R_f)$ denote 
the morphism of presheaves on $\cC$ defined as follows:
for each object $Z$ of $\cC$, the map
$e_f(Z): \frh_\cC(Y)(Z) = \Hom_\cC(Z,Y) 
\to \frh_\cC(R_f)(Z)$
sends $g \in \Hom_\cC(Z,Y)$ to $f \circ g \in \frh_\cC(R_f)(Z)$.
Then for any presheaf $F$ on $\cC$, the map
$\Hom_{\Presh(\cC)}(\frh_\cC(R_f),F) \to 
\Hom_{\Presh(\cC)}(\frh_\cC(Y),F) \xto{y_{F,Y}} F(Y)$
is injective and its image is equal to the equalizer of
the two maps 
$\Hom_{\Presh(\cC)}(\frh_\cC(Y),F) \rightrightarrows
\Hom_{\Presh(\cC)}(\frh_\cC(Y) \times_{\frh_\cC(X)} \frh_\cC(Y),F)$
given by the composition with the first and the second
projections 
$\frh_\cC(Y) \times_{\frh_\cC(X)} \frh_\cC(Y)
\to \frh_\cC(Y)$.
\end{lem}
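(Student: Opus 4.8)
The plan is to recognize the assertion as the universal property exhibiting $(\frh_\cC(R_f),e_f)$ as the coequalizer, in $\Presh(\cC)$, of the pair of projections $p_1,p_2 : \frh_\cC(Y) \times_{\frh_\cC(X)} \frh_\cC(Y) \rightrightarrows \frh_\cC(Y)$. Since $y_{F,Y}$ is bijective by Yoneda, it is equivalent to prove the corresponding statements for the precomposition map
\[
\Hom_{\Presh(\cC)}(\frh_\cC(R_f),F) \longrightarrow \Hom_{\Presh(\cC)}(\frh_\cC(Y),F),
\]
namely that it is injective and that its image is the equalizer of $\psi \mapsto \psi \circ p_1$ and $\psi \mapsto \psi \circ p_2$. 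I would first record two elementary facts about $e_f$. It is surjective on sections over every object: by the very definition of $R_f$, each $h \in \frh_\cC(R_f)(Z)$ is of the form $f \circ g$ for some $g : Z \to Y$, and $e_f(Z)$ sends $g$ to exactly such an $h$. And it coequalizes $p_1,p_2$: a section of the fiber product over $Z$ is a pair $(a,b)$ of morphisms $Z \to Y$ with $f \circ a = f \circ b$, whence $e_f(Z)(a) = f \circ a = f \circ b = e_f(Z)(b)$, so $e_f \circ p_1 = e_f \circ p_2$.

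For injectivity I would use that $e_f$ is (pointwise) surjective, hence an epimorphism of presheaves: if $\phi_1,\phi_2 : \frh_\cC(R_f) \to F$ satisfy $\phi_1 \circ e_f = \phi_2 \circ e_f$, then $\phi_1(Z)$ and $\phi_2(Z)$ agree on the image of $e_f(Z)$, which is all of $\frh_\cC(R_f)(Z)$; hence $\phi_1 = \phi_2$. The inclusion of the image of the precomposition map into the equalizer is then immediate: if $\psi = \phi \circ e_f$, the relation $e_f \circ p_1 = e_f \circ p_2$ from the previous paragraph gives $\psi \circ p_1 = \phi \circ (e_f \circ p_1) = \phi \circ (e_f \circ p_2) = \psi \circ p_2$.

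The substantive direction is the reverse inclusion: given $\psi : \frh_\cC(Y) \to F$ with $\psi \circ p_1 = \psi \circ p_2$, I must produce $\phi : \frh_\cC(R_f) \to F$ with $\phi \circ e_f = \psi$. I would define $\phi(Z)$ on a section $h = f \circ g \in \frh_\cC(R_f)(Z)$ by $\phi(Z)(h) = \psi(Z)(g)$. The main obstacle, and the only place the equalizer hypothesis enters, is well-definedness: if $f \circ g_1 = h = f \circ g_2$, then $(g_1,g_2)$ is a section of $\frh_\cC(Y) \times_{\frh_\cC(X)} \frh_\cC(Y)$ over $Z$, so $\psi \circ p_1 = \psi \circ p_2$ forces $\psi(Z)(g_1) = \psi(Z)(g_2)$. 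Naturality of $\phi$ then follows from naturality of $\psi$ by a direct check: for $u : Z' \to Z$ and $h = f \circ g$ one has $h \circ u = f \circ (g \circ u)$, so $\phi(Z')(h \circ u) = \psi(Z')(g \circ u) = F(u)(\psi(Z)(g)) = F(u)(\phi(Z)(h))$. Finally $\phi \circ e_f = \psi$ holds by taking $g$ itself as the chosen factorization of $f \circ g$. Transporting back through the bijection $y_{F,Y}$ identifies the image of the composite map of the statement with the claimed equalizer, completing the proof.
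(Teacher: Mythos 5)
Your proof is correct and follows essentially the same route as the paper: both arguments come down to recognizing that $e_f$ exhibits $\frh_\cC(R_f)$ as the coequalizer (quotient) of the kernel pair $\frh_\cC(Y) \times_{\frh_\cC(X)} \frh_\cC(Y) \rightrightarrows \frh_\cC(Y)$ of $\frh_\cC(f)$, which is precisely the universal property asserted. The only difference is presentational --- the paper cites the general theory of quotients by equivalence relations (SGA3, Exp.\ IV, 3.1) applied to the image factorization of $\frh_\cC(f)$, whereas you verify the same universal property by a direct pointwise computation on sections, which is a sound and self-contained unpacking of that citation.
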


\begin{proof}
It follows from the definition that the subpresheaf 
$\frh_\cC(R_f)$ of $\frh_\cC(X)$ is equal to the image
of the morphism $\frh_\cC(f):\frh_\cC(Y) \to \frh_\cC(X)$
and the induced epimorphism $\frh_\cC(Y) \to \frh_\cC(R_f)$
is equal to $e_f$.
From this we see that 
the inclusion $\frh_\cC(Y) \times_{\frh_\cC(X)} \frh_\cC(Y)
\to \frh_\cC(Y) \times \frh_\cC(Y)$ is an equivalence
relation in $Y$ in the sense of \cite[EXPOS\'E IV, 3.1]{SGA3},
and that the object $\frh_\cC(R_f)$ together with morphism $e_f$ 
is the quotient object (in the category $\Presh(\cC)$) 
of $\frh_\cC(Y)$ by this equivalence relation.
Hence the claim follows.
\end{proof}

\begin{cor}\label{cor:sheaf_criterion2}
Let $\cC$ be a category and let $\cT$ be a semi-localizing collection
of morphisms in $\cC$. Then a presheaf $F$ on $\cC$ is a
$J_\cT$-sheaf if and only if for any morphism $f:Y \to X$
in $\cC$ belonging to $\cT$, the map $F(f):F(X) \to F(Y)$
is injective and its image is equal to the equalizer of
$$
F(Y) \xleftarrow[\cong]{y_{F,Y}} \Hom_{\Presh(\cC)}(\frh_\cC(Y),F)
\rightrightarrows \Hom_{\Presh(\cC)}(\frh_\cC(Y) \times_{\frh_\cC(X)}
\frh_\cC(Y), F).
$$
\end{cor}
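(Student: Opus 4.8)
The plan is to deduce this immediately from Proposition~\ref{prop:sheaf_criterion1} and Lemma~\ref{lem:equalizer} by identifying the map whose bijectivity governs the sheaf condition with a composite built from the equalizer description. By Proposition~\ref{prop:sheaf_criterion1}, $F$ is a $J_\cT$-sheaf if and only if $c_{F,X,R_f}$ is bijective for every $f:Y\to X$ in $\cT$. Fixing such an $f$, it therefore suffices to show that $c_{F,X,R_f}$ is bijective precisely when $F(f):F(X)\to F(Y)$ is injective with image equal to the displayed equalizer.

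First I would bring in the map
$$
\Phi_f : \Hom_{\Presh(\cC)}(\frh_\cC(R_f),F) \to F(Y)
$$
from Lemma~\ref{lem:equalizer}, namely the composition of restriction along $e_f$ with $y_{F,Y}$. That lemma asserts that $\Phi_f$ is injective and that its image is exactly the equalizer $E \subset F(Y)$ of the two projection-induced maps in the statement; thus $\Phi_f$ identifies its source with $E$.

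The key step is the compatibility
$$
\Phi_f \circ c_{F,X,R_f} = F(f).
$$
To check it I would evaluate both sides on $\xi \in F(X)$: by definition $c_{F,X,R_f}(\xi)$ is the restriction of $y_{F,X}^{-1}(\xi)$ along $\frh_\cC(R_f)\hookrightarrow \frh_\cC(X)$, and precomposing with $e_f$ recovers $y_{F,X}^{-1}(\xi)\circ\frh_\cC(f)$, because $e_f$ followed by the inclusion $\frh_\cC(R_f)\hookrightarrow\frh_\cC(X)$ equals $\frh_\cC(f)$ (both send $g$ to $f\circ g$). Applying $y_{F,Y}$ and using the naturality of the Yoneda bijection then produces $F(f)(\xi)$.

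With this identity the corollary is formal. Since $\Phi_f$ is injective, $F(f)=\Phi_f\circ c_{F,X,R_f}$ is injective if and only if $c_{F,X,R_f}$ is, and $\Image(F(f))=\Phi_f(\Image(c_{F,X,R_f}))$. If $c_{F,X,R_f}$ is bijective, then $F(f)$ is injective and $\Image(F(f))=\Image(\Phi_f)=E$; conversely, $\Image(F(f))=E=\Image(\Phi_f)$ together with injectivity of $\Phi_f$ forces $c_{F,X,R_f}$ to be surjective, while injectivity of $F(f)$ forces $c_{F,X,R_f}$ to be injective. I expect no genuine obstacle here: the only step demanding care is the diagram chase verifying $\Phi_f\circ c_{F,X,R_f}=F(f)$ through the definitions of $c_{F,X,R_f}$, $e_f$, and the Yoneda maps.
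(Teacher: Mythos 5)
Your proof is correct and matches the paper's own argument, which simply cites Proposition~\ref{prop:sheaf_criterion1} and Lemma~\ref{lem:equalizer}; you have merely made explicit the identity $\Phi_f \circ c_{F,X,R_f} = F(f)$ (via $e_f$ composed with the inclusion $\frh_\cC(R_f)\hookrightarrow\frh_\cC(X)$ being $\frh_\cC(f)$, plus Yoneda naturality) that the paper leaves implicit. The formal bookkeeping transferring bijectivity of $c_{F,X,R_f}$ to the stated injectivity-and-equalizer condition is exactly right.
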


\begin{proof}
This follows from Proposition \ref{prop:sheaf_criterion1}
and Lemma \ref{lem:equalizer}.
\end{proof}

\section{Galois coverings}
\label{sec:Galois coverings}
We define what it means for a morphism in 
a category to be a Galois covering.
Then, we define what it means for a topology to
 have enough Galois coverings.
In Section~\ref{sec:quotient objects},
we collect some abstract theory concerning
quotient objects in a category.   
As we deal with automorphisms of an object, it is necessary to make this notion precise.

\subsection{Galois coverings}
\subsubsection{}
Let $X$ be an object of a category $\cC$.
Let $Y_1$ and $Y_2$ be objects of $\cC$ and
suppose that morphisms $f_1:Y_1 \to X$ and $f_2:Y_2 \to X$
are given.
We say that a morphism 
(\resp an isomorphism) $g:Y_1 \to Y_2$ in $\cC$ is
a morphism (\resp an isomorphism) over $X$ if $f_2 \circ g = f_1$.
In other words, $g$ is a morphism (\resp an isomorphism)
over $X$ if it is a morphism 
(\resp an isomorphism) from $f_1$ to $f_2$ in the overcategory $\cC_{/X}$.
The set of morphisms (\resp an isomorphism) from $Y_1$ to $Y_2$ over $X$ 
is denoted by $\Hom_X(Y_1,Y_2)$ (\resp by $\Isom_X(Y_1,Y_2)$).
For a morphism $f:Y \to X$ in $\cC$, we write $\End_X(Y)$ 
(\resp $\Aut_X(Y)$) for $\Hom_X(Y,Y)$ (\resp $\Isom_X(Y,Y)$).
The set $\End_X(Y)$ forms a monoid with respect to the
composition of morphisms, and $\Aut_X(Y)$ is equal to the group
of invertible elements of $\End_X(Y)$.

\begin{defn}
\label{defn:pseudo G-torsor}
Let $S$ be a set on which a group $G$ acts from the left.
We say that a map $\phi : S \to S'$ of sets is a pseudo $G$-torsor
if the following three conditions are satisfied:
\begin{enumerate}
\item We have $\phi(g s) = \phi(s)$ for any $g \in G$ and for any $s \in S$.
\item The group $G$ acts freely on $S$.
\item The map $\quot{S}{G} \to S'$ induced by $\phi$ is injective.
\end{enumerate}
\end{defn}

\begin{defn}\label{defn:Galois_cov}
Let $\cC$ be a category and let $f:Y \to X$
be a morphism in $\cC$.
We say that $f$ is a Galois
covering in $\cC$ if there exists a group $G$ and a homomorphism
$\rho:G \to \Aut_X(Y)$ of groups 
such that the following condition is satisfied:
for any object $Z$ of $\cC$,
the map $\Hom_{\cC}(Z,Y) \to \Hom_{\cC}(Z,X)$
given by the composition with $f$ is a pseudo $G$-torsor. Here each $g \in G$ acts on $\Hom_{\cC}(Z,Y)$ 
by the composition with $\rho(g)$.
\end{defn}

\subsubsection{}
\label{sec:fiber Galois}
Let $f:Y \to X$ be a Galois covering.   
The fiber product of $f$ and $f$ has the following 
description.
Let $\rho:G \to \Aut_X(Y)$ be a homomorphism as in 
Definition \ref{defn:Galois_cov}. Then it can be
checked easily that the diagram
$$
\begin{CD}
\coprod_{g \in \rho(G)} \frh_\cC(Y)
@>{p_2}>> \frh_\cC(Y) \\
@V{p_1}VV @VV{\frh_\cC(f)}V \\
\frh_\cC(Y) @>{\frh_\cC(f)}>>
\frh_\cC(X)
\end{CD}
$$
is cartesian, which induces an isomorphism
from the coproduct 
$\coprod_{g \in \rho(G)} \frh_\cC(Y)$ to the fiber
product $\frh_\cC(Y) \times_{\frh_\cC(X)} \frh_\cC(Y)$.
Here $p_1$ (\resp $p_2$)
is the morphism $\coprod_{g \in \rho(G)} \frh_\cC(Y)
\to \frh_\cC(Y)$ whose component 
$g \in \rho(G)$ is the morphism $\frh_\cC(g)$
(\resp the identity morphism on $\frh_\cC(Y)$).

\begin{lem}\label{lem:Galois_group}
Let $\cC$ be a category and let
$f: Y \to X$ in $\cC$ be a Galois covering in $\cC$.
Let $\rho:G \to \Aut_X(Y)$ be the group homomorphism
satisfying the condition in Definition \ref{defn:Galois_cov}.
Then, we have $\Aut_X(Y) = \End_X(Y)$, and
$\rho$ is an isomorphism.
\end{lem}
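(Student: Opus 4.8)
The plan is to extract everything from the single instance $Z = Y$ of the pseudo $G$-torsor condition. Write $\phi : \Hom_\cC(Y,Y) \to \Hom_\cC(Y,X)$ for the map $u \mapsto f \circ u$, on which $g \in G$ acts by $g \cdot u = \rho(g) \circ u$. The key observation is that $\End_X(Y)$ is exactly the fiber $\phi^{-1}(f)$: a morphism $u : Y \to Y$ satisfies $f \circ u = f$ if and only if $\phi(u) = f = \phi(\id_Y)$. Since $\id_Y$ is the obvious reference point in this fiber and $g \cdot \id_Y = \rho(g)$, the $G$-orbit of $\id_Y$ is precisely $\rho(G)$.

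First I would prove $\End_X(Y) = \Aut_X(Y)$ together with surjectivity of $\rho$ onto $\Aut_X(Y)$. Take any $u \in \End_X(Y)$. Then $\phi(u) = f = \phi(\id_Y)$, so $u$ and $\id_Y$ have the same image under $\phi$. Condition (3) of Definition~\ref{defn:pseudo G-torsor} (injectivity of $\quot{S}{G} \to S'$) then forces $u$ and $\id_Y$ into the same $G$-orbit, i.e. $u = g \cdot \id_Y = \rho(g)$ for some $g \in G$. Hence $\End_X(Y) \subseteq \rho(G) \subseteq \Aut_X(Y)$, and since $\Aut_X(Y) \subseteq \End_X(Y)$ holds by definition, all three collections coincide; in particular $\rho$ is surjective onto $\Aut_X(Y)$ and every endomorphism over $X$ is invertible.

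It remains to show $\rho$ is injective, which I would read off from the freeness condition (2) applied at $\id_Y$. If $\rho(g) = \id_Y$, then $g \cdot \id_Y = \rho(g) \circ \id_Y = \id_Y$, and freeness of the $G$-action on $\Hom_\cC(Y,Y)$ gives $g = e$. Thus $\Ker \rho$ is trivial, and $\rho : G \to \Aut_X(Y)$ is a bijective homomorphism of groups, hence an isomorphism.

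I do not anticipate a genuine obstacle here: the whole content is to recognize that specializing $Z = Y$ turns the three abstract torsor axioms into exactly the three facts needed, namely that invariance places $\rho(G)$ in the fiber over $f$, that injectivity of $\quot{S}{G} \to S'$ yields both surjectivity of $\rho$ and the equality $\End_X(Y) = \Aut_X(Y)$, and that freeness yields injectivity of $\rho$. The only point requiring care is the bookkeeping of the action convention $g \cdot u = \rho(g) \circ u$, so that the orbit of $\id_Y$ is correctly identified with $\rho(G)$ rather than with a set of right translates.
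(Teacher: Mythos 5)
Your proof is correct and is essentially the paper's own argument: both specialize the pseudo-torsor condition to $Z=Y$, identify $\End_X(Y)$ with the fiber $\phi^{-1}(f)=\phi^{-1}(\phi(\id_Y))$, and read off surjectivity of $\rho$ from condition (3) and injectivity from freeness (the paper merely packages these two steps as ``$G$ acts simply transitively on $\phi^{-1}(f)$''). Your explicit bookkeeping of the orbit $G\cdot\id_Y=\rho(G)$ is, if anything, cleaner than the paper's slightly terse chain of inclusions.
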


\begin{proof}
Let $\phi:\Hom_{\cC}(Y,Y) \to \Hom_{\cC}(Y,X)$
denote the map given by the composition with $f$.
Then, $\phi$ is a pseudo $G$-torsor.
As $\phi^{-1}(f) = \phi^{-1}(\phi(\id_Y))$ is non-empty,
the group $G$ acts simply transitively on $\phi^{-1}(f)$.
As $G \subset \Aut_{Y}(X) \subset \phi^{-1}(f)$,
it follows that $G = \Aut_{Y}(X) = \phi^{-1}(f)$.
This proves the claim.
\end{proof}

\subsubsection{Enough Galois coverings}
\begin{defn}
\label{defn:enough Galois}
Let $\cC$ be a category and $\cT$ 
be a collection of morphisms in $\cC$.
Let $\cT' \subset \cC$ be the collection of 
Galois coverings.
We say that $\cT$  has enough Galois coverings if 
$\wh{\cT'}=\wh{\cT}$.
We say that an $A$-topology $J$ on $\cC$ 
has enough Galois coverings
if $\cT(J)$ has enough Galois coverings
\end{defn}

\begin{cor}\label{cor:sheaf_criterion3}
Let $\cC$ be a category and let $J$ be 
an $A$-topology on $\cC$.
Suppose that $J$ has enough Galois coverings.
Then a presheaf $F$ on $\cC$ is a
sheaf if and only if for any object $X$ of $\cC$ 
and for any Galois covering $f:Y \to X$ in $\cC$ 
which belongs to $\cT(J)$, the map $F(f):F(X) \to F(Y)$
is injective and its image is equal to the $\Aut_X(Y)$-invariant
part $F(Y)^{\Aut_X(Y)}$ of $F(Y)$.
\end{cor}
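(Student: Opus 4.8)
The plan is to treat this as a refinement of Corollary~\ref{cor:sheaf_criterion2}, converting the equalizer appearing there into the invariant subset $F(Y)^{\Aut_X(Y)}$ when the covering is Galois, and then to propagate the resulting condition from Galois coverings to all of $\cT(J)$. By Proposition~\ref{prop:cTJ} the collection $\cT(J)=\wh{\cT}$ is semi-localizing, so I take it as the basis in Proposition~\ref{prop:sheaf_criterion1}; thus $F$ is a sheaf if and only if $c_{F,X,R_f}$ is bijective for every $f\in\cT(J)$. Writing $\cT'$ for the collection of Galois coverings, the assumption that $J$ has enough Galois coverings reads $\wh{\cT'}=\cT(J)$ (Definition~\ref{defn:enough Galois} together with Proposition~\ref{prop:cTJ}), and since always $\cT'\subseteq\wh{\cT'}$, every Galois covering already lies in $\cT(J)$; so the phrase ``Galois covering which belongs to $\cT(J)$'' imposes no extra restriction. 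The ``only if'' direction is then immediate: if $F$ is a sheaf, Corollary~\ref{cor:sheaf_criterion2} gives the injectivity of $F(f)$ and the equalizer description of its image for every $f\in\cT(J)$, and the next step identifies that equalizer with the invariants.

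For a Galois covering $f:Y\to X$ with $\rho:G\to\Aut_X(Y)$ as in Definition~\ref{defn:Galois_cov}, I would compute the equalizer of Corollary~\ref{cor:sheaf_criterion2} explicitly. By the cartesian square of Section~\ref{sec:fiber Galois}, the fibre product $\frh_\cC(Y)\times_{\frh_\cC(X)}\frh_\cC(Y)$ is isomorphic to $\coprod_{g\in\rho(G)}\frh_\cC(Y)$, under which the two projections acquire, in their $g$-th component, the maps $\frh_\cC(g)$ and $\id$ respectively. Hence $\Hom_{\Presh(\cC)}(\frh_\cC(Y)\times_{\frh_\cC(X)}\frh_\cC(Y),F)\cong\prod_{g\in\rho(G)}F(Y)$, and the two maps $F(Y)\rightrightarrows\prod_{g}F(Y)$ send $s$ to $(F(g)(s))_g$ and to $(s)_g$. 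Their equalizer is $\{s\in F(Y)\mid F(g)(s)=s \text{ for all }g\in\rho(G)\}=F(Y)^{\rho(G)}$, and by Lemma~\ref{lem:Galois_group} we have $\rho(G)=\Aut_X(Y)$. Thus for a Galois covering the condition of Corollary~\ref{cor:sheaf_criterion2} is exactly the asserted one, and by Lemma~\ref{lem:equalizer} it is equivalent to $c_{F,X,R_f}$ being bijective.

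For the ``if'' direction it remains to deduce bijectivity of $c_{F,X,R_f}$ for an arbitrary $f:Y\to X$ in $\cT(J)$ from the Galois case. First I would establish separatedness along all of $\cT(J)$: given $f\in\cT(J)=\wh{\cT'}$, choose $g:Z\to Y$ with $h:=f\circ g$ a Galois covering; since $F(h)=F(g)\circ F(f)$ is injective by hypothesis, $F(f)$ is injective. Now fix $f$ and such an $h=f\circ g$, so that $R_h\subseteq R_f$ and $c_{F,X,R_h}=c'\circ c_{F,X,R_f}$, with $c'$ the restriction along $\frh_\cC(R_h)\inj\frh_\cC(R_f)$. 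By the previous paragraph $c_{F,X,R_h}$ is bijective, whence $c_{F,X,R_f}$ is injective; it remains to show $c'$ is injective, for then the formal diagram chase at the end of the proof of Proposition~\ref{prop:sheaf_criterion1} yields surjectivity of $c_{F,X,R_f}$. To see $c'$ is injective I would argue as in that proof: if $\alpha_1,\alpha_2$ agree on $\frh_\cC(R_h)$ but differ at some $\gamma:W\to X$ in $R_f$, apply Condition~(3) of Definition~\ref{defn:semi-localizing} to $Z\xto{h}X\xleftarrow{\gamma}W$ (legitimate since $h\in\cT(J)$) to obtain $g_1:V\to Z$ and $g_2:V\to W$ in $\cT(J)$ with $\gamma\circ g_2=h\circ g_1\in R_h$; naturality gives $F(g_2)(\alpha_i(W)(\gamma))=\alpha_i(V)(\gamma\circ g_2)$, these agree for $i=1,2$, and since $g_2\in\cT(J)$ the map $F(g_2)$ is injective by the separatedness just proved, forcing $\alpha_1(W)(\gamma)=\alpha_2(W)(\gamma)$, a contradiction. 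Proposition~\ref{prop:sheaf_criterion1} then concludes that $F$ is a sheaf.

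The main obstacle is this last step, the passage from the condition on Galois coverings to all of $\cT(J)$. The subtlety is that the restriction-injectivity argument of Proposition~\ref{prop:sheaf_criterion1} needs separatedness along the auxiliary morphism $g_2\in\cT(J)$, whereas the hypothesis supplies information only about Galois coverings. Bootstrapping separatedness along every morphism of $\cT(J)$ from injectivity of $F$ on Galois coverings (using $\wh{\cT'}=\cT(J)$) is precisely what unlocks the argument; the remainder is the explicit identification of the equalizer with $F(Y)^{\Aut_X(Y)}$ via the coproduct description of Section~\ref{sec:fiber Galois}.
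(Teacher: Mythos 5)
Your proof is correct and takes essentially the paper's route: the paper's entire proof is the citation of Lemma~\ref{lem:Galois_group}, Corollary~\ref{cor:sheaf_criterion2}, and the cartesian square of Section~\ref{sec:fiber Galois}, which are exactly the three ingredients you deploy, including the identification of the equalizer with $F(Y)^{\Aut_X(Y)}$ via the coproduct decomposition of $\frh_\cC(Y)\times_{\frh_\cC(X)}\frh_\cC(Y)$. Your bootstrapping of separatedness along all of $\cT(J)$ and the rerun of the Proposition~\ref{prop:sheaf_criterion1} argument via a Galois refinement $h=f\circ g$ is a faithful filling-in of the ``if'' direction that the one-line proof leaves implicit (and is genuinely needed, since the Galois coverings in $\cT(J)$ need not form a semi-localizing collection---closure under composition can fail---so Corollary~\ref{cor:sheaf_criterion2} cannot be quoted verbatim with that collection as basis; only the \O re-type condition survives, which is exactly what your refinement step exploits).
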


\begin{proof}
This follows from Lemma \ref{lem:Galois_group}, 
Corollary \ref{cor:sheaf_criterion2}, and 
the remark in Section~\ref{sec:fiber Galois}.
\end{proof}

\subsection{Quotient objects}
\label{sec:quotient objects}
In this paragraph, we recall the notion of a \quotobj
by an action of a group in a general category 
and prove some of its basic properties.

\begin{defn}\label{defn:quotient}
Let $\cC$ be a category, $Y$ an object in $\cC$,
and $G$ a subgroup of $\Aut_{\cC}(Y)$.
A \quotobj $X$ of $Y$ by $G$ is an object
in $\cC$ equipped with
a morphism $c:Y \to \quot{Y}{G}$ in $\cC$ satisfying the
following universal property: for any object $Z$
in $\cC$ and for any morphism
$f : Y \to Z$ in $\cC$ satisfying $f\circ g=f$ for
all $g \in G$, there exists a unique morphism
$\overline{f} : X \to Z$ such that $f=\overline{f}\circ c$.
In other words, a \quotobj $X$ is an
object in $\cC$ that co-represents the covariant functor
from $\cC$ to the category of sets that associates,
to each object $Z \in \cC$, the $G$-invariant part
$\Hom_{\cC}(Y,Z)^G$ of the set $\Hom_{\cC}(Y,Z)$.
We call the morphism $c:Y \to X$ 
the canonical quotient morphism.
\end{defn}
\subsubsection{}
A \quotobj of $Y$ by  $G$ is unique up to
unique isomorphism in the following sense. Suppose that
both $Y'_1$ and $Y'_2$ are \quotobjs of $Y$ by $G$.
We denote by $c_1:Y \to Y'_1$ and $c_2:Y\to Y'_2$ the
canonical quotient morphisms. Then, there exists a unique
isomorphism $\alpha : Y'_1 \xto{\cong} Y'_2$ satisfying
$\alpha \circ c_1 = c_2$.
This claim follows easily from the universality of
\quotobjsp.
We use the symbol $\quot{Y}{G}$ to denote 
any quotient object of $Y$ by $G$.

\subsubsection{}
There is another equivalent way of defining a \quotobjp.
Let $*_G$ denote the category such that $*_G$ 
has only one object $*$, that the set $\Hom_{*_G}(*,*)$ is
equal to $G$, and the composite of morphisms 
and the identity morphism are given by the group structure of $G$.
Then, the \quotobj of $Y$ by $G$ is nothing but 
a colimit in the category $\cC$ of the diagram $*_G \to \cC$ that sends
$*$ to $Y$ and sends $g:*\to *$ to $g:Y\to Y$
for all $g \in G$.

\begin{lem}\label{lem:quotient_epi}
Let the notation be as in Definition \ref{defn:quotient}.
Then, the canonical quotient morphism $c:Y\to \quot{Y}{G}$ 
is an epimorphism.
\end{lem}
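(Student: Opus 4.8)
The plan is to apply the defining universal property of the quotient object directly. Recall that to say $c : Y \to \quot{Y}{G}$ is an epimorphism means exactly: for every object $W$ of $\cC$ and every pair of morphisms $u_1, u_2 : \quot{Y}{G} \to W$ with $u_1 \circ c = u_2 \circ c$, one has $u_1 = u_2$. So I would fix such a $W$ and such a pair $u_1, u_2$, and aim to deduce their equality from the universality of $c$.

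The key preliminary observation is that the canonical quotient morphism $c$ is itself $G$-invariant, i.e.\ $c \circ g = c$ for every $g \in G$. This is immediate from either description of the quotient object given above. In the co-representability formulation of Definition~\ref{defn:quotient}, $c$ is the image of $\id_{\quot{Y}{G}}$ under the natural bijection $\Hom_\cC(\quot{Y}{G},\quot{Y}{G}) \cong \Hom_\cC(Y,\quot{Y}{G})^G$, and hence lies in the $G$-invariant part. Equivalently, in the colimit description in terms of the diagram $*_G \to \cC$, the morphism $c$ is the leg of the colimit cocone, and the cocone compatibility condition for the morphism $g : * \to *$ is precisely the identity $c \circ g = c$.

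Granting this, set $f := u_1 \circ c = u_2 \circ c$. Then for every $g \in G$ we have $f \circ g = u_1 \circ (c \circ g) = u_1 \circ c = f$, so $f : Y \to W$ is $G$-invariant. The universal property of $\quot{Y}{G}$ then supplies a \emph{unique} morphism $\overline{f} : \quot{Y}{G} \to W$ satisfying $f = \overline{f} \circ c$. Since both $u_1$ and $u_2$ satisfy this same equation, the uniqueness clause forces $u_1 = \overline{f} = u_2$, which is what we wanted.

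I do not expect a genuine obstacle here: the statement is a formal consequence of the universal property, and this kind of argument (a map to a colimit is epic) is standard. The only point requiring a moment of care is the $G$-invariance of $c$ itself, which is where the co-representability or colimit reformulation from Section~2.3.3 is used; once that is in hand, everything reduces to the uniqueness clause of the universal property.
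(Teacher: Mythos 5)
Your proof is correct and takes essentially the same route as the paper's: set $f = u_1 \circ c = u_2 \circ c$ and conclude $u_1 = \overline{f} = u_2$ from the uniqueness clause of the universal property of $\quot{Y}{G}$. The only difference is that you explicitly verify the $G$-invariance of $c$ (hence of $f$) before invoking the universal property, a hypothesis the paper's proof uses silently.
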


\begin{proof}
Suppose that there exist an object $Z \in \cC$ and 
morphisms $f_1,f_2:\quot{Y}{G} \to Z$ satisfying 
$f_1 \circ c = f_2 \circ c$. We prove that $f_1=f_2$.

We set $f=f_1 \circ c = f_2 \circ c$. It follows from
the definition of $\quot{Y}{G}$ that there exists a unique
morphism $\overline{f}:\quot{Y}{G} \to Z$ such that
$f=\overline{f} \circ c$. By the uniqueness of $\overline{f}$,
we have $f_1=f_2=\overline{f}$.
This proves the claim.
\end{proof}

\begin{lem}\label{lem:quot_fullsub}
Let $\cC$ be a category and 
let $\cC' \subset \cC$ be a full subcategory.
Let $Y$ be an object in $\cC'$,
and $G$ a subgroup of $\Aut_{\cC}(Y)$.
Suppose that a \quotobj $\quot{Y}{G}$
of $Y$ by $G$ in $\cC$ exists and that
$\quot{Y}{G}$ is isomorphic in $\cC$ to an object $Z$
in $\cC'$. Then, $Z$ is a \quotobj
of $Y$ by $G$ in $\cC'$.
\end{lem}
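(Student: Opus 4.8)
The plan is to transport the defining universal property of $\quot{Y}{G}$ from $\cC$ to the object $Z$ of $\cC'$, the only real input being the fullness of $\cC'$. Concretely, fullness gives $\Hom_{\cC'}(A,B)=\Hom_{\cC}(A,B)$ for all objects $A,B$ of $\cC'$; in particular every morphism of $\cC$ whose source and target lie in $\cC'$ already lies in $\cC'$, and $G$ is equally a subgroup of $\Aut_{\cC'}(Y)=\Aut_\cC(Y)$. Conceptually this is just the general fact, visible through the colimit description of quotient objects recalled above, that a full subcategory inherits any colimit of a diagram valued in it which happens to exist there; I would nonetheless argue directly.

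First I would fix an isomorphism $\alpha:\quot{Y}{G}\xto{\cong}Z$ in $\cC$ and set $c'=\alpha\circ c:Y\to Z$, where $c:Y\to\quot{Y}{G}$ is the canonical quotient morphism. Since $Y$ and $Z$ both lie in $\cC'$, fullness gives $c'\in\Hom_{\cC'}(Y,Z)$. Because $\quot{Y}{G}$ co-represents the functor $W\mapsto\Hom_\cC(Y,W)^G$, its canonical morphism $c$ is the universal element and hence is $G$-invariant, that is $c\circ g=c$ for all $g\in G$; therefore $c'\circ g=\alpha\circ c\circ g=\alpha\circ c=c'$, so $c'$ is $G$-invariant as well.

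Next I would verify the universal property of $(Z,c')$ inside $\cC'$ demanded by Definition~\ref{defn:quotient}. Let $W$ be an object of $\cC'$ and $f:Y\to W$ a morphism of $\cC'$ with $f\circ g=f$ for all $g\in G$. Viewing $f$ as a $G$-invariant morphism of $\cC$ via fullness, the universal property of $\quot{Y}{G}$ in $\cC$ yields a unique $\wt{f}:\quot{Y}{G}\to W$ with $f=\wt{f}\circ c$. I would then set $\overline{f}=\wt{f}\circ\alpha^{-1}:Z\to W$, again a morphism of $\cC'$ by fullness, and compute $\overline{f}\circ c'=\wt{f}\circ\alpha^{-1}\circ\alpha\circ c=\wt{f}\circ c=f$. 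For uniqueness, any $\overline{f}{}':Z\to W$ in $\cC'$ with $\overline{f}{}'\circ c'=f$ satisfies $(\overline{f}{}'\circ\alpha)\circ c=f$, so $\overline{f}{}'\circ\alpha=\wt{f}$ by the uniqueness clause in $\cC$, and cancelling the isomorphism $\alpha$ gives $\overline{f}{}'=\overline{f}$. This exhibits $Z$, equipped with $c'$, as a quotient object of $Y$ by $G$ in $\cC'$.

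There is no substantial obstacle: the argument is pure bookkeeping around the co-representing universal property. The only point demanding care is that $\quot{Y}{G}$ itself need not belong to $\cC'$---merely its isomorph $Z$ does---so the isomorphism $\alpha$ must be threaded through every construction, and fullness must be invoked at each step to certify that the composites $c'$ and $\overline{f}$, formed a priori in $\cC$, genuinely lie in $\cC'$.
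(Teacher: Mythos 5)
Your proof is correct and takes the same (essentially unique) approach as the paper, whose entire proof reads ``The universality of $Z$ can be checked easily.'' You have simply written out that routine check in full: transporting the universal property through the isomorphism $\alpha$, with fullness certifying that $c'$ and $\overline{f}$ are morphisms of $\cC'$, is exactly the intended verification.
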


\begin{proof}
The universality of $Z$ can be checked easily.
\end{proof}

\begin{lem}\label{lem:Galois_quotient}
Let $\cC$ be a category and let $J$ be a Grothendieck topology on $\cC$.
Suppose that any representable presheaf on $\cC$ is a $J$-sheaf.
Let $f:Y \to X$ be a Galois covering in $\cC$ such that the sieve 
$R_f$ on $X$ belongs to $J(X)$.
Then the object $X$ of $\cC$ together with the morphism $f$
is a quotient object of $Y$ by $\Aut_X(Y)$.
\end{lem}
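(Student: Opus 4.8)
The plan is to verify directly that the pair $(X,f)$ satisfies the universal property in Definition~\ref{defn:quotient}, i.e.\ that $X$ co-represents the functor $Z \mapsto \Hom_\cC(Y,Z)^{\Aut_X(Y)}$ via precomposition with $f$. By Lemma~\ref{lem:Galois_group} the group $G$ appearing in the definition of a Galois covering may be identified with $\Aut_X(Y)$ through $\rho$, so it suffices to show that for every object $Z$ of $\cC$ the map
$$
-\circ f : \Hom_\cC(X,Z) \to \Hom_\cC(Y,Z)
$$
is injective with image exactly $\Hom_\cC(Y,Z)^{\Aut_X(Y)}$, functorially in $Z$. That the image lands in the invariant part is immediate: for $g \in \Aut_X(Y)$ one has $f\circ g = f$, hence $(s\circ f)\circ g = s\circ f$ for any $s:X \to Z$.

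For the converse and the injectivity I would feed the representable sheaf $F=\frh_\cC(Z)$ into the machinery of Section~\ref{sec:topos}. By hypothesis $F$ is a $J$-sheaf and $R_f \in J(X)$, so the map $c_{F,X,R_f}: F(X) \to \Hom_{\Presh(\cC)}(\frh_\cC(R_f),F)$ is bijective. Composing this bijection with the injection of Lemma~\ref{lem:equalizer} and unwinding the Yoneda identifications shows that the resulting map $F(X)=\Hom_\cC(X,Z) \to F(Y)=\Hom_\cC(Y,Z)$ is precisely $-\circ f$, that it is injective, and that its image equals the equalizer of the two maps
$$
\Hom_\cC(Y,Z) \rightrightarrows \Hom_{\Presh(\cC)}\bigl(\frh_\cC(Y)\times_{\frh_\cC(X)}\frh_\cC(Y),\,\frh_\cC(Z)\bigr)
$$
induced by the two projections of the fiber product.

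It then remains to identify this equalizer with $\Hom_\cC(Y,Z)^{\Aut_X(Y)}$, and this is where the Galois hypothesis enters and where the only genuine bookkeeping lies. Using the cartesian description of Section~\ref{sec:fiber Galois} together with Lemma~\ref{lem:Galois_group}, the fiber product $\frh_\cC(Y)\times_{\frh_\cC(X)}\frh_\cC(Y)$ is identified with the coproduct $\coprod_{g\in\Aut_X(Y)}\frh_\cC(Y)$, under which the first projection has $g$-component $\frh_\cC(g)$ and the second projection has $g$-component the identity. By Yoneda and the universal property of the coproduct, the common target of the two maps becomes $\prod_{g\in\Aut_X(Y)}\Hom_\cC(Y,Z)$, and the two maps send $u\in\Hom_\cC(Y,Z)$ to $(u\circ g)_{g}$ and to the constant tuple $(u)_{g}$ respectively. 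Their equalizer is thus exactly $\{u : u\circ g = u \text{ for all } g\in\Aut_X(Y)\}=\Hom_\cC(Y,Z)^{\Aut_X(Y)}$.

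Combining the three identifications yields the bijection $\Hom_\cC(X,Z)\xto{\cong}\Hom_\cC(Y,Z)^{\Aut_X(Y)}$; naturality in $Z$ is clear because each step is natural, so $(X,f)$ co-represents the invariants functor and is therefore a quotient object of $Y$ by $\Aut_X(Y)$. I expect the main obstacle to be purely the careful matching of the two projections with the constant versus the $g$-twisted tuple, so that the equalizer comes out as the $\Aut_X(Y)$-invariants rather than some twisted variant; everything else is a formal combination of the sheaf condition for representables, Lemma~\ref{lem:equalizer}, and the coproduct description of Section~\ref{sec:fiber Galois}.
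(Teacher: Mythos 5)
Your proof is correct and follows essentially the same route as the paper: the paper's one-line proof cites Corollary~\ref{cor:sheaf_criterion3}, whose own proof is precisely the combination you spell out, namely the sheaf condition making $c_{F,X,R_f}$ bijective for $F=\frh_\cC(Z)$, Lemma~\ref{lem:equalizer}, Lemma~\ref{lem:Galois_group}, and the coproduct description of $\frh_\cC(Y)\times_{\frh_\cC(X)}\frh_\cC(Y)$ from Section~\ref{sec:fiber Galois}, yielding the bijection $\Hom_\cC(X,Z)\xto{\cong}\Hom_\cC(Y,Z)^{\Aut_X(Y)}$ for every $Z$. If anything, your direct unwinding is slightly cleaner, since Corollary~\ref{cor:sheaf_criterion3} is stated for an $A$-topology with enough Galois coverings whereas this lemma assumes only a general Grothendieck topology $J$; you use only the single-covering instance of the criterion, which is valid under the lemma's hypotheses.
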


\begin{proof}
Let $Z$ be an arbitrary object of $\cC$.
It suffices to show that the map $\Hom(X,Z) \to \Hom(Y,Z)$
given by the composition with $f$ is injective and its image
is equal to the $\Aut_X(Y)$-invariant part $\Hom(Y,Z)$.
As $\frh_\cC(Z)$ is a $J$-sheaf and $R_f$ belongs to $J(X)$,
this follows from Corollary \ref{cor:sheaf_criterion3}.
\end{proof}

\section{$B$-sites}
\label{sec:B-sites}
We define $B$-sites in this section and study 
their properties.
The reader will find in Section~\ref{sec:basic} 
that the basic statements from 
Galois theory also hold true in our setting.   
Section~\ref{sec:cofinality} contains a technical 
proposition and its corollary, which are derived in a manner very different 
from the classical Galois theory.
In the classical case, the underlying category
of the site contains a final object and the proofs 
of these statements 
are much easier.
In Section~\ref{sec:sheafify}, we assume that 
the $B$-site has enough Galois coverings,
and give an explicit description of the
sheafification functor in terms of the Galois
coverings.  
This will be used in Section~\ref{sec:9.1}.

We will use the convention for the terminology ``poset'' used in Section~\ref{sec:poset}.

\subsection{$E$-categories}
\label{sec:E-categories}
\begin{defn}
\label{defn:E-categories}
We say that a category $\cC$ is an $E$-category if every morphism
in $\cC$ is an epimorphism.
\end{defn}

\begin{lem}\label{lem:Galois_descends}
Let $\cC$ be a category.
Let $Y_1 \xto{f_1} X \xleftarrow{f_2} Y_2$ be a diagram in $\cC$.
Suppose that $f_2$ is 
a Galois covering in $\cC$.
Then, for any two morphisms $h_1,h_2 :Y_1 \to Y_2$
that are over $X$, there exists a unique element 
$g \in \Aut_{X}(Y_2)$
satisfying $h_1 = g \circ h_2$. 
\end{lem}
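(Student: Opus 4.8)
The plan is to apply the defining pseudo $G$-torsor property of the Galois covering $f_2$ to the single test object $Z = Y_1$. By Definition~\ref{defn:Galois_cov} there is a group $G$ and a homomorphism $\rho : G \to \Aut_X(Y_2)$ such that, for every object $Z$, composition with $f_2$ makes $\Hom_\cC(Z,Y_2) \to \Hom_\cC(Z,X)$ a pseudo $G$-torsor, where $g \in G$ acts on the source by $s \mapsto \rho(g) \circ s$; and by Lemma~\ref{lem:Galois_group} this $\rho$ is an isomorphism onto $\Aut_X(Y_2)$. Specializing to $Z = Y_1$, I would write $\phi : \Hom_\cC(Y_1,Y_2) \to \Hom_\cC(Y_1,X)$ for the map given by composition with $f_2$.

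For existence, I would first observe that since $h_1$ and $h_2$ are over $X$ we have $\phi(h_1) = f_2 \circ h_1 = f_1 = f_2 \circ h_2 = \phi(h_2)$. Condition~(3) of Definition~\ref{defn:pseudo G-torsor} asserts that the induced map $\quot{\Hom_\cC(Y_1,Y_2)}{G} \to \Hom_\cC(Y_1,X)$ is injective, so $h_1$ and $h_2$ must lie in the same $G$-orbit. Hence there is some $a \in G$ with $h_1 = \rho(a) \circ h_2$, and setting $g = \rho(a) \in \Aut_X(Y_2)$ produces the desired element; note $g \circ h_2$ is automatically over $X$ because $f_2 \circ g = f_2$.

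For uniqueness, suppose $g, g' \in \Aut_X(Y_2)$ both satisfy $h_1 = g \circ h_2 = g' \circ h_2$. Since $\rho$ is an isomorphism I may write $g = \rho(a)$ and $g' = \rho(b)$; then $\rho(a)\circ h_2 = \rho(b)\circ h_2$, that is, $a$ and $b$ carry $h_2$ to the same element under the $G$-action. Condition~(2) of Definition~\ref{defn:pseudo G-torsor}, the freeness of the action, then forces $a = b$, whence $g = g'$.

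The argument is a direct unwinding of the definitions, so I do not anticipate a serious obstacle. The only point that genuinely requires care is the uniqueness step: in a general category we cannot cancel $h_2$ on the right, since it need not be an epimorphism, so uniqueness has to be extracted from the freeness clause of the pseudo $G$-torsor condition rather than from any cancellation property. The single real idea is the choice of test object $Z = Y_1$, after which existence comes from injectivity (Condition~(3)) and uniqueness from freeness (Condition~(2)).
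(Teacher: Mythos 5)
Your proof is correct and follows essentially the same route as the paper: both specialize the pseudo torsor condition to the test object $Z=Y_1$, invoke Lemma~\ref{lem:Galois_group} to identify $G$ with $\Aut_X(Y_2)$, and note that $h_1,h_2$ lie in the same fiber over $f_1$. The paper merely compresses your two steps (injectivity of the quotient map for existence, freeness for uniqueness) into the single statement that $\Aut_X(Y_2)$ acts simply transitively on $\alpha^{-1}(f_1)$.
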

\begin{proof}
Let $\alpha: \Hom_{\cC}(Y_1,Y_2) \to 
\Hom_{\cC}(Y_1,X)$ denote the map given by the composition with $f_2$.
We have $\alpha(h_1) = \alpha(h_2) = f_1$.
As $f_2$ is a Galois covering, it follows from Lemma \ref{lem:Galois_group}
that $\alpha$ is a pseudo $\Aut_{X}(Y_2)$-torsor,
i.e.,
the group $\Aut_{X}(Y_2)$ acts simply transitively
on the set $\alpha^{-1}(f_1)$.
This proves the claim.
\end{proof}

\begin{lem}\label{lem:Galois_epi}
Let $\cC$ be an $E$-category.
Let $f:Y \to X$ be a Galois covering in $\cC$.
Suppose that $f$ is written as the composite
$f=f_1 \circ f_2$ of two morphisms in $\cC$.
Then $f_2$ is a Galois covering in $\cC$.
\end{lem}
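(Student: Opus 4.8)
The plan is to exhibit $\Aut_W(Y)$ as the structure group of $f_2$, where I write $f_2 : Y \to W$ and $f_1 : W \to X$ for the two morphisms in the factorization $f = f_1 \circ f_2$. First I would record the elementary inclusion $\Aut_W(Y) \subseteq \Aut_X(Y)$: any $g \in \Aut_W(Y)$ satisfies $f_2 \circ g = f_2$, hence $f \circ g = f_1 \circ f_2 \circ g = f_1 \circ f_2 = f$, so $g \in \Aut_X(Y)$. Taking the group to be $\Aut_W(Y)$ with $\rho_2$ its identity homomorphism, it then suffices to check, for every object $Z$ of $\cC$, that the map $\phi_2 : \Hom_\cC(Z,Y) \to \Hom_\cC(Z,W)$ given by composition with $f_2$ is a pseudo $\Aut_W(Y)$-torsor in the sense of Definition~\ref{defn:pseudo G-torsor}.

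The first two torsor conditions are essentially formal. Invariance, $\phi_2(g \circ s) = \phi_2(s)$, is immediate from $f_2 \circ g = f_2$. For freeness, I would use that $f$ is a Galois covering: by Lemma~\ref{lem:Galois_group} the composition map $\Hom_\cC(Z,Y) \to \Hom_\cC(Z,X)$ is a pseudo $\Aut_X(Y)$-torsor, so $\Aut_X(Y)$ acts freely on $\Hom_\cC(Z,Y)$; since $\Aut_W(Y)$ is a subgroup, it acts freely as well.

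The heart of the argument --- and the step I expect to be the main obstacle --- is the injectivity condition (3), namely that any two sections $s_1, s_2 \in \Hom_\cC(Z,Y)$ with $f_2 \circ s_1 = f_2 \circ s_2$ differ by an element of $\Aut_W(Y)$. I would first compose with $f_1$ to obtain $f \circ s_1 = f \circ s_2$, and then invoke the pseudo $\Aut_X(Y)$-torsor structure of $f$ to produce $g \in \Aut_X(Y)$ with $s_1 = g \circ s_2$. What remains is to verify that this $g$ already lies in the subgroup $\Aut_W(Y)$, i.e.\ that $f_2 \circ g = f_2$. Substituting $s_1 = g \circ s_2$ into $f_2 \circ s_1 = f_2 \circ s_2$ gives $(f_2 \circ g) \circ s_2 = f_2 \circ s_2$. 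This is precisely where the $E$-category hypothesis enters: $s_2$ is a morphism of $\cC$, hence an epimorphism, so it is right-cancellable and we conclude $f_2 \circ g = f_2$, as desired. This completes the verification that $\phi_2$ is a pseudo $\Aut_W(Y)$-torsor, and therefore that $f_2$ is a Galois covering.
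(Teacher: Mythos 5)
Your proposal is correct and follows essentially the same route as the paper's proof: both take $\Aut_W(Y)\subseteq\Aut_X(Y)$ as the structure group for $f_2$, and the decisive step is identical --- given two sections with equal composites with $f_2$, use the pseudo $\Aut_X(Y)$-torsor structure of $f$ (via Lemma~\ref{lem:Galois_group}) to produce $g\in\Aut_X(Y)$ with $s_1=g\circ s_2$, then right-cancel the epimorphism $s_2$ to conclude $f_2\circ g=f_2$, i.e.\ $g\in\Aut_W(Y)$. The only difference is presentational: the paper packages the freeness and injectivity conditions into a single existence-and-uniqueness statement for $g$, whereas you verify the three conditions of Definition~\ref{defn:pseudo G-torsor} separately.
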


\begin{proof}
Let $X'$ denote the target of the morphism $f_2$.
The group $\Aut_{X'}(Y)$ is a subgroup of the group $\Aut_{X}(Y)$.
It suffices to show that, for any commutative
diagram
$$
\begin{CD}
Z @>{h}>> Y \\
@V{h'}VV @VV{f_2}V \\
Y @>{f_2}>> X',
\end{CD}
$$
there exists a unique automorphism
$g \in \Aut_{X'}(Y)$
satisfying $h' = g \circ h$.
As $f$ is a Galois covering, there exists
a unique automorphism $g \in \Aut_{X}(Y)$
satisfying $h' = g \circ h$.
Hence to prove the claim it suffices to prove
that $g \in \Aut_{X'}(Y)$.
We have $f_2 \circ g \circ h
= f_2 \circ h' = f_2 \circ h$.
As $h$ is an epimorphism, we have
$f_2 \circ g = f_2$. Hence we have
$g \in \Aut_{X'}(Y)$, which completes the proof.
\end{proof}

\subsection{$B$-sites}
\label{sec:basic}
We write $(\cC, J)$ to denote a site whose underlying 
category is $\cC$ and whose Grothendieck
topology is $J$.
\begin{defn}
\label{defn:B-site}
A $B$-site $(\cC, J)$ is a site satisfying the following
conditions:
\begin{enumerate}
\item $\cC$ is an $E$-category.
\item $J$ is an $A$-topology
\item For any diagram $Z \xto{g} Y \xto{f} X$ in $\cC$, the composite
$g \circ f$ belongs to $\cT(J)$ if and only if $f$ and $g$ belong to 
$\cT(J)$.
\end{enumerate}
\end{defn}
\noindent 
It follows 
from Proposition~\ref{prop:cTJ} that
if $g \circ f \in \cT(J)$, then $g \in \cT(J)$.
Hence we may replace Condition(3) above by 
the weaker condition:
If $g \circ f \in \cT(J)$, then $f \in \cT(J)$.

Let $(\cC,J)$ be a $B$-site.
We say that a morphism $f:Y \to X$ in $\cC$ 
is a Galois covering in $\cT(J)$ if $f$ belongs to $\cT(J)$, 
and $f$ is a Galois covering in $\cC$.

\newcommand{\EtCon}{\mathrm{EtConn}}
\begin{ex}
\label{ex:connected etale site}
Here we give a basic example of a $B$-site.
Let $S$ be a connected noetherian scheme.
Let us consider the full subcategory $\EtCon_S$ of the
category of schemes over $S$ whose objects are
connected $S$-schemes that are finite \'etale over $S$.
Then, the category $\EtCon_S$ is semi-cofiltered,
and the pair of $\EtCon_S$ and the atomic topology
is $B$-site. 

If we do not assume them to be connected,
Condition (3) is not satisfied.
\end{ex}

\begin{lem}\label{lem:Galois_lifts}
Let $(\cC,J)$ be a $B$-site.
Let $Y_1 \xto{f_1} X \xleftarrow{f_2} Y_2$ be a diagram
in $\cC$.
Suppose that $f_1$ is a Galois covering in $\cT(J)$ and that
there exists a morphism $h:Y_1 \to Y_2$ over $X$.
Then 
\begin{enumerate}
\item For any commutative diagram
$$
\begin{CD}
Z @>{f'_2}>> Y_1 \\
@V{f'_1}VV @VV{f_1}V \\
Y_2 @>{f_2}>> X
\end{CD}
$$
in $\cC$,
there exists an automorphism $g \in \Aut_{X}(Y_1)$ such that
$f'_1 = h \circ g \circ f'_2$.
Moreover, the morphism $h \circ g$ is uniquely determined
by the commutative diagram above.
\item The group $\Aut_{X}(Y_1)$ acts transitively on the
set $\Hom_{X}(Y_1,Y_2)$ and the diagram
$$
\begin{CD}
\coprod_{h \in \Hom_{X}(Y_1,Y_2)} \frh_\cC(Y_1) 
@>{f'_2}>> \frh_\cC(Y_1) \\
@V{f'_1}VV @VV{\frh_\cC(f_1)}V \\
\frh_\cC(Y_2) @>{\frh_\cC(f_2)}>> \frh_\cC(X)
\end{CD}
$$
in $\Presh(\cC)$ is cartesian. Here, $f'_1$
denotes the morphism whose component at $h$ is the identity map
for every $h$, and $f'_2$ denotes the morphism whose component
at $h$ is equal to $h_\cC(h)$.
\end{enumerate}
\end{lem}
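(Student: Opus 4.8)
The plan is to deduce part (2) formally from part (1), and to prove part (1) by a single descent argument built on the Ore condition. The first observation is that the given morphism $h\colon Y_1\to Y_2$ lies in $\cT(J)$: since $f_2\circ h=f_1$ and $f_1\in\cT(J)$, Condition~(3) of Definition~\ref{defn:B-site}, in the weaker form noted just after that definition (a left factor of an element of $\cT(J)$ again lies in $\cT(J)$), forces $h\in\cT(J)$. I will also use that, by Proposition~\ref{prop:cTJ}, $\cT(J)=\wh{\cT}$ is semi-localizing and hence satisfies the Ore condition (Condition~(3) of Definition~\ref{defn:semi-localizing}), that every morphism of $\cC$ is an epimorphism, and that by Lemma~\ref{lem:Galois_group} the group attached to the Galois covering $f_1$ may be taken to be $G=\Aut_X(Y_1)$.

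The heart of the proof is the following lifting argument for part (1). Given the commutative square, so that $f_1\circ f'_2=f_2\circ f'_1$, I first apply the Ore condition to the cospan $Y_1\xto{h}Y_2\xleftarrow{f'_1}Z$, which is legitimate because $h\in\cT(J)$: it produces an object $V$ and morphisms $v_1\colon V\to Y_1$ and $v_2\colon V\to Z$ with $v_2\in\cT(J)$ and $h\circ v_1=f'_1\circ v_2$. Composing this last equation with $f_2$ and using $f_2\circ h=f_1$ together with $f_2\circ f'_1=f_1\circ f'_2$ gives $f_1\circ v_1=f_1\circ(f'_2\circ v_2)$. Since $f_1$ is a Galois covering, the map $\Hom_\cC(V,Y_1)\to\Hom_\cC(V,X)$ given by composition with $f_1$ is a pseudo $G$-torsor (Definitions~\ref{defn:Galois_cov} and~\ref{defn:pseudo G-torsor}), so two elements with the same image lie in one $G$-orbit; hence there is $g\in\Aut_X(Y_1)$ with $v_1=g\circ f'_2\circ v_2$. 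Then $h\circ g\circ f'_2\circ v_2=h\circ v_1=f'_1\circ v_2$, and because $v_2$ is an epimorphism it may be cancelled to yield $f'_1=h\circ g\circ f'_2$, which is exactly the assertion of part (1).

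For the uniqueness clause, if $h\circ g\circ f'_2=h\circ g'\circ f'_2$ then, $f'_2$ being an epimorphism, $h\circ g=h\circ g'$; thus the morphism $h\circ g$ is determined by the square, even though $g$ itself is only determined modulo $\Aut_{Y_2}(Y_1)$. Part (2) now follows formally. Transitivity of $\Aut_X(Y_1)$ on $\Hom_X(Y_1,Y_2)$ is the special case of part (1) with $Z=Y_1$, $f'_2=\id_{Y_1}$, and $f'_1$ an arbitrary element $h_1\in\Hom_X(Y_1,Y_2)$: it produces $g$ with $h_1=h\circ g$. For the cartesian square, I would consider the comparison morphism $\coprod_{h'\in\Hom_X(Y_1,Y_2)}\frh_\cC(Y_1)\to\frh_\cC(Y_1)\times_{\frh_\cC(X)}\frh_\cC(Y_2)$ whose effect on sections over an object $Z$ sends a pair $(h',a)$ to $(a,\,h'\circ a)$. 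This is injective because $a$ is an epimorphism, and surjective by part~(1): given $(a,b)$ with $f_1\circ a=f_2\circ b$, part~(1) yields $g$ with $b=h\circ g\circ a$, so $(a,b)$ is the image of $(h\circ g,a)$. Since fibre products and coproducts in $\Presh(\cC)$ are computed objectwise, this objectwise bijection is an isomorphism of presheaves, which is precisely the assertion that the square is cartesian.

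The only non-formal step is the lifting in the second paragraph, and I expect the interplay there, rather than any single computation, to be the main point. The device is to use the Ore condition to replace $Z$ by a cover $v_2\colon V\to Z$ in $\cT(J)$ over which a lift of $f'_1$ through $h$ manifestly exists, to pin that lift down using the pseudo-torsor property of the Galois covering $f_1$, and finally to descend the resulting identity along the epimorphism $v_2$. This is exactly the place where all three defining features of a $B$-site are used at once: the $A$-topology supplies the Ore condition, the Galois covering supplies the torsor structure, and the $E$-category hypothesis supplies the cancellation.
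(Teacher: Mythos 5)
Your proof is correct and follows essentially the same route as the paper's: both use Condition~(3) of Definition~\ref{defn:B-site} to get $h\in\cT(J)$, apply the Ore condition to the cospan $Y_1\xto{h}Y_2\xleftarrow{f'_1}Z$ to produce a cover in $\cT(J)$, identify the two resulting lifts over $X$ up to an element of $\Aut_X(Y_1)$ via the pseudo-torsor structure of $f_1$ (the paper packages this as Lemma~\ref{lem:Galois_descends}), and cancel the covering epimorphism. The only difference is cosmetic: you verify the cartesian square in (2) objectwise, which the paper leaves as an immediate consequence of (1).
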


\begin{proof}
Let the notation be as in claim (1).
Observe that by Condition~(3) of 
Definition~\ref{defn:B-site} the morphism $h$ belongs to $\cT(J)$.
As $\cT(J)$ is semi-localizing,
there exists an object $Z'$ of $\cC$ 
and morphisms $h': Z' \to Z$ and
$f''_1:Z'\to Y_1$ in $\cC$ such that $h'$ belongs to $\cT(J)$ 
and that the diagram
$$
\begin{CD}
Z' @>{h'}>> Z \\
@V{f''_1}VV @VV{f'_1}V \\
Y_1 @>{h}>> Y_2
\end{CD}
$$
is commutative. 

Set $h'' =f'_2 \circ h' : Z' \to Y_1$. 
As both $h''$ and $f''_1$ are morphisms over $X$
and $f_1$ is a Galois covering,
it follows from Lemma \ref{lem:Galois_descends}
that there exists an element $g \in \Aut_{X}(Y_1)$ 
satisfying $f''_1=g \circ h''$.
Hence, we have 
$f'_1 \circ h' = h \circ f''_1 = 
h \circ g \circ h'' = 
h \circ g \circ f'_2 \circ h'$.
As $\cC$ is an $E$-category,
the morphism $h'$ is an epimorphism. 
Hence we have $f'_1 = h \circ g \circ f'_2$.
The uniqueness of $h \circ g$ follows as
$f'_2$ is an epimorphism.
This proves claim (1).

Let the notation be as in claim (2).
Let $h_2 :Y_1 \to Y_2$ be any morphism in $\cC$ over $X$.
We apply claim (1) to the commutative diagram
$$
\begin{CD}
Y_1 @>{\id_{Y_1}}>> Y_1 \\
@V{h_2}VV @VV{f_1}V \\
Y_2 @>{f_2}>> X.
\end{CD}
$$
There exists an automorphism $g \in \Aut_{X}(Y_1)$
such that $h_2 = h \circ g$.
This proves that the action of $\Aut_{X}(Y_1)$
on $\Hom_{X}(Y_1,Y_2)$ is transitive.
The rest of claim (2) follows as an immediate
consequence of claim (1).
\end{proof}

\begin{cor}
\label{cor:Hom bijective}
Let $(\cC,J)$ be a $B$-site. 
Let $Y_1 \xto{f_1} X \xleftarrow{f_2} Y_2$ be a diagram
in $\cC$.
Suppose that $f_1$ is a Galois covering in $\cT(J)$ and that
there exists a morphism $h:Y_1 \to Y_2$ over $X$.
Then, for any morphism $f':Z \to Y_1$ in $\cC$,
the map $\Hom_{X}(Y_1,Y_2) \to \Hom_{X}(Z,Y_2)$
given by the composite with $f'$ is bijective.
\end{cor}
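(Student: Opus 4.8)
The plan is to prove bijectivity by treating injectivity and surjectivity separately, reducing each directly to a result already at hand. First I would check that the map is well defined: regarding $Z$ as an object over $X$ via $f_1 \circ f'$, for any $\psi \in \Hom_X(Y_1,Y_2)$ we have $f_2 \circ (\psi \circ f') = (f_2 \circ \psi) \circ f' = f_1 \circ f'$, so $\psi \circ f'$ indeed lies in $\Hom_X(Z,Y_2)$.

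For injectivity I would invoke only that $\cC$ is an $E$-category. If $\psi_1,\psi_2 \in \Hom_X(Y_1,Y_2)$ satisfy $\psi_1 \circ f' = \psi_2 \circ f'$, then since $f'$ is a morphism in the $E$-category $\cC$ it is an epimorphism, hence right-cancellable, so $\psi_1 = \psi_2$.

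For surjectivity I would appeal to Lemma~\ref{lem:Galois_lifts}(1). Given an arbitrary $\phi \in \Hom_X(Z,Y_2)$, the condition $f_2 \circ \phi = f_1 \circ f'$ expressing that $\phi$ is over $X$ says precisely that the square with top edge $f':Z \to Y_1$, left edge $\phi:Z \to Y_2$, right edge $f_1$, and bottom edge $f_2$ commutes. Applying Lemma~\ref{lem:Galois_lifts}(1) to this square (taking the lemma's $f'_2$ to be our $f'$ and its $f'_1$ to be our $\phi$), I obtain an automorphism $g \in \Aut_X(Y_1)$ with $\phi = h \circ g \circ f'$. Setting $\psi = h \circ g$, which lies in $\Hom_X(Y_1,Y_2)$ since $h$ is over $X$ and $g \in \Aut_X(Y_1)$, I get $\psi \circ f' = \phi$, as required.

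There is essentially no serious obstacle, as all the real content is supplied by the hypotheses and by Lemma~\ref{lem:Galois_lifts}. The only points demanding care are verifying that the commutative square needed to invoke Lemma~\ref{lem:Galois_lifts}(1) is exactly the one encoding that $\phi$ is a morphism over $X$, and keeping track of the order of composition so as to read off $\psi = h \circ g$ correctly.
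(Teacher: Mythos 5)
Your proof is correct and follows exactly the paper's argument: injectivity from $f'$ being an epimorphism in the $E$-category $\cC$, and surjectivity by applying Lemma~\ref{lem:Galois_lifts}(1) to the commutative square encoding that a given morphism $Z \to Y_2$ lies over $X$, yielding $g \in \Aut_X(Y_1)$ with the element realized as $(h \circ g) \circ f'$. Your explicit checks of well-definedness and the orientation of the square are sound additions but do not change the route.
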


\begin{proof}
The injectivity follows because $f'$ is an epimorphism.
We prove the surjectivity as follows.
Let $h':Z \to Y_2$ be a morphism over $X$.
It follows from Lemma \ref{lem:Galois_lifts} (1) that
there exists an automorphism $g \in \Aut_{X}(Y_1)$ such that
$h' = h \circ g \circ f'$. This proves the surjectivity.
\end{proof}

\subsubsection{ }
Let $f:Y\to X$ be a morphism in a category $\cC$.
Let $\wt{\alpha} : Y \xto{\cong} Y$ and
$\alpha : X \xto{\cong} X$ be automorphisms in $\cC$.
We say that $\wt{\alpha}$ descends to $\alpha$ via $f$,
or that $\alpha$ ascends to $\wt{\alpha}$, 
if the diagram
\begin{equation}\label{eq:stabilize}
\begin{CD}
Y @>{\wt{\alpha}}>{\cong}> Y \\
@V{f}VV @VV{f}V \\
X @>{\alpha}>{\cong}> X
\end{CD}
\end{equation}
is commutative.
We say that an automorphism $\wt{\alpha} \in \Aut_{\cC}(Y)$ 
descends to $X$ via $f$ if it descends to some element
in $\Aut_{\cC}(X)$.
Suppose that the morphism $f$ is 
an epimorphism. Then, if $\wt{\alpha} \in \Aut_{\cC}(Y)$ 
descends to $X$ via $f$, it descends to a unique element
in $\Aut_{\cC}(X)$ via $f$.

\begin{lem}\label{lem:descends}
Let $\cC$ be a category.
Let $Y \xto{f} X \xto{f'} W$ be a 
diagram in the category $\cC$. Suppose that
$f'$ is a Galois covering in $\cC$.
\begin{enumerate}
\item Any automorphism $\wt{g} \in \Aut_{W}(Y)$ descends to 
a unique element in $g \in \Aut_{W}(X)$ via $f$.
\item The map $\Aut_{W}(Y) \to \Aut_{W}(X)$
which sends $\wt{g}$ to $g$ is a group homomorphism.
\end{enumerate}
\end{lem}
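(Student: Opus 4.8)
The plan is to exploit the pseudo-torsor structure coming from the Galois covering $f'$. First I would invoke Lemma~\ref{lem:Galois_group} to identify the group attached to $f'$ with $\Aut_W(X) = \End_W(X)$, so that for every object $Z$ of $\cC$ the composition-with-$f'$ map $\phi : \Hom_\cC(Z,X) \to \Hom_\cC(Z,W)$ is a pseudo $\Aut_W(X)$-torsor, where $\Aut_W(X)$ acts on $\Hom_\cC(Z,X)$ by post-composition. The key consequence I would record is that on any \emph{non-empty} fiber of $\phi$ the group $\Aut_W(X)$ acts simply transitively: by injectivity of $\Hom_\cC(Z,X)/\Aut_W(X) \to \Hom_\cC(Z,W)$ (Condition (3) of Definition~\ref{defn:pseudo G-torsor}) any two points of a common fiber lie in one orbit, and by freeness (Condition (2)) the group element carrying one to the other is unique.

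For part (1), I would take $Z = Y$ and compare the two morphisms $f$ and $f \circ \wt{g}$ in $\Hom_\cC(Y,X)$. Since $\wt{g} \in \Aut_W(Y)$ is over $W$, we have $f' \circ f \circ \wt{g} = f' \circ f$, so both $f$ and $f \circ \wt{g}$ lie in the fiber $\phi^{-1}(f' \circ f)$, which is non-empty as it contains $f$. By simple transitivity there is a unique $g \in \Aut_W(X)$ with $g \circ f = f \circ \wt{g}$, and this relation is exactly the commutativity of diagram~\eqref{eq:stabilize}, i.e.\ the statement that $\wt{g}$ descends to $g$ via $f$. Uniqueness of $g$ inside $\Aut_W(X)$ is precisely the freeness of the action: if $g_1 \circ f = g_2 \circ f$ then $g_2^{-1} g_1$ fixes $f$, hence equals the identity.

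For part (2), I would argue purely formally from the defining relations $g_i \circ f = f \circ \wt{g}_i$ furnished by part (1). Composing, one computes $f \circ (\wt{g}_1 \circ \wt{g}_2) = (f \circ \wt{g}_1) \circ \wt{g}_2 = (g_1 \circ f) \circ \wt{g}_2 = g_1 \circ (f \circ \wt{g}_2) = g_1 \circ g_2 \circ f = (g_1 \circ g_2) \circ f$, so $\wt{g}_1 \circ \wt{g}_2$ descends to $g_1 \circ g_2$; by the uniqueness in part (1) the descent map therefore sends $\wt{g}_1 \circ \wt{g}_2$ to $g_1 \circ g_2$. Since $\id_X \circ f = f = f \circ \id_Y$ shows $\id_Y$ descends to $\id_X$, the map $\wt{g} \mapsto g$ is a group homomorphism.

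The only point requiring care is the set-up in the first paragraph: one must read off from Definition~\ref{defn:Galois_cov} that the relevant action is by post-composition and combine the freeness and injective-quotient clauses of Definition~\ref{defn:pseudo G-torsor} to obtain simple transitivity on each non-empty fiber. Once this is in place, both assertions are formal, and in particular no hypothesis that $f$ itself be an epimorphism is needed, since uniqueness comes entirely from the free action of $\Aut_W(X)$ rather than from cancelling $f$.
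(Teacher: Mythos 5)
Your proposal is correct and is essentially the paper's argument: the paper applies Lemma~\ref{lem:Galois_descends} to the two morphisms $f$ and $f \circ \wt{g}$ over $W$, and that lemma's proof is exactly the pseudo-torsor simple-transitivity argument (via Lemma~\ref{lem:Galois_group}) that you spell out inline, while part (2) is deduced from uniqueness just as you do. Your closing remark that no epimorphism hypothesis on $f$ is needed is also consistent with the paper, whose uniqueness here comes from the free $\Aut_W(X)$-action rather than from cancellation.
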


\begin{proof}
We apply Lemma \ref{lem:Galois_descends}
to the two morphisms $f, f \circ \wt{g}: Y \to X$
over $W$. There exists a unique element 
$g \in \Aut_{W}(X)$ satisfying 
$f \circ \wt{g} = g \circ f$.
Hence, $\wt{g}$ descends to $g$ via $f$.
This proves claim (1).

Claim (2) follows from the uniqueness of
$g \in \Aut_{\cC}(X)$ to which $\wt{g}$ 
descends via $f$.
\end{proof}

\begin{lem}\label{lem:lifts}
Let $(\cC,J)$ be a $B$-site. 
Let $Y \xto{f} X \xto{f'} W$ be a 
diagram in $\cC$ such that the
composite $f'\circ f$ is a Galois covering in $\cT(J)$.
\begin{enumerate}
\item Any automorphism in $\Aut_{W}(X)$
ascends to an automorphism in $\Aut_{W}(Y)$
via $f$.
\item $f'$ is a Galois covering if and only if any automorphism
in $\Aut_{W}(Y)$ descends to an automorphism in $\Aut_{W}(X)$ via $f$.
\item Suppose that $f'$ is a Galois covering. 
Then the group homomorphism 
$\Aut_{W}(Y) \to \Aut_{W}(X)$ in Lemma \ref{lem:descends}
induces a short exact sequence
$$
1 \to \Aut_{X}(Y) \to
\Aut_{W}(Y) \to \Aut_{W}(X) \to 1
$$
of groups.
\end{enumerate}
\end{lem}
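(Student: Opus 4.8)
The plan is to work throughout from three preliminary facts. Since $f' \circ f \in \cT(J)$, Condition~(3) of Definition~\ref{defn:B-site} forces both $f$ and $f'$ to lie in $\cT(J)$, and $\cT(J)$ is semi-localizing by Proposition~\ref{prop:cTJ}. By Lemma~\ref{lem:Galois_group} the group attached to the Galois covering $f' \circ f$ is exactly $\Aut_W(Y)$. Finally, every morphism of the $E$-category $\cC$ is an epimorphism, which I will use repeatedly to cancel on the right.

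For claim~(1) I would apply Lemma~\ref{lem:Galois_lifts}~(1) to the diagram $Y \xto{f' \circ f} W \xleftarrow{f'} X$, in which $f' \circ f$ is the required Galois covering in $\cT(J)$ and $h = f \colon Y \to X$ is a morphism over $W$ (as $f' \circ f = f' \circ f$). Given $\alpha \in \Aut_W(X)$, the square with top edge $\id_Y$ and left edge $\alpha \circ f$ commutes, since $f' \circ (\alpha \circ f) = f' \circ f$; the lemma then produces $g \in \Aut_W(Y)$ with $\alpha \circ f = f \circ g$, so $g$ is the desired automorphism to which $\alpha$ ascends.

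For the forward implication of claim~(2), if $f'$ is a Galois covering then Lemma~\ref{lem:descends}~(1) applies verbatim to $Y \xto{f} X \xto{f'} W$ and shows that every $\wt{g} \in \Aut_W(Y)$ descends to a unique element of $\Aut_W(X)$ via $f$. For the converse, assuming every automorphism of $Y$ over $W$ descends, I would exhibit $f'$ as a Galois covering (Definition~\ref{defn:Galois_cov}) by checking that composition with $f'$ makes $\Hom_\cC(Z,X) \to \Hom_\cC(Z,W)$ a pseudo $\Aut_W(X)$-torsor (Definition~\ref{defn:pseudo G-torsor}) for every $Z$, with $\rho = \id$. Conditions~(1) and~(2) are immediate: invariance follows from $f' \circ \alpha = f'$, and freeness from the fact that every $s \colon Z \to X$ is an epimorphism. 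The crux, and the step I expect to be the main obstacle, is Condition~(3), the injectivity of $\quot{\Hom_\cC(Z,X)}{\Aut_W(X)} \to \Hom_\cC(Z,W)$. To prove it I would start from $s_1, s_2 \colon Z \to X$ with $f' \circ s_1 = f' \circ s_2$ and lift them into $Y$: since $f \in \cT(J)$ and $\cT(J)$ is semi-localizing, two applications of Condition~(3) of Definition~\ref{defn:semi-localizing} produce an object $Z'$, an epimorphism $b \colon Z' \to Z$, and morphisms $A_1, A_2 \colon Z' \to Y$ with $f \circ A_i = s_i \circ b$. Then $(f' \circ f) \circ A_1 = (f' \circ f) \circ A_2$, so the pseudo $\Aut_W(Y)$-torsor property of $f' \circ f$ yields $\wt{g} \in \Aut_W(Y)$ with $A_2 = \wt{g} \circ A_1$. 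Descending $\wt{g}$ to $g \in \Aut_W(X)$ by hypothesis (so $f \circ \wt{g} = g \circ f$) gives $s_2 \circ b = f \circ A_2 = g \circ f \circ A_1 = g \circ s_1 \circ b$, and cancelling the epimorphism $b$ yields $s_2 = g \circ s_1$, which is Condition~(3).

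For claim~(3) I would assemble the previous parts. With $f'$ a Galois covering, Lemma~\ref{lem:descends} makes $\Aut_W(Y) \to \Aut_W(X)$, $\wt{g} \mapsto g$, a group homomorphism; its surjectivity is exactly claim~(1), since every $\alpha \in \Aut_W(X)$ ascends to some $\wt{\alpha}$ with $f \circ \wt{\alpha} = \alpha \circ f$, i.e.\ is hit. Moreover $\wt{g}$ lies in the kernel precisely when its descent is $\id_X$, that is when $f \circ \wt{g} = f$, which says exactly $\wt{g} \in \Aut_X(Y)$; thus the kernel is the evident subgroup $\Aut_X(Y) \hookrightarrow \Aut_W(Y)$. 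Combining these gives the short exact sequence $1 \to \Aut_X(Y) \to \Aut_W(Y) \to \Aut_W(X) \to 1$.
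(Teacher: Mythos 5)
Your proof is correct and takes essentially the same route as the paper's: part (1) via Lemma~\ref{lem:Galois_lifts}~(1) applied to the Galois covering $f' \circ f$, the forward half of (2) via Lemma~\ref{lem:descends}, the converse by Ore-lifting along $f$, transferring by the Galois group of $f' \circ f$, descending by hypothesis, and cancelling epimorphisms, and (3) by surjectivity from (1) together with the kernel computation $f \circ \wt{g} = f \Leftrightarrow \wt{g} \in \Aut_X(Y)$. The only difference is cosmetic: in the converse of (2) the paper lifts a single morphism and cites Lemma~\ref{lem:Galois_lifts}~(1), whereas you lift both $s_1$ and $s_2$ (two applications of the Ore condition) and invoke the pseudo-torsor property of $f' \circ f$ directly, which amounts to inlining the proof of that lemma.
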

\begin{proof}
Claim (1) follows from Lemma \ref{lem:Galois_lifts}.

Suppose that any automorphism in $\Aut_{W}(Y)$ 
descends to an automorphism in $\Aut_{W}(X)$.
Let $f'':W' \to W$ be a morphism in $\cC$.
As $\cC$ is an $E$-category, 
the group $\Aut_{W}(X)$ acts freely on the
set $\Hom_{W}(W',X)$. Hence to prove that $f'$ is a Galois covering, 
it suffices to prove that
the group $\Aut_{W}(X)$ acts 
transitively on the set $\Hom_{W}(W',X)$.
Let $h_1,h_2 : W' \to X$ be morphisms over $W$.
As $f$ belongs to $\cT(J)$ and $\cT(J)$ is 
semi-localizing, 
there exist
an object $W''$ of $\cC$ and morphisms 
$h'_1 :W'' \to Y$ and $f'_2:W'' \to W'$ such that
$f'_2$ belongs to $\cT(J)$ and that $f \circ h'_1 = h_1 \circ f'_2$.
It then follows from Lemma \ref{lem:Galois_lifts} (1)
that there exists an automorphism $\wt{g} \in \Aut_{W}(Y)$
satisfying $h_2 \circ f'_2 = f \circ \wt{g} \circ h'_1$.
By assumption, the automorphism $\wt{g}$ descends to
an automorphism $g \in \Aut_W(X)$ via $f$. 
Hence,
$h_2 \circ f'_2 = f \circ \wt{g} \circ h'_1
= g \circ f \circ h'_1 = g \circ h_1 \circ f'_2$.
As $f'_2$ is an epimorphism, we have $h_2 = g \circ h_1$.
Hence, the group $\Aut_{W}(X)$ acts 
transitively on the set $\Hom_{W}(W',X)$.
This proves that $f'$ is a Galois covering
if any automorphism in $\Aut_{W}(Y)$ 
descends to an automorphism in $\Aut_{W}(X)$.
The converse follows immediately from Lemma \ref{lem:descends}.
This proves claim (2).

It follows from claim (1) that the homomorphism
$\Aut_{W}(Y) \to \Aut_{W}(X)$ is surjective.
It is easy to see that the kernel of this homomorphism
is equal to $\Aut_{X}(Y)$, which proves claim (3).
\end{proof}

\subsection{Cofinality}
\label{sec:cofinality}
\subsubsection{}
For a poset $P$, we denote by $\cP^\op$ the dual of the poset $P$,
regarded as a $\frU$-small category. Let $\cC$ be a category and
let $F:\cP^\op \to \cC$ be a covariant functor.
Let $(X,(f_y)_{y \in P})$ be a pair of an object $X$ of $\cC$
and a morphism $f_y:X \to F(y)$ in $\cC$ for each $y \in P$.
We say that the pair $(X,(f_y)_{y \in P})$ is an object
of $\cC$ over $F$ if for any $y_1,y_2 \in P$ with $y_1 \le y_2$, 
we have $f_{y_1} = F(g) \circ f_{y_2}$, where $g$ 
denotes the
unique morphism from $y_2$ to $y_1$ in $\cP^\op$.

\begin{prop}\label{prop:tree}
Let $\cC$ be a category that is semi-cofiltered.
Let $P$ be a non-empty finite poset.
Suppose that $P$ is a rooted tree, which means that
$P$ has the bottom element and for any element
$y \in P$, the subset $\{z \in P \ |\ z \le y \}$
is a totally ordered set.
Let $\cP^\op$ denote the dual of the poset $P$,
regarded as a $\frU$-small category.
Let $F:\cP^\op \to \cC$
be a covariant functor from $\cP^\op$ to $\cC$.
Then there exists an object $(X,(f_y)_{y \in P})$ 
of $\cC$ over $F$.
\end{prop}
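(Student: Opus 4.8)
The plan is to prove the statement by induction on the (finite) cardinality of $P$, using semi-cofilteredness of $\cC$ (Definition~\ref{defn:semi-cofiltered}), i.e.\ Condition~(3) of Definition~\ref{defn:semi-localizing} applied to $\cT=\Mor(\cC)$. Concretely, the only property of $\cC$ I will use is that every cospan $A \to C \leftarrow B$ in $\cC$ can be completed to a commutative square $A \leftarrow D \to B$. For the base case $P=\{r\}$, with $r$ necessarily the root, I would take $X=F(r)$ and $f_r=\id_{F(r)}$; the sole comparability $r\le r$ imposes nothing, so $(F(r),(f_r))$ is an object of $\cC$ over $F$.

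For the inductive step, assume $|P|\ge 2$ and choose a maximal element $\ell$ of $P$. Since $r$ is the unique minimum and $\ell$ is maximal, $|P|\ge 2$ forces $\ell\ne r$, hence $r<\ell$ and the chain $\{z\in P \mid z<\ell\}$ is a non-empty finite totally ordered set; let $p$ be its maximum, the parent of $\ell$. Set $P'=P\setminus\{\ell\}$. Then $P'$ is again a non-empty rooted tree: $r$ remains its minimum, and because $\ell$ is maximal it lies in no down-set $\{z\le y\}$ with $y\in P'$, so each such down-set is unchanged and still totally ordered. By the inductive hypothesis applied to $P'$ and to the restriction of $F$ to the corresponding full subcategory of $\cP^\op$, there is an object $(X',(f'_y)_{y\in P'})$ of $\cC$ over this restriction.

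Next I would bring in $\ell$. Write $\pi\colon F(\ell)\to F(p)$ for the image under $F$ of the unique morphism $\ell\to p$ in $\cP^\op$ (corresponding to $p\le\ell$ in $P$). Applying the cofiltered completion to the cospan $X'\xto{f'_p} F(p) \xleftarrow{\pi} F(\ell)$ yields an object $Z$ with morphisms $u\colon Z\to X'$ and $v\colon Z\to F(\ell)$ such that $f'_p\circ u=\pi\circ v$. I then set $X=Z$, define $f_y=f'_y\circ u$ for $y\in P'$, and $f_\ell=v$, and claim $(X,(f_y)_{y\in P})$ is an object over $F$.

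Finally I would verify the compatibility conditions. For comparable pairs inside $P'$ the required identities follow by precomposing the corresponding identities for $(X',(f'_y))$ with $u$. For a pair $y\le\ell$ with $y\ne\ell$ one has $y\le p$, and the morphism $\ell\to y$ in $\cP^\op$ factors through $p$; the case $y=p$ is exactly $f_p=f'_p\circ u=\pi\circ v=\pi\circ f_\ell$, and the case $y<p$ follows by applying $F(p\to y)$ to this and invoking the $P'$-compatibility. Since $\ell$ is maximal there are no comparabilities with $\ell$ as the smaller element, so all constraints are checked. The essential point---and the only place the hypotheses are genuinely used---is that $\cC$ is merely weakly cofiltered (the Ore condition completes one cospan but, lacking an equalizer-type axiom, cannot reconcile several incomparable constraints at once); the rooted-tree hypothesis is exactly what guarantees that $\{z\le\ell\}$ is a chain, so that adding the leaf $\ell$ imposes a single new constraint, at its parent $p$, reducible to one cospan completion. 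I expect the main obstacle to be nothing computational but rather keeping this reduction transparent, since for a general finite poset the down-set below a maximal element need not be totally ordered and the inductive step would break down.
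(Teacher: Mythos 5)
Your proof is correct and takes essentially the same approach as the paper's: induction on $|P|$, deleting a maximal element, and using the Ore condition (semi-cofilteredness) to complete the single cospan $X' \xto{f'_p} F(p) \xleftarrow{\pi} F(\ell)$ at the parent, then defining the new cone maps by precomposition. Your explicit verification of the compatibilities (including the factorization through the parent for $y<p$) is simply a spelled-out version of what the paper dismisses as straightforward to check.
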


\begin{proof}
We proceed by induction on the cardinality of $P$.
If $P$ consists of a single element $y$, then 
$X=F(y)$ and $f_y=\id_X$ satisfies the desired property.
Suppose that $P$ consists of more than two elements.
As $P$ is a finite poset, there exists a maximal
element $z\in P$. Set $P' = P \setminus \{z\}$.
Then $P'$ is again a tree.
Let $F:\cP^{\op} \to \cC$ be a covariant functor.
We denote by $F'$ the restriction of $F$ to ${\cP'}^{\op}$. 
By the induction hypothesis, there exists
an object $(X',(f'_y)_{y \in P'})$ in $\cC$ 
over $F'$.
We let $z' \in P'$ denote the maximal element of the
finite totally ordered set $\{ y \in P'\ |\ y \le z \}$.
Let $f:z \to z'$ denote the unique morphism in $\cP^{\op}$.
Let us consider the diagram 
$$
F(z) \xto{F(f)} F(z') \xleftarrow{f'_{z'}} X'
$$
in $\cC$. (We wrote $X'$ for the source of $f'_{z'}$).
As $\cC$ is semi-cofiltered,
there exist an object $X$ of $\cC$ and morphisms
$f': X \to F(z)$ and $g': X \to X'$ in $\cC$
satisfying $F(f) \circ f' = f'_{z'} \circ g'$.
We set $f_y = f'_y \circ g'$ for $y \in P'$
and $f_z = f'$.
It then logically follows to check that the pair 
$(X,(f_y)_{y \in P})$ is an object in $\cC$ over $F$.
This proves the claim.
\end{proof}

\subsubsection{}
When $\cT$ is a semi-localizing collection of morphisms in $\cC$, recall that we have
$\wh{\cT}=\cT(J)$
where $J$ is the $A$-topology associated with $\cT$.
We let $\cC(\cT(J))$ denote the following subcategory of $\cC$.
The objects of $\cC(\cT(J))$ are the objects of $\cC$.
For two objects $X$, $Y$ of $\cC$, the morphisms from $X$ to $Y$
in $\cC(\cT(J))$ are the morphisms from $X$ to $Y$ in $\cC$
belonging to $\wh{\cT}$.
It follows from Lemma \ref{lem:composite}
that the composition of morphisms in $\cC(\cT(J))$ is well-defined.

\begin{cor}[cofinality]
\label{lem:cofinality}
Let $(\cC,J)$ be a $B$-site. 
Suppose that $\cT(J)$ 
has enough Galois coverings.
Let $P$ be a finite poset. 
Suppose that $P$ is a rooted tree.
Let $\cP^{\op}$ denote the dual of the poset $P$
regarded as a small category.
Let $F:\cP^{\op} \to \cC$ be a covariant functor
such that $F(h)$ belongs to $\cT(J)$ for any morphism 
$h$ in $\cP^{\op}$.
Then there exists an object $(X,(f_y)_{y \in P})$ of $\cC$ 
over $F$ such that
the morphism $f_y:X \to F(y)$ is a Galois covering in 
$\cT(J)$ for each $y \in P$.
\end{cor}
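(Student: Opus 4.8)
The plan is to build the desired cone in two stages: first produce a cone over $F$ all of whose legs lie in $\cT(J)$, and then upgrade it by making only the single leg over the root of $P$ a Galois covering, from which the Galois property of the remaining legs follows formally.

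First I would pass to the subcategory $\cC(\cT(J))$ introduced just above. Its morphisms are exactly $\wh{\cT}=\cT(J)$, which is semi-localizing by Lemma~\ref{lem:composite} and Proposition~\ref{prop:cTJ}, so $\cC(\cT(J))$ is semi-cofiltered in the sense of Definition~\ref{defn:semi-cofiltered}. By hypothesis $F(h)\in\cT(J)$ for every morphism $h$ of $\cP^\op$, so $F$ factors through $\cC(\cT(J))$. Applying Proposition~\ref{prop:tree} to $\cC(\cT(J))$, to the rooted tree $P$, and to $F$ then yields an object $(X_0,(f_{0,y})_{y\in P})$ of $\cC(\cT(J))$ over $F$; in particular each leg $f_{0,y}:X_0\to F(y)$ belongs to $\cT(J)$, and the legs satisfy the cone relations $f_{0,y_1}=F(g)\circ f_{0,y_2}$ whenever $y_1\le y_2$, where $g:y_2\to y_1$ is the unique morphism in $\cP^\op$.

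Next I would make the leg over the root a Galois covering. Let $r\in P$ be the bottom element. Since $\cT(J)$ has enough Galois coverings (Definition~\ref{defn:enough Galois}), we have $\cT(J)=\wh{\cT'}$, where $\cT'$ is the collection of Galois coverings; applying Definition~\ref{defn:Tee hat} to $f_{0,r}\in\cT(J)=\wh{\cT'}$ produces a morphism $g:X\to X_0$ with $f_{0,r}\circ g$ a Galois covering. Set $f_y=f_{0,y}\circ g$ for each $y\in P$. Precomposition with $g$ preserves the cone relations, so $(X,(f_y)_{y\in P})$ is again an object of $\cC$ over $F$. Moreover $f_r=f_{0,r}\circ g$ is a Galois covering, hence lies in $\cT(J)$ (as $\cT'\subseteq\wh{\cT'}=\cT(J)$), and then Condition~(3) of Definition~\ref{defn:B-site} forces $g\in\cT(J)$, so every $f_y\in\cT(J)$ as well. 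Finally, for each $y$ the cone relation at $r\le y$ gives the factorization $f_r=F(y\to r)\circ f_y$, exhibiting $f_y$ as the upstream factor of the Galois covering $f_r$; since $\cC$ is an $E$-category by Condition~(1) of Definition~\ref{defn:B-site}, Lemma~\ref{lem:Galois_epi} shows that $f_y$ is a Galois covering. Being a Galois covering it lies in $\cT(J)$, so each $f_y$ is a Galois covering in $\cT(J)$, as required.

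I expect the main obstacle to be conceptual rather than computational: one should not try to make all legs Galois simultaneously by a direct refinement. The key idea is that it suffices to make the single leg over the root a Galois covering, after which Lemma~\ref{lem:Galois_epi} supplies the remaining legs for free, each being an upstream factor of the root leg through the cone relations. Arranging a single apex $X$ whose root leg is Galois while keeping all legs inside $\cT(J)$ is precisely where the hypothesis of enough Galois coverings, combined with the $B$-site composition condition~(3), is needed.
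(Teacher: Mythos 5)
Your proof is correct and follows essentially the same route as the paper's: build a cone over $F$ in the semi-cofiltered category $\cC(\cT(J))$ via Proposition~\ref{prop:tree}, refine once at the root using the definition of enough Galois coverings, and propagate the Galois property to the remaining legs with Lemma~\ref{lem:Galois_epi}. One small bookkeeping note: your justification that $\cC(\cT(J))$ is semi-cofiltered ``because $\cT(J)$ is semi-localizing'' is slightly short, since the Ore condition for $\cT(J)$ in $\cC$ only guarantees that one leg of the completing square lies in $\cT(J)$ a priori --- the other leg lies in $\cT(J)$ by Condition~(3) of Definition~\ref{defn:B-site}, the same condition you correctly invoke later to get $g\in\cT(J)$.
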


\begin{proof} 
Set $\cT=\cT(J)$.  Recall that $\wh{\cT} = \cT$.
It follows from the definition that
$\cC(\cT(J))$ is semi-cofiltered.

We regard $F$ as a covariant functor from $\cP^\op$ to $\cC(\cT(J))$.
It follows from Proposition \ref{prop:tree}
that there exists an object $(X',(f'_y)_{y \in P})$ of $\cC(\cT(J))$ over $F$.
Let $y_0 \in P$ denote the bottom element.
As $\cT(J)$ has enough Galois coverings, 
there exist an object $X$ of $\cC$ and a morphism $h:X \to X'$ in $\cC$
such that $h$ belongs to $\cT(J)$ 
and that the composite $f'_{y_0} \circ h$ is a Galois covering.
For each $y \in P$, we set $f_y = f'_y \circ h$.
Then $(X,(f_y)_{y \in P})$ is an object in $\cC$ over $F$.
As the morphism $f_{y_0}$ factors through $f_y$,
it follows from Lemma \ref{lem:Galois_epi} that
$f_y$ is a Galois covering in $\cT(J)$.
This proves the claim.
\end{proof}

\subsection{An explicit construction of the sheafification functor}
\label{sec:sheafify}

Let $(\cC,J)$ be a $B$-site.
Suppose that $\cC$ is essentially $\frU$-small.
In the paragraphs below, we give an explicit description
of the sheafification functor $a_J:\Presh(\cC)
\to \Shv(\cC,J)$ 
when there are enough Galois coverings.
Let us assume throughout 
this subsection that there are enough Galois coverings.
\subsubsection{ }
\label{sec:sheafify1}
Let us fix a set $G$ of objects of $\cC$ such that any object
of $\cC$ is isomorphic to an object that belongs to $G$.
Let $F: \cC^\op \to (\Sets)$ be a presheaf on $\cC$.
In the following paragraphs we give a description of the sheaf 
$a_J(F):\cC^\op \to (\Sets)$ associated with $F$.

\subsubsection{ }
\label{sec:Gal/X}
For an object $X$ of $\cC$, we let
$(\Gal/X)'$ denote the following category.
The objects in $(\Gal/X)'$ are the pairs
$(Y,f)$ of an object $Y$ of $\cC$ that belongs to $G$ 
and a morphism $f:Y\to X$ in $\cC$ that is a Galois
covering in $\cT(J)$. For two objects $(Y_1,f_1)$ and $(Y_2,f_2)$
of $(\Gal/X)'$, the set of morphisms from $(Y_1,f_1)$ to $(Y_2,f_2)$ 
in $(\Gal/X)'$ is the set $\Aut_{X}(Y_2) \backslash \Hom_X(Y_1,Y_2)$.
It is clear that the category $(\Gal/X)'$ has a $\frU$-small 
full subcategory whose embedding into $(\Gal/X)'$ is an equivalence
of categories. We fix such a $\frU$-small full subcategory of
$(\Gal/X)'$ and denote it by $\Gal/X$.
It follows from Lemma \ref{lem:Galois_descends} that
for any two objects $(Y_1,f_1)$ and $(Y_2,f_2)$
of $\Gal/X$, there exists at most one morphism
from $(Y_1,f_1)$ to $(Y_2,f_2)$ in $\Gal/X$.
Hence, the composition of two morphisms is well-defined.
It follows from Lemma \ref{lem:cofinality} that the
category $\Gal/X$ is cofiltered.

Let $X$ be an object of $\cC$. 
For an object $(Y,f)$ of $\Gal/X$, we set
$F_{/X}(Y,f) = F(Y)^{\Aut_X(Y)}$.
Let $(Y_1,f_1)$ and $(Y_2,f_2)$ be two objects of
$\Gal/X$ and let $h:Y_1 \to Y_2$ be a morphism over $X$.
It then follows from Lemma \ref{lem:Galois_descends}
that the map $F(h) :F(Y_2) \to F(Y_1)$ sends 
an element in the $\Aut_{X}(Y_2)$-invariant part 
$F_{/X}(Y_2,f_2) \subset F(Y_2)$ to an element in $F_{/X}(Y_1,f_1)$.
The induced map $F_{/X}(h) : F_{/X}(Y_2,f_2)
\to F_{/X}(Y_1,f_1)$ depends only on the class of the class of $h$
in $\Aut_{X}(Y_2) \backslash \Hom_X(Y_1,Y_2)$.
We obtain a contravariant functor $F_{/X}$ from $\Gal/X$ 
to the category of sets.

For an object $(Y,f)$ of $\Gal/X$, Lemma \ref{lem:equalizer} gives
a bijection $F_{/X}(Y,f) \cong \Hom_{\Presh(\cC)}(\frh_\cC(R_f),F)$
that is functorial in $(Y,f)$.
It follows from 
Definition~\ref{defn:semi-localizing} that as 
there are enough Galois coverings,
the set 
$\{ R_f\ | f \in \Obj \Gal/X \}$ is cofinal in the collection $J(X)$.
Hence, it follows from the construction of the sheafifiction functor,
given in Section 3 of \cite[EXPOSE II]{SGA4}, that we have a bijection
\begin{equation}\label{eq:a_J}
a_J(F)(X) \cong \displaystyle 
\varinjlim F_{/X} := \varinjlim_{(Y,f)} F_{/X}(Y,f)
\end{equation}
where the colimit is taken over the objects in
the $\frU$-small category $\Gal/X$.

\subsubsection{ }
Let $f:X \to X'$ be a morphism in $\cC$.
We give a description of the restriction map 
$a_J(F)(f): a_J(F)(X') \to a_J(F)(X)$ via the bijection \eqref{eq:a_J}.
Let $\Gal/f$ denote the full subcategory of $\Gal/X$
whose objects are the pairs $(Y,h)$ in $\Gal/X$ such
that the composite $f \circ h : Y \to X'$ is a
Galois covering in $\cC$.
It follows from Lemma \ref{lem:cofinality} that
the subcategory $\Gal/f$ is cofinal in $\Gal/X$.

To each object $(Y,h)$ in $\Gal/f$, we
associate the object $(Y,f\circ h)$ in $\Gal/{X'}$.
It is easy to check that this defines a
functor $\Gal/f \to \Gal/{X'}$.

\begin{lem}\label{lem:fully_faithful}
The functor $\Gal/f \to \Gal/{X'}$ is fully faithful,
and its image is cofinal in $\Gal/{X'}$.
\end{lem}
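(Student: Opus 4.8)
The plan is to exploit that both $\Gal/f$ and $\Gal/{X'}$ are thin categories: by Lemma~\ref{lem:Galois_descends} there is at most one morphism between any two of their objects. Faithfulness is then automatic, and full faithfulness of the functor $\Phi\colon\Gal/f\to\Gal/{X'}$, $(Y,h)\mapsto(Y,f\circ h)$, reduces to the assertion that, for objects $(Y_1,h_1),(Y_2,h_2)$ of $\Gal/f$, a morphism $(Y_1,f\circ h_1)\to(Y_2,f\circ h_2)$ in $\Gal/{X'}$ exists if and only if a morphism $(Y_1,h_1)\to(Y_2,h_2)$ in $\Gal/f$ exists; the ``only if'' direction is immediate since $\Phi$ is a functor. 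Before starting I would record two facts that hold because enough Galois coverings are assumed throughout this section: every Galois covering in $\cC$ already belongs to $\cT(J)$, since $\cT'\subseteq\wh{\cT'}=\cT(J)$ by Definition~\ref{defn:enough Galois}, so $\Phi$ really does land in $\Gal/{X'}$; and then Condition~(3) of Definition~\ref{defn:B-site} forces $f\in\cT(J)$ as soon as $\Gal/f$ is non-empty.

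For fullness, suppose $g\colon Y_1\to Y_2$ is a morphism over $X'$, i.e.\ $f\circ h_2\circ g=f\circ h_1$. The key observation is that both $h_1$ and $h_2\circ g$ lie in $\Hom_{X'}(Y_1,X)$, the set of morphisms $Y_1\to X$ over $X'$. I would then apply Lemma~\ref{lem:Galois_lifts}(2) not to the pair $(h_1,h_2)$ over $X$, but to the diagram $Y_1\xto{f\circ h_1}X'\xleftarrow{f}X$: here $f\circ h_1$ is a Galois covering in $\cT(J)$ and $h_1$ is itself a morphism over $X'$, so the hypotheses hold, and the conclusion is that $\Aut_{X'}(Y_1)$ acts transitively on $\Hom_{X'}(Y_1,X)$. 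Hence there is $\rho\in\Aut_{X'}(Y_1)$ with $h_2\circ g=h_1\circ\rho$, and then $g\circ\rho^{-1}\colon Y_1\to Y_2$ satisfies $h_2\circ(g\circ\rho^{-1})=h_1$, i.e.\ it is a morphism over $X$, which is the required morphism in $\Gal/f$.

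For cofinality of the image I would argue directly. Because $\Gal/{X'}$ is cofiltered and the colimit of interest is taken over the opposite category, ``cofinal'' here means that every object of $\Gal/{X'}$ is the target of a morphism out of the image. So, given $(W,p)\in\Gal/{X'}$, I seek $(Y,h)\in\Gal/f$ together with a morphism $Y\to W$ over $X'$. I would feed Corollary~\ref{lem:cofinality} the rooted tree $P$ with bottom element $0$ and two incomparable elements $1,2$ above it, and the functor $F\colon\cP^\op\to\cC$ with $F(0)=X'$, $F(1)=X$, $F(2)=W$, sending the two edges to $f$ and $p$ (both in $\cT(J)$). The corollary produces an object $Y$ with Galois coverings $e_0\colon Y\to X'$, $e_1\colon Y\to X$, $e_2\colon Y\to W$ in $\cT(J)$ satisfying $f\circ e_1=e_0=p\circ e_2$. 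Setting $h=e_1$ gives $(Y,h)\in\Gal/f$ (since $f\circ h=e_0$ is Galois in $\cT(J)$), while $e_2$ is a morphism $Y\to W$ over $X'$; after replacing $Y$ by an isomorphic object of $G$ this yields the desired morphism $\Phi(Y,h)\to(W,p)$.

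The step I expect to be the crux is fullness: the naive attempt to compare $h_1$ and $h_2$ directly over $X$ fails, because there need be no morphism over $X$ to begin with and $f$ itself is not assumed to be a Galois covering. What unlocks it is to push the comparison down to the base $X'$, where the Galois covering $f\circ h_1$ together with the ready-made ``section'' $h_1$ lets Lemma~\ref{lem:Galois_lifts}(2) supply the correcting automorphism $\rho$. A secondary matter requiring care is the bookkeeping of the direction of morphisms and the identification of Galois coverings in $\cC$ with those in $\cT(J)$; both are controlled by the ``enough Galois coverings'' hypothesis in force here.
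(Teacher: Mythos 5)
Your proof is correct and takes essentially the same route as the paper: full faithfulness comes from correcting the given morphism over $X'$ by an automorphism produced by Lemma~\ref{lem:Galois_lifts} (you precompose with an element of $\Aut_{X'}(Y_1)$ obtained from the transitivity statement in part~(2), whereas the paper postcomposes with an element of $\Aut_{X'}(Y_2)$ obtained from part~(1) applied to the same square --- the same mechanism, since (2) is itself deduced from (1)), and your rooted-tree construction is precisely the argument behind the paper's bare citation of Corollary~\ref{lem:cofinality} for cofinality. The only (cosmetic) slip is a swapped label: the implication you call ``only if'' --- existence of a morphism in $\Gal/X'$ implies existence of one in $\Gal/f$ --- is the nontrivial direction you subsequently prove, while the direction immediate from functoriality of $\Phi$ is the ``if''.
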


\begin{proof}
First we prove that the functor is fully faithful.
Let $f'_1:Y_1 \to X$ and $f'_2:Y_2 \to X$ be Galois
coverings such that both $f \circ f'_1$ and
$f \circ f'_2$ are Galois coverings,
and let $h:Y_1 \to Y_2$ be a morphism over $X'$.
It suffices to prove that there exists
an automorphism $g \in \Aut_{X'}(Y_2)$
such that $g \circ h$ is a morphism over $X$.
We apply Lemma \ref{lem:Galois_lifts} (1) to the
commutative diagram
$$
\begin{CD}
Y_1 @>{h}>> Y_2 \\
@V{f'_1}VV @VV{f \circ f_2}V \\
X @>{f}>> X'.
\end{CD}
$$
There exists $g \in \Aut_{X'}(Y_2)$ such that
$f'_1 = f'_2 \circ g \circ h$. This proves that
the functor is fully faithful.

The cofinality of the image of this functor
follows from Corollary~\ref{lem:cofinality}.
\end{proof}

\subsubsection{}
We define the map 
$f^*: \varinjlim F_{/X'} 
\to \varinjlim F_{/X}$ as follows.
As $\Gal/f$ is cofinal in $\Gal/X$, the set
$\varinjlim F_{/X}$ is equal to the colimit
$$
\varinjlim F_{/X}|_{\Gal/f}:=\varinjlim_{(Y,h) \in \Obj \Gal/f} F_{/X}(Y,h)
$$
of the restriction of $F_{/X}$ to $\Gal/f$.
It follows from Lemma \ref{lem:fully_faithful} that
$\varinjlim F_{/X'}$ 
is equal to the colimit 
$$
\varinjlim F_{/X'}|_{\Gal/f} 
:= \varinjlim_{(Y,h) \in \Obj \Gal/f} F_{/X'}(Y,f \circ h)
$$
of the restriction of $F_{/X'}$ to $\Gal/f$.
The inclusion map $F_{/X'}(Y,f \circ h)=
F(Y)^{\Aut_{X'}(Y)} \inj
F(Y)^{\Aut_{X}(Y)} = 
F_{/X}(Y,f)$ 
for each
object $(Y,h)$ in $\Gal/f$ gives a natural transform
$F_{/X'}|_{\Gal/f} \to 
F_{/X}|_{\Gal/f}$.
Passing to the colimit, we obtain the desired map
$f^* :\varinjlim F_{/X'} \to \varinjlim F_{/X}$.

It then follows from the construction of $a_J(F)$ given in
Section 3 of \cite[EXPOSE II]{SGA4} that 
via the bijections \eqref{eq:a_J} for $X$ and $X'$, 
the map $f^*$ gives the restriction map 
$a_J(f): a_J(F)(X) \to a_J(F)(X')$.

\section{Grid, fiber functor, 
Galois monoid of a $Y$-site}
\label{sec:grids}

The primary aim of this section is to state our main
theorem (Theorem~\ref{thm:Galois_main}).   

We define what we call a $Y$-site, which is a $B$-site
satisfying additional conditions.  
We define a pregrid and a grid of such a site.
When we are considering the \'etale site 
of the spectrum of a field, the pregrid and the grid
are analogues
of the algebraic closure of the field.
A pregrid may also be regarded as an analogue
of the maximal tree associated with a graph.

To a grid, we associate an analogue of the fiber functor 
(in the sense of Galois category theory).
We also have 
an analogue of the absolute Galois group,
which turns out to be a monoid (not necessarily 
a group) with a certain family of specified submonoids.

We prove in Section~\ref{sec:grid existence} 
that under certain cardinality conditions, a grid of $Y$-site exists.

The main theorem states that,
under some cardinality assumptions,
when a grid exists we have the equivalence of 
categories of sheaves on the site and the smooth
sets of the absolute Galois monoid.

In Sections~\ref{sec:poset} and~\ref{sec:quasi-posets},
we develop some conventions regarding the term poset and 
define the term quasi-poset. To clarify terminology, we regard a partially ordered set
naturally as a category, and we call it a poset 
in this article.   A quasi-poset is a category 
that is similar to and almost a poset.
In Section~\ref{sec:torsors},
we prove a lemma on the non-emptiness of 
certain projective limit.
This will be used in Section~\ref{sec:grid existence}, 
where
we construct a pregrid of a $Y$-site
assuming certain cardinality conditions.

\subsection{Convention: Posets}
\label{sec:poset}
By a partially ordered set, or a poset, we mean a 
set equipped with a partial order.  We assume 
throughout that the partial order is antisymmetric.  

Let $P$ be a poset.
Let $\cC_P$ denote the following category.
The objects of $\cC_P$ are the elements of $P$.
For two elements $x$  and $y$ of $P$, there exists
at most one morphism from $x$ to $y$ in $\cC_P$
and there exists a morphism from $x$ to $y$
if and only if $x \le y$. 
The assignment that sends a partially ordered set $P$
to the category $\cC_P$ is a functor from 
the category of partially ordered sets 
to the category of categories.  
This functor is faithful.  

In what follows, we regard a poset as a category
by the functor above.  By abuse of notation, 
we say that a category
is a poset if it lies in the image of the 
functor above.

\subsection{Quasi-posets}
\label{sec:quasi-posets}
\begin{defn}
\label{defn:quasi-posets}
We say that a category $\cC$ is 
a {\em quasi-poset}
if the following two conditions are satisfied:
\begin{enumerate}
\item The category $\cC$ is non-empty and 
essentially $\frU$-small.
\item The category $\cC$ is thin, i.e., for
any two objects $X$, $Y$ of $\cC$, there exists
at most one morphism from $X$ to $Y$ in $\cC$.
\end{enumerate}
\end{defn}

Let $\cC$ be a quasi-poset
satisfying the following
condition.
\begin{enumerate}
\setcounter{enumi}{2}
\item The category $\cC$ is skeletal, i.e.,
any two objects of $\cC$ that are isomorphic are equal.
\end{enumerate}
Then $\cC$ is a poset.

\subsubsection{}
Let $\cC$ be a quasi-poset.
Then there exists a poset 
subcategory (i.e., a subcategory that is a poset)
$\cC_1$ of $\cC$
such that the inclusion functor
$i: \cC_1 \inj \cC$ 
is an equivalence of categories.
The subcategory $\cC_1$ is unique in
the following sense: 
for any other such poset subcategory
$i' : \cC'_1 \inj \cC$, there
exists a unique isomorphism
$\alpha: \cC_1 \cong \cC'_1$ of categories
such that the functor
$i$ is naturally isomorphic to the composite
functor $i' \circ \alpha$.
We call the poset subcategory $\cC_1$
a {\it poset skeleton} of $\cC$.
The following lemma can be checked easily:
\begin{lem}\label{lem:induced_skeleton}
Let $\cC$ and $\cC'$ be quasi-posets 
and let $\cC_1$ and $\cC'_1$
be their poset skeletons.
Then, for any functor $F:\cC \to \cC'$, there
exists a unique functor $F_1:\cC_1 \to \cC'_1$
such that the diagram
$$
\begin{CD}
\cC_1 @>{F_1}>> \cC'_1 \\
@V{i}VV @VV{i'}V \\
\cC @>{F}>> \cC',
\end{CD}
$$
where $i$ and $i'$ denote the
inclusion functors, is commutative 
up to natural isomorphisms.
\qed
\end{lem}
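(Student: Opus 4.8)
The plan is to exploit that $\cC$, $\cC'$, and their poset skeletons are all \emph{thin}, so that a functor between any two of them is nothing but a monotone map on objects and is completely determined by its effect on objects. The strategy is to build a canonical retraction $r':\cC' \to \cC'_1$ of the inclusion $i'$, and then simply set $F_1 = r' \circ F \circ i$; the square will commute up to natural isomorphism because $i' \circ r'$ will be naturally isomorphic to the identity.

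First I would construct $r'$. Since $i'$ is an equivalence it is essentially surjective, so every object $X$ of $\cC'$ is isomorphic in $\cC'$ to some object of $\cC'_1$; because $\cC'_1$ is skeletal and $\cC'$ is thin, that object is \emph{unique}, and I define $r'(X)$ to be it, together with a chosen isomorphism $\phi_X: i'(r'(X)) \xto{\cong} X$. For a morphism $f:X \to Y$ of $\cC'$, the composite $\phi_Y^{-1} \circ f \circ \phi_X$ is a morphism $r'(X) \to r'(Y)$ of $\cC'$, hence lies in $\cC'_1$ by fullness of $i'$; since $\cC'_1$ is thin this determines $r'(f)$, and functoriality is automatic. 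By construction $r' \circ i' = \id_{\cC'_1}$ (using skeletality and uniqueness), and the family $(\phi_X)$ assembles into a natural isomorphism $i' \circ r' \cong \id_{\cC'}$, naturality being automatic in the thin category $\cC'$.

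Given this, existence is immediate: with $F_1 := r' \circ F \circ i$ one has $i' \circ F_1 = i' \circ r' \circ F \circ i \cong F \circ i$, which is exactly the asserted commutativity up to natural isomorphism. For uniqueness, suppose $F_1$ and $F_1'$ both make the square commute up to isomorphism; then $i' \circ F_1 \cong i' \circ F_1'$, so for each object $X$ of $\cC_1$ the objects $F_1(X)$ and $F_1'(X)$ become isomorphic after applying $i'$, hence are isomorphic in $\cC'_1$ because $i'$ is fully faithful and therefore reflects isomorphisms. As $\cC'_1$ is skeletal this forces $F_1(X) = F_1'(X)$, so the two functors agree on objects; and since $\cC'_1$ is thin, a functor into it is determined by its values on objects, whence $F_1 = F_1'$.

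The one genuinely delicate point---and the step I would watch most carefully---is this last upgrade from \emph{isomorphic} to \emph{equal}. The hypotheses mix a statement that holds only up to natural isomorphism (the commutativity of the square, forced by working with equivalences) with a demand for a strictly unique functor $F_1$. The resolution is precisely that $\cC'_1$ is skeletal, which converts isomorphism of objects into equality, combined with thinness, which converts agreement on objects into equality of functors; without both properties only uniqueness up to isomorphism would survive.
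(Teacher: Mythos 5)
Your proof is correct. The paper gives no proof of this lemma (it is stated as one that ``can be checked easily''), and your argument---constructing the retraction $r'$ of $i'$ via skeletality and fullness of the inclusion, getting functoriality of $r'$ and naturality of $(\phi_X)$ for free from thinness, and then deriving \emph{strict} uniqueness of $F_1$ by combining skeletality (isomorphic objects of $\cC'_1$ are equal) with thinness (agreement on objects forces equality of functors)---is exactly the intended routine verification, with the one delicate point (upgrading ``commutes up to isomorphism'' to a strictly unique $F_1$) correctly isolated.
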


\begin{cor}
Any equivalence of categories 
between two posets
is an isomorphism of categories.
\qed
\end{cor}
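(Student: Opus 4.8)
The plan is to exploit the two defining features of a poset viewed as a category: it is \emph{thin} (at most one morphism between any ordered pair of objects) and \emph{skeletal} (isomorphic objects are equal, which follows from antisymmetry of the order). Let $P$ and $Q$ be posets and let $F: P \to Q$ be an equivalence, with quasi-inverse $G: Q \to P$ together with natural isomorphisms $\eta: \id_P \Rightarrow G \circ F$ and $\varepsilon: F \circ G \Rightarrow \id_Q$.

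First I would show that $F$ is a bijection on objects. For each object $x$ of $P$, the component $\eta_x: x \to GF(x)$ is an isomorphism in $P$; since $P$ is skeletal, this forces $GF(x) = x$. Thus $G \circ F$ is the identity on objects of $P$, and symmetrically $F \circ G$ is the identity on objects of $Q$. Hence $F$ and $G$ are mutually inverse bijections between the two object sets.

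Next I would treat morphisms. Because $P$ and $Q$ are thin, a functor between them is completely determined by its effect on objects, so it suffices to check that $F$ matches up the (unique) morphisms correctly. Concretely, for objects $x, y$ of $P$ there is a morphism $x \to y$ if and only if there is one $Fx \to Fy$ in $Q$: the forward implication is functoriality of $F$, and the reverse follows by applying $G$ and using that $G \circ F$ is the identity on objects. Combined with thinness, this shows $F$ is bijective on each hom-set and that $G \circ F = \id_P$ and $F \circ G = \id_Q$ hold strictly; therefore $F$ is an isomorphism of categories with inverse $G$.

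I expect no serious obstacle here; the only subtlety is the degenerate possibility that one of the posets is empty, but an empty category admits an equivalence only to an empty category, so the statement is trivial in that case. Alternatively, the result can be read off from Lemma~\ref{lem:induced_skeleton} together with the uniqueness of poset skeletons, since a poset is its own poset skeleton, but the direct argument above is shorter and self-contained.
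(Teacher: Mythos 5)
Your proposal is correct and matches the paper's intended argument: since posets are thin and skeletal, the components of $\eta$ and $\varepsilon$ are forced to be identities, so $G \circ F = \id_P$ and $F \circ G = \id_Q$ hold strictly and $F$ is an isomorphism with inverse $G$. The paper leaves the corollary without a written proof precisely because of the observation recorded nearby that natural isomorphisms between functors into a poset are identities---which is exactly the fact you verify directly from antisymmetry and thinness.
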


It follows from condition (3) that
any category $\cC$ and for any 
poset $\cC'$,
the natural isomorphisms between 
two functors from $\cC$ to $\cC'$ are the identities.
In particular, the $2$-category of
posets (the full subcategory of the category of 
$\frU$-small categories where the objects are posets) 
can be regarded as a $1$-category.

\subsection{Torsors under pro-groups}
\label{sec:torsors}
Let $I$ be a directed partially ordered set and
let $G = (G_i)_{i \in I}$ be a projective
system of groups indexed by the elements of $I$.
For $i,j \in I$ with $i \le j$, we denote by
$\phi_{j,i}:G_j \to G_i$ the transition homomorphism
in the projective system $G$.
A (left) $G$-torsor is a projective system $(S_i)_{i \in I}$
of sets equipped with a left action of
the group $G_i$ on $S_i$ for each $i \in I$
such that $S_i$ is a left $G_i$-torsor for each $i \in I$
and that for any two $i,j$ with $i \le j$ and for
any $g \in G_j$, $s \in S_j$, we have
$f_{j,i}(g \cdot s) = \phi_{j,i}(g) \cdot f_{j,i}(s)$,
where $f_{j,i}:S_j \to S_i$ denotes the
transition map.

We say that the projective system $G$ has 
{\em trivial first non-abelian cohomology} if
for any $G$-torsor $(S_i)_{i \in I}$, the
limit $\varprojlim_{i \in I} S_i$ of sets is non-empty.
\begin{lem}\label{lem:admissible}
Let $I$ be a directed partially ordered set and
let $G=(G_i)_{i \in I}$ be a projective system of groups.
Suppose that at least one of the following conditions is
satisfied:
\begin{enumerate}
\item For every $i \in I$, $G_i$ is a finite group.
\item $I$ is a finite set.
\item $I$ is a countable set, and the transition maps 
are surjective.
\end{enumerate}
Then, $G$ has trivial first non-abelian cohomology.
\end{lem}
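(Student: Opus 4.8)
The three cases have genuinely different flavors, so the plan is to treat them separately after one common preliminary observation. The preliminary point is that, by the very definition of a torsor, each $S_i$ is non-empty (a $G_i$-torsor is isomorphic as a $G_i$-set to $G_i$ acting on itself by left translation), so in particular $|S_i| = |G_i|$; moreover each transition map $f_{j,i}:S_j \to S_i$ is $\phi_{j,i}$-equivariant, so for any fixed $s \in S_j$ the image $f_{j,i}(S_j) = f_{j,i}(G_j \cdot s)$ is the orbit of $f_{j,i}(s)$ under the subgroup $\phi_{j,i}(G_j) \subseteq G_i$. Since $G_i$ acts simply transitively on $S_i$, this shows $f_{j,i}$ is surjective if and only if $\phi_{j,i}$ is. I would record this at the outset, as it is precisely what makes Case (3) run.

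For Case (2), I would use that a finite directed poset has a greatest element $m$ (directedness gives upper bounds of finite subsets, and $I$ is finite). Choosing $s_m \in S_m$ and setting $s_i = f_{m,i}(s_m)$ for every $i \in I$, the compatibility $f_{j,i}(s_j) = s_i$ for $i \le j$ is immediate from the relation $f_{m,i} = f_{j,i} \circ f_{m,j}$, so $(s_i)_{i \in I} \in \varprojlim_{i} S_i$. This case is essentially free.

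For Case (1), where every $G_i$, hence every $S_i$, is a finite non-empty set and where no surjectivity is assumed, the plan is the standard compactness argument. I would give each $S_i$ the discrete topology and form $\prod_i S_i$, which is compact by Tychonoff. For each pair $i \le j$ set $C_{j,i} = \{(s_k)_k \mid f_{j,i}(s_j) = s_i\}$, a closed subset, so that $\varprojlim_i S_i = \bigcap_{i \le j} C_{j,i}$. By the finite intersection property it then suffices to show that any finite subcollection of the $C_{j,i}$ has non-empty intersection. Such a subcollection mentions only finitely many indices, which admit a common upper bound $m$ by directedness; propagating a chosen $s_m \in S_m$ downward exactly as in Case (2), and filling in the remaining coordinates arbitrarily using $S_k \neq \emptyset$, yields a point satisfying all the listed constraints. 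Compactness then produces a point of the full intersection. This compactness step is the main, and only genuinely non-formal, obstacle; everything else is bookkeeping.

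For Case (3), with $I$ countable and the $\phi_{j,i}$, hence the $f_{j,i}$, surjective, the plan is to reduce to a chain. I would first extract a cofinal increasing sequence $i_1 \le i_2 \le \cdots$ by enumerating $I = \{a_1, a_2, \dots\}$ and inductively taking $i_{n+1}$ to be an upper bound of $\{i_n, a_{n+1}\}$, which exists by directedness; cofinality is then clear. Along this chain I would build a compatible family by choosing $s_{i_1} \in S_{i_1}$ and lifting successively through the surjections $f_{i_{n+1}, i_n}$. Finally, because the chain is cofinal, this family extends to an element of $\varprojlim_i S_i$ by setting $s_i = f_{i_n, i}(s_{i_n})$ for any $n$ with $i \le i_n$, the choice of $n$ being irrelevant by compatibility. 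This completes the argument in all three cases.
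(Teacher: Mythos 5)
Your proof is correct and takes essentially the same route the paper sketches: the paper handles case (1) by citing that any filtered limit of non-empty finite sets is non-empty (exactly what your Tychonoff/finite-intersection argument establishes) and remarks that cases (2) and (3) ``can be checked more directly,'' which is precisely what your greatest-element and cofinal-chain constructions do. Your preliminary observation that surjectivity of $\phi_{j,i}$ forces surjectivity of $f_{j,i}$ (via simple transitivity of the $G_i$-action) is a detail the paper leaves implicit but that case (3) genuinely needs, so it is good that you recorded it.
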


\begin{proof}
In the case in which Condition (1) is satisfied, the claim follows, as
any filtered limit of non-empty finite sets is non-empty.
In the case in which Condition (2) is satisfied, the claim can be checked
directly.    Suppose Condition (3) is satisfied.
Observe that then there exists a cofinal partially ordered subset $J \subset I$
which is isomorphic to the opposite
of the partially ordered set of natural numbers 
$\Z_{\ge 0}$.   The claim follows by using $J$ 
and the surjectivity.
\end{proof}

We can rephrase the condition that
$G$ has trivial first non-abelian cohomology
by introducing the notion of 
the {\em first non-abelian cohomology}
$R^1 \varprojlim_{i \in I} G_i$. This is a pointed set
defined as follows. Let $J$ denote the set of pairs
$(i,j) \in I \times I$ with $i \le j$.
Let $Z^1 \varprojlim_{i \in I} (G_i)$
denote the set of elements 
$(g_{i,j})_{(i,j) \in J} \in \prod_{(i,j) \in J} G_i$
satisfying $g_{i,j} \phi_{j,i}(g_{j,k}) = g_{i,k}$
for any $i,j,k \in I$ with $i \le j \le k$.
We say that two elements $(g_{i,j})$ and
$(g'_{i,j})$ in $Z^1 \varprojlim_{i \in I} (G_i)$
are equivalent if there exists an element
$(h_i) \in  \prod_{i \in I} G_i$ satisfying
$g'_{i,j} = h_i^{-1} g_{i,j} \phi_{j,i}(h_j)$
for any $(i,j) \in J$.
We define $R^1 \varprojlim_{i \in I} G_i$
to be the quotient of $Z^1 \varprojlim_{i \in I} (G_i)$ 
under the equivalence relation above.
We regard $R^1 \varprojlim_{i \in I} G_i$ as the
set pointed at the class of $(1)_{(i,j) \in J} 
\in Z^1 \varprojlim_{i \in I} (G_i)$.

\begin{lem}\label{lem:non-empty}
Let $G=(G_i)_{i \in I}$ be a projective system of groups.
Then, $G$ has trivial first non-abelian cohomology if
and only if $R^1 \varprojlim_{i \in I} G_i$ consists
of one point.
\end{lem}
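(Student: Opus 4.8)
The plan is to set up the standard dictionary between $G$-torsors and non-abelian $1$-cocycles in $Z^1\varprojlim_{i\in I}(G_i)$, under which a point of $\varprojlim_i S_i$ corresponds to a trivialization of the associated cohomology class. Since $R^1\varprojlim_i G_i$ consists of one point precisely when every class equals the base point, and trivial first non-abelian cohomology means every torsor has a nonempty limit, the two conditions will match term by term. I would prove the two implications separately, constructing the correspondence explicitly in each direction.

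First I would treat ``$R^1\varprojlim_i G_i$ is a single point $\Rightarrow$ trivial first non-abelian cohomology''. Let $(S_i)_{i\in I}$ be a $G$-torsor. Each $S_i$ is a nonempty $G_i$-torsor, so I choose $s_i\in S_i$ for every $i$. For $i\le j$ the elements $s_i$ and $f_{j,i}(s_j)$ of $S_i$ differ by a unique $g_{i,j}\in G_i$, say $s_i=g_{i,j}\cdot f_{j,i}(s_j)$. Using the transitivity $f_{j,i}\circ f_{k,j}=f_{k,i}$ together with the equivariance $f_{j,i}(g\cdot s)=\phi_{j,i}(g)\cdot f_{j,i}(s)$ and freeness of the action, a direct computation gives $(g_{i,j})\in Z^1\varprojlim_i(G_i)$. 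By hypothesis its class is the base point, so there is $(h_i)$ with $g_{i,j}=h_i\,\phi_{j,i}(h_j)^{-1}$ (equivalently $1=h_i^{-1}g_{i,j}\phi_{j,i}(h_j)$); replacing $s_i$ by $s_i'=h_i^{-1}\cdot s_i$ then yields $f_{j,i}(s_j')=s_i'$ for all $i\le j$, i.e. a point of $\varprojlim_i S_i$.

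For the converse I would build a twisted torsor from a cocycle. Given $(g_{i,j})\in Z^1\varprojlim_i(G_i)$, put $S_i=G_i$ with $G_i$ acting by left translation, and define transition maps $f_{j,i}(x)=\phi_{j,i}(x)\,g_{i,j}^{-1}$. The cocycle relation $g_{i,j}\phi_{j,i}(g_{j,k})=g_{i,k}$ is exactly what makes $f_{j,i}\circ f_{k,j}=f_{k,i}$, and left-equivariance is immediate, so $(S_i)$ is a $G$-torsor. By the assumed triviality $\varprojlim_i S_i\neq\emptyset$; an element $(x_i)$ satisfies $\phi_{j,i}(x_j)g_{i,j}^{-1}=x_i$, and taking $h_i=x_i^{-1}$ shows $(g_{i,j})$ is equivalent to the constant cocycle $(1)$. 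As the base point always lies in $R^1\varprojlim_i G_i$, this gives that it is a single point.

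The only real subtlety — and the step I would be most careful about — is matching conventions: because the actions are on the left while the cocycle condition reads $g_{i,j}\phi_{j,i}(g_{j,k})=g_{i,k}$, one must choose the defining relation $s_i=g_{i,j}\cdot f_{j,i}(s_j)$ (rather than its inverse) and the twist $f_{j,i}(x)=\phi_{j,i}(x)g_{i,j}^{-1}$ with precisely these inverses, so that the cocycle identity coincides with compatibility of the transition maps and the trivializing family $(h_i)$ lands in the correct form of the equivalence relation. Everything else is routine bookkeeping with torsors.
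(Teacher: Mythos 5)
Your proof is correct and takes essentially the same route as the paper: the paper's proof constructs exactly your twisted torsor $S_z$ with $S_{z,i}=G_i$ and transition maps $f_{j,i}(x)=\phi_{j,i}(x)\,g_{i,j}^{-1}$, and asserts that $z\mapsto [S_z]$ gives a bijection from $R^1\varprojlim_{i\in I}G_i$ to isomorphism classes of left $G$-torsors, from which the lemma follows. You merely spell out the two implications (extracting a cocycle from chosen sections $s_i\in S_i$ and straightening by a trivializing family $(h_i)$) that the paper leaves as ``one can check easily,'' with conventions matching the paper's definitions exactly.
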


\begin{proof}
Let $J$ denote the set of pairs $(i,j)$ with $i \le j$.
For $(i,j) \in J$, let $\phi_{j,i}:G_j \to G_i$
denote the transition map.

For an element $z=(g_{i,j})_{(i,j) \in J}$ of
$Z^1 \varprojlim_{i\in I} G_i$, we associate
a left $G$-torsor $(S_{z,i})_{i \in I}$ as follows.
For $i \in I$, we set $S_{z,i}=G_i$, and
for $(i,j) \in J$, let $f_{j,i}: S_{z,j}
\to S_{z,i}$ denote the map that sends
$g \in G_j$ to $\phi_{j,i}(g) g_{i,j}^{-1}$.
Then $(S_{z,i})_{i \in I}$ with the system 
$(f_{j,i})_{(i,j) \in J}$
of transition maps is a left $G$-torsor, which we
denote by $S_z$.
One can then easily check that the map that sends $z \in Z^1 \varprojlim_{i\in I} G_i$
to the isomorphism class of $S_z$ induces
a bijection from $R^1 \varprojlim_{i\in I} G_i$
to the set of isomorphism classes of left $G$-torsors.
Thus, the claim follows.
\end{proof}

\subsection{$Y$-sites}

\begin{defn}
We say that a category $\cC$
is $\Lambda$-connected if, for any two objects
$X$, $Y$ in $\cC$, there exists an object
$Z$ in $\cC$ and morphisms 
$Z \to X$ and $Z \to Y$ in $\cC$.
\end{defn}

\begin{defn}
\label{defn:Y-sites}
Let $(\cC, J)$ be a $B$-site.
Recall that $\cT(J)=\wh{\cT(J)}$.
We say that a $B$-site $(\cC, J)$
is a $Y$-site if the following properties 
hold:
\begin{enumerate}
\item
$\cC$ is essentially $\frU$-small.
\item
$C(\cT(J))$ is $\Lambda$-connected.
\item
$\cT(J)$ has enough Galois coverings.
\end{enumerate}
\end{defn}

\subsection{Pregrids}
\begin{defn}\label{defn:pregrid}
A pregrid of a $\Lambda$-connected category
$\cC$ is a pair $(\cC_1, \iota_1)$
of a $\frU$-small category $\cC_1$ 
and a covariant functor
$\iota_1:\cC_1 \to \cC$ satisfying the following
conditions:
\begin{enumerate}
\item The category $\cC_1$ is a poset and is
$\Lambda$-connected.
\item The functor $\iota_1$ is essentially surjective.
\item For any object $X$ of $\cC_1$,
the functor $\cC_{1,/X} \to \cC_{/\iota_1(X)}$ between
the overcategories induced by $\iota_1$
is essentially surjective.
\item For any object $X$ of $\cC_1$, the functor
$\cC_{1,X/} \to \cC_{\iota_1(X)/}$
between the undercategories induced by $\iota_1$
is an equivalence of categories.
\end{enumerate}
\end{defn}

Let $(\cC, J)$ be a $Y$-site.
Let $(\cC_0,\iota_0)$ be a pair of a category $\cC_0$
and a functor $\iota_0: \cC_0 \to \cC$.
\begin{defn}(edge objects)
\label{defn:edge objects}
A morphism $f$ of $\cC_0$ is called of type $J$ if
the morphism $\iota_0(f)$ belongs to $\cT(J)$.
An object $X$ of $\cC_0$ is called an edge object of $\cC_0$
if any morphism $f:Y \to X$ in $\cC_0$ is of type $J$.
\end{defn}

For any object $X$ of $\cC_0$, we let
$\iota_{0,X/}$ denote the functor
$\iota_{0,X/} : \cC_{0,X/} \to \cC_{\iota_0(X)/}$
between undercategories induced by $\iota_0$.
\begin{defn}(grids)
\label{defn:grids}
Let $(\cC,J)$ be a $Y$-site.
We define a grid of $(\cC,J)$ 
to be a pair $(\cC_0, \iota_0)$
of a $\frU$-small category $\cC_0$ and a functor 
$\iota_0: \cC_0 \to \cC$ satisfying the following 
properties:
\begin{enumerate}
\item The category $\cC_0$ is a poset and
is $\Lambda$-connected.
\item For any object $X'$ of $\cC$, there exists an
edge object $X$ of $\cC_0$ such that $\iota_0(X)$ is isomorphic
to $X'$ in $\cC$.
\item For any object $X$ of $\cC_0$ and for any
morphism $f:Y \to \iota_0(X)$ in $\cC$ that
belongs to $\cT(J)$, there exists
a morphism $f':Y' \to X$ in $\cC_0$ and an
isomorphism $\alpha:Y \xto{\cong} \iota_0(Y')$
in $\cC$ satisfying $f= \iota_0(f') \circ \alpha$.
\item For any object $X$ of $\cC_0$, the functor
$\iota_{0,X/}:\cC_{0,X/} \to \cC_{\iota_0(X)/}$
is an equivalence of categories.
\end{enumerate}
\end{defn}

\subsection{The absolute Galois monoid associated with a grid}
Let $(\cC_0, \iota_0)$ be a grid 
of a $Y$-site $(\cC,J)$.
\subsubsection{The absolute Galois monoid $M_\Cip$}
\label{sec:MC}
We denote by
$M_\Cip$ the set of pairs $(\alpha,\gamma_\alpha)$ 
of an endomorphism $\alpha :\cC_0 \to \cC_0$ 
of the category $\cC_0$ and a natural isomorphism
$\gamma_\alpha : \iota_0 \xto{\cong} \iota_0 \circ \alpha$.
For two elements $(\alpha,\gamma_\alpha)$
and $(\beta,\gamma_\beta)$ of $M_\Cip$,
we define the composite 
$(\alpha,\gamma_\alpha)\circ 
(\beta, \gamma_\beta) \in M_\Cip$ 
to be
$$
(\alpha,\gamma_\alpha)\circ 
(\beta, \gamma_\beta) =
(\alpha \circ \beta,
(\beta^* \gamma_\alpha) \circ \gamma_\beta)
$$
where $\beta^* \gamma_\alpha$ denotes the
natural isomorphism $\iota_0 \circ \beta 
\xto{\cong} \iota_0 \circ \alpha \circ \beta$
induced by $\gamma_\alpha$.
The binary operation $-\circ-$ on $M_\Cip$
gives a monoid structure on the set $M_\Cip$,
whose unit element of $M_\Cip$ 
is equal to $(\id_{\cC_0},\id_{\iota_0})$.
We call this monoid the absolute Galois monoid 
associated with the grid $\Cip$.

\subsubsection{The associated submonoids}
\label{sec:submonoids}
Let $X$ be an  
object of $\cC_0$.
We define the submonoid $\bK_X \subset M_\Cip$
to be
$$
\bK_X = \{
(\alpha,\gamma_\alpha) \in M_\Cip \ |\ 
\alpha(X) = X,
\gamma_\alpha(X)= \id_{\iota_0(X)}:
\iota_0(X) \xto{\cong} \iota_0(\alpha(X))
= \iota_0(X) \}.
$$
We use this monoid only when $X$ is an edge object.
We will see later (Lemma~\ref{lem:bKX2}) that 
when $X$ is an edge object, $\bK_X$ is a group.

\subsection{The fiber functor associated with a grid}

\subsubsection{A grid is cofiltered}

\begin{lem}\label{lem:core_cofiltered}
Let $\cC'$ be a category that is 
non-empty, thin, and $\Lambda$-connected.
Then the category $\cC'$ is cofiltered.
\end{lem}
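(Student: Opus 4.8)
### Proof Proposal for Lemma~\ref{lem:core_cofiltered}

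The plan is to verify directly the standard axioms for a category to be cofiltered, exploiting heavily the thinness hypothesis which collapses all coherence and uniqueness requirements. Recall that a (possibly large) category $\cC'$ is cofiltered if it is non-empty and satisfies: (a) for any two objects $X$, $Y$ there exist an object $Z$ and morphisms $Z \to X$, $Z \to Y$; and (b) for any two parallel morphisms $u, v : X \rightrightarrows Y$ there exist an object $W$ and a morphism $w : W \to X$ equalizing them, i.e.\ $u \circ w = v \circ w$. Since $\cC'$ is assumed non-empty, the first part of the definition is immediate.

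First I would dispatch condition (a): this is precisely the hypothesis that $\cC'$ is $\Lambda$-connected (Definition of $\Lambda$-connected), so there is nothing to prove beyond quoting that hypothesis. The entire subtlety therefore lives in condition (b). Here is where thinness does all the work: if $X$, $Y$ are objects of $\cC'$, then by condition (2) of Definition~\ref{defn:quasi-posets} (which $\cC'$ satisfies, being thin) there exists \emph{at most one} morphism from $X$ to $Y$. Consequently, any two parallel morphisms $u, v : X \to Y$ are automatically equal. Given parallel $u, v : X \to Y$ we thus have $u = v$, and we may simply take $W = X$ and $w = \id_X$; then trivially $u \circ \id_X = v \circ \id_X$. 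This verifies condition (b) with no further construction.

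The only point that requires a moment's care is checking that the version of the cofiltered axioms one uses is the correct one, and that non-emptiness is genuinely supplied; I would state the axioms explicitly at the outset to avoid ambiguity, since some authors fold non-emptiness into the definition and others list it separately. I expect no serious obstacle: the lemma is essentially the observation that a thin $\Lambda$-connected category trivially equalizes parallel arrows because there are no genuinely distinct parallel arrows to equalize. If anything, the main thing to be careful about is not to over-engineer the argument—one should resist invoking the poset skeleton or any of the machinery of quasi-posets, since the bare thinness and $\Lambda$-connectedness suffice and the statement is deliberately stated for an arbitrary thin category rather than a poset. The proof should therefore be only a few lines, amounting to: non-emptiness is assumed, the binary lower-bound condition is $\Lambda$-connectedness, and the parallel-morphism condition is vacuous by thinness.
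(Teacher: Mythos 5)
Your proof is correct, but it verifies a different (equivalent) definition of ``cofiltered'' than the one the paper commits to, and this is exactly the fork the authors flag in the remark following the lemma: they fix the definition of SGA4, I.2.7, under which cofiltered means connected plus the two pseudo-cofiltered axioms, namely (i) every cospan $Y_1 \xto{f_1} X \xleftarrow{f_2} Y_2$ can be completed to a commutative square, and (ii) every parallel pair can be equalized. Under that definition the nontrivial content is axiom (i), which your proof never addresses directly: the paper's argument reduces the lemma to showing $\cC'$ is semi-cofiltered, then uses $\Lambda$-connectedness to produce a span $Y_1 \xleftarrow{g_1} Z \xto{g_2} Y_2$ over the feet of the cospan and invokes thinness to force $f_1 \circ g_1 = f_2 \circ g_2$. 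You instead verify the dual-of-filtered axioms (non-emptiness, binary lower bounds, equalization of parallel pairs), for which $\Lambda$-connectedness is literally the second axiom and thinness trivializes the third --- precisely the situation the authors anticipate when they write that ``for some choice of the definition, the statement \dots{} may be an immediate consequence.'' The gap between the two routes is purely definitional rather than mathematical: your axioms (a) and (b) imply the SGA4 cospan axiom by the same one-line move the paper makes (apply (a) to the feet of the cospan, then use thinness), and the paper's connectedness requirement follows from non-emptiness plus $\Lambda$-connectedness. What your approach buys is brevity; what the paper's buys is that the verification is valid against the specific definition it cites, without silently appealing to the equivalence of the two formulations. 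If you wanted your write-up to drop into this paper, you should either quote the equivalence or add the two-line completion of cospans.
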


\begin{rmk}
Here we assume that the notion ``cofiltered"
is defined as in I.2.7 of \cite{SGA4}. There are several
equivalent definitions for this notion, and 
for some of these,
the statement of Lemma \ref{lem:core_cofiltered} 
below may be an immediate consequence of the definition.
\end{rmk}

\begin{proof}
As $\cC'$ is non-empty and thin, 
it suffices to
show that $\cC'$ is semi-cofiltered.

Let $Y_1 \xto{f_1} X \xleftarrow{f_2} Y_2$ 
be a diagram in $\cC'$.
As $\cC'$ is $\Lambda$-connected, there exists a
diagram $Y_1 \xleftarrow{g_1} Z \xto{g_2} Y_2$
in $\cC'$.
By noting that $\cC'$ is thin again, we have
$f_1 \circ g_1 = f_2 \circ g_2$, which
proves the claim.
\end{proof}

\begin{cor}\label{cor:core_cofiltered}
Let $(\cC_0,\iota_0)$ be a grid of a $Y$-site $(\cC, J)$
such that $\cC$ is non-empty.
Then $\cC_0$ is cofiltered.
\end{cor}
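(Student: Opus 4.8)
The plan is to deduce this directly from Lemma~\ref{lem:core_cofiltered}, whose hypotheses are that the category be non-empty, thin, and $\Lambda$-connected. So the entire task reduces to verifying these three properties for $\cC_0$, and almost all of them are immediate from the definition of a grid (Definition~\ref{defn:grids}).

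First I would observe that condition~(1) in Definition~\ref{defn:grids} states that $\cC_0$ is a poset and is $\Lambda$-connected. The $\Lambda$-connectedness is thus given outright. Being a poset, $\cC_0$ is in particular thin: by the convention of Section~\ref{sec:poset} (and Definition~\ref{defn:quasi-posets}), a poset, regarded as a category via the functor of Section~\ref{sec:poset}, has at most one morphism between any two objects. Hence the thinness hypothesis is satisfied as well.

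The only point requiring a short argument is non-emptiness. Here I would use the standing hypothesis that $\cC$ is non-empty together with condition~(2) of Definition~\ref{defn:grids}: for any object $X'$ of $\cC$ there exists an edge object $X$ of $\cC_0$ with $\iota_0(X)$ isomorphic to $X'$. Since $\cC$ has at least one object $X'$, this produces at least one object $X$ of $\cC_0$, so $\cC_0$ is non-empty.

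With all three hypotheses of Lemma~\ref{lem:core_cofiltered} verified, I would conclude that $\cC_0$ is cofiltered. I do not anticipate any genuine obstacle: the statement is essentially a matter of unwinding the grid axioms and invoking the preceding lemma, with the mild non-emptiness step being the only place where the additional hypothesis that $\cC$ is non-empty is actually used.
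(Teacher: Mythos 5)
Your proof is correct and follows exactly the route the paper intends: the paper's own proof simply says the claim ``follows immediately from the definition,'' meaning precisely your unwinding --- $\Lambda$-connectedness and posethood (hence thinness) from condition~(1) of Definition~\ref{defn:grids}, non-emptiness of $\cC_0$ from condition~(2) together with the non-emptiness of $\cC$, and then Lemma~\ref{lem:core_cofiltered}. You have merely made explicit the steps the paper leaves implicit, including correctly identifying the one place where the hypothesis that $\cC$ is non-empty is used.
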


\begin{proof}
The claim follows immediately from the definition.
\end{proof}

\subsubsection{The fiber functor associated with a grid}
\label{sec:fiber functor}
Let $(\cC_0,\iota_0)$ be a grid of a Y-site $(\cC,J)$
such that $\cC$ is non-empty.
Let $\cCe$ denote the full subcategory of $\cC_0$
whose objects are the edge objects of $\cC_0$.
\begin{lem} \label{lem:lower}
Let $X$ be an edge object of $\cC_0$ and let
$f:Y \to X$ be a morphism in $\cC_0$.
Then, $Y$ is an edge object of $\cC_0$.
\end{lem}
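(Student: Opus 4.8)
The plan is to unwind Definition~\ref{defn:edge objects} and reduce the statement to a single application of Condition~(3) in the definition of a $B$-site. To prove that $Y$ is an edge object I must show that \emph{every} morphism $g:Z \to Y$ in $\cC_0$ is of type $J$, that is, $\iota_0(g) \in \cT(J)$. So I would fix an arbitrary morphism $g:Z \to Y$ in $\cC_0$ and aim to produce the membership $\iota_0(g)\in\cT(J)$.

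The key step is to exploit the hypothesis that $X$ is an edge object by forming the composite with the given $f$. Since $f\circ g:Z \to X$ is a morphism into the edge object $X$, it is of type $J$ by Definition~\ref{defn:edge objects}, and applying the functor $\iota_0$ yields $\iota_0(f)\circ\iota_0(g)=\iota_0(f\circ g)\in\cT(J)$. I would then apply Condition~(3) of Definition~\ref{defn:B-site} to the diagram $\iota_0(Z)\xto{\iota_0(g)}\iota_0(Y)\xto{\iota_0(f)}\iota_0(X)$ in $\cC$: because the composite belongs to $\cT(J)$, both factors $\iota_0(f)$ and $\iota_0(g)$ belong to $\cT(J)$. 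In particular $\iota_0(g)\in\cT(J)$, so $g$ is of type $J$; as $g$ was arbitrary, $Y$ is an edge object.

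I expect no real obstacle here beyond selecting the correct implication. The only point worth flagging is which half of Condition~(3) is doing the work: the passage from a composite in $\cT(J)$ to its outer factor $\iota_0(f)$ is automatic from Proposition~\ref{prop:cTJ} (as recorded in the remark following Definition~\ref{defn:B-site}), whereas the conclusion I actually need concerns the inner factor $\iota_0(g)$, which is precisely the content added by Condition~(3). Thus the lemma follows directly from the $B$-site axioms together with the functoriality of $\iota_0$, and no cofinality or Galois-covering input is required.
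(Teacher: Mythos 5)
Your proof is correct and is essentially the paper's own argument: for an arbitrary $g:Z\to Y$ in $\cC_0$, the composite $f\circ g$ is a morphism into the edge object $X$, hence of type $J$, and Condition~(3) of Definition~\ref{defn:B-site} applied to $\iota_0(f)\circ\iota_0(g)=\iota_0(f\circ g)\in\cT(J)$ forces $\iota_0(g)\in\cT(J)$. Your closing remark is also on point: the outer factor $\iota_0(f)$ already follows from Proposition~\ref{prop:cTJ}, while the inner factor $\iota_0(g)$ is precisely the extra content of Condition~(3), which is exactly the half of the axiom the paper's proof invokes.
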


\begin{proof}
Let $g:Z \to Y$ be an arbitrary morphism in $\cC_0$.
As $X$ is an edge object of $\cC_0$,
the morphisms $f$ and $f \circ g$ are of type $J$.
Hence, it follows from Condition (3) of Definition
\ref{defn:B-site} that morphism $g$ is of type $J$.
This shows that $Y$ is an edge object of $\cC_0$.
\end{proof}

\begin{lem} \label{lem:C1_cofinal}
The edge objects are cofinal in $\cC_0$.
\end{lem}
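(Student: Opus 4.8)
The plan is to read the statement concretely and reduce it to Lemma~\ref{lem:lower} together with the $\Lambda$-connectedness built into the definition of a grid. Since $\cC_0$ is a poset and is cofiltered (Corollary~\ref{cor:core_cofiltered}), the colimit computing the fiber functor is effectively a filtered colimit over $\cC_0^{\op}$, so the relevant meaning of ``cofinal'' is: every object $X$ of $\cC_0$ is the target of a morphism $Z \to X$ from an edge object $Z$ (and, for the strong form, the associated comma categories are connected). I would therefore aim to show that edge objects lie arbitrarily ``far down'' in $\cC_0$.

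First I would produce at least one edge object. As $\cC$ is assumed non-empty in this subsection, it has an object $X'$, and condition (2) of Definition~\ref{defn:grids} supplies an edge object $E$ of $\cC_0$ with $\iota_0(E)$ isomorphic to $X'$; hence $\cCe$ is non-empty. Now fix an arbitrary object $X$ of $\cC_0$. By condition (1) of Definition~\ref{defn:grids} the category $\cC_0$ is $\Lambda$-connected, so applied to the pair $X$, $E$ there exist an object $Z$ of $\cC_0$ and morphisms $Z \to X$ and $Z \to E$. Since $E$ is an edge object and there is a morphism $Z \to E$, Lemma~\ref{lem:lower} forces $Z$ to be an edge object as well. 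Thus $Z$ is an edge object equipped with a morphism $Z \to X$, which is exactly the required cofinality.

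To obtain cofinality in the strong sense (so that colimits over $\cCe$ agree with colimits over $\cC_0$), I would finally check connectedness of the comma categories, and here thinness does the work. Given two edge objects $E_1, E_2$ each mapping to $X$, $\Lambda$-connectedness yields an object $W$ with morphisms $W \to E_1$ and $W \to E_2$; by Lemma~\ref{lem:lower} $W$ is again an edge object, and because $\cC_0$ is a poset every triangle in sight commutes automatically, producing a zigzag joining the two data through $W$. I expect no genuine obstacle: the only points requiring care are fixing the precise reading of ``cofinal'' and observing that the thinness of $\cC_0$ trivializes the compatibility diagrams, since all the substantive content is already packaged in Lemma~\ref{lem:lower} and the $\Lambda$-connectedness of the grid.
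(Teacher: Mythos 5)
Your proof is correct and follows essentially the same route as the paper: choose an edge object (available by condition (2) of Definition~\ref{defn:grids}), use $\Lambda$-connectedness of $\cC_0$ to find $Z$ mapping to both it and the given object $X$, and conclude via Lemma~\ref{lem:lower} that $Z$ is an edge object. Your additional checks---explicit existence of an edge object and connectedness of the comma categories via thinness---are careful touches the paper leaves implicit, but they do not change the argument.
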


\begin{proof}
Let $X$ be an object of $\cC_0$.
Let us choose an edge object $Y$ of $\cC_0$.
Then, $\cC_0$ is $\Lambda$-connected, and there exists an
object $Z$ of $\cC_0$ and morphisms 
$Z \to X$ and $Z \to Y$.
It follows from Lemma \ref{lem:lower} that $Z$ is
an edge object of $\cC_0$.
We thus obtain a morphism $Z \to X$ from an edge object
of $\cC_0$ to $X$. As $X$ is arbitrary, this shows that
the edge objects are cofinal in $\cC_0$.
\end{proof}

\begin{lem} \label{lem:core_cofiltered2}
The category $\cCe$ is cofiltered.
\end{lem}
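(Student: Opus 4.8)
The plan is to deduce the statement from Lemma~\ref{lem:core_cofiltered}, which asserts that any category that is non-empty, thin, and $\Lambda$-connected is cofiltered. So it suffices to check these three properties for the full subcategory $\cCe$ of edge objects. Two of them are immediate. For thinness, note that $\cCe$ is a full subcategory of $\cC_0$, and $\cC_0$ is a poset by Condition (1) of Definition~\ref{defn:grids}, hence thin; a full subcategory of a thin category is again thin. For non-emptiness, since $\cC$ is assumed non-empty it contains some object $X'$, and Condition (2) of Definition~\ref{defn:grids} furnishes an edge object $X$ of $\cC_0$ with $\iota_0(X)$ isomorphic to $X'$, so $X \in \cCe$.

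The only step that requires a short argument is $\Lambda$-connectedness. Given two edge objects $X$ and $Y$, I would first invoke the $\Lambda$-connectedness of $\cC_0$ (again Condition (1)) to obtain an object $Z$ of $\cC_0$ together with morphisms $Z \to X$ and $Z \to Y$ in $\cC_0$. The key observation is that $Z$ is automatically an edge object: applying Lemma~\ref{lem:lower} to the edge object $X$ and the morphism $Z \to X$ shows that $Z \in \cCe$. Because $\cCe$ is full, both morphisms $Z \to X$ and $Z \to Y$ already lie in $\cCe$, which is exactly the $\Lambda$-connectedness of $\cCe$.

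Having verified all three hypotheses, Lemma~\ref{lem:core_cofiltered} then yields that $\cCe$ is cofiltered. I do not anticipate any genuine obstacle here; the argument simply repackages the proof of Lemma~\ref{lem:C1_cofinal}, with Lemma~\ref{lem:lower} playing the essential role of guaranteeing that the witnessing object $Z$ never leaves the subcategory of edge objects, so that the cofinal choices stay inside $\cCe$.
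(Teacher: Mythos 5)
Your proof is correct and takes essentially the same route as the paper: both arguments hinge on Lemma~\ref{lem:lower} to guarantee that the object completing a pair of edge objects is itself an edge object, so that the verification stays inside $\cCe$. The only cosmetic difference is one of packaging --- the paper notes $\cCe$ is non-empty and thin and then checks the semi-cofiltered (Ore) condition for a cospan using Corollary~\ref{cor:core_cofiltered}, whereas you check $\Lambda$-connectedness for an arbitrary pair of edge objects and invoke Lemma~\ref{lem:core_cofiltered}; in a thin category these verifications are interchangeable.
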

\begin{proof}
The category $\cCe$ is non-empty and thin as
it is a full subcategory of $\cC_0$.
Hence, it suffices to show that $\cCe$ is semi-cofiltered.
Let $Y \xto{f} X \xleftarrow{g} X'$ be a diagram in $\cCe$.
It follows from Corollary \ref{cor:core_cofiltered} that
there exists an object $Y'$ of $\cC_0$ and morphisms
$f' : Y' \to X'$ and $g':Y' \to Y$ in $\cC_0$ satisfying
$g \circ f' = f \circ g'$.
As it follows from Lemma \ref{lem:lower} that
$Y'$ is an object of $\cCe$, this shows that
the category $\cCe$ is semi-cofiltered, which
proves the claim.
\end{proof}

For a presheaf $F$ on $\cC$, we define 
$\omega_\Cip$ 
to be the filtered colimit
\[
\omega_\Cip(F) =
\varinjlim_{X \in \Obj(\cCe)}
F(\iota_0(X)).
\]
We note that, as the objects of $\cCe$ are cofinal in
$\cC_0$ by Lemma \ref{lem:lower}, the natural map
\[
\omega_\Cip(F) \to 
\varinjlim_{X \in \Obj(\cC_0)}
F(\iota_0(X))
\]
is bijective. By associating 
$\omega_\Cip(F)$
to each 
presheaf $F$ on $\cC$, we obtain the
functor $\omega_\Cip$
 from $\Presh(\cC)$
to the category of sets. By abuse of notation,
we denote by the same symbol 
$\omega_\Cip$ 
the restriction of 
$\omega_\Cip$
to the full subcategory 
$\Shv(\cC,J)\subset\Presh(\cC)$.

\subsubsection{The action of the absolute Galois monoid}
Let $F$ be a presheaf on $\cC$.
We define the action of the monoid
$M_\Cip$ on the set 
$\omega_\Cip(F)$.
Let $(\alpha,\gamma_\alpha) \in M_\Cip$.
We define the map
$(\alpha,\gamma_\alpha)_* :
\omega_\Cip(F) \to \omega_\Cip(F)$ 
to be the composite
$$
\omega_\Cip(F)
= \varinjlim_{X} F(\iota_0(X))
\xto{(\gamma_\alpha^{-1})^*} 
\varinjlim_{X} F(\iota_0(\alpha(X)))
\xto{j} \varinjlim_{Y \in \Obj(\cC_0)} F(\iota_0(Y))
\cong \omega_\Cip(F),
$$
where $X$ runs over the edge objects of $\cC_0$ and $j$
is the map induced by the inclusion
$$
\{ \alpha(X)\ |\ X \in \Obj(\cCe) \} \subset \Obj(\cC_0).
$$
\begin{defn}
\label{defn:smooth sets}
We say that a left $M_\Cip$-set $S$ is {\em smooth}
if for any $s \in S$ there exists an edge
object $X$ of $\cC_0$ such that
$s=gs$ holds for any $g \in \bK_X$.
We denote by $\MSet$ the category of smooth
$M_\Cip$-sets.
\end{defn}

\begin{rmk}
\label{rmk:smooth}
We remark here on the use of the term 
{\it smooth} above.  A locally profinite group is 
defined to be a locally compact Hausdorff group such 
that the compact open subgroups form a basis for the 
neighborhoods of the
unit.   A {\it smooth representation} 
of such groups is defined to be a representation
in which each vector has an open isotropy subgroup.
We refer to Casselman's article 
\cite{Casselman} for 
generalities.
Smooth representations of locally profinite groups
are of 
interest in the theory of automorphic forms.
Examples of such groups
include $\GL_n(\mathbb{A}_f)$
or $\GL_n(F)$, where $\mathbb{A}_f$
denotes the ring of finite adeles of some global field,
and $F$ is a nonarchimedian local field.
\end{rmk}

\begin{rmk}
We will see later in 
Section~\ref{sec:topological monoid structure}
that
the absolute Galois monoid is naturally 
equipped with the structure of a topological 
monoid such that 
the category of smooth sets is equivalent to 
the category of discrete sets with continuous 
action for this structure.
\end{rmk}

We note that, if $F$ is a presheaf on $\cC$,
then for each edge object $X$ of $\cC_0$,
the image of the map $F(X) \to \omega_\Cip(F)$
given by the universality of colimit
is contained in the $\bK_X$-invariant
part $\omega_\Cip(F)^{\bK_X}$ of $\omega_\Cip(F)$,
with respect to the action of
$M_\Cip$ defined as above.
It follows that $\omega_\Cip(F)$ is a smooth
$M_\Cip$-set. 
It is straightforward to check that the action of 
$M_\Cip$ on $\omega_\Cip(F)$ is functorial on $F$.
Hence, the functor $\omega_\Cip$ factors through
the category of
smooth $M_\Cip$-sets. By abuse of notation,
we denote by the same symbol $\omega_\Cip$ the latter
functor and its restriction 
to the full subcategory $\Shv(\cC, J)\subset\Presh(\cC)$.

\subsection{The statement of the main theorem}
\subsubsection{Conditions on cardinality}
The main result of this article is the following.
\label{sec:cardinality}
For a category $\cD$,
let us consider the following conditions:
\begin{enumerate}
\item For any objects $X$, $Y$ of $\cD$,
the set $\Hom_{\cD}(X,Y)$ is a finite set.
\item There exists a set $S$, whose cardinality 
is at most countable,
of objects of $\cD$ such that to 
any object $X$ of $\cD$, 
there exists a morphism from an object of $\cD$ belonging to $S$. 
\end{enumerate}
These conditions appear in the statement of our 
main theorem below.
Technically, these will appear 
only at the following two points:
\begin{enumerate}
\item In the proof of the existence of 
a pregrid, in which we use Lemma~\ref{lem:admissible}, and
\item In the proof of Lemma~\ref{lem:Gal gp surjective}.
\end{enumerate}

\subsubsection{The statement}
\begin{thm}\label{thm:Galois_main}
Let $(\cC,J)$ be a $Y$-site. 
Let $(\cC_0, \iota_0)$ be a grid.
Then the functor 
\begin{equation}\label{main_equiv}
\omega_\Cip:
\Shv(\cC,J) \to \MSet
\end{equation}
is faithful.

If we moreover assume that,
for each object $X$ of $\cC$, 
the overcategory $\cC(\cT(J))_{/X}$ 
satisfies at least one of the two conditions 
in Section \ref{sec:cardinality},
then the functor $\omega_\Cip$ is
an equivalence of categories.
\end{thm}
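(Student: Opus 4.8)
The plan is to prove faithfulness unconditionally and then upgrade $\omega_\Cip$ to an equivalence under the cardinality hypothesis by identifying the sections of a sheaf with invariants in its fiber. First I would record the mechanism behind faithfulness. For an edge object $X$ of $\cC_0$ every morphism $Y \to X$ in $\cC_0$ is of type $J$, and by Lemma~\ref{lem:lower} each such $Y$ is again an edge object; since $\cCe$ is cofiltered (Lemma~\ref{lem:core_cofiltered2}) the colimit defining $\omega_\Cip(F)$ is filtered and all of its transition maps are the restriction maps $F(\iota_0(X)) \to F(\iota_0(Y))$ along morphisms in $\cT(J)$. By the sheaf criterion (Corollary~\ref{cor:sheaf_criterion2}) these restriction maps are injective for a sheaf $F$, so each canonical map $F(\iota_0(X)) \to \omega_\Cip(F)$ is injective. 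Given two sheaf morphisms $\phi_1,\phi_2 : F \to G$ with $\omega_\Cip(\phi_1)=\omega_\Cip(\phi_2)$, the compatibility of $\omega_\Cip(\phi_i)$ with the canonical maps, together with the injectivity of $G(\iota_0(X)) \to \omega_\Cip(G)$, forces $\phi_1(\iota_0(X))=\phi_2(\iota_0(X))$ for every edge object $X$; since by Condition~(2) of Definition~\ref{defn:grids} every object of $\cC$ is isomorphic to some $\iota_0(X)$ with $X$ an edge object, we obtain $\phi_1=\phi_2$.

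The heart of the equivalence is the identification
$$ \omega_\Cip(F)^{\bK_X} \;\cong\; F(\iota_0(X)) $$
for every sheaf $F$ and every edge object $X$, compatibly with the canonical injection above. The inclusion $F(\iota_0(X)) \subseteq \omega_\Cip(F)^{\bK_X}$ is immediate from the definition of $\bK_X$ in Section~\ref{sec:submonoids}, since its elements fix $X$ with $\gamma_\alpha(X)=\id$. The reverse inclusion is where the cardinality condition enters: for a Galois covering $g:Y \to X$ in $\cC_0$ with $Y$ an edge object, Corollary~\ref{cor:sheaf_criterion3} gives $F(\iota_0(X)) = F(\iota_0(Y))^{\Aut_{\iota_0(X)}(\iota_0(Y))}$, so it suffices to know that the natural map $\bK_X \to \Aut_{\iota_0(X)}(\iota_0(Y))$ is surjective. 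Realizing a prescribed automorphism of $\iota_0(Y)$ over $\iota_0(X)$ by a global self-equivalence of the grid is a lifting problem over the cofiltered poset $\cC_0$ whose obstruction is a first non-abelian cohomology class; the hypothesis on $\cC(\cT(J))_{/X}$ makes $R^1\varprojlim$ of the relevant projective system of Galois groups trivial (Lemma~\ref{lem:admissible}), which yields the surjectivity (the forward-referenced Lemma~\ref{lem:Gal gp surjective}, together with the fact that $\bK_X$ is a group by Lemma~\ref{lem:bKX2}). This surjectivity is the \emph{main obstacle} of the whole argument, and it is the single point where the two conditions of Section~\ref{sec:cardinality} are used.

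Granting this identification, fullness follows formally: an $M_\Cip$-equivariant map $\psi:\omega_\Cip(F)\to\omega_\Cip(G)$ commutes with each $\bK_X$, hence restricts to maps $\omega_\Cip(F)^{\bK_X}\to\omega_\Cip(G)^{\bK_X}$, that is, to maps $F(\iota_0(X))\to G(\iota_0(X))$; these are compatible with the restriction maps because $\psi$ is, and so they assemble into a morphism of sheaves $\phi$ with $\omega_\Cip(\phi)=\psi$. For essential surjectivity I would run the construction in reverse: given a smooth $M_\Cip$-set $S$, define a presheaf by $F(\iota_0(X))=S^{\bK_X}$ on edge objects (transporting along the grid to all of $\cC$ using Conditions~(2)--(4) of Definition~\ref{defn:grids}), verify that $F$ is a sheaf through the Galois criterion of Corollary~\ref{cor:sheaf_criterion3}—again invoking the surjectivity $\bK_X \surj \Aut_{\iota_0(X)}(\iota_0(Y))$ to identify $S^{\bK_X}=(S^{\bK_Y})^{\Aut_{\iota_0(X)}(\iota_0(Y))}$—and finally check $\omega_\Cip(F)\cong S$ as $M_\Cip$-sets, where smoothness of $S$ guarantees $S=\varinjlim_X S^{\bK_X}$ so that the colimit reproduces $S$. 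Combining the faithfulness of the first paragraph with fullness and essential surjectivity completes the proof.
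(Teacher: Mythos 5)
Your faithfulness argument and the identification $F(\iota_0(X))\cong\omega_\Cip(F)^{\bK_X}$ are essentially the paper's own (Lemma~\ref{lem:injectivity}; Corollary~\ref{cor:KX-invariant} via $\psi_X:H_X\xto{\cong}\bK_X$ and Lemma~\ref{lem:Gal gp surjective}). The genuine gap is your claim that fullness then ``follows formally'': compatibility of the induced maps $t_X$ with the restriction maps is \emph{not} a consequence of equivariance alone. An $M_\Cip$-equivariant map $\psi:\omega_\Cip(F)\to\omega_\Cip(G)$ intertwines only the monoid action, and the restriction $f^*$ along a morphism $f:X\to Y$ of $\cC$ is not a priori encoded in that action: lifting $f$ to $f':E_X\to Y'$ in $\cC_0$ produces an object $Y'$ that is isomorphic to, but in general distinct from, the chosen edge object $E_Y$, and there is no morphism in the poset $\cC_0$ relating the two colimit legs $F(Y)\to F(\iota_0(E_Y))\to\omega_\Cip(F)$ and $F(Y)\to F(\iota_0(Y'))\to\omega_\Cip(F)$. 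To compare them one must realize the isomorphism $\beta:\iota_0(E_Y)\xto{\cong}\iota_0(Y')$ by an actual element $(\alpha,\gamma_\alpha)\in M_\Cip$ with $\alpha(E_Y)=Y'$ and $\gamma_\alpha(E_Y)=\beta$; only then does equivariance of $\psi$ under this single element yield the naturality square, as in Section~\ref{sec:full}. That existence statement is Lemma~\ref{lem:sufficiently many}, and its proof is a second, independent use of the cardinality hypothesis: the non-emptiness of $\varprojlim_{f\in\Obj I_X}S_f$ for a torsor $(S_f)_f$ under the pro-group $(\Gal(f))_f$, again via Lemma~\ref{lem:admissible}. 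So your assertion that the surjectivity $\bK_X\surj\Gal(f)$ is ``the single point'' where the conditions of Section~\ref{sec:cardinality} are used is incorrect, and without Lemma~\ref{lem:sufficiently many} your fullness argument does not close.

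The same omission undermines your essential surjectivity step. For an abstract smooth $M_\Cip$-set $T$, the only maps $T^{\bK_{E_Y}}\to T^{\bK_{E_X}}$ at your disposal are multiplications by elements of $M_\Cip$; Conditions (2)--(4) of Definition~\ref{defn:grids} let you lift morphisms of $\cC$ to $\cC_0$, but they produce no maps of $T$ whatsoever, so ``transporting along the grid'' does not define the restriction maps of $F_T$. One needs Lemma~\ref{lem:sufficiently many} to produce the acting element, Lemma~\ref{lem:7-9-5} to show the resulting map is independent of the choice (the paper's Section~\ref{sec:Step 2}; the element is unique only up to right multiplication by $\bK_{E_Y}$, which is harmless precisely because one restricts to $\bK_{E_Y}$-invariants), and a further verification of functoriality in $f$. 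Your sheaf-condition check and the final identification $\omega_\Cip(F_T)\cong T$ are in line with the paper's Step 4 (the exact sequence $1\to\bK_{E_X}\to\bK_{Y'}\to\Gal(f')\to 1$, with surjectivity from Lemma~\ref{lem:Gal gp surjective}), so the skeleton of your proposal is right; but the ``sufficiently many elements'' lemma is a load-bearing ingredient in both fullness and essential surjectivity, not a formality, and it is missing from your argument.
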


\section{Existence of a grid of a $Y$-site}
\label{sec:grid existence}
Assuming a certain cardinality condition, in this section,
we prove the existence of a grid of a $Y$-site.

\subsection{Existence of a pregrid}
\label{sec:existence of a pregrid}

\subsubsection{Existence of a pregrid}

\begin{prop}
\label{prop:pregrid existence}
Let $(\cC,J)$ be a $Y$-site.
Suppose that there exists an object $X_0$
of $\cC(\cT(J))$
such that the overcategory $\cC(\cT(J))_{/X_0}$
satisfies at least one of the two cardinality 
conditions in 
Section~\ref{sec:cardinality}.
Then there exists a pregrid of $\cC(\cT(J))$.
\end{prop}

\subsubsection{Construction step 1}
\label{sec:construction 1}
For brevity, set $\cD=\cC(\cT(J))$.

We use the category $\Gal/X_0$ introduced in Section~\ref{sec:Gal/X} in conjunction with the following terminology.
For an object $(Y,f)$ of $\Gal/X_0$,
we denote by $G(Y,f)$ the Galois group of $f$. 
For an object $(Y,f)$ of $\Gal/X_0$ and for an object $h$
of $\cC_{/X_0}$, we say that $(Y,f)$ dominates $h$
if there exists a morphism from $f$ to $h$ in $\cC_{/X_0}$.

As $\cC$ is essentially $\frU$-small,
we can take a $\frU$-small set $V$ of
objects of $\cD_{/X_0}$ such that any 
object of $\cD_{/X_0}$ admits a morphism from an object 
that belongs to $V$.
When $\cC$ satisfies Condition (2), we may and will
assume that $V$ is at most countable.
Let $S\subset V$ be a subset. 
For each object $(Y,f)$ of $\Gal/X_0$ we set
$$
\wt{U}_S(Y,f) = \prod_{s \in S} \Hom_{\cC_{/X_0}}(f,s)
$$
and $U_S(Y,f) = G(Y,f) \backslash \wt{U}_S(Y,f)$,
where the quotient is taken with respect
to the diagonal action of $G(Y,f)$.
When $S = \emptyset$ is an empty set, we understand that
the set $\wt{U}_\emptyset (Y,f)$ 
consists of a single element.
Let $(Y,f)$ and $(Y',f')$ be two objects of $\Gal/X_0$.
Let $h: Y' \to Y$ be a morphism from $f'$ to $f$ in $\cC_{/X_0}$.
The composition with $h$ gives a map
$U_S(h):U_S(Y,f) \to U_S(Y',f')$. It can be checked easily
that the map $U_S(h)$ depends only on the class of $h$
in $\Hom_{\Gal/X_0}((Y',f'),(Y,f))$.
Hence, by associating $U_S(Y,f)$ to each object
$(Y,f)$ of $\Gal/X_0$, we obtain a presheaf on
$\Gal/X_0$, which we denote by $U_S$.
As the topology has enough
Galois coverings, there exists
an object $(Y,f)$ of $\cD_{/X_0}$
that dominates $s$ for every $s \in S$.
For such an object $(Y,f)$, the set $U_S(Y,f)$ is non-empty.
Let $(Y,f)$ and $(Y',f')$ be two objects of $\Gal/X_0$
such that both $U_S(Y,f)$ and $U_S(Y',f')$ are non-empty.
Let $h$ be a morphism from $f'$ to $f$ in $\cD_{/X_0}$,
It follows from Lemma \ref{lem:lifts} that
the morphism $h$ induces a surjective homomorphism
$G(Y',f') \surj G(Y,f)$ of groups.
It follows from Corollary~\ref{cor:Hom bijective}
that the composition with $h$ gives a bijection
$\Hom_{\cD_{/X_0}}(f,s) \to \Hom_{\cD_{/X_0}}(f',s)$.
This shows that the map 
$U_S(h) : U_S(Y,f) \to U_S(Y',f')$ is bijective.

Let us fix a $\frU$-small subcategory $\Gal'/X_0$ of
$\Gal/X_0$ that is equivalent to $\Gal/X_0$. 
For a finite subset $S \subset V$, we set
$$
U(S) := \varinjlim_{(Y,f)} U_S(Y,f)
$$
where the colimit is taken over the objects in the
category $\Gal'/X_0$.
We note that, for any object $(Y,f)$ of $\Gal'/X_0$,
the map $U_S(Y,f) \to U(S)$ is bijective if $U_S(Y,f)$
is non-empty.
Let $S$ and $S'$ be two finite subsets of $V$
with $S' \subset S$.
The projection $\wt{U}_S(Y,f) \to \wt{U}_{S'}(Y,f)$
for each object $(Y,f)$ of $\Gal/X_0$
gives a morphism $U_S \to U_{S'}$ of presheaves on $\Gal/X_0$.
The collection $(U_S)_{S \subset V}$ forms
a projective system of presheaves on $\Gal/X_0$ 
indexed by the finite subsets of $V$.
Hence, $(U(S))_{S \subset V}$ forms a projective system of
sets indexed by the finite subsets of $V$.

\begin{lem}
\label{lem:US non-empty}
A filtered projective limit 
$U = \varprojlim_{S} U(S) $ is non-empty.
\end{lem}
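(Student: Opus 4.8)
The plan is to present $U=\varprojlim_S U(S)$ as the limit of an inverse system of non-empty sets with surjective transition maps, indexed by the directed poset of finite subsets of $V$ under inclusion, and then to invoke the relevant cardinality hypothesis exactly as in Lemma~\ref{lem:admissible}. First I would record that each $U(S)$ is non-empty: since $S$ is finite and $\cT(J)$ has enough Galois coverings, the cofilteredness of $\Gal/X_0$ (Corollary~\ref{lem:cofinality}) yields an object $(Y,f)$ of $\Gal/X_0$ dominating every $s\in S$, as already observed in the construction; for such $(Y,f)$ the set $\wt{U}_S(Y,f)=\prod_{s\in S}\Hom_{\cC_{/X_0}}(f,s)$ is a finite product of non-empty sets and hence non-empty, so its quotient $U_S(Y,f)$ is non-empty, and the bijection $U_S(Y,f)\cong U(S)$ gives $U(S)\neq\emptyset$.

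Next I would show that for finite subsets $S'\subseteq S$ the transition map $U(S)\to U(S')$ is surjective. Fixing an $(Y,f)$ that dominates all of $S$, the bijections $U_S(Y,f)\cong U(S)$ and $U_{S'}(Y,f)\cong U(S')$ fit into a commutative square identifying $U(S)\to U(S')$ with the map $U_S(Y,f)\to U_{S'}(Y,f)$ induced by the projection $\wt{U}_S(Y,f)\to\wt{U}_{S'}(Y,f)$ that forgets the factors indexed by $S\setminus S'$. Because $(Y,f)$ dominates each $s\in S\setminus S'$, every forgotten factor $\Hom_{\cC_{/X_0}}(f,s)$ is non-empty, so this projection is surjective; passing to the quotient by $G(Y,f)$ preserves surjectivity, whence $U(S)\to U(S')$ is surjective.

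Finally I would conclude by the cardinality hypothesis. If $\cD_{/X_0}$ satisfies Condition (1), all $\Hom$-sets are finite, so each $\wt{U}_S(Y,f)$ and hence each $U(S)$ is finite, and a cofiltered limit of non-empty finite sets is non-empty. If instead Condition (2) holds, then $V$ was chosen at most countable, so the index poset is countable; enumerating a cofinal chain $S_1\subseteq S_2\subseteq\cdots$ and using the surjectivity of the transition maps, one builds a compatible sequence of elements by induction, which determines a point of $U$. Either way $U\neq\emptyset$; this dichotomy is precisely that of Lemma~\ref{lem:admissible}. The genuinely non-formal point, and the main obstacle, is this last step: without a cardinality restriction an inverse limit of non-empty sets over a directed poset can be empty, so the two conditions are indispensable, and one must also verify that the identifications $U_S(Y,f)\cong U(S)$ are compatible across varying $S$ so that surjectivity of the concrete projections transfers to the transition maps of the system.
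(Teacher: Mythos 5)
Your proposal is correct and follows exactly the paper's own argument: the paper's proof is just a terse version of yours, asserting that each $U(S)$ is non-empty and the transition maps are surjective (the details you supply via a dominating Galois object and the bijections $U_S(Y,f)\cong U(S)$), and then splitting into the two cardinality cases precisely as in Lemma~\ref{lem:admissible} --- finiteness of the $U(S)$ under Condition~(1), and a countable cofinal chain with an inductive lift under Condition~(2). Nothing is missing; you have merely made explicit the steps the paper labels ``easy to see'' and ``one can see directly.''
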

\begin{proof}
It is easy to see that $U(S)$ is non-empty for any 
finite subset $S \subset V$
and the transition map $U(S) \to U(S')$ is surjective for
any finite subsets $S,S' \subset V$ with $S' \subset S$.
Recall that we have assumed Condition (1) or (2) on $\cC$.
If $\cC$ satisfies Condition (1), then $U(S)$ is a finite set
for every finite subset $S \subset V$.
If $\cC$ satisfies Condition (2), then $V$ is at most countable.
In the first case, one can use the fact that 
the projective limit of non-empty finite sets 
is non-empty to conclude.

Now let us consider the second case.
Observe that if $I$ is a cofiltered partially ordered set 
whose cardinality is countably infinite, then
there exists a cofinal partially ordered subset
which is isomorphic to the opposite of the 
partially ordered set of natural numbers $\Z_{\ge 0}$.
Using the surjectivity of the transition maps,
we see that $U = \varprojlim_{S} U(S) $ is non-empty.
\end{proof}

\subsubsection{Construction step 2}

Let us fix an element $(x_S) \in U$.
For each finite subset $S \subset V$, let us fix
an object $(Y_S,f_S)$ of $\Gal'/X_0$ such that
$U_S(Y_S,f_S)$ is non-empty.
Let $y_S$ denote the element of $U_S(Y_S,f_S)$
that is mapped to $x_S$ via the bijection
$U_S(Y_S,f_S) \to U(S)$.
Let us take a representative 
$\wt{y}_S=(\wt{y}_{S,s})_{s \in S} 
\in \wt{U}_S(Y_S,f_S)$ of $y_S$.
By definition, $\wt{y}_{S,s}$ is a morphism
from $f_S$ to $s$ in $\cD_{/X_0}$ for each $s \in S$.
Let $\cC'_{1,S}=\cC'_{1,S,\wt{y}_S}$ denote
the full subcategory of the undercategory $\cD_{Y_S/}$,
whose objects are the morphisms $f: Y_S \to Z$
in $\cD$ satisfying $f = g \circ \wt{y}_{S,s}$ for
some $s \in S$ and for some morphism $g$ in $\cD$.
As $\cD$ is essentially $\frU$-small and 
all morphisms in $\cD$ are epimorphisms,
it follows that the category $\cC'_{1,S}$ is a quasi-poset.
Let us choose a poset skeleton $\cC_{1,S}$ of $\cC'_{1,S}$.

For each pair $(S_1,S_2)$ of finite subsets of $V$
with $S_1 \subset S_2$,
we construct a fully faithful functor
$\cC_{1,S_1} \to \cC_{1,S_2}$ as  follows.
Let us choose an object $(W,f)$ of $\Gal/X_0$
that dominates both $f_{S_1}$ and $f_{S_2}$.
For $i=1,2$, let us choose a morphism $h_i$ 
from $f$ to $f_{S_i}$ in $\cD_{/X_0}$.
Let $\wt{y}'_{S_i} \in \wt{U}_{S_i}(W,f)$
denote the image of $\wt{y}_{S_i}$ under the map
$\wt{U}_{S_i}(Y_{S_i},f_{S_i}) \to \wt{U}_{S_i}(W,f)$
given by the composition with $h_i$.
As $x_{S_2}$ is mapped to $x_{S_1}$ under the map
$U(S_2) \to U(S_1)$, there exists an element
$\alpha \in G(W,f)$ such that $\wt{y}'_{S_1}$ is equal to the
image of $\alpha\cdot \wt{y}'_{S_2}$ under the projection
$\wt{U}_{S_2}(W,f) \to \wt{U}_{S_1}(W,f)$.
We have a diagram
$$
\cD_{Y_{S_1}/} \xto{-\circ h_1} \cD_{W/} 
\xrightarrow[\cong]{-\circ \alpha}
\cD_{W/} \xleftarrow{-\circ h_2} \cD_{Y_{S_2}/}
$$
of undercategories, which induces a diagram
$$
\cC_{1,S_1,\wt{y}_{S_1}} \xto{(1)} \cC_{1,S_1,\wt{y}'_{S_1}}
\xto{(2)} \cC_{1,S_2,\alpha \wt{y}'_{S_2}}
\xto{(3)} \cC_{1,S_2, \wt{y}'_{S_2}}
\xleftarrow{(4)} \cC_{1,S_2,\wt{y}_{S_2}}.
$$
\begin{lem}
The functors (1), (3), and (4) are 
equivalences of categories and the functor (2) is fully faithful.
\end{lem}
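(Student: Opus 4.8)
The plan is to carry out the entire argument at the level of the quasi-posets $\cC'_{1,S}$ rather than their chosen poset skeletons: each of the functors (1)--(4) is induced via Lemma~\ref{lem:induced_skeleton} from a functor between the corresponding $\cC'_{1,S}$, and since the skeleton inclusions are equivalences, being an equivalence (resp.\ fully faithful) may be checked on these induced functors. The single fact I would isolate at the outset is that for any morphism $h:W\to Y$ in $\cD$ the functor $-\circ h:\cD_{Y/}\to\cD_{W/}$ is fully faithful. Indeed, a morphism from $g_1\circ h$ to $g_2\circ h$ in $\cD_{W/}$ is a $\psi$ with $\psi\circ g_1\circ h=g_2\circ h$, and since every morphism of $\cC$ is an epimorphism (as $\cC$ is an $E$-category) and $h$ is such a morphism, this forces $\psi\circ g_1=g_2$, so $\psi$ is already a morphism $g_1\to g_2$ in $\cD_{Y/}$.

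For (1) and (4) I would then upgrade this to an equivalence by proving essential surjectivity onto the named full subcategories, the crucial point being the identities $\wt{y}'_{S_i,s}=\wt{y}_{S_i,s}\circ h_i$ for every $s\in S_i$. For (1), an object of the target $\cC'_{1,S_1,\wt{y}'_{S_1}}$ has the form $g\circ\wt{y}'_{S_1,s}=(g\circ\wt{y}_{S_1,s})\circ h_1$ with $s\in S_1$, hence is the image under $-\circ h_1$ of the object $g\circ\wt{y}_{S_1,s}$ of the source; the identical computation with $h_2$ settles (4). Combined with the fully faithfulness above, this shows (1) and (4) are equivalences.

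For (3), note that $\alpha\in G(W,f)=\Aut_{X_0}(W)$ is invertible, so $-\circ\alpha$ is an isomorphism of $\cD_{W/}$ with inverse $-\circ\alpha^{-1}$; because $G(W,f)$ acts on $\wt{U}_{S_2}(W,f)$ by precomposition, this isomorphism carries $g\circ(\alpha\cdot\wt{y}'_{S_2,s})$ to $g\circ\wt{y}'_{S_2,s}$ and hence restricts to an isomorphism of $\cC'_{1,S_2,\alpha\wt{y}'_{S_2}}$ onto $\cC'_{1,S_2,\wt{y}'_{S_2}}$, yielding (3). Finally (2) is the inclusion of $\cC'_{1,S_1,\wt{y}'_{S_1}}$ into $\cC'_{1,S_2,\alpha\wt{y}'_{S_2}}$ as full subcategories of $\cD_{W/}$, so fully faithfulness is automatic; it is well defined on objects because $S_1\subset S_2$ and, by the defining property of $\alpha$, $\wt{y}'_{S_1,s}=\alpha\cdot\wt{y}'_{S_2,s}$ for every $s\in S_1$, so each object $g\circ\wt{y}'_{S_1,s}$ with $s\in S_1$ of the source is already an object of the target.

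The only real work, and what I expect to be the main obstacle, is the bookkeeping: verifying that each induced functor genuinely restricts to a functor between the precise full subcategories named, i.e.\ tracking how the tuples $\wt{y}_S$, $\wt{y}'_S$ and $\alpha\wt{y}'_S$ transform along $h_1$, $h_2$ and $\alpha$. Once the transformation identities $\wt{y}'_{S_i,s}=\wt{y}_{S_i,s}\circ h_i$ and $\wt{y}'_{S_1,s}=\alpha\cdot\wt{y}'_{S_2,s}$ (for $s\in S_1$) are in place, all four assertions reduce to epimorphism cancellation and the invertibility of $\alpha$, with no further input.
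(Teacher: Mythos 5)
Your proposal is correct and takes essentially the same route as the paper: the paper's (terse) proof likewise observes that (2) is an inclusion of full subcategories of $\cD_{W/}$ (hence fully faithful), that (3) is induced by precomposition with the automorphism $\alpha \in G(W,f)$ (hence an isomorphism), and that (1) and (4) are essentially surjective by the identities $\wt{y}'_{S_i,s} = \wt{y}_{S_i,s}\circ h_i$ and fully faithful because $h_1$ and $h_2$ are epimorphisms. Your added bookkeeping (checking the functors land in the named subcategories, and reducing to the quasi-posets $\cC'_{1,S}$ via the poset-skeleton lemma) fills in exactly the steps the paper declares ``can be checked easily,'' so there is no gap.
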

\begin{proof}
It can be checked easily that the functor (2) is fully faithful,
that the functor (3) is an equivalence of
categories, and that the functors (1) and (4) are essentially
surjective. As the morphisms $h_1$ and $h_2$ in $\cD$
are epimorphisms, it follows that 
the functors (1) and (4) are fully faithful, which proves the claim.
\end{proof}

By taking the composite of (1), (2), (3) 
and the inverse of (4), 
we obtain a fully faithful
functor $i_{S_1,S_2} : \cC_{1,S_2} \to \cC_{1,S_2}$ between posets.
When we fix $(W,f)$, $h_1$, and $h_2$,
the element $\alpha \in G(W,f)$ is not uniquely determined
and the functor $-\circ\alpha:\cD_{W/} \to \cD_{W/}$ 
may depend on the choice of $\alpha$.
However, the functor (3) induced by $-\circ\alpha$ 
is independent of the choice of $\alpha$. 
Using this observation, one can check straightforwardly that
the functor $i_{S_1,S_2}$ does not depend on the choice of
$(W,f)$, $h_1$, $h_2$, and $\alpha$.
For three finite subsets $S_1,S_2,S_3 \subset V$ with
$S_1 \subset S_2 \subset S_3$, we have
$i_{S_1,S_3}= i_{S_2,S_3} \circ i_{S_1,S_2}$.
Hence, the pair $((\cC_{1,S})_{S \subset V}, (i_{S',S})_{S'\subset S})$
forms an inductive system in the category of poset categories.
We define $\cC_1$ to be the colimit of this inductive system.
(The notation $\cC_1$ has already 
appeared in Section~\ref{sec:fiber functor}.
The conflict is explained in 
Remark~\ref{rmk:C1 edge objects}.)
By taking a limit of the composite 
$\cC_{1,S} \to \cC'_{1,S} \to \cD$, we obtain a functor
$\iota_1 : \cC_1 \to \cD$, that is uniquely determined up to
natural equivalences.

\subsubsection{Proof of Proposition~\ref{prop:pregrid existence}}
We claim that the pair $(\cC_1,\iota_1)$ 
is a pregrid of $\cD$. 
By definition, the category $\cC_1$ is a poset.
We show that $\cC_1$ is $\Lambda$-connected.
Let $Z_1$ and $Z_2$ be two objects of $\cC_1$.
For $i=1,2$, take a finite subset $S_i \subset V$
such that $Z_i$ belongs to the image of
the functor $\cC_{1,S_i} \to \cC_1$.
We set $S=S_1 \cup S_2$.
For $i=1,2$, there exists an object $z_i$ of
$\cC_{1,S}$ whose image under the 
functor $\cC_{1,S} \to \cC_1$ is equal to $Z_i$.
Then, $z_i$ is, by definition, a morphism from $Y_{S}$ to an object $Z'_i$ of $\cD$.
Let us take an object $f:W \to X_0$ of $\cD_{/X_0}$
that is isomorphic to $f_{S}$ in $\cD_{/X_0}$ and that belongs to $V$.
Let us choose an isomorphism $\alpha$ from $f$ to $f_{S}$.
We set $T = S \cup \{f\}$.
Observe that the morphism $\wt{y}_{T,f}: Y_{T} \to W$ is 
an object of $\cC'_{1,T}$. Hence, the diagram
$$
Z'_1 \xleftarrow{z_1 \circ \alpha}
W \xto{z_2 \circ \alpha}  Z'_2 
$$
in $\cD$ gives a diagram 
$z_1 \circ \alpha \circ \wt{y}_{T,f}
\leftarrow \wt{y}_{T,f} \to z_2 \circ 
\alpha \circ \wt{y}_{T,f}$ in $\cC'_{1,T}$.
The last diagram induces a diagram of the form 
$Z_1 \leftarrow Z' \to Z_2$ for some $Z'$ in 
$\cC_1$.
This proves that the category $\cC_1$ is 
$\Lambda$-connected.
Next, we prove that the functor $\iota_1$ 
is essentially surjective.
Let us take an arbitrary object $X$ of $\cD$.
As $\cD$ is $\Lambda$-connected, there exists a diagram
$$
X_0 \xleftarrow{f_0} Y \xto{f} X
$$
in $\cD$. We may choose this diagram in such a way that
$f_0$ belongs to $V$.
Let $S=\{f_0\}$. Then $\wt{y}_{S,f_0}$, regarded as
a morphism in $\cD$, is an object of $\cC'_{1,S}$.
Let $X'$ denote the object of $\cC_1$ given by
the object $f \circ \wt{y}_{S,f_0}$ of $\cC'_{1,S}$.
It then follows from the definition of $\iota_1$ that
the object $\iota_1(X')$ of $\cD$ is isomorphic to $X$,
which proves that the functor $\iota$ is essentially
surjective.

\begin{lem}
The pair $(\cC_1,\iota_1)$ satisfies
the condition (3) in Definition \ref{defn:pregrid}.
\end{lem}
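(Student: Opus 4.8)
The plan is to unwind the colimit construction $\cC_1=\varinjlim_S\cC_{1,S}$ and reduce the essential surjectivity of the induced functor $\cC_{1,/X}\to\cD_{/\iota_1(X)}$, where $\cD=\cC(\cT(J))$, to a single lifting problem at a finite level, which I would then solve using that $\cD$ is semi-cofiltered together with the existence of enough Galois coverings. First I would fix a finite $S\subset V$ and a representative $z=g\circ\wt{y}_{S,s_0}\colon Y_S\to Z'$ in $\cC'_{1,S}$ of the given object $X$, so that $\iota_1(X)=Z'$. For any finite $S'\supseteq S$, an object lying below $X$ in $\cC_1$ is represented at level $S'$ by a morphism $w'\colon Y_{S'}\to W''$ in $\cC'_{1,S'}$ (hence factoring through some fixed $\wt{y}_{S',s}$) together with a morphism $h\colon W''\to Z'$ with $h\circ w'=z_{(S')}$, where $z_{(S')}=i_{S,S'}(z)$ is the transported representative of $X$; its $\iota_1$-image in $\cD_{/Z'}$ is exactly $(h\colon W''\to Z')$. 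Thus, given an arbitrary $p\colon W\to Z'$ in $\cD$, it suffices to produce, for some finite $S'\supseteq S$, a morphism $w''\colon Y_{S'}\to W$ that factors through one of the fixed $\wt{y}_{S',s}$ and satisfies $p\circ w''=z_{(S')}$; then $p$ itself exhibits $[w'']\le X$ with $\iota_1([w'']\to X)=p$.

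To build the lift I would first invoke Condition (3) of the semi-localizing definition (Definition~\ref{defn:semi-localizing}) for $\cD$ to obtain a span $Y_S\xleftarrow{a}U\xto{b}W$ with $z\circ a=p\circ b$. The key move is then to replace $U$ by a Galois covering $\wh{U}\to X_0$ dominating it (enough Galois coverings), pulling $a,b$ back along $q\colon\wh{U}\to U$ to $\wh{a}=a\circ q$ and $\wh{b}=b\circ q$, so that $z\circ\wh{a}=p\circ\wh{b}$ and $f_S\circ\wh{a}\colon\wh{U}\to X_0$ is a Galois covering. Setting $u=(\wh{U},f_S\wh{a})$, which is isomorphic to an element of $V$, and $S'=S\cup\{u\}$, the chosen covering $Y_{S'}$ dominates $u$, giving $\wt{y}_{S',u}\colon Y_{S'}\to\wh{U}$ over $X_0$ and a candidate of the form $w''=\wh{b}\circ\wt{y}_{S',u}\circ\delta$ for a suitable $\delta\in\Aut_{X_0}(Y_{S'})$ to be determined.

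The main obstacle, and the reason for insisting that $U$ be refined to a covering Galois over $X_0$, is reconciling this lift with the \emph{pre-chosen} compatible system of representatives $(\wt{y}_S)$ through the transition functor $i_{S,S'}$. Here I would compute $i_{S,S'}$ with convenient data: a Galois covering $\Omega$ dominating $Y_{S'}$, the morphism $h_2\colon\Omega\to Y_{S'}$, and $h_1:=\wh{a}\circ\wt{y}_{S',u}\circ h_2\colon\Omega\to Y_S$ (which is over $X_0$ since $f_S\wh{a}=u$). This is legitimate because $i_{S,S'}$ was shown to be independent of all such choices. The element $\alpha\in\Aut_{X_0}(\Omega)$ forced by the compatibility $\wt{y}'_{S,s}=\wt{y}'_{S',s}\circ\alpha$ then descends along $h_2$ to some $\delta\in\Aut_{X_0}(Y_{S'})$ by Lemma~\ref{lem:descends} (using that $f_{S'}$ is a Galois covering), giving $z_{(S')}=z\circ\wh{a}\circ\wt{y}_{S',u}\circ\delta$. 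Since $z\circ\wh{a}=p\circ\wh{b}$, this is $p\circ\wh{b}\circ\wt{y}_{S',u}\circ\delta=p\circ w''$, as required.

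It remains to check that $w''=\wh{b}\circ\wt{y}_{S',u}\circ\delta$ is a genuine object of $\cC'_{1,S'}$, i.e.\ that it factors through a fixed $\wt{y}_{S',s}$. This is exactly where the second Galois refinement pays off: $\delta\in\Aut_{X_0}(Y_{S'})$ descends along $\wt{y}_{S',u}\colon Y_{S'}\to\wh{U}$ to some $\epsilon\in\Aut_{X_0}(\wh{U})$ by a second application of Lemma~\ref{lem:descends} (now using that $\wh{U}\to X_0$ is Galois), whence $\wt{y}_{S',u}\circ\delta=\epsilon\circ\wt{y}_{S',u}$ and $w''=(\wh{b}\circ\epsilon)\circ\wt{y}_{S',u}$ factors through $\wt{y}_{S',u}$. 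Therefore $w''\in\cC'_{1,S'}$, the morphism $p\colon W\to Z'$ gives $[w'']\le X$ in $\cC_1$, and $\iota_1([w'']\to X)=p$, proving essential surjectivity. The geometric existence of the lift is routine from semi-cofilteredness; the delicate point is precisely this double descent of automorphisms, which is what dictates choosing $U$ to be Galois over $X_0$.
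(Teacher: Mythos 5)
Your proof is correct and follows essentially the same route as the paper's: refine the semi-cofiltered span so that its leg over $X_0$ becomes a Galois covering belonging to $V$, adjoin it to $S$, use the compatibility of the fixed element $(x_S) \in U$ to produce the correcting automorphism, and descend that automorphism via Lemma~\ref{lem:descends} (using that the adjoined covering is Galois over $X_0$) so that the lifted object still factors through the distinguished morphism $\wt{y}_{S',u}$. The only differences are presentational: the paper computes the transition functor with the convenient choice $\Omega = Y_T$, $h_2 = \id$, which makes your first descent (from $\Omega$ down to $Y_{S'}$) unnecessary, and it carries the isomorphism $\iota_1(X) \cong X'$ explicitly rather than normalizing it away as you do.
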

\begin{proof}
Let $X$ be an object of $\cC_1$ and let 
$f:Y \to \iota_1(X)$ be a morphism in $\cD$.
Let us choose a finite subset $S \subset V$ such that
$X$ belongs to the image of an object $h$
of $\cC_{1,S}$ under the functor
$\cC_{1,S} \to \cC_1$.
The object $h$ of $\cC_{1,S}$ is, by definition,
a morphism from $Y_{S}$ to an object $X'$ of $\cD$,
and there exists an element $s \in S$ such that
the morphism $h$ is the composite of
the morphism $\wt{y}_{S,s}$ and a morphism
$h_1 : Y_s \to X'$, where $Y_s$ denotes the domain of $s$.
Observe that $X'$ and $\iota_1(X)$ are isomorphic in $\cD$.
Let us fix an isomorphism 
$\alpha : \iota_1(X) \xto{\cong} X'$.
As $\cD$ is semi-cofiltered, there exists a
commutative diagram
$$
\begin{CD}
W @>{h'}>> Y \\
@V{f'}VV @VV{\alpha \circ f}V \\
Y_S @>{h}>> X'
\end{CD}
$$
in $\cD$. We may and will assume that the composite
$f_S \circ f'$ is a Galois covering and belongs to $V$.
We set $T= S \cup \{f_S \circ f' \}$.
As $x_T$ is mapped to $x_S$ under the map
$U(T) \to U(S)$, there exists an element $\beta \in G(Y_T,f_T)$
such that $\wt{y}_{S,s} \circ f' \circ \wt{y}_{T,f_S \circ f'}$
is equal to $\wt{y}_{T,s} \circ \beta$ and that the
image of $h$ under the functor $\cC_{1,S} \to \cC'_{1,T}$
is isomorphic to 
$h \circ f' \circ \wt{y}_{T,f_S \circ f'} \circ \beta^{-1}$.
As $f_S \circ f'$ is a Galois covering, there
exists an element $\beta' \in G(W,f_S \circ f')$ satisfying
$\wt{y}_{T,f_S \circ f'} \circ \beta = \beta' \circ
\wt{y}_{T,f_S \circ f'}$.
Hence
$h \circ f' \circ \wt{y}_{T,f_S \circ f'} \circ \beta^{-1}
= \alpha \circ f \circ h' \circ \beta'^{-1} \circ \wt{y}_{T,f_S \circ f'}$.
Hence, the morphism $\alpha \circ f$ can be regarded as a morphism from
$h' \circ \beta'^{-1} \circ \wt{y}_{T,f_S \circ f'}$
to $h \circ f' \circ \wt{y}_{T,f_S \circ f'} \circ \beta^{-1}$
in $\cC'_{1,T}$. This morphism induces an object of the
overcategory $\cC_{1,/X}$ whose image under the functor
$\cC_{1,/X} \to \cD_{/X}$ induced by $\iota_1$ is
isomorphic to $f$. This proves that the pair $(\cC_1,\iota_1)$
satisfies Condition (3) in Definition \ref{defn:pregrid}.
\end{proof}

\begin{lem}
The pair $(\cC_1,\iota_1)$ satisfies
Condition (4) in Definition \ref{defn:pregrid}.
\end{lem}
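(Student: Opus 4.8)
The plan is to verify Condition~(4) of Definition~\ref{defn:pregrid}: for each object $X$ of $\cC_1$ I must show that the induced functor $\iota_{1,X/}\colon \cC_{1,X/}\to \cD_{\iota_1(X)/}$ on undercategories is an equivalence. First I would fix such an $X$. By construction $\cC_1=\varinjlim_{S}\cC_{1,S}$ along the fully faithful transition functors $i_{S',S}$, so $X$ is the image of an object of $\cC_{1,S}$ for some finite $S\subset V$; write $X'=\iota_1(X)$ and let $h\colon Y_S\to X'$ be the object of $\cC'_{1,S}$ representing $X$ under the poset-skeleton equivalence $\cC_{1,S}\simeq\cC'_{1,S}$. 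Since the $\Hom$-collections of $\cC_1$ are the filtered colimits of those of the $\cC_{1,S}$, the undercategory decomposes as $\cC_{1,X/}=\varinjlim_{S''\supseteq S}(\cC_{1,S''})_{X/}$, and I would analyze each term $(\cC_{1,S''})_{X/}$ separately and then pass to the colimit.

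The key step is a slice-of-slice computation, which I would first carry out for $S''=S$. Consider the chain
$$
(\cC_{1,S})_{X/}\xleftarrow{\ \simeq\ }(\cC'_{1,S})_{h/}\inj (\cD_{Y_S/})_{h/}\xrightarrow{\ \simeq\ }\cD_{X'/},
$$
where the left arrow is induced on undercategories by the poset-skeleton equivalence, the middle arrow is the inclusion of a full subcategory, and the right arrow is the standard equivalence sending a morphism $h\to k$ of $\cD_{Y_S/}$ (that is, a $\phi\colon X'\to C$ with $\phi\circ h=k$) to $\phi\colon X'\to C$. The crucial observation is that this full subcategory is in fact everything: since $h$ lies in $\cC'_{1,S}$ it factors as $h=g\circ\wt{y}_{S,s}$ for some $s\in S$, whence for any $\phi\colon X'\to C$ the composite $k=\phi\circ h=(\phi\circ g)\circ\wt{y}_{S,s}$ again factors through $\wt{y}_{S,s}$ and so is an object of $\cC'_{1,S}$; thus the inclusion $(\cC'_{1,S})_{h/}\inj(\cD_{Y_S/})_{h/}$ is an equivalence. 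Composing the chain gives an equivalence $(\cC_{1,S})_{X/}\simeq\cD_{X'/}$, and one checks directly that it agrees with the functor induced by $\iota_1$. The same argument applies verbatim with any $S''\supseteq S$ in place of $S$ (using that $X$ represents an object $h''$ of $\cC'_{1,S''}$ which factors through some $\wt{y}_{S'',s''}$), yielding $(\cC_{1,S''})_{X/}\simeq\cD_{X'/}$ compatibly with $\iota_1$.

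Finally I would pass to the colimit. Each transition functor $(\cC_{1,S''})_{X/}\to(\cC_{1,S'''})_{X/}$ is fully faithful, being induced by the fully faithful $i_{S'',S'''}$, and commutes up to natural isomorphism with the two equivalences onto $\cD_{X'/}$; hence it is itself an equivalence. A filtered colimit of a system of equivalences over a fixed target is an equivalence: on $\Hom$-sets the transition maps become bijections, so the colimit computes $\Hom_{\cD_{X'/}}$, giving full faithfulness, while essential surjectivity is inherited from any single term. Therefore $\iota_{1,X/}\colon\cC_{1,X/}=\varinjlim_{S''}(\cC_{1,S''})_{X/}\to\cD_{X'/}$ is an equivalence, which is Condition~(4). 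I expect the main obstacle to be the bookkeeping in this colimit step — checking that the slice-level equivalences are genuinely compatible with the transition functors and with $\iota_1$, so that the ``filtered colimit of equivalences'' principle applies — rather than the slice-of-slice identity itself, which is essentially formal once one notes that $h$ factoring through $\wt{y}_{S,s}$ forces the relevant full subcategory to be the entire undercategory.
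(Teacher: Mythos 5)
Your proof is correct and follows essentially the same route as the paper's: fix a finite stage $S$ with $X$ coming from $h\in\cC_{1,S}$, show for each $T\supseteq S$ that the inclusion $\cC_{1,T}\subset\cC'_{1,T}\subset\cD_{Y_T/}$ induces an equivalence $\cC_{1,T,h_T/}\xto{\cong}\cD_{X_T/}$, and pass to the filtered colimit. Your factorization argument (any $\phi\circ h_T$ again factors through $\wt{y}_{T,s}$, so the relevant undercategory of $\cC'_{1,T}$ is all of $(\cD_{Y_T/})_{h_T/}$) is precisely the verification the paper compresses into ``it can be checked easily,'' and your colimit bookkeeping matches the paper's concluding step.
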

\begin{proof}
Let $X$ be an object of $\cC_1$.
Let us choose a finite subset $S \subset V$ such that
$X$ belongs to the image of an object $h$
of $\cC_{1,S}$ under the functor
$\cC_{1,S} \to \cC_1$.
For any finite subset $T \subset V$ with $S \subset T$,
let $h_T$ denote the image of $h$ under the
functor $i_{S,T}:\cC_{1,S} \to \cC_{1,T}$.
The object $h_T$ of $\cC_{1,T}$ is, by definition,
a morphism from $Y_T$ to an object $X_T$ of $\cD$.
It can then be checked easily that the inclusion
functor $\cC_{1,T} \to \cD_{Y_T/}$ induces an equivalence
$\cC_{1,T,h_T/} \xto{\cong} \cD_{X_T/}$ of undercategories.
This shows that the functor $\iota_1$ induces an equivalence 
$\cC_{1,X/} \xto{\cong} \cD_{\iota_1(X)/}$ of undercategories.
This completes the proof.
\end{proof}

\subsection{Existence of a grid}
\label{sec:C0}
The goal of this section is to 
prove the following proposition.
Its proof is given at the end
of this subsection.
\begin{prop}
\label{cor:grid existence}
Let $(\cC,J)$ be a $Y$-site.
Suppose that there exists an object $X_0$
of $\cC(\cT(J))$
such that the overcategory $\cC(\cT(J))_{/X_0}$
satisfies at least one of the two cardinality 
conditions in 
Section~\ref{sec:cardinality}.
Then there exists a grid of $(\cC, J)$.
\end{prop}

\subsubsection{Construction of a grid from a pregrid}
We construct
a pair $\Ci$ of a $\frU$-small category
$\cC_0$ and a covariant functor $\iota_0:
\cC_0 \to \cC$,
from a pregrid 
$(\cC_1, \iota_1:\cC_1 \to \cD=\cC(\cT(J))$,
as follows.

For each object $X$ of $\cC_1$,
the undercategory $\cD_{\iota_1(X)/}$ is 
a quasi-poset, as $\cC$ is an $E$-category 
and is
essentially $\frU$-small.
Let us choose a poset skeleton
$\cC_{0,X}$ of $\cD_{\iota_1(X)/}$.
When $f:Y\to X$ is a morphism in $\cC_1$,
the functor $\cD_{\iota_1(X)/}
\to \cD_{\iota_1(Y)}$ /, 
given by the composition
with $f$, is fully faithful.
Hence it induces a fully faithful functor
$\cC_{0,X} \to \cC_{0,Y}$ between posets.
We define $\cC_0$ to be the colimit
$\cC_0 = \varinjlim_{X \in \Obj \cC_1}
\cC_{0,X}$.  
(Consider the diagram over $X \in \Obj \cC_1$ 
in the category of partially ordered
sets, take the colimit as a partially ordered set, 
and regard it as a category using the functor 
in Section~\ref{sec:poset}).

By associating, to each object $Y$ of $\cC_1$,
the image of the initial object of $\cC_{0,Y}$
under the functor $\cC_{0,Y} \to \cC_0$, 
we obtain a functor $\jmath:\cC_1 \to \cC_0$.
One can check easily that the functor $\jmath$
is fully faithful.

By patching the functors 
$\cC_{0,X} \to \cD_{\iota_1(X)/}\to \cC$
for various $X$, we obtain a functor 
$\iota_0:\cC_0 \to \cC$ satisfying
$\varphi \circ \iota_1 = 
\iota_0 \circ \jmath$,
where $\varphi : \cD \to \cC$
denotes the inclusion functor.

\subsubsection{}
\begin{lem}
\label{lem:C0_core}
The pair $(\cC_0, \iota_0)$
constructed above is a grid.
\end{lem}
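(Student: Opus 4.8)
The plan is to verify the four conditions of Definition~\ref{defn:grids} for $(\cC_0,\iota_0)$. Two structural facts are used throughout. First, since $\cC_1$ is a non-empty thin $\Lambda$-connected category it is cofiltered by Lemma~\ref{lem:core_cofiltered}, so $\cC_0=\varinjlim_{X\in\cC_1}\cC_{0,X}$ is a \emph{filtered} colimit of posets along the fully faithful (hence order-embedding) transition functors; thus $\cC_0$ is again a poset, and any finite diagram in $\cC_0$ may be represented inside a single stage $\cC_{0,W}$. Second, $\iota_0$ sends an object represented by a morphism $u\colon\iota_1(W)\to Z$ of the undercategory (taken in $\cC$) of which $\cC_{0,W}$ is a poset skeleton to its target $Z$, and a morphism between two such to the underlying morphism of $\cC$; in particular $\jmath(W)$ is the class of the initial object $\id_{\iota_1(W)}$, and $\iota_0(\jmath(W))=\iota_1(W)$. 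Recall finally that both $\cC_{0,X/}$ (a subposet of $\cC_0$) and $\cC_{\iota_0(X)/}$ are thin, the latter because $\cC$ is an $E$-category and every structural morphism out of $\iota_0(X)$ is then an epimorphism.

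For Condition (1), $\cC_0$ is a poset by the remark above, and for $\Lambda$-connectedness I would take two objects represented in stages $\cC_{0,X_1}$, $\cC_{0,X_2}$, use $\Lambda$-connectedness of $\cC_1$ to choose $W$ with morphisms $W\to X_1$ and $W\to X_2$, transport both objects into $\cC_{0,W}$, and observe that its initial object $\jmath(W)$ maps to each, giving a common predecessor in $\cC_0$. For Condition (2), I first claim each $\jmath(W)$ is an edge object: a morphism into $\jmath(W)$, represented in some stage $\cC_{0,W'}$, is carried by $\iota_0$ to a morphism $v$ satisfying $v\circ w=\iota_1(\psi)\in\cT(J)$, where $w$ is the structural map of the source and $\psi\colon W'\to W$ is the relevant morphism of $\cC_1$; Condition (3) of Definition~\ref{defn:B-site} then forces $v\in\cT(J)$, so the morphism is of type $J$. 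Since $\iota_1$ is essentially surjective and $\iota_0(\jmath(W))=\iota_1(W)$, every object of $\cC$ is isomorphic to $\iota_0$ of an edge object.

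For Condition (3), let $X$ be represented by $u\colon\iota_1(W)\to Z$, so $\iota_0(X)=Z$, and let $f\colon Y\to Z$ belong to $\cT(J)$. Applying the \O re condition of the semi-localizing collection $\cT(J)$ (Proposition~\ref{prop:cTJ}, Condition (3) of Definition~\ref{defn:semi-localizing}) to the cospan $Y\xto{f}Z\xleftarrow{u}\iota_1(W)$ yields $V$ and morphisms $g_1\colon V\to Y$, $g_2\colon V\to\iota_1(W)$ with $g_2\in\cT(J)$ and $f\circ g_1=u\circ g_2$. By Condition (3) of the pregrid (Definition~\ref{defn:pregrid}) the object $g_2$ of $\cD_{/\iota_1(W)}$ is isomorphic to $\iota_1(\psi)$ for some $\psi\colon W'\to W$ in $\cC_1$; identifying $V=\iota_1(W')$, the image of $X$ in $\cC_{0,W'}$ is represented by $u\circ g_2=f\circ g_1$. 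Then $Y'=(\iota_1(W')\xto{g_1}Y)$ is an object of $\cC_{0,W'}$ with $\iota_0(Y')=Y$, and $f$ itself, viewed under $\iota_1(W')$, is a morphism $f'\colon Y'\to X$ in $\cC_{0,W'}\subset\cC_0$, giving (with $\alpha=\id$) the factorisation $f=\iota_0(f')\circ\alpha$.

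I expect Condition (4) to be the main obstacle, since the assertion that $\iota_{0,X/}\colon\cC_{0,X/}\to\cC_{\iota_0(X)/}$ is an equivalence is precisely the property the passage from pregrid to grid was designed to install, replacing the $\cT(J)$-undercategories of the pregrid by the full undercategories of $\cC$. Both sides being thin, faithfulness is automatic, so it remains to prove essential surjectivity and fullness. Writing $Z=\iota_0(X)$ with $X$ represented by $u\colon\iota_1(W)\to Z$, the crux is that for each stage $W'$ the undercategory $(\cC_{0,W'})_{X/}$ is carried isomorphically onto $\cC_{Z/}$ by $\iota_0$: an object $X\to A$ corresponds to an arbitrary morphism $v\colon Z\to\iota_0(A)$ of $\cC$, and morphisms correspond because the structural maps compose correctly. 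Essential surjectivity then follows already from the single stage $\cC_{0,W}$ (given $v\colon Z\to Z'$, take $A=(\iota_1(W)\xto{v\circ u}Z')$), while fullness follows by representing any two objects of $\cC_{0,X/}$, together with a morphism of their $\iota_0$-images under $Z$, inside a common stage $\cC_{0,W'}$ and reading off the corresponding inequality $A\le B$ there. The delicate point will be keeping this stagewise identification, the filtered-colimit description of $\cC_{0,X/}$, and the $E$-category epimorphism arguments mutually consistent; Conditions (1)–(3) are comparatively routine consequences of $\Lambda$-connectedness, the \O re condition, and Condition (3) of the $B$-site axioms.
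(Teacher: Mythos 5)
Your proposal is correct and follows essentially the same route as the paper's proof: you verify the four conditions of Definition~\ref{defn:grids} via, respectively, $\Lambda$-connectedness of $\cC_1$ together with the initial objects $\jmath(W)$, the $B$-site axiom (3) applied to a factorization through a $\cC_1$-morphism, the \O re condition combined with pregrid Condition (3), and the stagewise identification $(\cC_{0,W'})_{X/}\cong\cC_{\iota_0(X)/}$ passed through the filtered colimit, which is exactly the paper's strategy. The only cosmetic slip is in Condition (3): after replacing $g_1$ by its representative in the poset skeleton, the isomorphism $\alpha$ is the skeletal identification rather than the identity — harmless, since Definition~\ref{defn:grids}(3) permits an arbitrary $\alpha$.
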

We check below that the conditions of 
a grid (Definition~\ref{defn:grids}) are satisfied.
\begin{lem}
Conditions (1) is satisfied.
\end{lem}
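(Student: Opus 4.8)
The statement to prove is that $\cC_0$ is a poset and is $\Lambda$-connected (Condition (1) of Definition~\ref{defn:grids}). The first half is essentially built into the construction: $\cC_0$ was defined as the colimit $\varinjlim_{X \in \Obj \cC_1} \cC_{0,X}$ formed in the category of partially ordered sets, so it is a poset by fiat. To justify that this is a legitimate colimit of posets, I would first record that each transition functor $\cC_{0,X} \to \cC_{0,Y}$ attached to a morphism $f:Y \to X$ of $\cC_1$ is fully faithful between posets, hence an order-embedding: if $F(a)=F(b)$ then $a \le b$ and $b \le a$, so $a=b$ by antisymmetry. Thus the $\cC_{0,X}$ form a genuine diagram of posets along monomorphisms. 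Moreover, since $\cC_1$ is a poset and is $\Lambda$-connected (Condition (1) of Definition~\ref{defn:pregrid}), the opposite category $\cC_1^{\op}$ indexing this diagram is filtered; hence the colimit is a directed union of posets along order-embeddings. In particular every object of $\cC_0$ is the image of an object of some $\cC_{0,X}$, and each cocone map $\cC_{0,X} \to \cC_0$ is order-preserving.

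The real content is $\Lambda$-connectedness, and the plan is as follows. Given two objects $Z_1, Z_2$ of $\cC_0$, by the previous remark I may choose objects $X_1, X_2$ of $\cC_1$ and objects $z_i \in \cC_{0,X_i}$ whose images under $\cC_{0,X_i} \to \cC_0$ are $Z_i$. Using that $\cC_1$ is $\Lambda$-connected, I then choose an object $W$ of $\cC_1$ together with morphisms $W \to X_1$ and $W \to X_2$ in $\cC_1$. The induced transition functors $\cC_{0,X_i} \to \cC_{0,W}$ carry $z_i$ to objects $z_i' \in \cC_{0,W}$, and by compatibility of the colimit cocone the image of $z_i'$ in $\cC_0$ is again $Z_i$.

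The final step exploits that $\cC_{0,W}$ has a least element. Indeed $\cC_{0,W}$ is a poset skeleton (Section~\ref{sec:quasi-posets}) of the undercategory $\cD_{\iota_1(W)/}$, where $\cD = \cC(\cT(J))$, and this undercategory has the initial object $\id_{\iota_1(W)}$; hence $\cC_{0,W}$ has a bottom element $\bot_W$. Then $\bot_W \le z_1'$ and $\bot_W \le z_2'$ in $\cC_{0,W}$, so, letting $Z$ denote the image of $\bot_W$ under the order-preserving map $\cC_{0,W} \to \cC_0$, we obtain $Z \le Z_1$ and $Z \le Z_2$, i.e.\ morphisms $Z \to Z_1$ and $Z \to Z_2$ in $\cC_0$. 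This proves $\Lambda$-connectedness. I expect no serious obstacle: the only points needing care are the bookkeeping that the two cocone maps really send $z_1',z_2'$ back to $Z_1,Z_2$ on the common stage $W$, and the observation that a filtered colimit of posets along order-embeddings is computed as a directed union, which is what lets every object be lifted to a single stage.
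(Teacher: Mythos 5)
Your proof is correct and follows essentially the same route as the paper: the paper also reduces $\Lambda$-connectedness of $\cC_0$ to that of $\cC_1$ by observing that every object of $\cC_0$ receives a morphism from $\jmath(X')$ for some $X' \in \cC_1$ (your bottom element $\bot_W$ of the stage $\cC_{0,W}$ is exactly $\jmath(W)$), and then invokes $\Lambda$-connectedness of the pregrid. Your version merely spells out the directed-union mechanics of the poset colimit that the paper compresses into ``it follows from the construction of $\cC_0$''.
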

\begin{proof}
It follows from the construction of
$\cC_0$ that given an object $X$ of $\cC_0$,
there exist an object $X'$ of $\cC_1$
and a morphism $\jmath(X') \to X$.
Note that the category $\cC_1$ 
is $\Lambda$-connected, 
as $(\cC_1, \iota_1)$
is a pregrid.
Hence, $\cC_0$ is $\Lambda$-connected.
This shows that Condition (1) is 
satisfied.
\end{proof}

\begin{lem}
Condition (3) is satisfied.
\end{lem}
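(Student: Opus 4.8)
The plan is to unwind the colimit construction of $\cC_0$ and reduce the lifting problem posed by Condition~(3) of Definition~\ref{defn:grids} to a statement inside the poset $\cC_1$, where thinness makes it automatic. Concretely, fix an object $X$ of $\cC_0$ and a morphism $f:Y\to\iota_0(X)$ in $\cT(J)$. By the construction of $\cC_0$, the object $X$ is the image of an object of $\cC_{0,X_1}$ for some $X_1\in\Obj\cC_1$, and since $\cC_{0,X_1}$ is a poset skeleton of $\cD_{\iota_1(X_1)/}$ this object is a morphism $u:\iota_1(X_1)\to\iota_0(X)$ in $\cT(J)$. First I would use Condition~(4) of Definition~\ref{defn:pregrid}, the equivalence $\cC_{1,X_1/}\xto{\cong}\cD_{\iota_1(X_1)/}$, to produce a morphism $w:X_1\to X_u$ in $\cC_1$ and an isomorphism $\beta:\iota_0(X)\xto{\cong}\iota_1(X_u)$ with $\beta\circ u=\iota_1(w)$. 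Then $\beta\circ f:Y\to\iota_1(X_u)$ again lies in $\cT(J)$, so Condition~(3) of Definition~\ref{defn:pregrid}, the essential surjectivity of $\cC_{1,/X_u}\to\cD_{/\iota_1(X_u)}$, yields $w':X_f\to X_u$ in $\cC_1$ and an isomorphism $\gamma:Y\xto{\cong}\iota_1(X_f)$ with $\iota_1(w')\circ\gamma=\beta\circ f$.

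Next I would invoke $\Lambda$-connectedness of $\cC_1$ (Condition~(1) of Definition~\ref{defn:pregrid}) to choose an object $X_3$ with morphisms $p:X_3\to X_1$ and $q:X_3\to X_f$. Here is the one genuinely load-bearing step: because $\cC_1$ is a poset it is thin, so the two morphisms $w\circ p$ and $w'\circ q$ from $X_3$ to $X_u$ must coincide. Setting $v=\gamma^{-1}\circ\iota_1(q):\iota_1(X_3)\to Y$ and $u_3=u\circ\iota_1(p):\iota_1(X_3)\to\iota_0(X)$ (both in $\cT(J)$), the identities $\iota_1(w')\circ\gamma=\beta\circ f$ and $w\circ p=w'\circ q$ give $\beta\circ f\circ v=\iota_1(w'\circ q)=\iota_1(w\circ p)=\beta\circ u_3$, whence $f\circ v=u_3$ since $\beta$ is invertible. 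In other words $f$ is a morphism from the object $v$ to the object $u_3$ in the undercategory $\cD_{\iota_1(X_3)/}$. The point I want to stress is that the lift $v$ with $f\circ v=u_3$ could never be found in $\cD$ directly; it exists only after transporting the data along $\iota_1$ into the thin category $\cC_1$, where all parallel arrows agree.

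Finally I would transport this morphism back to $\cC_0$. Through the poset skeleton $\cC_{0,X_3}$ and the structure functor $\cC_{0,X_3}\to\cC_0$ of the colimit, the objects $v$ and $u_3$ of $\cD_{\iota_1(X_3)/}$ correspond to an object $Y'$ and to (the image of) $X$, and the morphism $f$ from $v$ to $u_3$ descends to a morphism $f':Y'\to X$ in $\cC_0$. The skeleton isomorphism between $v$ and its chosen representative furnishes the required $\alpha:Y\xto{\cong}\iota_0(Y')$, and by construction $f=\iota_0(f')\circ\alpha$. The only delicate bookkeeping — and the place where I would be most careful — is checking that $u_3$ really represents the given object $X$ of $\cC_0$: this holds because the transition functor $\cC_{0,X_1}\to\cC_{0,X_3}$ is induced by composition with $\iota_1(p)$, which sends $u$ to $u_3$, so $u$ and $u_3$ have the same image in the colimit. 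Everything beyond this is formal.
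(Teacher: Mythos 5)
Your argument breaks at its very first substantive step: the application of Condition~(4) of Definition~\ref{defn:pregrid} to the structure morphism $u:\iota_1(X_1)\to\iota_0(X)$. That condition gives an equivalence $\cC_{1,X_1/}\xto{\cong}\cD_{\iota_1(X_1)/}$ with $\cD=\cC(\cT(J))$, so its essential surjectivity only reaches morphisms out of $\iota_1(X_1)$ that belong to $\cT(J)$. You justify $u\in\cT(J)$ by reading the construction as taking $\cC_{0,X_1}$ to be a skeleton of $\cD_{\iota_1(X_1)/}$; the construction paragraph does literally say this, but it cannot be what is meant: if every object and morphism of $\cC_0$ came from $\cD$-undercategories, then every object of $\cC_0$ would be an edge object (contradicting Remark~\ref{rmk:C1 edge objects}), and Condition~(4) of Definition~\ref{defn:grids} would fail outside the atomic case, since $\cC_{0,X/}\to\cC_{\iota_0(X)/}$ could never hit the non-type-$J$ morphisms out of $\iota_0(X)$. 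The paper's own proofs of Conditions~(3) and~(4) confirm the intended reading, using the inclusions $\cC_{0,Y'_1}\inj\cC_{\iota_1(Y'_1)/}$ and $\cC_{0,X'}\to\cC_{\iota_1(X')/}$, i.e., skeletons of the full undercategories in $\cC$. Under that reading $u$ is an arbitrary morphism of $\cC$: for instance, in the grid $(\cC_{+,0},\iota|_{\cC_{+,0}})$ of Section~\ref{sec:monoid example} the morphism $\{0,1,2\}\to\{1,2\}$ is not of type $J$. The gap is not repairable within your scheme: if $\beta\circ u=\iota_1(w)$ for some $w$ in $\cC_1$, then $u=\beta^{-1}\circ\iota_1(w)\in\cT(J)$, a contradiction; nor does passing to another representative $u\circ\iota_1(p)$ of $X$ help, since by Condition~(3) of Definition~\ref{defn:B-site} that composite lies in $\cT(J)$ only if $u$ does.

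The missing idea is the paper's use of the semi-localizing (Ore) property: rather than lifting $u$ into $\cC_1$, apply Condition~(3) of Definition~\ref{defn:semi-localizing} to the cospan $Y\xto{f}\iota_0(X)\xleftarrow{u}\iota_1(X_1)$, using only $f\in\cT(J)$, to obtain $g':Y'\to Y$ and $f':Y'\to\iota_1(X_1)$ with $f'\in\cT(J)$ and $f\circ g'=u\circ f'$; then Condition~(3) of Definition~\ref{defn:pregrid} (overcategory essential surjectivity, which you do invoke correctly for $\beta\circ f$) lifts $f'$ to a morphism $f'_1:Y'_1\to X_1$ in $\cC_1$, and the descent is performed in the single skeleton $\cC_{0,Y'_1}$: the object $g'\circ\alpha^{-1}$, the object $u\circ\iota_1(f'_1)$ (which represents $X$), and $f$ as a morphism between them. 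Your final transport-and-bookkeeping step is sound and essentially coincides with the paper's, and your thinness observation ($w\circ p=w'\circ q$) would be a legitimate substitute for the Ore step once both legs sit inside $\cC_1$ --- but getting the $u$-leg into $\cC_1$ is precisely what is impossible. Incidentally, your remark that the lift $v$ "could never be found in $\cD$ directly" is backwards: the Ore condition produces exactly such a lift after refinement, and that is how the paper proceeds.
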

\begin{proof}
Let $X$ be an object of $\cC_0$
and let $f: Y \to \iota_0(X)$ 
be a morphism in $\cC(\cT(J))$.
Let us choose an object $X'$ in $\cC_1$ and
a morphism 
$g: \jmath(X') \to X$ in $\cC_0$.
As $\cT(J)$ is semi-localizing,
there exist an object $Y'$ of $\cC$ and
morphisms $f':Y' \to \iota_1(X')$ and
$g':Y' \to Y$ in $\cC$ such that
$f'$ belongs to $\cT(J)$ and that the equality
$f \circ g' = \iota_0(g) \circ f'$ holds.
As $(\cC_1, \iota_1)$
is a pregrid,
there exists an object $Y'_1$ of $\cC_1$,
a morphism $f'_1 : Y'_1 \to X'$ in $\cC_1$,
and an isomorphism 
$\alpha: Y' \xto{\cong} \iota_1(Y'_1)$
in $\cC(\cT(J))$ satisfying 
$f' = \iota_1(f'_1) \circ \alpha$.
As the inclusion functor 
$\cC_{0,Y'_1} \inj \cC_{\iota_1(Y'_1)/}$
and the functor
$\cC_{\iota_1(Y'_1)/} \to \cC_{Y'/}$
given by the composition with $\alpha$
are equivalences of categories,
there exists an object 
$\iota_1(Y'_1) \to Y_1$ of $\cC_{0,Y'_1}$
and an isomorphism $\beta: Y_1 \xto{\cong} Y$
such that the map $g'$ is equal to the
composite $Y' \xto{\alpha} \iota_1(Y'_1)
\to Y_1 \xto{\beta} Y$.
Let $Y_2$ denote the image of the object
$\iota_1(Y'_1) \to Y_1$ of $\cC_{0,Y'_1}$
under the functor $\cC_{0,Y'_1} \to \cC_0$.
Then, there exists a morphism $f_2: Y_2 \to X$
in $\cC_0$, and the isomorphism $\beta$
induces an isomorphism 
$\beta_2: \iota_0(Y_2)
\xto{\cong} Y$ satisfying $\iota_0(f_2)
= f \circ \beta_2$.
This shows that the pair 
$(\cC_0,\iota_0)$
satisfies property (3).
\end{proof}

\begin{lem}
Condition (4) is satisfied.
\end{lem}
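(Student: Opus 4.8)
The plan is to fix an object $X$ of $\cC_0$ and read off the functor $\iota_{0,X/}\colon\cC_{0,X/}\to\cC_{\iota_0(X)/}$ from the colimit presentation of $\cC_0$. First I would record a general reduction: since $\cC$ is an $E$-category (Definition~\ref{defn:E-categories}), every morphism of $\cC$ is an epimorphism, so both $\cC_{0,X/}$ and $\cC_{\iota_0(X)/}$ are thin; hence $\iota_{0,X/}$ is automatically faithful, and it remains only to prove that it is full and essentially surjective. By the construction of $\cC_0$, the object $X$ is the image under the canonical functor $\cC_{0,X'}\to\cC_0$ of an object $h$ of the poset skeleton $\cC_{0,X'}$, for some object $X'$ of $\cC_1$; unwinding the definition, $h$ corresponds to a morphism $h\colon\iota_1(X')\to W$ with $\iota_0(X)=W$.

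Next I would identify $\cC_{0,X/}$ with an undercategory of $W$. Recall that $\cC_1$ is a poset which is $\Lambda$-connected, hence cofiltered by Lemma~\ref{lem:core_cofiltered}, so that $\cC_0=\varinjlim_{X'\in\Obj\cC_1}\cC_{0,X'}$ is a \emph{filtered} colimit of posets taken along the transition functors $\cC_{0,X'}\to\cC_{0,Z'}$, each of which is fully faithful and therefore an order-embedding. Along a filtered colimit of posets by order-embeddings no new order relations are created, so $\cC_{0,X/}$, viewed as the up-set of $X$, is the filtered colimit over the slice $\cC_{1,/X'}$ of the up-sets of the images $h_{Z'}$ of $h$ in the pieces $\cC_{0,Z'}$. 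The up-set of $h_{Z'}$ in $\cC_{0,Z'}$ is a poset skeleton of the undercategory of $h_{Z'}$, and the undercategory of an object $\iota_1(Z')\to W$ inside an undercategory of $\iota_1(Z')$ is canonically isomorphic to the undercategory of its target $W$. Since the target $W$ is unchanged along the transition functors, these identifications turn the system into a \emph{constant} diagram with value a skeleton of $\cC_{W/}$; as the index $\cC_{1,/X'}$ is connected (indeed cofiltered, because $\cC_1$ is), the colimit of the constant diagram is its value. This produces an equivalence $\cC_{0,X/}\xto{\simeq}\cC_{\iota_0(X)/}$, and tracing the definition of $\iota_0$ shows it is the functor $\iota_{0,X/}$.

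The main obstacle is the step where the per-piece up-sets are identified with a skeleton of the undercategory of $W$: one must check that these canonical isomorphisms are strictly compatible with the transition functors, so that the colimit truly is a constant diagram, and—more importantly—that this undercategory of $W$ is the full $\cC_{\iota_0(X)/}$ and not merely its $\cT(J)$-part. Concretely, essential surjectivity requires that every morphism of $\cC$ out of $W=\iota_0(X)$ arise, up to isomorphism over $W$, as an object of the up-set of $h_{Z'}$ for $Z'$ large enough, and fullness requires that a morphism over $W$ between two such objects already be realized in some piece $\cC_{0,Z'}$. Both statements reduce to the ordinary properties of the undercategory of $W$ after passing to the colimit, with the epimorphism property of the morphisms $\iota_1(Z'\to X')$ supplying the descent of the required factorizations; this is exactly where the distinction between $\cC$ and $\cC(\cT(J))$ must be handled with care, and it is the delicate point of the argument.
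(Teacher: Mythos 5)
Your argument is correct and is essentially the paper's own proof: the paper likewise identifies the up-set $\cC_{0,X',g/}$ with $(\cC_{\iota_1(X')/})_{g/}\cong\cC_{\iota_0(X)/}$, notes that each transition functor $f^*$ (precomposition with an epimorphism $\iota_1(f)$) restricts to an equivalence between these up-sets, and concludes by passing to the filtered colimit over $\cC_1$, exactly as you do. The point you flag as delicate is harmless in the paper's construction: $\cC_{0,X'}$ is taken to be a poset skeleton of the full undercategory $\cC_{\iota_1(X')/}$ in $\cC$ (not merely of its $\cT(J)$-part $\cC(\cT(J))_{\iota_1(X')/}$ --- the occasional ``$\cD$'' in the paper's notation notwithstanding), so the constant value of your diagram is indeed all of $\cC_{\iota_0(X)/}$, as the paper's parallel use of the inclusion $\cC_{0,X'}\hookrightarrow\cC_{\iota_1(X')/}$ in its proofs of Conditions (3) and (4) confirms.
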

\begin{proof}
We prove that the pair 
$(\cC_0,\iota_0)$ satisfies
property (3). 
It remains to be proven that
the pair $(\cC_0,\iota_0)$ satisfies 
property (4).
Let $X$ be an object of $\cC_0$.
Let us choose an object $X'$ in $\cC_1$ and
an object $g$ in $\cC_{0,X'}$
whose image under the functor $\cC_{0,X'} \to \cC_0$
is equal to $X$.
We regard $g$ as a morphism
$g: \jmath(X') \to \iota_0(X)$ in $\cC$.
Then the inclusion functor 
$\cC_{0,X'} \to \cC_{\iota_1(X')/}$
induces equivalences of categories 
$\cC_{0,X',g/} \xto{\cong} 
(\cC_{\iota_0(X')/})_{g/} \cong 
\cC_{\iota_0(X)}$.
For any morphism $f:Y \to X'$ in $\cC_1$,
the functor 
$f^* : \cC_{0,X'} \to \cC_{0,Y}$ induced
by the composition with $f$ gives an equivalence
of categories from $\cC_{0,X',g/}$ to
$\cC_{0,Y,f^*(g)/}$.
Passing to the colimit, we see that the functor
$\cC_{0,X} \to \cC_0$ induces an equivalence
of categories from $\cC_{0,X',g/}$ to
$\cC_{0,X/}$. Hence the functor
$\cC_{0,X/} \to \cC_{\iota_0(X)/}$ induced by
$\iota_0$ is an equivalence of categories.
This shows that that the pair $(\cC_0,\iota_0)$ satisfies
property (4), which completes the proof.
\end{proof}

\begin{lem}
Condition (2) is satisfied.
\end{lem}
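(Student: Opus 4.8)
The plan is to produce, for each object $X'$ of $\cC$, an \emph{edge} object of $\cC_0$ lying over $X'$, and the natural candidate is $\jmath(X)$ for a suitable $X\in\Obj\cC_1$. Since the construction gives $\iota_0\circ\jmath=\varphi\circ\iota_1$, the object $\jmath(X)$ already sits over $\iota_1(X)$, so the whole problem reduces to (i) finding $X$ with $\iota_1(X)\cong X'$ and (ii) showing $\jmath(X)$ is an edge object. For (i) I would simply invoke essential surjectivity of $\iota_1$ (Condition (2) of Definition~\ref{defn:pregrid}): as $\cD=\cC(\cT(J))$ has the same objects as $\cC$, there is an object $X$ of $\cC_1$ with $\iota_1(X)\cong X'$ in $\cC$, and then $\iota_0(\jmath(X))=\varphi(\iota_1(X))=\iota_1(X)\cong X'$.

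The substance is (ii). Recall $\cC_0=\varinjlim_{Y\in\Obj\cC_1}\cC_{0,Y}$ is a filtered colimit of posets along fully faithful transition functors; it is filtered because $\cC_1$ is cofiltered by Lemma~\ref{lem:core_cofiltered} (being a non-empty, thin, $\Lambda$-connected category). The object $\jmath(X)$ is by definition the image of the initial object $\id_{\iota_1(X)}$ of $\cC_{0,X}$. I would take an arbitrary morphism $g:W\to\jmath(X)$ in $\cC_0$ and present it at a single level: choose a morphism $f:Y\to X$ in $\cC_1$ late enough that both $W$ and $\jmath(X)$ have representatives in $\cC_{0,Y}$. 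Under the transition functor, which is precomposition with $\iota_1(f)$, the initial object $\id_{\iota_1(X)}$ is carried to the object $\iota_1(f):\iota_1(Y)\to\iota_1(X)$ of $\cC_{0,Y}\subset\cC_{\iota_1(Y)/}$. Thus $g$ is realized as a morphism in the undercategory from some $w:\iota_1(Y)\to W_0$ to $\iota_1(f)$, i.e.\ a morphism $u:W_0\to\iota_1(X)$ of $\cC$ with $u\circ w=\iota_1(f)$, and by the description of $\iota_0$ on $\cC_{0,Y}$ we have $\iota_0(g)=u$.

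It then remains to observe that $u\in\cT(J)$. Since $\iota_1$ takes values in $\cD=\cC(\cT(J))$, the morphism $\iota_1(f)$ lies in $\cT(J)$, and $\cT(J)=\wh{\cT}$ by Proposition~\ref{prop:cTJ}. From $u\circ w=\iota_1(f)\in\wh{\cT}$ and Definition~\ref{defn:Tee hat} we get a morphism $h$ with $(u\circ w)\circ h=u\circ(w\circ h)\in\cT$, whence $u\in\wh{\cT}=\cT(J)$ (equivalently, this is the left-factor property noted right after Definition~\ref{defn:B-site}). Therefore $\iota_0(g)=u$ is of type $J$; as $g$ was arbitrary, $\jmath(X)$ is an edge object, and combined with $\iota_0(\jmath(X))\cong X'$ this establishes Condition (2).

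I expect the only delicate point to be the colimit bookkeeping in the second paragraph, namely checking that an arbitrary morphism into $\jmath(X)$ is faithfully presented at one level $Y$ so that the factorization $u\circ w=\iota_1(f)$ is genuinely available and $\iota_0(g)$ is correctly identified with $u$; the fully faithfulness of the transition functors is what makes this legitimate. Once this presentation is in hand, the $B$-site cancellation furnished by Proposition~\ref{prop:cTJ} closes the argument with no further work.
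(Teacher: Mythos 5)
Your proof is correct and takes essentially the same route as the paper's: both reduce, via essential surjectivity of $\iota_1$ and the identity $\iota_0\circ\jmath=\varphi\circ\iota_1$, to showing that objects of $\jmath(\cC_1)$ are edge objects, then precompose an arbitrary morphism into $\jmath(X)$ with a morphism from (the image of) an object of $\cC_1$ so that the composite is the image of a $\cC_1$-morphism, hence lies in $\cT(J)$, and conclude by the left-factor property $\cT(J)=\wh{\cT(J)}$. The only difference is presentational: where you unwind the morphism explicitly at a single colimit level $\cC_{0,Y}$ to get the factorization $u\circ w=\iota_1(f)$, the paper obtains the same factorization abstractly inside $\cC_0$, citing the construction for a $Z\in\cC_1$ mapping to both ends and thin-ness of $\cC_0$ for the commutation $g=f\circ h$.
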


\begin{proof}
Via the functor $\jmath$, we regard $\cC_1$ as a full subcategory of $\cC_0$.
Note that the restriction of $\iota_0$ to $\cC_1$ is equal to $\iota_1$.
As $\iota_1$ is essentially surjective, it suffices to show that
any object of $\cC_1$ is an edge object of $\cC_0$.
Let $X$ be an object of $\cC_1$ and let
$f:Y \to X$ be a morphism in $\cC_0$.
It follows from the construction of category $\cC_0$
that there exists an object $Z$ of $\cC_1$ and morphisms
$g:Z \to X$ and $h:Z \to Y$. As $\cC_0$ is thin, we have
$g = f \circ h$. As $g$ is a morphism in $\cC_1$,
the morphism $\iota_0(g)$ belongs to $\cT(J)$.
Hence, $\iota_0(f)$ belongs to $\cT(J)$. As $f$ is arbitrary,
this shows that $X$ is an edge object of $\cC_0$.
This completes the proof.
\end{proof}

This completes the proof of Lemma~\ref{lem:C0_core}.
\qed

\begin{rmk}
\label{rmk:C1 edge objects}
More strongly, one can show that an object of $\cC_0$ is 
an edge object if and only if it belongs
to $\Obj(\cC_1)$. 
The ``if" part follows from the argument of the proof of
the lemma above.
The ``only if" part can be proved as follows.
Let $X$ be an edge object of $\cC_0$.
It follows from the construction of $\cC_0$ that
there exists an object $Y$ of $\cC_1$ and a morphism $f:Y \to X$.
in $\cC_0$. As $X$ is an edge object, $f$ is of type $J$.
As $(\cC_0,\iota_0)$ is a grid of $(\cC,J)$, and
$(\cC_1,\iota_1)$ is a pregrid of $\cC(\cT(J))$,
the functor $\iota_{0,Y/} : \cC_{0,Y/} \to \cC_{\iota_0(Y)/}$ is
an equivalence of categories and its restriction to 
$\cC_{1,Y/}$ induces an equivalence $\cC_{1,Y/}
\cong \cC(\cT(J))_{\iota_0(Y)/}$ of categories.
Hence, if we regard $f$ as an object of $\cC_{0,Y/}$,
then $f$ is isomorphic to an object of $\cC_{1,Y/}$ in
$\cC_{0,Y/}$. As $\cC_0$ is skeletal, this shows that
$f$ is a morphism in $\cC_1$. Hence $X$ is an object of $\cC_1$.
\end{rmk}

\begin{proof}
[Proof of Proposition~\ref{cor:grid existence}]
This follows from 
Proposition~\ref{prop:pregrid existence}
and Lemma~\ref{lem:C0_core}.

\end{proof}

\section{Proof of Theorem \ref{thm:Galois_main}: the fiber functor is fully faithful}
\label{sec:proof of theorem}
\subsection{Properties of a grid}
In this subsection, we will supply some preliminaries
required in our proof of Theorem \ref{thm:Galois_main},
most of which follow from Lemma \ref{lem:C0_core}.

Let $(\cC,J)$ be a $Y$-site and let 
$(\cC_0, \iota_0)$ be a grid of $(\cC,J)$.

\begin{lem}\label{lem:Galois_over}
Let $Y_1 \xto{f_1} X \xleftarrow{f_2} Y_2$
be a diagram in $\cC_0$.
Suppose that $\iota_0(f_1)$ is a Galois covering
and that there exists a morphism
from $\iota_0(Y_1)$ to $\iota_0(Y_2)$ 
over $\iota_0(X)$.
Then there exists a morphism from $Y_1$
to $Y_2$ over $X$.
\end{lem}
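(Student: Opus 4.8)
The plan is to reduce the whole statement to producing a single morphism $Y_1 \to Y_2$ in $\cC_0$: since $\cC_0$ is a poset, it is thin and skeletal, so once such a morphism exists it is automatically over $X$ (both $f_2$ composed with it and $f_1$ are the unique morphism $Y_1 \to X$). The three ingredients I would combine are Corollary~\ref{cor:Hom bijective}, the essential surjectivity and full faithfulness of the undercategory comparison functors from Condition~(4) of Definition~\ref{defn:grids}, and the $\Lambda$-connectedness of $\cC_0$ (Condition~(1)). First I would record that $\iota_0(f_1)$, being a Galois covering, in fact lies in $\cT(J)$: the $Y$-site has enough Galois coverings (Condition~(3) of Definition~\ref{defn:Y-sites}), so by Definition~\ref{defn:enough Galois} together with Lemma~\ref{lem:composite} the collection of all Galois coverings is contained in $\wh{\cT(J)} = \cT(J)$. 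Thus $\iota_0(f_1)$ is a Galois covering in $\cT(J)$, which is what the later lemmas require.

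Next, using $\Lambda$-connectedness I would choose an object $Q$ of $\cC_0$ with morphisms $Q \to Y_1$ and $Q \to Y_2$, and set $q_1 = \iota_0(Q \to Y_1)$ and $q_2 = \iota_0(Q \to Y_2)$; by thinness of $\cC_0$ the two composites $Q \to Y_i \to X$ coincide, so $q_1,q_2$ are compatible with the structure maps to $\iota_0(X)$. The decisive step is to apply Corollary~\ref{cor:Hom bijective} to the diagram $\iota_0(Y_1) \xto{\iota_0(f_1)} \iota_0(X) \xleftarrow{\iota_0(f_2)} \iota_0(Y_2)$ with $f' = q_1$: since $\iota_0(f_1)$ is a Galois covering in $\cT(J)$ and a morphism over $\iota_0(X)$ exists by hypothesis, composition with $q_1$ is a bijection $\Hom_{\iota_0(X)}(\iota_0(Y_1),\iota_0(Y_2)) \xto{\cong} \Hom_{\iota_0(X)}(\iota_0(Q),\iota_0(Y_2))$. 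Hence there is a morphism $h'$ over $\iota_0(X)$ with $h' \circ q_1 = q_2$. I would emphasize that the given $h$ is then discarded: its only purpose is to make the source Hom-set non-empty so that the corollary applies and $h'$ can be extracted.

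Finally I would transport $h'$ back into $\cC_0$. By essential surjectivity of Condition~(4) of Definition~\ref{defn:grids} at $Y_1$, there is a morphism $k' : Y_1 \to W'$ in $\cC_0$ and an isomorphism $\gamma' : \iota_0(W') \xto{\cong} \iota_0(Y_2)$ with $\gamma' \circ \iota_0(k') = h'$. Since $Q \le Y_1 \le W'$, the structure map of $W'$ over $Q$ is $\iota_0(k') \circ q_1$, and one computes $\gamma' \circ \iota_0(k') \circ q_1 = h' \circ q_1 = q_2$, so $\gamma'$ is an isomorphism between the images of $W'$ and $Y_2$ in the undercategory $\cC_{\iota_0(Q)/}$. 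Applying Condition~(4) at $Q$ (an equivalence, hence reflecting isomorphisms) yields an isomorphism $W' \cong Y_2$ in $\cC_{0,Q/}$, and since $\cC_0$ is a poset this forces $W' = Y_2$; then $k'$ is the desired morphism $Y_1 \to Y_2$.

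The step I expect to be the main obstacle is precisely the identification of the lifted object with $Y_2$ itself. Essential surjectivity at $Y_1$ only ever returns some $W'$ abstractly isomorphic to $Y_2$, and naively lifting the given $h$ would produce a ``conjugate'' of $Y_2$ that need not equal it. The resolution is to replace $h$ by the morphism $h'$ made compatible with a common lower bound $Q$ through Corollary~\ref{cor:Hom bijective} — this is exactly where the Galois hypothesis on $\iota_0(f_1)$ is indispensable — so that the resulting isomorphism descends to one over $\iota_0(Q)$ and can be matched across the rigid poset $\cC_0$.
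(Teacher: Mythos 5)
Your proof is correct and follows essentially the same route as the paper's: both arguments choose a common lower bound (your $Q$, the paper's $Z$) via $\Lambda$-connectedness, use the Galois hypothesis on $\iota_0(f_1)$ to replace the given $h$ by an $h'$ satisfying $h' \circ q_1 = q_2$ (you invoke Corollary~\ref{cor:Hom bijective}, the paper invokes its source, Lemma~\ref{lem:Galois_lifts}~(1)), and then descend to $\cC_0$ through Condition~(4) of Definition~\ref{defn:grids}, with your preliminary observation that enough Galois coverings place every Galois covering in $\cT(J)$ correctly filling a step the paper leaves implicit. The only divergence is the final descent: the paper applies fullness of the equivalence $\cC_{0,Q/} \to \cC_{\iota_0(Q)/}$ directly to $h'$, viewed as a morphism from $\iota_0(g_1)$ to $\iota_0(g_2)$ in $\cC_{\iota_0(Q)/}$, obtaining the morphism $Y_1 \to Y_2$ in one stroke, so the identification $W' = Y_2$ that you single out as the main obstacle never arises, and your valid but longer detour through essential surjectivity at $Y_1$ plus isomorphism-reflection at $Q$ can be shortened accordingly.
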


\begin{proof}
The category $\cC_0$ is $\Lambda$-connected
(Lemma~\ref{lem:C0_core}).
Hence, there exist an object $Z$ of $\cC_0$ and morphisms
$g_1: Z \to Y_1$ and $g_2 :Z \to Y_2$ making
the diagram
$$
\begin{CD}
Z @>{g_1}>> Y_1 \\
@V{g_2}VV @VV{f_1}V \\
Y_2 @>{f_2}>> X
\end{CD}
$$
commutative. Let us apply the functor $\iota_0$
to this diagram. It then follows from
Lemma \ref{lem:Galois_lifts} (1)  that
there exists a morphism in the 
undercategory $\cC_{\iota_0(Z)/}$
from $\iota_0(g_1)$ to $\iota_0(g_2)$.
Hence, it follows from (4) of Lemma \ref{lem:C0_core}
that there exists a morphism in the undercategory
$\cC_{0,Z/}$ from $g_1$ to $g_2$.
In particular, there exists a morphism
$h:Y_1 \to Y_2$ in $\cC_0$.
As $\cC_0$ is thin, any diagram in $\cC_0$
is commutative. Hence, $h$ is a morphism
over $X$, which proves the claim.
\end{proof}

\begin{cor}\label{cor:Galois_over}
Let $X$ be an object of $\cC_0$ and let
$f_1:Y_1 \to X$ and $f_2 :Y_2 \to X$ be two morphisms
in $\cC_0$ such that 
$\iota_0(f_1)$ and $\iota_0(f_2)$ are
Galois coverings in $\cC$.
Suppose that there exists an isomorphism
$\iota_0(Y_1) \cong \iota_0(Y_2)$ over $\iota_0(X)$ in $\cC$.
Then, we have $Y_1 = Y_2$ and $f_1 = f_2$.
\end{cor}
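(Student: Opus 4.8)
The plan is to deduce this as an essentially immediate consequence of Lemma~\ref{lem:Galois_over}, applied in both directions, combined with the fact that $\cC_0$ is a poset (Condition~(1) in Definition~\ref{defn:grids}). First I would unpack the hypothesis: an isomorphism $\iota_0(Y_1) \xto{\cong} \iota_0(Y_2)$ over $\iota_0(X)$ provides, in particular, a morphism $\iota_0(Y_1) \to \iota_0(Y_2)$ over $\iota_0(X)$, and its inverse is a morphism $\iota_0(Y_2) \to \iota_0(Y_1)$ over $\iota_0(X)$. So I have a morphism between the images in each direction, compatible with the structure maps to $\iota_0(X)$.

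Next I would apply Lemma~\ref{lem:Galois_over} twice. For the diagram $Y_1 \xto{f_1} X \xleftarrow{f_2} Y_2$, the assumption that $\iota_0(f_1)$ is a Galois covering together with the morphism $\iota_0(Y_1) \to \iota_0(Y_2)$ over $\iota_0(X)$ produces a morphism $Y_1 \to Y_2$ over $X$ in $\cC_0$. Symmetrically, for the diagram $Y_2 \xto{f_2} X \xleftarrow{f_1} Y_1$, using that $\iota_0(f_2)$ is a Galois covering and that the inverse isomorphism gives a morphism $\iota_0(Y_2) \to \iota_0(Y_1)$ over $\iota_0(X)$, the same lemma produces a morphism $Y_2 \to Y_1$ over $X$. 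The point of having \emph{both} $\iota_0(f_1)$ and $\iota_0(f_2)$ Galois is precisely that it lets me run the lemma in each direction.

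Finally I would invoke the poset structure. Since $\cC_0$ is a genuine poset, the existence of morphisms $Y_1 \to Y_2$ and $Y_2 \to Y_1$ means $Y_1 \le Y_2$ and $Y_2 \le Y_1$, whence $Y_1 = Y_2$ by antisymmetry of the order. As $\cC_0$ is thin, there is at most one morphism $Y_1 \to X$, so the two structure maps coincide, giving $f_1 = f_2$. The argument involves no real obstacle: the only points to handle with care are matching the hypotheses of Lemma~\ref{lem:Galois_over} to the correct slots in the two applications (which object's image carries the Galois covering, and the direction of the comparison morphism between the $\iota_0(Y_i)$), and the fact that it is the poset property of $\cC_0$, not merely thinness, that upgrades ``morphisms in both directions'' to an actual equality $Y_1 = Y_2$.
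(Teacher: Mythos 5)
Your proof is correct and coincides with the paper's own argument: the paper likewise applies Lemma~\ref{lem:Galois_over} twice (once for each Galois covering hypothesis, using the isomorphism and its inverse as the comparison morphisms over $\iota_0(X)$) to obtain morphisms $Y_1 \to Y_2$ and $Y_2 \to Y_1$ over $X$, and then concludes from the fact that $\cC_0$ is a poset. Your added remarks on slotting the hypotheses correctly and on antisymmetry versus mere thinness are accurate refinements of the same argument, not a different route.
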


\begin{proof}
It follows from Lemma \ref{lem:Galois_over} that
there exist a morphism from $Y_1$ to $Y_2$
and a morphism from $Y_2$ to $Y_1$.
As $\cC_0$ is a poset, the claim follows.
\end{proof}

\begin{lem}\label{lem:isom_under}
Let $X$ and $X'$ be objects of $\cC_0$ and
let $\beta: \iota_0(X) \xto{\cong} \iota_0(X')$
be an isomorphism in $\cC$. Then, for any
morphism $f:X \to Y$ in $\cC_0$,
there exists a unique morphism $f':X' \to Y'$
in $\cC_0$ satisfying the following property:
there exists an
isomorphism $\beta': \iota_0(Y) \xto{\cong}
\iota_0(Y')$ in $\cC$ that makes the diagram
$$
\begin{CD}
\iota_0(X) @>{\beta}>{\cong}> \iota_0(X') \\
@V{\iota_0(f)}VV @VV{\iota_0(f')}V \\
\iota_0(Y) @>{\beta'}>{\cong}> \iota_0(Y')
\end{CD}
$$
commutative. (We note that such an isomorphism 
$\beta'$ is unique, as $f$ is an epimorphism.)
\end{lem}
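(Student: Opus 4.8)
The essential input is Condition~(4) of Definition~\ref{defn:grids}, which says that for every object $W$ of $\cC_0$ the functor $\iota_{0,W/}:\cC_{0,W/}\to\cC_{\iota_0(W)/}$ between undercategories is an equivalence of categories. The plan is to transport the object $\iota_0(f)$ of $\cC_{\iota_0(X)/}$ across $\beta$ into $\cC_{\iota_0(X')/}$, and then pull it back through the equivalence $\iota_{0,X'/}$ to produce $f'$. Existence will be immediate essential surjectivity; uniqueness will come from full faithfulness of $\iota_{0,X'/}$ combined with the fact that $\cC_0$ is a poset.

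For existence, I would form the morphism $\iota_0(f)\circ\beta^{-1}:\iota_0(X')\to\iota_0(Y)$, regarded as an object of $\cC_{\iota_0(X')/}$; this is the image of $\iota_0(f)$ under the canonical isomorphism of undercategories $\cC_{\iota_0(X)/}\xto{\cong}\cC_{\iota_0(X')/}$ induced by precomposition with $\beta^{-1}$. Since $\iota_{0,X'/}$ is essentially surjective, there exist an object $f':X'\to Y'$ of $\cC_{0,X'/}$ and an isomorphism from $\iota_0(f)\circ\beta^{-1}$ to $\iota_0(f')$ in $\cC_{\iota_0(X')/}$. Unwinding the definition of the undercategory, such an isomorphism is precisely an isomorphism $\beta':\iota_0(Y)\xto{\cong}\iota_0(Y')$ in $\cC$ satisfying $\beta'\circ(\iota_0(f)\circ\beta^{-1})=\iota_0(f')$, equivalently $\iota_0(f')\circ\beta=\beta'\circ\iota_0(f)$, which is exactly the asserted commuting square. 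I would also record the parenthetical claim: given $f'$, the isomorphism $\beta'$ is unique because $\iota_0(f)$ is an epimorphism (as $\cC$ is an $E$-category), so $\beta'_1\circ\iota_0(f)=\beta'_2\circ\iota_0(f)$ forces $\beta'_1=\beta'_2$.

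For uniqueness of $f'$, suppose $f'':X'\to Y''$ in $\cC_0$ also admits an isomorphism $\beta''$ filling the analogous square, i.e.\ $\iota_0(f'')\circ\beta=\beta''\circ\iota_0(f)$. Then both $\iota_0(f')$ and $\iota_0(f'')$, viewed as objects of $\cC_{\iota_0(X')/}$, are isomorphic to $\iota_0(f)\circ\beta^{-1}$ (via $\beta'$ and $\beta''$ respectively), hence isomorphic to each other. Since $\iota_{0,X'/}$ is fully faithful it reflects isomorphisms, so $f'$ and $f''$ are isomorphic as objects of $\cC_{0,X'/}$; and because $\cC_0$ is a poset (Condition~(1) of Definition~\ref{defn:grids}), the undercategory $\cC_{0,X'/}$ is skeletal, whence $f'=f''$ and $Y'=Y''$.

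This argument is essentially bookkeeping once Condition~(4) is in hand, so I do not anticipate a genuine conceptual obstacle. The only points requiring care are tracking the variance correctly when moving undercategory objects across $\beta$ (one precomposes with $\beta^{-1}$) and matching the resulting isomorphism with the commuting square, together with invoking the poset/skeletal property of $\cC_0$—rather than mere essential uniqueness—to upgrade \emph{isomorphic} to \emph{equal} in the uniqueness step.
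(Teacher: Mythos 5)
Your proposal is correct and follows essentially the same route as the paper's proof: both transport $\iota_0(f)$ across $\beta$ to the object $\iota_0(f)\circ\beta^{-1}$ of $\cC_{\iota_0(X')/}$, invoke the equivalence $\iota_{0,X'/}:\cC_{0,X'/}\to\cC_{\iota_0(X')/}$ from Condition~(4) of the grid (Lemma~\ref{lem:C0_core}) to produce $f'$ and $\beta'$, and deduce uniqueness of $f'$ from the fact that $\cC_0$ is skeletal. Your write-up merely makes explicit the bookkeeping (reflecting isomorphisms through the fully faithful functor, and using thinness plus skeletality of the poset to upgrade isomorphic to equal) that the paper's one-line uniqueness argument leaves implicit.
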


\begin{proof}
From the given data, we obtain a morphism
$\iota_0(f) \circ \beta^{-1}: \iota_0(X') \to \iota_0(Y)$.
Using the fact that the functor $\cC_{0,X'/} \to \cC_{\iota_0(X')/}$
induced by $\iota_0$ is an equivalence of categories
(Lemma \ref{lem:C0_core}(4)),
we obtain morphisms $f'$ and $\beta'$
that make the diagram commutative.
The uniqueness of $f'$ follows, as $\cC_0$ is skeletal.
\end{proof}

\begin{lem}\label{lem:isom_over}
Let $X$ and $X'$ be objects of $\cC_0$ and
let $\beta: \iota_0(X) \xto{\cong} \iota_0(X')$
be an isomorphism in $\cC$. Then, for any
morphism $f:Y \to X$ in $\cC_0$ such that
$\iota_0(f)$ is a Galois covering in $\cT$,
there exists a unique morphism $f':Y' \to X'$
in $\cC_0$ satisfying the following property:
there exists a (not necessarily unique)
isomorphism $\beta': \iota_0(Y) \xto{\cong}
\iota_0(Y')$ in $\cC$ that makes the diagram
$$
\begin{CD}
\iota_0(Y) @>{\beta'}>{\cong}> \iota_0(Y') \\
@V{\iota_0(f)}VV @VV{\iota_0(f')}V \\
\iota_0(X) @>{\beta}>{\cong}> \iota_0(X')
\end{CD}
$$
commutative.
\end{lem}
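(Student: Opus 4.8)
The plan is to obtain existence from Condition~(3) of the definition of a grid (Definition~\ref{defn:grids}) and uniqueness from Corollary~\ref{cor:Galois_over}. The one preliminary fact I need is that pre- or post-composing a Galois covering with an isomorphism again gives a Galois covering lying in $\cT(J)$.

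First I would record this fact. If $g:Y \to X$ is a Galois covering and $\delta$ is an isomorphism, then the structure group transports in the evident way (postcomposition leaves $\Aut_X(Y)$ unchanged, since an isomorphism is a monomorphism; precomposition conjugates it by $\delta$), and for every $Z$ the induced map on $\Hom_\cC(Z,-)$ is the composite of the original pseudo-torsor with a bijection, so the three conditions of Definition~\ref{defn:pseudo G-torsor} persist. Moreover every isomorphism $\beta$ lies in $\cT(J)$, because $\cT(J)=\wh{\cT(J)}$ and $\beta \circ \beta^{-1}=\id \in \cT(J)$; hence $\beta \circ g \in \cT(J)$ by closure under composition (Proposition~\ref{prop:cTJ}).

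For existence I would apply Condition~(3) of Definition~\ref{defn:grids} to the object $X'$ of $\cC_0$ and the morphism $\beta \circ \iota_0(f):\iota_0(Y) \to \iota_0(X')$ of $\cC$, which by the preliminary fact is a Galois covering belonging to $\cT(J)$. This yields a morphism $f':Y' \to X'$ in $\cC_0$ and an isomorphism $\beta':\iota_0(Y)\xto{\cong}\iota_0(Y')$ with $\beta \circ \iota_0(f) = \iota_0(f') \circ \beta'$, which is exactly the required commutative square.

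For uniqueness, suppose $f'_1:Y'_1\to X'$ and $f'_2:Y'_2\to X'$ both work, with witnessing isomorphisms $\beta'_1,\beta'_2$. Then $\gamma:=\beta'_2\circ(\beta'_1)^{-1}:\iota_0(Y'_1)\xto{\cong}\iota_0(Y'_2)$ satisfies $\iota_0(f'_2)\circ\gamma=\iota_0(f'_1)$, so $\gamma$ is an isomorphism over $\iota_0(X')$; and each $\iota_0(f'_i)=(\beta\circ\iota_0(f))\circ(\beta'_i)^{-1}$ is a Galois covering by the preliminary fact. Corollary~\ref{cor:Galois_over} then forces $Y'_1=Y'_2$ and $f'_1=f'_2$. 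This last step is the only substantive one: it is where the hypothesis that $\cC_0$ is a poset (via Corollary~\ref{cor:Galois_over}) does the real work, everything else being formal manipulation of isomorphisms.
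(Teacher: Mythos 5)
Your proof is correct and follows essentially the same route as the paper: existence comes from Condition~(3) of Definition~\ref{defn:grids} applied to $\beta\circ\iota_0(f)\in\cT(J)$, and uniqueness from the poset property of $\cC_0$ via Corollary~\ref{cor:Galois_over} (the paper invokes Lemma~\ref{lem:Galois_over} and then the poset property directly, which is exactly what that corollary packages). Your preliminary verification that pre- and post-composing a Galois covering in $\cT(J)$ with isomorphisms again yields a Galois covering in $\cT(J)$ is correct and merely makes explicit a point the paper leaves implicit.
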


\begin{proof}
The existence of $f'$ follows from
(3) of Lemma \ref{lem:C0_core}.
We prove the uniqueness.
Suppose that the two morphisms 
$f'_1:Y'_1 \to X'$ and $f'_2:Y'_2 \to X'$
satisfy the property of the lemma.
As both $\iota_0(Y'_1)$ and $\iota_0(Y'_2)$
are isomorphic to $\iota_0(Y)$ over $X$,
there exists an isomorphism from 
$\iota_0(Y'_1)$ to $\iota_0(Y'_2)$ over $\iota_0(X)$.
Hence, it follows from Lemma \ref{lem:Galois_over}
that there exist morphisms $Y'_1 \to Y'_2$
and $Y'_2 \to Y'_1$ over $X$.
As the category $\cC_0$ is a poset,
it follows that $Y'_1=Y'_2$ and
$f'_1 = f'_2$, which proves the claim.
\end{proof}

\begin{lem} \label{lem:yama}
Let $X$ be an object of $\cC_0$ and let 
$\iota_0(X) \xleftarrow{f} Z \to Y$ be a diagram in
$\cC$ such that $f$ belongs to $\cT$.
Then, there exist a diagram
$X \leftarrow Z' \to Y'$ in $\cC_0$
and isomorphisms $\iota_0(Z') \cong Z$
and $\iota_0(Y') \cong Y$ that make
the diagram
\begin{equation}\label{two_squares}
\begin{CD}
\iota_0(X) @<<< \iota_0(Z') @>>> \iota_0(Y') \\
@| @V{\cong}VV @V{\cong}VV \\
\iota_0(X) @<{f}<< Z @>>> Y
\end{CD}
\end{equation}
in $\cC$ commutative.
\end{lem}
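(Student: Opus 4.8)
The plan is to lift the diagram one leg at a time, using the two defining properties of a grid. First I would apply Condition (3) of Definition~\ref{defn:grids} (equivalently Lemma~\ref{lem:C0_core}(3)) to the object $X$ of $\cC_0$ and the morphism $f : Z \to \iota_0(X)$, which belongs to $\cT(J)$ by hypothesis. This produces a morphism $f' : Z' \to X$ in $\cC_0$ together with an isomorphism $\alpha : Z \xto{\cong} \iota_0(Z')$ in $\cC$ satisfying $f = \iota_0(f') \circ \alpha$. This handles the left leg and supplies the middle vertical isomorphism, which will be taken to be $\alpha^{-1} : \iota_0(Z') \xto{\cong} Z$ so as to point downward as in the diagram.

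Next I would transport the second leg along $\alpha$. Writing $g : Z \to Y$ for the other morphism of the given diagram, I consider $g \circ \alpha^{-1} : \iota_0(Z') \to Y$ as an object of the undercategory $\cC_{\iota_0(Z')/}$. By Condition (4) of a grid (Lemma~\ref{lem:C0_core}(4)), the functor $\iota_{0,Z'/} : \cC_{0,Z'/} \to \cC_{\iota_0(Z')/}$ is an equivalence of categories, hence in particular essentially surjective. Therefore there exist a morphism $g' : Z' \to Y'$ in $\cC_0$ and an isomorphism $\beta : \iota_0(Y') \xto{\cong} Y$ in $\cC$ realizing an isomorphism in $\cC_{\iota_0(Z')/}$ from $\iota_0(g')$ to $g \circ \alpha^{-1}$, that is, satisfying $\beta \circ \iota_0(g') = g \circ \alpha^{-1}$. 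This gives the right leg and the right vertical isomorphism.

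It then remains to assemble the data into the diagram $X \xleftarrow{f'} Z' \xto{g'} Y'$ in $\cC_0$, equipped with the vertical isomorphisms $\alpha^{-1} : \iota_0(Z') \xto{\cong} Z$ and $\beta : \iota_0(Y') \xto{\cong} Y$, and to check commutativity of \eqref{two_squares}. The left square commutes because $f \circ \alpha^{-1} = \iota_0(f')$, which is merely a rearrangement of the relation $f = \iota_0(f') \circ \alpha$ obtained in the first step. The right square commutes because $\beta \circ \iota_0(g') = g \circ \alpha^{-1}$, which is exactly the relation produced in the second step.

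I do not expect any serious obstacle here: the statement follows by a direct, two-step application of the grid axioms. The only point requiring care is the bookkeeping of the directions of $\alpha$ and $\beta$, so that the relations they satisfy line up with the two squares of \eqref{two_squares} under the convention that the vertical arrows point downward, from $\iota_0(Z')$ to $Z$ and from $\iota_0(Y')$ to $Y$.
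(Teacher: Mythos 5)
Your proof is correct and follows essentially the same route as the paper: the paper likewise first invokes Condition (3) of a grid (Lemma~\ref{lem:C0_core}(3)) to lift the leg $f$ and obtain $Z'$ with $\iota_0(Z') \cong Z$, and then uses the equivalence of undercategories from Condition (4) to lift the remaining leg and obtain $Y'$ with $\iota_0(Y') \cong Y$. Your version merely spells out the direction-of-isomorphism bookkeeping that the paper leaves implicit.
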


\begin{proof}
It follows from (3) of Lemma \ref{lem:C0_core} that
there exist an object $Z'$ of $\cC_0$ and the isomorphism 
$\iota_0(Z') \cong Z$ in $\cC$ that make the left square of
\eqref{two_squares} commutative.
It then follows from (4) of Lemma \ref{lem:C0_core} that
there exist an object $Y'$ of $\cC_0$ and the isomorphism 
$\iota_0(Y') \cong Y$ in $\cC$ that make the right square of
\eqref{two_squares} commutative.
This proves the claim.
\end{proof}

\subsection{Proof of Theorem~\ref{thm:Galois_main}: the functor $\omega_\Cip$ is faithful}
\label{sec:faithful}
\begin{lem}
\label{lem:injectivity}
Let $F$ be a sheaf on $(\cC,J)$.
Then for any edge object $X$ of $\cC_0$,
the map $F(X) \to \omega_\Cip(F)$
given by the universality of colimit
is injective.
\end{lem}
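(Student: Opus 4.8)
The plan is to reduce the statement to the injectivity of a single restriction map, exploiting that $\omega_\Cip(F)$ is by construction a \emph{filtered} colimit. Recall from Lemma~\ref{lem:core_cofiltered2} that the full subcategory $\cCe$ of edge objects of $\cC_0$ is cofiltered; hence its dual is filtered and
\[
\omega_\Cip(F) = \varinjlim_{Z \in \Obj(\cCe)} F(\iota_0(Z))
\]
is a filtered colimit of sets, the transition map attached to a morphism $v$ in $\cCe$ being $F(\iota_0(v))$. Moreover $\cCe$, being a full subcategory of the poset $\cC_0$, is thin, so between any two of its objects there is at most one morphism.

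First I would take two elements $a,b \in F(\iota_0(X))$ with the same image in $\omega_\Cip(F)$ and apply the elementary criterion for a filtered colimit of sets: they are identified precisely when there exist an object $Z$ of $\cCe$ and a pair of morphisms from $X$ to $Z$ in $\cCe^{\op}$ equalizing them. Translating back, such a morphism is a morphism $u\colon Z \to X$ in $\cCe$, and thinness forces the two candidate morphisms to coincide, so we obtain a single $u\colon Z\to X$ in $\cCe$ with $F(\iota_0(u))(a)=F(\iota_0(u))(b)$. Now the edge-object hypothesis enters: since $X$ is an edge object and $u$ is a morphism in $\cC_0$, Definition~\ref{defn:edge objects} gives that $\iota_0(u)$ belongs to $\cT(J)$. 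By Proposition~\ref{prop:cTJ} the collection $\cT(J)$ is a semi-localizing basis for $J$, so Corollary~\ref{cor:sheaf_criterion2}, applied to this basis and to the sheaf $F$, yields that the restriction map $F(\iota_0(u))\colon F(\iota_0(X)) \to F(\iota_0(Z))$ is injective. Combined with $F(\iota_0(u))(a)=F(\iota_0(u))(b)$ this gives $a=b$, proving injectivity.

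The step doing the real work is the last one, where separatedness of the sheaf along a single $\cT(J)$-morphism collapses the identification; there is no genuine obstacle beyond it. The only care required is bookkeeping: one must make sure the index category is genuinely filtered so that the elementary identification criterion is available, and one must correctly pass between $\cCe$ and its dual while using thinness to replace an a priori pair of parallel comparison morphisms by a single structure map $u$. Once these points are handled, the sheaf axiom via Corollary~\ref{cor:sheaf_criterion2} finishes the argument immediately.
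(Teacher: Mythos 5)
Your proof is correct and takes essentially the same route as the paper's: both reduce the claim to the injectivity of $F(\iota_0(f))$ for every morphism $f:Y \to X$ in $\cC_0$, which follows from the edge-object hypothesis (Definition~\ref{defn:edge objects}) together with Corollary~\ref{cor:sheaf_criterion2}, and then conclude by filteredness of the colimit. You merely make explicit the filtered-colimit identification criterion and the thinness bookkeeping that the paper's two-line proof leaves implicit.
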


\begin{proof}
Let $f:Y \to X$ be a morphism in $\cC_0$.
Then, $\iota_0(f)$ belongs to $\cT$.
As $F$ is a sheaf on $(\cC,J)$, it follows from
Corollary \ref{cor:sheaf_criterion2} that the map
$F(X) \to F(Y)$ is injective.
Hence, the map $F(X)\to \omega_\Cip(F)$
is injective.
\end{proof}

\begin{proof}[Proof of Theorem~\ref{thm:Galois_main}
: faithfulness]
We next will prove the faithfulness of the functor
\eqref{main_equiv}.
Let $F$ and $F'$ be sheaves on $(\cC,J)$
and let $f,g:F \to F'$ be two morphisms of
sheaves on $(\cC,J)$.
Suppose that $\omega_\Ci(f) = \omega_\Ci(g)$.
We show that $f=g$.
Let us take an arbitrary object $X$ of $\cC$.
It suffices to show that $f(X)=g(X)$.
As $\iota_0$ is essentially surjective,
there exists an object $X'$ of $\cC_0$
and an isomorphism $\iota_0(X')\xto{\alpha}
X$ in $\cC$. Hence, the claim follows
from the commutativity of the diagram
$$
\begin{CD}
F(X) @>{\alpha^*}>> F(\iota_0(X'))
@>>> \omega_\Cip(F) \\
@V{f(X),g(X)}VV @VVV 
@VV{\omega_\Cip(f),\omega_\Cip(g)}V \\
G(X) @>{\alpha^*}>> G(\iota_0(X'))
@>>> \omega_\Cip(G) \\
\end{CD}
$$
and Lemma \ref{lem:injectivity}.
\end{proof}

\subsection{The category $I_X$}
\label{sec:IX}
Let $X$ be an edge object of $\cC_0$.
We denote by $I_X$ the full-subcategory of
the overcategory $\cC_{0,/X}$ whose objects are
the morphisms $f:Y \to X$ in $\cC_0$
such that $\iota_0(f)$ is a Galois covering
in $\cT$. For an object $f:Y\to X$ of $I_X$,
we write $\Gal(f)$ for $\Gal(\iota_0(f))$.
\begin{lem}\label{lem:I_X cofinal}
The category $I_X$ is cofiltered and the
objects of $I_X$ are cofinal in $\cC_{0,/X}$.
\end{lem}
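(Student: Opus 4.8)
The plan is to verify both assertions by reducing statements about $I_X$ to the cofinality corollary (Corollary~\ref{lem:cofinality}) together with the defining properties of a grid. First observe that since $\cC_0$ is a poset, the overcategory $\cC_{0,/X}$ is thin, and hence so is its full subcategory $I_X$; moreover $I_X$ is $\frU$-small. Thus by Lemma~\ref{lem:core_cofiltered} it will suffice to prove that $I_X$ is non-empty and $\Lambda$-connected, and, separately, that its objects are cofinal in $\cC_{0,/X}$.

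The heart of the matter is the cofinality statement. Given an object $g:Z \to X$ of $\cC_{0,/X}$, I would first note that, because $X$ is an edge object, $\iota_0(g)$ lies in $\cT(J)$. I then apply Corollary~\ref{lem:cofinality} to the poset $P=\{y_0 \le y_1\}$, a rooted tree with root $y_0$, and the functor $F:\cP^\op \to \cC$ with $F(y_0)=\iota_0(X)$, $F(y_1)=\iota_0(Z)$, and $F(y_1 \to y_0)=\iota_0(g)$; the required hypothesis $F(h)\in\cT(J)$ holds for the only nonidentity morphism. This yields an object $W$ with morphisms $f_{y_0}:W \to \iota_0(X)$ and $f_{y_1}:W \to \iota_0(Z)$, both Galois coverings in $\cT(J)$, satisfying $f_{y_0}=\iota_0(g)\circ f_{y_1}$. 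Applying Condition (3) of Definition~\ref{defn:grids} to $f_{y_0}\in\cT(J)$ produces a morphism $f':Y' \to X$ in $\cC_0$ and an isomorphism $\alpha:W \xto{\cong} \iota_0(Y')$ with $f_{y_0}=\iota_0(f')\circ\alpha$. Then $\iota_0(f')=f_{y_0}\circ\alpha^{-1}$ is a Galois covering, since precomposing a Galois covering with an isomorphism of its source again gives one; and it lies in $\cT(J)$ by Condition (3) of Definition~\ref{defn:B-site}, because the composite $\iota_0(f')\circ\alpha=f_{y_0}$ lies in $\cT(J)$. Hence $f'$ is an object of $I_X$.

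To build a morphism from $f'$ to $g$ in $\cC_{0,/X}$, I would observe that $f_{y_1}\circ\alpha^{-1}:\iota_0(Y')\to\iota_0(Z)$ is a morphism over $\iota_0(X)$, since $\iota_0(g)\circ f_{y_1}\circ\alpha^{-1}=f_{y_0}\circ\alpha^{-1}=\iota_0(f')$. As $\iota_0(f')$ is a Galois covering, Lemma~\ref{lem:Galois_over} applied to the diagram $Y' \xto{f'} X \xleftarrow{g} Z$ supplies a morphism $Y' \to Z$ in $\cC_0$ over $X$, i.e.\ the desired morphism from $f'$ to $g$. This establishes cofinality, and non-emptiness of $I_X$ follows at once by taking $g=\id_X$.

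For $\Lambda$-connectedness, given two objects $f_1:Y_1 \to X$ and $f_2:Y_2 \to X$ of $I_X$, I would use that $\cC_0$ is cofiltered (Corollary~\ref{cor:core_cofiltered}) to find an object $Z$ with morphisms $Z \to Y_1$ and $Z \to Y_2$, giving an object $g:Z \to X$ of $\cC_{0,/X}$; cofinality then furnishes an object of $I_X$ mapping to $g$, and hence, by thinness, to both $f_1$ and $f_2$. Combining non-emptiness, thinness, and $\Lambda$-connectedness with Lemma~\ref{lem:core_cofiltered} shows $I_X$ is cofiltered. I expect the main obstacle to be the cofinality step: arranging via Corollary~\ref{lem:cofinality} that the refining Galois covering genuinely lies in $\cT(J)$, and then transporting it back into the poset $\cC_0$ through the grid axioms (3) and Lemma~\ref{lem:Galois_over}. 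The remaining points are formal consequences of $\cC_0$ being a thin, $\Lambda$-connected, skeletal category.
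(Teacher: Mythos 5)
Your proof is correct, and every appeal you make (Corollary~\ref{lem:cofinality} on the two-element chain, grid axiom (3) of Definition~\ref{defn:grids}, Condition (3) of Definition~\ref{defn:B-site}, Lemma~\ref{lem:Galois_over}, and Lemma~\ref{lem:core_cofiltered}) is legitimately available at this point of the paper; but your cofinality step takes a genuinely different route from the paper's. Given your object $g:Z \to X$ of $\cC_{0,/X}$, the paper refines on the \emph{domain} side: using the enough-Galois-coverings hypothesis directly, it picks $g':Z' \to \iota_0(Z)$ in $\cT(J)$ with $\iota_0(g)\circ g'$ a Galois covering, lifts the refining morphism $g'$ into $\cC_0$ via grid axiom (3) applied over $Z$, obtaining $h:Z'' \to Z$ in $\cC_0$, and takes $g\circ h$ as the object of $I_X$ --- whereupon the comparison morphism to $g$ is just $h$, so it comes for free. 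You instead produce (via Corollary~\ref{lem:cofinality}, a heavier tool that is itself deduced from enough Galois coverings and Proposition~\ref{prop:tree}, though here a direct appeal to the definition of enough Galois coverings would also do) a Galois covering $f_{y_0}:W \to \iota_0(X)$ in $\cT(J)$ factoring through $\iota_0(g)$, and you lift it into $\cC_0$ over $X$ rather than over $Z$. Lifting at $X$ loses the factorization through $Z$ inside $\cC_0$, which is exactly why you must then pay with Lemma~\ref{lem:Galois_over} to descend the comparison morphism $f_{y_1}\circ\alpha^{-1}$ from $\cC$ back to $\cC_0$; that descent is valid precisely because $\iota_0(f')$ is a Galois covering, and you check this correctly (including that $\iota_0(f')\in\cT(J)$ via Condition (3) of Definition~\ref{defn:B-site}). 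So the paper's choice of lifting point makes the argument shorter and avoids both Corollary~\ref{lem:cofinality} and Lemma~\ref{lem:Galois_over}, while your version is more modular and illustrates how the descent lemma substitutes for a well-chosen lift. For the cofiltered half, the two arguments coincide in substance: the paper applies Lemma~\ref{lem:core_cofiltered} to $\cC_{0,/X}$ and leaves implicit that a cofinal full subcategory of a thin cofiltered category is again cofiltered, whereas you spell this out by verifying non-emptiness (taking $g=\id_X$) and $\Lambda$-connectedness of $I_X$ via Corollary~\ref{cor:core_cofiltered} and cofinality, then applying Lemma~\ref{lem:core_cofiltered} to $I_X$ itself.
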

\begin{proof}
As $\cC_0$ is $\Lambda$-connected and thin,
the overcategory $\cC_{0,/X}$ is $\Lambda$-connected 
and thin.
It follows from Lemma \ref{lem:core_cofiltered} that
$\cC_{0,/X}$ is cofiltered.
Hence, it suffices to prove that the objects of
$I_X$ are cofinal in $\cC_{0,/X}$.
Let $f:Y\to X$ be an object of $\cC_{0,/X}$ and
let us regard it as a morphism in $\cC_0$.
As $\cT(J)$ has enough Galois coverings,
there exists a morphism $g':Z' \to \iota_0(Y)$ in $\cC$
such that $g'$ belongs to $\cT(J)$ and the composite
$\iota_0(f) \circ g'$ is a Galois covering in $\cC$.
It follows from property (3) of the grid
$(\cC_0,\iota_0)$ that there exist a morphism
$g:Z \to Y$ in $\cC_0$ and an isomorphism
$\alpha:\iota_0(Z) \xto{\cong} Z'$ in $\cC$ 
satisfying $\iota_0(g) = g' \circ \alpha$.
The morphism $f \circ g$ in $\cC_0$, regarded
as an object of $\cC_{0,/X}$ is an object
of $I_X$, as $\iota_0(f \circ g)=
\iota_0(f) \circ g' \circ \alpha$ is
a Galois covering in $\cC$.
This proves the claim.
\end{proof}

\begin{lem}\label{lem:yama2}
Let $X$ be an edge object of $\cC_0$ and
let $Z$ be an object of $\cC_0$.
Then there exist an object $f:Y \to X$ of $I_X$
and a morphism $Y \to Z$ in $\cC_0$.
\end{lem}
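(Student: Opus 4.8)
The plan is to combine the $\Lambda$-connectedness of $\cC_0$ with the cofinality of $I_X$ in the overcategory $\cC_{0,/X}$, both of which are already available. First I would apply the $\Lambda$-connectedness of $\cC_0$ (Condition (1) in Definition~\ref{defn:grids}, valid since $(\cC_0,\iota_0)$ is a grid by Lemma~\ref{lem:C0_core}) to the two objects $X$ and $Z$. This produces an object $W$ of $\cC_0$ together with morphisms $p:W \to X$ and $q:W \to Z$ in $\cC_0$.

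Next I would regard $p:W \to X$ as an object of the overcategory $\cC_{0,/X}$. By Lemma~\ref{lem:I_X cofinal}, the objects of $I_X$ are cofinal in $\cC_{0,/X}$, so there exist an object $f:Y \to X$ of $I_X$ and a morphism from $f$ to $p$ in $\cC_{0,/X}$. Concretely this is a morphism $h:Y \to W$ in $\cC_0$ with $p \circ h = f$; since $\cC_0$ is thin, the commutativity is automatic once a morphism $Y \to W$ exists. By the definition of $I_X$, the morphism $\iota_0(f)$ is a Galois covering in $\cT$, so $f:Y \to X$ is already an object of $I_X$ of the required form.

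Finally, the composite $q \circ h : Y \to Z$ is a morphism in $\cC_0$, which together with the object $f:Y \to X$ of $I_X$ furnishes exactly the data asserted by the lemma.

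There is essentially no serious obstacle: the statement is a formal consequence of the two facts established earlier, namely $\Lambda$-connectedness (which supplies the auxiliary object $W$ mapping to both $X$ and $Z$) and the cofinality of $I_X$ in $\cC_{0,/X}$ (which replaces the map $W \to X$ by a Galois covering $Y \to X$ lying over it, at the cost of precomposing $q$ with $h$). The only point requiring any care is the bookkeeping of the direction of morphisms in the overcategory, and this is harmless precisely because $\cC_0$ is a poset, so every diagram in it commutes automatically.
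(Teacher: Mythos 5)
Your proof is correct and is essentially identical to the paper's own argument: both use $\Lambda$-connectedness of $\cC_0$ to produce an auxiliary object mapping to $X$ and $Z$, then invoke the cofinality of $I_X$ in $\cC_{0,/X}$ (Lemma~\ref{lem:I_X cofinal}) to replace the map to $X$ by an object of $I_X$, and finish by composing with the map to $Z$. The only difference is cosmetic notation, plus your (correct) remark that thinness of $\cC_0$ makes the overcategory bookkeeping automatic.
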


\begin{proof}
As $\cC_0$ is $\Lambda$-connected, there exist
an object $Y'$ of $\cC_0$ and morphisms
$h_1: Y' \to X$ and $h_2: Y' \to Z$ in $\cC_0$.
It follows from Lemma \ref{lem:I_X cofinal}
that there exists a morphism $h:Y \to Y'$
such that the composite $f = h_1 \circ h:Y \to X$ is
an object of $I_X$.
This proves the claim, as $h_2 \circ h$ is a
morphism from $Y$ to $Z$ in $\cC_0$.
\end{proof}

\begin{cor}
\label{cor:C0Y}
Let $X$ be an edge object of $\cC_0$.
Then the functor 
$$
\varinjlim_{(f:Y \to X) \in
\Obj I_X} \cC_{0,Y/} \to \cC_0
$$
given by the forgetful functors
$\cC_{0,Y/} \to \cC_0$ is an isomorphism of categories.
\end{cor}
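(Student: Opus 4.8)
The plan is to exploit two facts established immediately above: that $I_X$ is cofiltered (Lemma~\ref{lem:I_X cofinal}), so that the colimit in question is a \emph{filtered} colimit and can be computed explicitly, and that $\cC_0$ is a poset, hence thin and skeletal, which collapses all the bookkeeping. Write $\mathcal{K}$ for the colimit category and $U : \mathcal{K} \to \cC_0$ for the functor induced by the forgetful functors $\cC_{0,Y/} \to \cC_0$, $(Y \to W) \mapsto W$. First I would record the transition data: a morphism $h : (Y_1,f_1) \to (Y_2,f_2)$ of $I_X$ (that is, $h : Y_1 \to Y_2$ in $\cC_0$ with $f_2 \circ h = f_1$) induces the composition functor $\cC_{0,Y_2/} \to \cC_{0,Y_1/}$ sending $(Y_2 \to W)$ to $(Y_1 \xto{h} Y_2 \to W)$; since this leaves the target $W$ unchanged, the forgetful functors are compatible and $U$ is well defined. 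Because this assignment is contravariant in $I_X$ and $I_X$ is cofiltered, the diagram is indexed by the filtered category $I_X^{\op}$, so an object of $\mathcal{K}$ is the class of a pair $\bigl((Y,f),\,(Y \to W)\bigr)$, and any two objects, or any two parallel morphisms, may be represented over a common index $(Y,f) \in \Obj I_X$.

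To show that $U$ is an isomorphism of categories I would check that it is bijective on objects and induces a bijection on each Hom-set. For surjectivity on objects, given $W \in \Obj \cC_0$, Lemma~\ref{lem:yama2} furnishes an object $f : Y \to X$ of $I_X$ together with a morphism $Y \to W$ in $\cC_0$; the class of $\bigl((Y,f),(Y\to W)\bigr)$ is then an object of $\mathcal{K}$ lying over $W$. For injectivity on objects, suppose two objects lie over the same $W$; lifting both to a common $(Y,f)$ they are represented by morphisms $Y \to W$ in $\cC_0$, which coincide because $\cC_0$ is thin, so the two classes agree in the filtered colimit.

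For full faithfulness, fix two objects of $\mathcal{K}$ and represent them over a common $(Y,f)$ by $a : Y \to W_1$ and $b : Y \to W_2$. A morphism between them in $\cC_{0,Y/}$ is a morphism $k : W_1 \to W_2$ in $\cC_0$ with $k \circ a = b$, and this commutativity is automatic since $\cC_0$ is thin; moreover, under any transition functor the targets $W_1, W_2$ are unchanged, so every term of the filtered colimit computing $\Hom_{\mathcal{K}}$ equals $\Hom_{\cC_0}(W_1,W_2)$ with identity transition maps. Hence $\Hom_{\mathcal{K}} \cong \Hom_{\cC_0}(W_1,W_2)$, and this identification is precisely the map induced by $U$. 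This gives the required bijection on Hom-sets and, with the bijection on objects, the desired isomorphism of categories.

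I expect the only real obstacle to be the careful setup of the filtered colimit, namely pinning down the variance of the transition functors and justifying that objects and parallel morphisms can be lifted to a common index using the cofilteredness of $I_X$. Once the thinness of $\cC_0$ is invoked, every remaining verification (commutativity of triangles, uniqueness of lifts, triviality of the transition maps on Hom-sets) is immediate, so the substance of the argument lies entirely in Lemmas~\ref{lem:I_X cofinal} and~\ref{lem:yama2}.
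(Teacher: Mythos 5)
Your proof is correct and takes the same route as the paper: the paper's proof consists of the single remark that the corollary ``follows immediately from the previous lemma'' (Lemma~\ref{lem:yama2}), and your write-up supplies exactly the intended verifications --- surjectivity on objects via Lemma~\ref{lem:yama2}, with the cofilteredness of $I_X$ (Lemma~\ref{lem:I_X cofinal}) and the thinness and skeletality of the poset $\cC_0$ disposing of injectivity on objects and the Hom-set bijections. Nothing is missing; you have simply made explicit the routine filtered-colimit bookkeeping the paper leaves implicit.
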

\begin{proof}
This follows immediately from the previous lemma.
\end{proof}

\subsection{The functor $\theta_{Z,Z',h}$}
\label{sec:theta}
Let $Z, Z'$ be objects of $\cC_0$ and 
let $h:\iota_0(Z) \to \iota_0(Z')$ be a morphism in $\cC$.
Let $- \circ h: \cC_{\iota_0(Z')/} \to 
\cC_{\iota_0(Z)/}$ denote the functor
given by the composition with $h$.
As $\cC$ is an $E$-category,
the two undercategories $\cC_{\iota_0(Z)/}$
and $\cC_{\iota_0(Z')/}$ are thin and the functor
$-\circ h$ is fully faithful.
Let us consider the diagram
$$
\cC_{0,Z'/}
\xto{\iota_{0,Z'/}} \cC_{\iota_0(Z')/}
\xto{-\circ h} \cC_{\iota_0(Z)/}
\xleftarrow{\iota_{0,Z/}} \cC_{0,Z/}
$$
of categories and functors.
As the two functors in this diagram
other than $-\circ h$
are equivalences of categories,
this diagram gives a functor
$\cC_{0,Z'/} \to \cC_{0,Z/}$ which is fully faithful.
We denote this functor by $\theta_{Z,Z',h}$.
As the category $\cC_{0,Z/}$ is skeletal,
the following statement holds (although it is a
simple observation, we state it as a lemma
because we will refer to the statement 
several times):
\begin{lem}
\label{lem:theta}
Let $Z$, $Z'$ and $h$ be as above.
Then, $\theta_{Z,Z',h}$ is the unique
functor from $\cC_{0,Z/}$ to $\cC_{0,Z'/}$
that makes the diagram
\begin{equation}\label{CD:theta}
\begin{CD}
\cC_{0,Z'/} @>{\theta_{Z,Z',h}}>> \cC_{0,Z/} \\
@V{\iota_{0,Z'/}}VV @VV{\iota_{0,Z/}}V \\
\cC_{\iota_0(Z')/} @>{-\circ h}>> \cC_{\iota_0(Z)/}
\end{CD}
\end{equation}
commutative up to natural equivalence.
\qed
\end{lem}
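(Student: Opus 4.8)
The plan is to separate existence from uniqueness, with uniqueness carrying the actual content. For existence, observe that $\theta_{Z,Z',h}$ was \emph{defined} in Section~\ref{sec:theta} as the composite $(\iota_{0,Z/})^{-1} \circ (-\circ h) \circ \iota_{0,Z'/}$, where $(\iota_{0,Z/})^{-1}$ is a quasi-inverse of the functor $\iota_{0,Z/}:\cC_{0,Z/}\to\cC_{\iota_0(Z)/}$, which is an equivalence by Condition~(4) of the grid (Lemma~\ref{lem:C0_core}(4)). Composing on the left with $\iota_{0,Z/}$ and invoking the unit isomorphism $\iota_{0,Z/}\circ(\iota_{0,Z/})^{-1}\cong\id$ of the equivalence yields a natural isomorphism $\iota_{0,Z/}\circ\theta_{Z,Z',h}\cong(-\circ h)\circ\iota_{0,Z'/}$, so the square \eqref{CD:theta} commutes up to natural equivalence.

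For uniqueness I would first record three facts. First, since $\cC$ is an $E$-category, every morphism out of $\iota_0(Z)$ is an epimorphism, so the undercategory $\cC_{\iota_0(Z)/}$ is thin (and likewise $\cC_{\iota_0(Z')/}$). Second, $\iota_{0,Z/}$ is an equivalence of categories, hence fully faithful, and therefore reflects isomorphisms. Third, since $\cC_0$ is a poset, its undercategory $\cC_{0,Z/}$ is again a poset, hence both thin and skeletal (isomorphic objects are equal); similarly $\cC_{0,Z'/}$ is thin.

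Now suppose $\theta'$ is any functor $\cC_{0,Z'/}\to\cC_{0,Z/}$ making \eqref{CD:theta} commute up to natural equivalence. Then both $\iota_{0,Z/}\circ\theta'$ and $\iota_{0,Z/}\circ\theta_{Z,Z',h}$ are naturally isomorphic to $(-\circ h)\circ\iota_{0,Z'/}$, hence to each other. Evaluating at an object $W$ of $\cC_{0,Z'/}$ gives an isomorphism $\iota_{0,Z/}(\theta'(W))\cong\iota_{0,Z/}(\theta_{Z,Z',h}(W))$ in $\cC_{\iota_0(Z)/}$; since $\iota_{0,Z/}$ reflects isomorphisms we obtain $\theta'(W)\cong\theta_{Z,Z',h}(W)$ in $\cC_{0,Z/}$, and then $\theta'(W)=\theta_{Z,Z',h}(W)$ because $\cC_{0,Z/}$ is skeletal. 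Thus the two functors agree on objects; and because the target $\cC_{0,Z/}$ is thin, a functor into it is completely determined by its effect on objects, so $\theta'=\theta_{Z,Z',h}$ on the nose.

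The argument is essentially bookkeeping, and the only step I would flag is the passage from \emph{naturally isomorphic} to \emph{equal}: this is exactly where skeletality of $\cC_{0,Z/}$, combined with the fact that $\iota_{0,Z/}$ reflects isomorphisms, is indispensable. Without skeletality one would obtain only a natural isomorphism $\theta'\cong\theta_{Z,Z',h}$ rather than literal equality, and the on-the-nose uniqueness asserted by the lemma would fail.
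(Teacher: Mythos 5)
Your proof is correct and follows exactly the route the paper intends: the paper states the lemma with no written proof, remarking only that it is a ``simple observation'' following from the skeletality of $\cC_{0,Z/}$, and your argument is precisely the spelled-out version of that observation (thinness of the undercategories of $\cC$ via the $E$-category condition, the equivalence $\iota_{0,Z/}$ from grid condition~(4) reflecting isomorphisms, skeletality of the poset $\cC_{0,Z/}$ upgrading natural isomorphism to equality on objects, and thinness of the target forcing equality of the functors). Your closing remark correctly identifies skeletality as the step that turns uniqueness up to isomorphism into on-the-nose uniqueness, which is exactly why the paper insists that $\cC_0$ be a poset rather than merely a quasi-poset.
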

\begin{cor}
Let $Z''$ be another object of $\cC_0$ and 
$h':\iota_0(Z') \to \iota_0(Z'')$
be a morphism in $\cC$. Then, we have
$\theta_{Z,Z'',h'\circ h}
= \theta_{Z,Z',h} \circ \theta_{Z',Z'',h'}$.
\qed
\end{cor}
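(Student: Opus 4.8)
The plan is to deduce the identity directly from the uniqueness assertion in Lemma~\ref{lem:theta}, with no further constructions. First I would record that the two sides are parallel functors. Since $h:\iota_0(Z)\to\iota_0(Z')$ we have $\theta_{Z,Z',h}:\cC_{0,Z'/}\to\cC_{0,Z/}$, and since $h':\iota_0(Z')\to\iota_0(Z'')$ we have $\theta_{Z',Z'',h'}:\cC_{0,Z''/}\to\cC_{0,Z'/}$, so the composite $\theta_{Z,Z',h}\circ\theta_{Z',Z'',h'}$ runs from $\cC_{0,Z''/}$ to $\cC_{0,Z/}$; as $h'\circ h:\iota_0(Z)\to\iota_0(Z'')$, the functor $\theta_{Z,Z'',h'\circ h}$ runs between the same categories. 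By Lemma~\ref{lem:theta} applied to the triple $(Z,Z'',h'\circ h)$, the functor $\theta_{Z,Z'',h'\circ h}$ is the \emph{unique} functor $\cC_{0,Z''/}\to\cC_{0,Z/}$ making the square with bottom arrow $-\circ(h'\circ h)$ and vertical arrows $\iota_{0,Z''/}$, $\iota_{0,Z/}$ commute up to natural equivalence. Hence it suffices to verify that $\theta_{Z,Z',h}\circ\theta_{Z',Z'',h'}$ enjoys the same commutativity.

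To check this I would paste the two squares furnished by Lemma~\ref{lem:theta}, namely the natural isomorphisms $\iota_{0,Z/}\circ\theta_{Z,Z',h}\cong(-\circ h)\circ\iota_{0,Z'/}$ and $\iota_{0,Z'/}\circ\theta_{Z',Z'',h'}\cong(-\circ h')\circ\iota_{0,Z''/}$. Composing them gives $\iota_{0,Z/}\circ(\theta_{Z,Z',h}\circ\theta_{Z',Z'',h'})\cong(-\circ h)\circ\iota_{0,Z'/}\circ\theta_{Z',Z'',h'}\cong(-\circ h)\circ(-\circ h')\circ\iota_{0,Z''/}$, the second isomorphism being the whiskering of the second natural isomorphism by $-\circ h$. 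The only arithmetic input is the contravariance of precomposition: for $g:\iota_0(Z'')\to W$ one has $(g\circ h')\circ h=g\circ(h'\circ h)$, so $(-\circ h)\circ(-\circ h')=-\circ(h'\circ h)$ as functors $\cC_{\iota_0(Z'')/}\to\cC_{\iota_0(Z)/}$. This yields $\iota_{0,Z/}\circ(\theta_{Z,Z',h}\circ\theta_{Z',Z'',h'})\cong(-\circ(h'\circ h))\circ\iota_{0,Z''/}$, exactly the commutativity required for $(Z,Z'',h'\circ h)$.

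The only point requiring care, and hence the main (if minor) obstacle, is the bookkeeping of the ``up to natural equivalence'' clauses: one must be sure that pasting the two $2$-cells yields a genuine natural isomorphism rather than merely a zig-zag of equivalences, and that the resulting $2$-cell is the one the uniqueness clause expects. This is clean here because every target category in sight is thin: $\cC_0$ is a poset and $\cC$ is an $E$-category, so the undercategories $\cC_{0,W/}$ and $\cC_{\iota_0(W)/}$ are thin, and between parallel functors landing in such a category any two natural isomorphisms coincide. Consequently the pasted cell is the unique natural isomorphism, the uniqueness part of Lemma~\ref{lem:theta} applies verbatim, and we conclude $\theta_{Z,Z'',h'\circ h}=\theta_{Z,Z',h}\circ\theta_{Z',Z'',h'}$.
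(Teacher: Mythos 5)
Your argument is correct and is exactly the one the paper intends: the corollary is stated with no written proof precisely because it follows from the uniqueness clause of Lemma~\ref{lem:theta} by pasting the two defining squares and using $(-\circ h)\circ(-\circ h')=-\circ(h'\circ h)$, which is what you do. Your extra care about the $2$-cell bookkeeping via thinness of the undercategories (from $\cC_0$ being a poset and $\cC$ an $E$-category) matches the observations already recorded in Section~\ref{sec:theta}, so nothing is missing.
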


\begin{cor}
Suppose that $h$ is an isomorphism in $\cC$.
Then, the functor $\theta_{Z,Z',h}$ is an isomorphism 
of categories.
\qed
\end{cor}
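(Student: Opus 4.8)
The plan is to exhibit $\theta_{Z',Z,h^{-1}}$ as a two-sided inverse of $\theta_{Z,Z',h}$, using only the composition law for the functors $\theta$ from the preceding corollary together with the uniqueness clause of Lemma~\ref{lem:theta}.

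First I would record that for any object $Z$ of $\cC_0$ the functor $\theta_{Z,Z,\id_{\iota_0(Z)}}$ equals the identity functor on $\cC_{0,Z/}$. Indeed, when $h=\id_{\iota_0(Z)}$ the functor $-\circ h$ in diagram \eqref{CD:theta} is the identity on $\cC_{\iota_0(Z)/}$, so the identity functor $\id_{\cC_{0,Z/}}$ makes that diagram commute (in fact strictly). Since $\cC_{0,Z/}$ is skeletal, Lemma~\ref{lem:theta} asserts that there is a unique functor with this property, whence $\theta_{Z,Z,\id_{\iota_0(Z)}} = \id_{\cC_{0,Z/}}$.

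Now suppose $h$ is an isomorphism in $\cC$, and consider $h^{-1}:\iota_0(Z') \to \iota_0(Z)$, giving the functor $\theta_{Z',Z,h^{-1}}:\cC_{0,Z/} \to \cC_{0,Z'/}$. Applying the composition corollary with the objects $Z,Z',Z''$ instantiated as $Z,Z',Z$ and the two morphisms taken to be $h$ and $h^{-1}$ yields
\[
\theta_{Z,Z',h} \circ \theta_{Z',Z,h^{-1}}
= \theta_{Z,Z,\,h^{-1}\circ h}
= \theta_{Z,Z,\id_{\iota_0(Z)}}
= \id_{\cC_{0,Z/}}.
\]
Applying it once more, now with $Z,Z',Z''$ instantiated as $Z',Z,Z'$ and the two morphisms taken to be $h^{-1}$ and $h$, gives
\[
\theta_{Z',Z,h^{-1}} \circ \theta_{Z,Z',h}
= \theta_{Z',Z',\,h\circ h^{-1}}
= \theta_{Z',Z',\id_{\iota_0(Z')}}
= \id_{\cC_{0,Z'/}}.
\]
Hence $\theta_{Z,Z',h}$ and $\theta_{Z',Z,h^{-1}}$ are mutually inverse, so $\theta_{Z,Z',h}$ is an isomorphism of categories.

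There is no real obstacle here: everything is formal once the identity $\theta_{Z,Z,\id_{\iota_0(Z)}} = \id_{\cC_{0,Z/}}$ is in hand, and that rests only on the skeletality of the overcategories of the poset $\cC_0$ and the uniqueness in Lemma~\ref{lem:theta}. The sole point requiring care is the bookkeeping of the index order when invoking the composition corollary, since $\theta$ reverses direction (a morphism $\iota_0(Z)\to\iota_0(Z')$ produces a functor $\cC_{0,Z'/}\to\cC_{0,Z/}$); one must match the source and target objects correctly in each of the two applications above.
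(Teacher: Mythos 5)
Your proof is correct and is exactly the argument the paper intends: the corollary is stated with no written proof because it follows at once from the composition law $\theta_{Z,Z'',h'\circ h}=\theta_{Z,Z',h}\circ\theta_{Z',Z'',h'}$ together with the uniqueness in Lemma~\ref{lem:theta}, which (as you note) forces $\theta_{Z,Z,\id_{\iota_0(Z)}}=\id_{\cC_{0,Z/}}$ since $\cC_{0,Z/}$ is thin and skeletal. Your two instantiations of the composition corollary are indexed correctly, so nothing is missing.
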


Let $f':Z' \to Y'$ be an object of 
$\cC_{0,Z'/}$, and denote
the object $\theta_{Z,Z',h}(f')$ 
of $\cC_{0,Z/}$ by
$f:Z \to Y$. It follows from Lemma~\ref{lem:theta} that
there exists an isomorphism
$\alpha : \iota_0(Y) 
\xto{\cong} \iota_0(Y')$ 
such that the
diagram
$$
\begin{CD}
\iota_0(Z) @>{h}>> \iota_0(Z') \\
@V{\iota_0(f)}VV @VV{\iota_0(f')}V \\
\iota_0(Y) @>{\alpha}>{\cong}> \iota_0(Y')
\end{CD}
$$
is commutative. As $\iota_0(f)$ is an epimorphism,
such an isomorphism $\alpha$ is unique.

\begin{lem}
\label{lem:theta_descends}
Let the notation be as above.
Then the functor $\theta': \cC_{0,Y'/} \to \cC_{0,Y/}$
induced by the functor $\theta_{Z,Z',h}$ is equal
to the functor $\theta_{Y,Y',\alpha}$.
\end{lem}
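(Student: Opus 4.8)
The plan is to reduce everything to the uniqueness statement of Lemma~\ref{lem:theta}. Since $\cC_0$ is a poset, the slice $\cC_{0,Y/}$ is skeletal and thin, so $\theta_{Y,Y',\alpha}$ is \emph{the} unique functor $\cC_{0,Y'/} \to \cC_{0,Y/}$ for which the defining square \eqref{CD:theta} (with $Y,Y',\alpha$ in place of $Z,Z',h$) commutes up to natural isomorphism. Hence it suffices to produce a natural isomorphism $\iota_{0,Y/}\circ\theta' \cong (-\circ\alpha)\circ\iota_{0,Y'/}$; the equality $\theta'=\theta_{Y,Y',\alpha}$ then follows formally.

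First I would make $\theta'$ explicit via the canonical identifications of slices. Recall the isomorphism $(\cC_{0,Z'/})_{f'/} \cong \cC_{0,Y'/}$ sending an object $g':Y'\to W'$ of $\cC_{0,Y'/}$ to the morphism $g':f'\to g'\circ f'$ of $\cC_{0,Z'/}$, and similarly $(\cC_{0,Z/})_{f/} \cong \cC_{0,Y/}$. Because $\theta_{Z,Z',h}$ carries $f'$ to $f$, it restricts to a functor on these slices which, under the identifications, is precisely $\theta'$: writing $\theta_{Z,Z',h}(g'\circ f') = (k:Z\to W)$ and letting $g:Y\to W$ be the underlying morphism of $\theta_{Z,Z',h}(g')$ (so $g\circ f=k$), one has $\theta'(g') = g$.

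The heart of the argument is to transport the natural isomorphism $\eta : \iota_{0,Z/}\circ\theta_{Z,Z',h} \xto{\cong} (-\circ h)\circ\iota_{0,Z'/}$ supplied by Lemma~\ref{lem:theta}. Its component $\eta_{f'}$ is an isomorphism $\iota_0(f)\cong\iota_0(f')\circ h$ in $\cC_{\iota_0(Z)/}$, i.e.\ an isomorphism $\iota_0(Y)\xto{\cong}\iota_0(Y')$ intertwining the structure maps out of $\iota_0(Z)$; since $\iota_0(f)$ is an epimorphism this isomorphism is unique and so coincides with the isomorphism $\alpha$ fixed before the lemma. Setting $\beta := \eta_{g'\circ f'} : \iota_0(W)\xto{\cong}\iota_0(W')$, the naturality square of $\eta$ at the morphism $g':f'\to g'\circ f'$ reads, on underlying morphisms in $\cC$, as $\beta\circ\iota_0(g) = \iota_0(g')\circ\alpha$ (here precomposition with $h$ does not alter the underlying morphism $\iota_0(g')$). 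This says exactly that $\beta$ is an isomorphism in $\cC_{\iota_0(Y)/}$ from $\iota_{0,Y/}(\theta'(g'))=\iota_0(g)$ to $(-\circ\alpha)(\iota_{0,Y'/}(g'))=\iota_0(g')\circ\alpha$.

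Finally I would check that $g'\mapsto\beta$ is natural: a morphism $g'_1\to g'_2$ in $\cC_{0,Y'/}$ corresponds to a morphism in $(\cC_{0,Z'/})_{f'/}$, and the naturality square for $\eta$ at that morphism restricts to the desired one for $\beta$. Thus $\beta$ gives the natural isomorphism $\iota_{0,Y/}\circ\theta'\cong(-\circ\alpha)\circ\iota_{0,Y'/}$, and the uniqueness in Lemma~\ref{lem:theta} yields $\theta' = \theta_{Y,Y',\alpha}$. The only delicate point is the bookkeeping of the slice-of-slice identifications together with the verification that the components of $\eta$ at $f'$ and at $g'\circ f'$ are $\alpha$ and $\beta$ respectively; once these are pinned down the conclusion is purely formal, so I expect this identification of the transported natural isomorphism to be the main (though routine) obstacle.
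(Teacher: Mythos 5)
Your proposal is correct and follows essentially the same route as the paper: the paper's proof likewise observes that the diagram \eqref{CD:theta} induces the corresponding square for $(Y,Y',\alpha)$, commutative up to natural isomorphism, and then invokes the uniqueness statement of Lemma~\ref{lem:theta}. Your explicit bookkeeping — identifying the components of the transported natural isomorphism as $\alpha$ at $f'$ and $\beta$ at $g'\circ f'$, and verifying naturality in $g'$ — simply fills in details the paper leaves implicit.
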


\begin{proof}
It follows from the definition of the functor $\theta'$
that the diagram \eqref{CD:theta} induces a diagram
$$
\begin{CD}
\cC_{0,Y'/} @>{\theta'}>> \cC_{0,Y/} \\
@V{\iota_{0,Y'/}}VV @VV{\iota_{0,Y/}}V \\
\cC_{\iota_0(Y')/} @>{-\circ \alpha}>> \cC_{\iota_0(Y)/}
\end{CD}
$$
of categories and functors, that is
commutative up to natural isomorphisms.
Hence the claim follows from Lemma \ref{lem:theta}.
\end{proof}

Let $Z$, $Z'$ and $h:\iota_0(Z) \to \iota_0(Z')$ 
be as 
in Lemma \ref{lem:theta}.
Then, the natural equivalence from the composite
$\cC_{0,Z'/} \xto{\theta_{Z,Z',h}}
\cC_{0,Z/} \xto{\iota_{0,Z/}} \cC_{\iota_0(Z)/}$
to the composite $\cC_{0,Z'/} \xto{\iota_{0,Z'/}} \cC_{\iota_0(Z')/}
\xto{-\circ h} \cC_{\iota_0(Z)/}$ is unique, as
the category $\cC_{\iota_0(Z)/}$ is thin.
We denote this natural equivalence by $\xi_{Z,Z',h}$.

\subsection{Some properties of the monoids $M_\Cip$ and $\bK_X$}
\label{sec:bKX_group}

\begin{lem}
\label{lem:MC}
Let $(\alpha,\gamma_\alpha)$ be an element of the monoid $M_\Cip$.
Then, for any object $X$ of $\cC_0$, the functor
$\alpha': \cC_{0,X/} \to \cC_{0,\alpha(X)/}$ induced by $\alpha$
on the undercategories is equal to the isomorphism 
$\theta_{\alpha(X),X,\gamma_\alpha(X)^{-1}}$ of categories.
\end{lem}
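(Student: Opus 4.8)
The plan is to pin down $\alpha'$ via the uniqueness clause of Lemma~\ref{lem:theta}. Write $\theta := \theta_{\alpha(X),X,\gamma_\alpha(X)^{-1}}$; note that $\gamma_\alpha(X)^{-1}$ is a morphism $\iota_0(\alpha(X)) \to \iota_0(X)$ in $\cC$, so $\theta$ is indeed a functor $\cC_{0,X/} \to \cC_{0,\alpha(X)/}$, having the same source and target as $\alpha'$. By Lemma~\ref{lem:theta}, $\theta$ is the unique functor fitting into a square
\[
\begin{CD}
\cC_{0,X/} @>{\theta}>> \cC_{0,\alpha(X)/} \\
@V{\iota_{0,X/}}VV @VV{\iota_{0,\alpha(X)/}}V \\
\cC_{\iota_0(X)/} @>{-\circ \gamma_\alpha(X)^{-1}}>> \cC_{\iota_0(\alpha(X))/}
\end{CD}
\]
that commutes up to natural isomorphism; the uniqueness holds because $\cC_{0,\alpha(X)/}$ is skeletal (being a poset) and $\iota_{0,\alpha(X)/}$ is an equivalence (Lemma~\ref{lem:C0_core}(4)). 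Hence it suffices to verify that $\alpha'$ makes the same square commute up to natural isomorphism.

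To do so I would evaluate the two composites on an arbitrary object $f:X\to Y$ of $\cC_{0,X/}$. The path right-then-down yields $\iota_0(\alpha(f)):\iota_0(\alpha(X))\to\iota_0(\alpha(Y))$, while down-then-right yields $\iota_0(f)\circ\gamma_\alpha(X)^{-1}:\iota_0(\alpha(X))\to\iota_0(Y)$, both viewed as objects of the undercategory $\cC_{\iota_0(\alpha(X))/}$. A comparison morphism between them in this undercategory is a morphism $\beta:\iota_0(Y)\to\iota_0(\alpha(Y))$ with $\beta\circ\iota_0(f)\circ\gamma_\alpha(X)^{-1}=\iota_0(\alpha(f))$, i.e.\ $\beta\circ\iota_0(f)=\iota_0(\alpha(f))\circ\gamma_\alpha(X)$.

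This is precisely the naturality square of $\gamma_\alpha:\iota_0\xto{\cong}\iota_0\circ\alpha$ evaluated at $f$, which reads $\gamma_\alpha(Y)\circ\iota_0(f)=\iota_0(\alpha(f))\circ\gamma_\alpha(X)$; thus $\beta=\gamma_\alpha(Y)$ works and is an isomorphism. Since $\cC$ is an $E$-category, the undercategory $\cC_{\iota_0(\alpha(X))/}$ is thin, so this $\beta$ is the only morphism between the two objects, and the family $(\gamma_\alpha(Y))_{f}$ is automatically a natural isomorphism of the two composite functors, with no naturality check required. By the uniqueness in Lemma~\ref{lem:theta} we conclude $\alpha'=\theta$, as desired.

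I expect the only genuine difficulty to be the bookkeeping of variances: keeping track that $\gamma_\alpha(X):\iota_0(X)\xto{\cong}\iota_0(\alpha(X))$, so that its inverse is what feeds the precomposition functor $-\circ\gamma_\alpha(X)^{-1}$, and matching the orientation of the comparison isomorphism to that used in the definition of $\theta_{Z,Z',h}$. Once these directions are fixed, the argument reduces to a single invocation of the naturality of $\gamma_\alpha$ together with thinness of the undercategory.
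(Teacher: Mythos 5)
Your proof is correct and takes essentially the same route as the paper: the paper's proof likewise sets $\beta=\gamma_\alpha(X)$, observes that the natural isomorphism $\gamma_\alpha$ gives a natural isomorphism between the composite $(-\circ\beta^{-1})\circ\iota_{0,X/}$ and the composite $\iota_{0,\alpha(X)/}\circ\alpha'$, and concludes by the uniqueness clause of Lemma~\ref{lem:theta}. Your write-up merely spells out what the paper leaves implicit, namely the componentwise identification of the comparison isomorphism with $\gamma_\alpha(Y)$ via the naturality square and the fact that thinness of $\cC_{\iota_0(\alpha(X))/}$ makes the naturality of this family automatic.
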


\begin{proof}
We set 
$\beta = \gamma_\alpha(X): 
\iota_0(X) \xto{\cong}
\iota_0(\alpha(X))$.
Let $- \circ \beta^{-1}: \cC_{\iota_0(X)/}
\to \cC_{\iota_0(\alpha(X))/}$ 
denote the functor given by
the composition with $\beta^{-1}$.
Then, the natural isomorphism $\gamma_\alpha$ induces a
natural isomorphism from the composite
$\cC_{0,X/} \xto{\iota_{0,X/}} \cC_{\iota_0(X)/} \xto{-\circ\beta^{-1}}
\cC_{\iota_0(\alpha(X))/}$ to the composite
$\cC_{0,X} \xto{\alpha'} \cC_{0,\alpha(X)/}
\xto{\iota_{0,\alpha(X)/}} 
\cC_{\iota_0(\alpha(X))/}$.
Hence, the claim follows from Lemma \ref{lem:theta}.
\end{proof}

\begin{lem}
\label{lem:MC2}
Let $(\alpha,\gamma_\alpha)$ be an element of the monoid $M_\Cip$.
Then the functor $\alpha: \cC_0 \to \cC_0$ is fully faithful.
\end{lem}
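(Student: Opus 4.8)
The plan is to use that $\cC_0$ is a poset, which makes $\alpha$ automatically faithful and reduces fullness to an order-reflection statement. Indeed, since between any two objects of $\cC_0$ there is at most one morphism, the map $\Hom_{\cC_0}(X,Y) \to \Hom_{\cC_0}(\alpha(X),\alpha(Y))$ is vacuously injective, so I only have to prove fullness, which here means: whenever there is a morphism $\alpha(X) \to \alpha(Y)$, equivalently $\alpha(X) \le \alpha(Y)$, there is already a morphism $X \to Y$, equivalently $X \le Y$. The reverse implication is just the functoriality of $\alpha$.

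The key input is Lemma~\ref{lem:MC}: for every object $Z$ of $\cC_0$ the functor $\alpha'\colon \cC_{0,Z/} \to \cC_{0,\alpha(Z)/}$ induced by $\alpha$ on undercategories equals $\theta_{\alpha(Z),Z,\gamma_\alpha(Z)^{-1}}$. Since $\gamma_\alpha(Z)^{-1}$ is an isomorphism in $\cC$, the observation recorded in Section~\ref{sec:theta} (that $\theta_{Z,Z',h}$ is an isomorphism of categories whenever $h$ is an isomorphism) shows that $\alpha'$ is an isomorphism of categories. Next I would note that the undercategory of a poset is again a poset: because $\cC_0$ is thin and skeletal, $\cC_{0,Z/}$ is canonically identified with the sub-poset $\{W \in \Obj\cC_0 \mid Z \le W\}$ of $\cC_0$, on which $\alpha'$ acts by $W \mapsto \alpha(W)$. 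Hence for each $Z$ the functor $\alpha$ restricts to an isomorphism of posets $\{W \mid Z \le W\} \xto{\cong} \{V \mid \alpha(Z) \le V\}$; an isomorphism of categories between posets is exactly an order-reflecting bijection, so for all $W,W'$ with $Z \le W$ and $Z \le W'$ one has $W \le W'$ if and only if $\alpha(W) \le \alpha(W')$.

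To deduce fullness I would invoke that $\cC_0$ is $\Lambda$-connected, which is Condition~(1) in the definition of a grid (Lemma~\ref{lem:C0_core}). Given $X,Y$ with $\alpha(X) \le \alpha(Y)$, $\Lambda$-connectedness furnishes an object $Z$ with morphisms $Z \to X$ and $Z \to Y$, i.e.\ $Z \le X$ and $Z \le Y$; thus $X,Y$ both lie in $\{W \mid Z \le W\}$, and applying the functor $\alpha$ gives $\alpha(Z) \le \alpha(X)$ and $\alpha(Z) \le \alpha(Y)$, so $\alpha(X),\alpha(Y)$ both lie in $\{V \mid \alpha(Z) \le V\}$. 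Since $\alpha$ restricts to an order-reflecting isomorphism between these two up-sets and $\alpha(X) \le \alpha(Y)$, I conclude $X \le Y$, producing the desired morphism $X \to Y$. The only genuinely structural step is this choice of a common lower bound $Z$ via $\Lambda$-connectedness, which places both $X$ and $Y$ inside a single undercategory where Lemma~\ref{lem:MC} applies; the identification of $\cC_{0,Z/}$ with an up-set and the interpretation of an isomorphism of posetal categories as an order-reflecting bijection are routine, and I anticipate no real obstacle.
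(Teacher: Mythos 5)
Your proof is correct and follows essentially the same route as the paper's: reduce via thinness to producing a morphism $X \to Y$, use $\Lambda$-connectedness to find a common lower bound $Z$, and invoke Lemma~\ref{lem:MC} together with the fact that $\theta_{\alpha(Z),Z,\gamma_\alpha(Z)^{-1}}$ is an isomorphism of categories to transport the morphism back along the undercategories. Your rephrasing of the final step as order-reflection between the up-sets $\{W \mid Z \le W\}$ and $\{V \mid \alpha(Z) \le V\}$ is just the posetal translation of the paper's lifting of $f$ from $\cC_{0,\alpha(Z)/}$ to $\cC_{0,Z/}$.
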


\begin{proof}
Let $X_1$ and $X_2$ be objects of $\cC_0$
and suppose that there exists a morphism 
$f: \alpha(X_1) \to \alpha(X_2)$ in $\cC_0$.
As the category $\cC_0$ is thin, it suffices to
show that there exists a morphism from $X_1$ to $X_2$
in $\cC_0$.
As $\cC_0$ is $\Lambda$-connected,
there exist an object $Y$ of $\cC_0$ and
morphisms $h_1:Y \to X_1$ and $h_2:Y\to X_2$.
By applying the functor $\alpha$, we obtain the
diagram 
$\alpha(X_1) \xleftarrow{\alpha(h_1)} Y 
\xto{\alpha(h_2)} \alpha(X_2)$ in $\cC_0$.
As the category $\cC_0$ is thin, we have
$\alpha(h_2) = f \circ \alpha(h_1)$ and therefore $f$ can be
regarded as a morphism from $\alpha(h_1)$ to $\alpha(h_2)$
in $\cC_{0,\alpha(Y)/}$.
It follows from Lemma \ref{lem:MC2} that
the functor $\alpha':\cC_{0,Y/} \to \cC_{0,\alpha(Y)/}$
induced by $\alpha$ is equal to 
$\theta_{\alpha(Y),Y,\gamma_\alpha(Y)^{-1}}$.
Note that $\alpha'$ sends the object $h_i$ of $\cC_{0,Y/}$
to the object $\alpha(h_i)$ of $\cC_{0,\alpha(Y)/}$.
As $\theta_{\alpha(Y),Y,\gamma_\alpha(Y)^{-1}}$ is
an isomorphism of categories, it follows that
there exists a morphism from $h_1$ to $h_2$ that
is sent to the morphism $f$ by $\theta_{\alpha(Y),Y,\gamma_\alpha(Y)^{-1}}$.
This proves that $\alpha$ is fully faithful.
\end{proof}

\begin{lem}
\label{lem:bKX1}
Let $X$ be an edge object of $\cC_0$ and let
$(\alpha,\gamma_\alpha)$ be an element of $\bK_X$.
Let $f : Y \to X$ be a morphism in $\cC_0$ such that
$\iota_0(f)$ is a Galois covering in $\cT$.
Then we have $\alpha(Y)=Y$, $\alpha(f)=f$, and
$\gamma_\alpha(Y) : \iota_0(Y) \xto{\cong} \iota_0(Y)$ 
is an element of the Galois group of $\iota_0(f)$.
\end{lem}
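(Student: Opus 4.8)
The plan is to read the statement off from the naturality of $\gamma_\alpha$ combined with the rigidity of Galois coverings recorded in Corollary~\ref{cor:Galois_over}. First I would write down the naturality square of the natural isomorphism $\gamma_\alpha : \iota_0 \xto{\cong} \iota_0 \circ \alpha$ at the morphism $f : Y \to X$, namely
$$
\gamma_\alpha(X) \circ \iota_0(f) = \iota_0(\alpha(f)) \circ \gamma_\alpha(Y).
$$
Since $(\alpha,\gamma_\alpha) \in \bK_X$, we have $\alpha(X) = X$ and $\gamma_\alpha(X) = \id_{\iota_0(X)}$, so this reduces to $\iota_0(f) = \iota_0(\alpha(f)) \circ \gamma_\alpha(Y)$. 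In particular $\gamma_\alpha(Y) : \iota_0(Y) \xto{\cong} \iota_0(\alpha(Y))$ is an isomorphism over $\iota_0(X)$, where $\iota_0(Y)$ and $\iota_0(\alpha(Y))$ carry the structure morphisms $\iota_0(f)$ and $\iota_0(\alpha(f))$ respectively.

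The key point is then that $\iota_0(\alpha(f))$ is again a Galois covering in $\cC$: from the displayed identity $\iota_0(\alpha(f)) = \iota_0(f) \circ \gamma_\alpha(Y)^{-1}$, and precomposing a Galois covering with an isomorphism yields a Galois covering (transport the pseudo-torsor structure along the bijection on $\Hom$-sets induced by $\gamma_\alpha(Y)^{-1}$). With both $\iota_0(f)$ and $\iota_0(\alpha(f))$ Galois coverings and with $\gamma_\alpha(Y)$ an isomorphism over $\iota_0(X)$ between their sources, Corollary~\ref{cor:Galois_over} applies to the two morphisms $f : Y \to X$ and $\alpha(f) : \alpha(Y) \to X$ in $\cC_0$ and forces $\alpha(Y) = Y$ and $\alpha(f) = f$, which is claims (1) and (2).

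For claim (3) I would feed these equalities back into the reduced naturality identity. With $\alpha(Y) = Y$ and $\alpha(f) = f$, the isomorphism $\gamma_\alpha(Y)$ is an endomorphism of $\iota_0(Y)$ satisfying $\iota_0(f) \circ \gamma_\alpha(Y) = \iota_0(f)$, that is, $\gamma_\alpha(Y) \in \Aut_{\iota_0(X)}(\iota_0(Y))$. Since $\iota_0(f)$ is a Galois covering, Lemma~\ref{lem:Galois_group} identifies $\Aut_{\iota_0(X)}(\iota_0(Y))$ with the Galois group of $\iota_0(f)$, so $\gamma_\alpha(Y)$ lies in that group, as required.

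The argument is essentially bookkeeping once the correct naturality square is written; the only step needing care is the stability of Galois coverings under precomposition with an isomorphism, since this is precisely what licenses invoking Corollary~\ref{cor:Galois_over} with $\alpha(f)$ playing the role of the second covering. Everything else is direct substitution using the defining conditions of $\bK_X$ and the naturality of $\gamma_\alpha$.
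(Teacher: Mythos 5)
Your proof is correct and takes essentially the same route as the paper's: both write the naturality square of $\gamma_\alpha$ at $f$, use $\alpha(X)=X$ and $\gamma_\alpha(X)=\id_{\iota_0(X)}$ to see that $\gamma_\alpha(Y)$ is an isomorphism over $\iota_0(X)$ with $\iota_0(\alpha(f))$ again a Galois covering, and then invoke Corollary~\ref{cor:Galois_over} to get $\alpha(Y)=Y$ and $\alpha(f)=f$. Your explicit justification that Galois coverings are stable under precomposition with an isomorphism, and the final identification of the Galois group via Lemma~\ref{lem:Galois_group}, simply spell out steps the paper's proof leaves implicit in ``from which the claim follows.''
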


\begin{proof}
As $\alpha(X)= X$ and 
$\gamma_\alpha(X)$ is the identity,
we have the commutative diagram
$$
\begin{CD}
\iota_0(Y) @>{\gamma_\alpha(Y)}>{\cong}> 
\iota_0(\alpha(Y)) \\
@V{\iota_0(f)}VV @VV{\iota_0(\alpha(f))}V \\
\iota_0(X) @= \iota_0(X).
\end{CD}
$$
In particular, $\iota_0(\alpha(f))$ 
is a Galois covering in $\cT$.
Hence by Corollary \ref{cor:Galois_over}, 
we have
$\alpha(Y) = Y$ and 
$\alpha(f) = f$, from which the claim follows.
\end{proof}

\begin{lem}
\label{lem:bKX2}
Let $X$ be an edge object of $\cC_0$.
Then the submonoid $\bK_X \subset M_\Cip$
introduced in Section \ref{sec:submonoids} is a group.
\end{lem}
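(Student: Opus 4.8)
The plan is to show that every element $(\alpha,\gamma_\alpha)\in\bK_X$ admits a right inverse inside $\bK_X$, and then to invoke the elementary monoid fact that a monoid in which every element has a right inverse is a group. Concretely, if $a\circ b=1$ and $b\circ c=1$ then $b\circ a=(b\circ a)\circ(b\circ c)=b\circ(a\circ b)\circ c=b\circ c=1$, so every right inverse is two-sided.

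The first and main step is to prove that the endomorphism $\alpha:\cC_0\to\cC_0$ underlying an element of $\bK_X$ is an isomorphism of categories. By Lemma~\ref{lem:MC2} the functor $\alpha$ is fully faithful, and since $\cC_0$ is a skeletal (antisymmetric) poset this already yields injectivity on objects: if $\alpha(Z_1)=\alpha(Z_2)$, the identity morphisms together with full faithfulness produce morphisms $Z_1\to Z_2$ and $Z_2\to Z_1$, whence $Z_1=Z_2$. For surjectivity on objects I would take an arbitrary $Z\in\Obj\cC_0$; by Lemma~\ref{lem:yama2} there are an object $(f:Y\to X)$ of $I_X$ and a morphism $Y\to Z$, so $Z$ sits in the undercategory $\cC_{0,Y/}$. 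By Lemma~\ref{lem:bKX1} we have $\alpha(Y)=Y$, and by Lemma~\ref{lem:MC} the induced functor $\cC_{0,Y/}\to\cC_{0,\alpha(Y)/}=\cC_{0,Y/}$ equals $\theta_{Y,Y,\gamma_\alpha(Y)^{-1}}$, which is an isomorphism of categories because $\gamma_\alpha(Y)$ is an isomorphism. In particular it is surjective on objects, so $Z$ lies in the image of $\alpha$. Hence $\alpha$ is bijective on objects and fully faithful, so it is an isomorphism of categories with a genuine inverse functor $\beta:=\alpha^{-1}$.

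Next I would construct the inverse pair using the composition law of Section~\ref{sec:MC}. Define $\gamma_\beta:=(\beta^*\gamma_\alpha)^{-1}$, where $\beta^*\gamma_\alpha$ is the natural isomorphism $\iota_0\circ\beta\xto{\cong}\iota_0\circ\alpha\circ\beta=\iota_0$; since $\alpha\circ\beta=\id_{\cC_0}$ this is a natural isomorphism $\iota_0\xto{\cong}\iota_0\circ\beta$, so $(\beta,\gamma_\beta)\in M_\Cip$. The composition formula then gives $(\alpha,\gamma_\alpha)\circ(\beta,\gamma_\beta)=(\alpha\circ\beta,\,(\beta^*\gamma_\alpha)\circ\gamma_\beta)=(\id_{\cC_0},\id_{\iota_0})$ by the very definition of $\gamma_\beta$. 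To check $(\beta,\gamma_\beta)\in\bK_X$, observe that $\beta(X)=\alpha^{-1}(X)=X$ because $\alpha(X)=X$, and that the component of $\beta^*\gamma_\alpha$ at $X$ is $\gamma_\alpha(\beta(X))=\gamma_\alpha(X)=\id_{\iota_0(X)}$, so $\gamma_\beta(X)=\id_{\iota_0(X)}$.

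Putting these together, every element of $\bK_X$ has a right inverse lying in $\bK_X$, and the monoid argument above then shows each such right inverse is two-sided, so $\bK_X$ is a group. I expect the surjectivity-on-objects assertion in the first step to be the real obstacle: it is where one must descend from a convenient edge object $Y$ dominating the given $Z$ to an actual preimage, and it depends essentially on the cofinality of $I_X$ (Lemmas~\ref{lem:I_X cofinal} and~\ref{lem:yama2}) combined with the local isomorphism on $\cC_{0,Y/}$ furnished by Lemmas~\ref{lem:bKX1} and~\ref{lem:MC}. The remaining verifications are formal manipulations with the composition law.
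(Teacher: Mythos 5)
Your proof is correct and takes essentially the same route as the paper: full faithfulness of $\alpha$ via Lemma~\ref{lem:MC2}, and surjectivity on objects for an arbitrary $Z$ via Lemma~\ref{lem:yama2}, Lemma~\ref{lem:bKX1}, and the automorphism $\theta_{Y,Y,\gamma_\alpha(Y)^{-1}}$ of $\cC_{0,Y/}$ from Lemma~\ref{lem:MC}, which is exactly the paper's argument. Your explicit construction of the inverse pair $(\beta,(\beta^*\gamma_\alpha)^{-1})$, the verification that it lies in $\bK_X$, and the right-inverse monoid observation simply make precise the step the paper leaves implicit behind ``it suffices to show that the functor $\alpha$ is an isomorphism of categories.''
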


\begin{proof}
Let $(\alpha,\gamma_\alpha)$ be an
element of $\bK_X$. It suffices to show that
the functor $\alpha:\cC_0 \to \cC_0$ is
an isomorphism of categories.
As the category $\cC_0$ is skeletal, 
it then suffices to prove that
the functor $\alpha$ gives an equivalence of categories.
It follows from Lemma \ref{lem:MC2} that the functor
$\alpha$ is fully faithful.
We prove that $\alpha$ is essentially surjective.
Let us take an arbitrary object $Z$ of $\cC_0$.
It follows from Lemma \ref{lem:yama2} that
there exist a morphism $f:Y \to X$ in $\cC_0$ 
and a morphism $h:Y \to Z$ in $\cC_0$
such that $\iota_0(f)$ is a Galois
covering in $\cT$.
By Lemma \ref{lem:bKX1}, we have $\alpha(Y)=Y$.
We set $\beta = \gamma_\alpha(Y)^{-1}$, which is an
automorphism of the object $\iota_0(Y)$.
As $\theta_{Y,Y,\beta}$ 
is an automorphism of the
category $\cC_{0,Y/}$, 
there exists an object
$h': Y \to Z'$ of $\cC_{0,Y/}$ 
that is sent to
$h$ by the automorphism 
$\theta_{Y,Y,\beta}$.
It then follows from Lemma \ref{lem:MC2} that
we have $\alpha(Z')=Z$, which proves that the functor
$\alpha$ is essentially surjective.
This completes the proof.
\end{proof}

\subsection{The isomorphisms $\psi_X$ and $\phi_X$}
\subsubsection{}
\label{sec:Gal gp surjective}
Let $X$ be an edge object of $\cC_0$.
Let $f_1,f_2$ be two objects of $I_{X}$
and suppose that there exists a morphism
$g$ from $f_2$ to $f_1$ in $I_{X}$.
Note that $\iota_0(f_1)$ and $\iota_0(f_2)$
are Galois coverings in $\cC$.
It follows from Lemma \ref{lem:descends}
(1)  
that for any $\alpha \in \Gal(f_2)$
there exists a unique $\alpha' \in \Gal(f_1)$
satisfying $\iota_0(g) \circ \alpha
= \alpha' \circ \iota_0(g)$.
By sending $\alpha$ to $\alpha'$ we obtain
a map $\Gal(f_2) \to \Gal(f_1)$.
It follows from Lemma \ref{lem:descends} (2)
that this map is a homomorphism of groups.
It follows from Lemma \ref{lem:lifts} 
that this homomorphism is surjective.
We set
$$
H_{X} = \varprojlim_{f \in \Obj I_{X}} 
\Gal(f).
$$

\begin{lem}
\label{lem:Gal gp surjective}
Let the notation be as above.
Suppose that one of the conditions on
cardinality in Section~\ref{sec:cardinality} is satisfied
for the category $\cC(\cT(J))_{/X}$.
Then, the canonical map
\[ \rho:H_X \to \Gal(f)\]
where $f \in I_X$, is surjective.
\end{lem}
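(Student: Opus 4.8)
The plan is to realize $(\Gal(f))_{f \in \Obj I_X}$ as a projective system of groups with surjective transition maps and to deduce surjectivity of each projection from the vanishing of the first non-abelian cohomology under the cardinality hypothesis. First I would fix an object $f_0 : Y_0 \to X$ of $I_X$ and an element $a \in \Gal(f_0)$; the goal is to produce an element of $H_X$ whose $f_0$-component is $a$. By Lemma~\ref{lem:I_X cofinal} the category $I_X$ is cofiltered and thin, so $\Obj I_X$, ordered by the existence of a morphism, is a directed poset, and the full subposet $\{f \in \Obj I_X : f \text{ dominates } f_0\}$ is cofinal; hence $H_X = \varprojlim_{f} \Gal(f)$ may be computed over this subposet. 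The transition maps $\Gal(f') \to \Gal(f)$ attached to a morphism $f' \to f$ in $I_X$ are group homomorphisms and are surjective by Lemma~\ref{lem:lifts} (as recorded in the discussion preceding the statement).

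Next I would pass to the fibre over $a$. For $f$ dominating $f_0$, let $K_f = \Ker(\Gal(f) \to \Gal(f_0))$ and let $S_f \subset \Gal(f)$ be the preimage of $a$. Left multiplication makes $S_f$ a left $K_f$-torsor, and the transition map $\Gal(f') \to \Gal(f)$ carries $S_{f'}$ into $S_f$ and $K_{f'}$ onto $K_f$ (a lift of an element of $K_f$ automatically lies in $K_{f'}$, by the cocycle identity for the three homomorphisms to $\Gal(f_0)$), so $(K_f)$ is a projective system with surjective transition maps and $(S_f)$ is a $(K_f)$-torsor in the sense of Section~\ref{sec:torsors}. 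A point of $\varprojlim_f S_f$ is precisely an element of $H_X$ projecting to $a$; thus it suffices to show this limit is non-empty, i.e. that $(K_f)$ has trivial first non-abelian cohomology.

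Finally I would invoke the cardinality hypothesis through Lemma~\ref{lem:admissible}. If $\cC(\cT(J))_{/X}$ satisfies Condition (1), then every $\Gal(f) = \Aut_{\iota_0(X)}(\iota_0(Y))$ embeds, via Lemma~\ref{lem:Galois_group}, into a hom-set of $\cC(\cT(J))_{/X}$ and is therefore finite, so each $K_f$ is finite and Lemma~\ref{lem:admissible}(1) applies. If instead Condition (2) holds, I would extract from the countable generating set $S$ a countable cofinal subfamily of $I_X$ --- replacing each object of $S$ by a dominating Galois covering, which exists because $\cT(J)$ has enough Galois coverings and the grid realizes such coverings in $\cC_0$ --- and then apply Lemma~\ref{lem:admissible}(3) to the restriction of $(K_f)$ to this countable cofinal subsystem, whose transition maps remain surjective. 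In either case $\varprojlim_f S_f \neq \emptyset$, which yields the surjectivity of $\rho$.

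The main obstacle is the Condition (2) case: one must manufacture a genuinely countable cofinal subfamily of Galois coverings out of the weaker countable-generation hypothesis and verify that cofinality (and hence the computation of the limit) is preserved when passing to the kernels $K_f$; by contrast the Condition (1) case reduces quickly to the finiteness of the relevant automorphism groups and a direct appeal to the non-emptiness of filtered limits of finite sets.
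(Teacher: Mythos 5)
Your proposal is correct and is essentially the paper's own argument in expanded form: the paper's three-line proof likewise combines surjectivity of the transition maps (Lemma~\ref{lem:lifts}) with the dichotomy that Condition (1) forces every $\Gal(f')$ to be finite while Condition (2) makes the index system countable, and then concludes surjectivity of the projection $H_X \to \Gal(f)$. Your explicit packaging of the fibres over $a$ as a torsor under the kernel pro-group $(K_f)$, with Lemma~\ref{lem:admissible} supplying the non-empty limit, and your construction of a countable cofinal family of Galois coverings in $I_X$ (via enough Galois coverings, grid condition (3), and Lemma~\ref{lem:Galois_over}) are faithful elaborations of steps the paper leaves implicit --- indeed, for Condition (2) the paper simply asserts, citing Lemma~\ref{lem:Galois_over}, that $I_X$ itself is at most countable, whereas your countable cofinal subfamily is all that is actually needed and is arguably the more careful justification.
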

\begin{proof}
It follows from Lemma \ref{lem:lifts} that
the transition maps in the limit in the definition of $H_X$
are surjective.
The assumption on the category $\cC(\cT(J))_{/X}$ 
and Lemma \ref{lem:Galois_over} imply that we have either 
that $\Gal(f')$ is a finite group for any $f' \in \Obj I_X$ 
or that the set $I_X$ is at most countable.
Hence the natural projection $H_X \to \Gal(f)$ is surjective
for any $f \in I_X$.
\end{proof}

\subsubsection{}
\label{sec:defn phiX}
It follows from Lemma \ref{lem:bKX2} that
$\bK_X$ is a group.
Let us construct a homomorphism
$\psi_X : H_{X} \to \bK_X$ of groups.
Let $\beta = (\beta_f)_{f \in \Obj I_{X}} \in H_{X}$.
For each object $f:Y \to X$ of $I_X$,
we let $\alpha(\beta)_Y$ denote the automorphism
$\theta_{Y,Y,\beta_f^{-1}}
: \cC_{0,Y/} \xto{\cong}
\cC_{0,Y/}$ of categories.
Let $f':Y' \to X$ be another object of $I_X$ and
let $h: Y' \to Y$ be a morphism in $I_X$.
It then follows from Lemma \ref{lem:theta_descends} that
the diagram
$$
\begin{CD}
\cC_{0,Y/} @>{\alpha(\beta)_Y}>{\cong}>
\cC_{0,Y/} \\
@V{-\circ h}VV @VV{-\circ h}V \\
\cC_{0,Y'/} @>{\alpha(\beta)_{Y'}}>{\cong}>
\cC_{0,Y'/}
\end{CD}
$$
is commutative.
By taking the colimit with respect to $f$
and by using Corollary \ref{cor:C0Y}, we obtain an
isomorphism $\cC_0 \xto{\cong} \cC_0$ of categories 
which we denote by $\alpha(\beta)$.

\subsubsection{}
For each object $f:Y \to X$ of $I_X$,
we denote by $E_{1,Y}$ the composite
$E_{1,Y}: \cC_{0,Y/} \xto{\iota_{0,Y/}} 
\cC_{\iota_0(Y)/}
\xto{-\circ \beta_f^{-1}} \cC_{\iota_0(Y)/}$ 
and by $E_{2,Y}$ the composite
$E_{2,Y}: \cC_{0,Y/} \xto{\alpha(\beta)_{Y}} 
\cC_{0,Y/} \xto{\iota_{0,Y/}} \cC_{\iota_0(Y)/}$.
We set
$\gamma_{\alpha(\beta),Y} =
\xi_{Y,Y,\beta_f^{-1}}^{-1}$,
which is a natural isomorphism from the composite
$E_{1,Y}$ to the composite $E_{2,Y}$.

\subsubsection{}
Let $f':Y' \to X$ be another object of $I_X$ and
let $h: Y' \to Y$ be a morphism in $I_X$.
Let us consider the functors
$-\circ h: \cC_{0,Y/} \to \cC_{0,Y'/}$ 
and $-\circ\iota_0(h):\cC_{\iota_0(Y)/} \to \cC_{\iota_0(Y')/}$ 
given by the compositions with $h$ and
$\iota(h)$, respectively.

For $i=1,2$, the functor 
$E_{i,Y'}\circ(-\circ h) : \cC_{0,Y/}
\to \cC_{\iota_0(Y')}$ is equal to the functor
$(-\circ \iota_0(h)) \circ E_{i,Y}: \cC_{0,Y/}
\to \cC_{\iota_0(Y')/}$.
It follows from the proof of 
Lemma \ref{lem:theta_descends} that
the natural isomorphism
$\gamma_{\alpha(\beta),Y'}\circ (-\circ h) :
E_{1,Y'}\circ(-\circ h) 
\to E_{2,Y'} \circ(-\circ h)$ 
is equal to the natural isomorphism
$\gamma_{\alpha(\beta),Y'}\circ (-\circ h) :
E_{1,Y'}\circ(-\circ h) 
\to E_{2,Y'} \circ (-\circ h)$.
By taking the colimit with respect to $f$,
we obtain a natural isomorphism
$\gamma_{\alpha(\beta)} : 
\iota_0 \xto{\cong} \iota_0 
\circ \alpha(\beta)$.
Thus, we obtain an element
$(\alpha(\beta), \gamma_{\alpha(\beta)})$ in $M_\Cip$.

It is easy to check that the element
$(\alpha(\beta), \gamma_{\alpha(\beta)})$
belongs to $\bK_X$ and that
the map $\psi_X : H_X \to \bK_X$ 
that sends $\beta \in H_X$
to $(\alpha(\beta), \gamma_{\alpha(\beta)})$ is
a homomorphism of groups.

\subsubsection{}
\label{sec:phi_X}
Let us construct a homomorphism
$\phi_X : \bK_X \to H_X$ as follows.
Let $(\alpha,\gamma_\alpha) \in \bK_X$.
For any object $f:Y \to X$ of $I_X$, it follows from
Lemma \ref{lem:bKX1} that we have
$\alpha(Y) = Y$ and 
$\gamma_\alpha(Y) \in \Gal(f)$.
Let $f':Y' \to X$ be another object of $I_X$ and
let $h:Y' \to Y$ be a morphism in $I_X$.
As $\gamma_\alpha(Y)$ is functorial in $Y$,
we have $\iota_0(h) \circ \gamma_\alpha(Y') 
= \gamma_\alpha(Y) \circ \iota_0(h)$.
This shows that 
$(\gamma_\alpha(Y))_{f:Y \to X}$
is an element of $H_X$.
We define 
$\phi_X : \bK_X \to H_X$ to be the map
that sends $(\alpha,\gamma_\alpha) \in \bK_X$
to the element $(\gamma_\alpha(Y))_{f:Y\to X}$ of $H_X$.
One can check easily that $\phi_X$ is a homomorphism
of groups.

\begin{lem}
\label{lem:psi_isom}
The homomorphism $\psi_X : H_X \to \bK_X$ 
is an isomorphism.
\end{lem}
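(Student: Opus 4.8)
The plan is to prove that $\psi_X$ is an isomorphism by showing that the homomorphism $\phi_X\colon \bK_X \to H_X$ constructed in Section~\ref{sec:phi_X} is a two-sided inverse of $\psi_X$. Both $\psi_X$ and $\phi_X$ are already known to be homomorphisms of groups, so it suffices to verify the two equalities $\psi_X \circ \phi_X = \id_{\bK_X}$ and $\phi_X \circ \psi_X = \id_{H_X}$.

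First I would treat $\psi_X \circ \phi_X = \id_{\bK_X}$, which is the more conceptual direction. Let $(\alpha,\gamma_\alpha) \in \bK_X$ and put $\beta = \phi_X(\alpha,\gamma_\alpha) = (\gamma_\alpha(Y))_{f:Y \to X}$. For each object $f:Y \to X$ of $I_X$ we have $\alpha(Y) = Y$ by Lemma~\ref{lem:bKX1}, so Lemma~\ref{lem:MC} identifies the functor $\cC_{0,Y/} \to \cC_{0,Y/}$ induced by $\alpha$ with $\theta_{Y,Y,\gamma_\alpha(Y)^{-1}}$. On the other hand, by the construction of $\psi_X$ in Section~\ref{sec:defn phiX} the functor $\alpha(\beta)_Y$ equals $\theta_{Y,Y,\beta_f^{-1}} = \theta_{Y,Y,\gamma_\alpha(Y)^{-1}}$, since $\beta_f = \gamma_\alpha(Y)$. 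Thus $\alpha$ and $\alpha(\beta)$ induce the same automorphism of $\cC_{0,Y/}$ for every $f \in \Obj I_X$; as the undercategories $\cC_{0,Y/}$ for $f:Y \to X$ in $I_X$ exhaust $\cC_0$ via the isomorphism of Corollary~\ref{cor:C0Y}, this forces $\alpha(\beta) = \alpha$. The corresponding identity $\gamma_{\alpha(\beta)} = \gamma_\alpha$ then follows by comparing the two natural isomorphisms $\iota_0 \Rightarrow \iota_0 \circ \alpha$ on each $\cC_{0,Y/}$ and invoking the uniqueness built into Lemma~\ref{lem:theta}, the target undercategories being thin. Hence $\psi_X(\beta) = (\alpha,\gamma_\alpha)$.

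For the remaining composite $\phi_X \circ \psi_X = \id_{H_X}$, I would start from $\beta = (\beta_f)_f \in H_X$, form $\psi_X(\beta) = (\alpha(\beta),\gamma_{\alpha(\beta)})$, and compute $\phi_X(\psi_X(\beta)) = (\gamma_{\alpha(\beta)}(Y))_f$, so that the point is to show that the component of $\gamma_{\alpha(\beta)}$ at $Y$ recovers $\beta_f$. Here $\gamma_{\alpha(\beta)}(Y)$ is read off, under the colimit identification of Corollary~\ref{cor:C0Y}, from the natural isomorphism $\gamma_{\alpha(\beta),Y} = \xi_{Y,Y,\beta_f^{-1}}^{-1}$ evaluated at the initial object $\id_Y$ of $\cC_{0,Y/}$. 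Unwinding the definitions of $E_{1,Y}$ and $E_{2,Y}$, and accounting for the twist by $-\circ \beta_f^{-1}$ in $E_{1,Y}$ together with the fact that $\theta_{Y,Y,\beta_f^{-1}}$ preserves the initial object, this evaluation returns exactly $\beta_f$. Since $\cC_0$ is a poset, $f$ is the unique morphism $Y \to X$, so the assignment is well defined, and $\phi_X(\psi_X(\beta)) = \beta$.

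I expect the main obstacle to be the natural-transformation bookkeeping in this second composite: tracking the defining natural isomorphism $\xi_{Y,Y,\beta_f^{-1}}$ through the colimit of Corollary~\ref{cor:C0Y} and confirming that its component at $\id_Y$ is precisely $\beta_f$ rather than $\beta_f^{-1}$, which hinges on the inverse conventions fixed in Section~\ref{sec:defn phiX}. The compatibility of these components as $f$ varies, namely that they genuinely assemble into a single element of $H_X = \varprojlim_f \Gal(f)$, is the content of Lemma~\ref{lem:theta_descends}, which I would cite rather than reprove. With functoriality and the group-homomorphism properties already in hand, the whole argument reduces to these two evaluations.
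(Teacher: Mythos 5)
Your proposal is correct and is in substance the same as the paper's argument: the paper also inverts $\psi_X$ through $\phi_X$, noting that $\phi_X \circ \psi_X = \id_{H_X}$ is immediate from the construction and then proving that $\phi_X$ is injective via Lemma~\ref{lem:bKX1}, Lemma~\ref{lem:MC}, Corollary~\ref{cor:C0Y}, and thinness of the undercategories $\cC_{\iota_0(Y)/}$ --- which, given $\phi_X \circ \psi_X = \id$, is formally equivalent to your identity $\psi_X \circ \phi_X = \id_{\bK_X}$ and rests on exactly the same lemmas. The only difference is organizational: you verify both composites explicitly (including unwinding $\gamma_{\alpha(\beta)}(Y)=\beta_f$ at the initial object $\id_Y$, which checks out), whereas the paper dismisses the composite $\phi_X \circ \psi_X$ as clear from the construction and packages the other direction as injectivity of $\phi_X$.
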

\begin{proof}
It is clear from the construction of $\psi_X$ that
the composite $\phi_X \circ \psi_X$ is equal to
the identity. Hence, to prove that $\psi_X$ is bijective
it suffices to prove that $\phi_X$ is injective.

Let $(\alpha,\gamma_{\alpha}) \in \bK_X$ and suppose that 
$\phi_X((\alpha,\gamma_{\alpha}))=1$.
Then for any object $f:Y \to X$ of $I_X$,
we have $\alpha(Y) = Y$ and 
$\gamma_{\alpha}(Y) = \id_{\iota_0(Y)}$.
It follows from Lemma \ref{lem:MC} that
the automorphism $\cC_{0,Y/}
\to \cC_{0,Y/}$ induced by $\alpha$ 
is equal to the identity.
Hence, it follows from Corollary \ref{cor:C0Y} that the automorphism
$\alpha$ is equal to the identity.
As the category $\cC_{\iota_0(Y)/}$ is thin,
any natural auto-equivalence $\gamma$ of
the functor $\cC_{0,Y/} \to \cC_{\iota_0(Y)/}$ 
induced by $\iota_0$ is equal to the identity.
This shows that the natural auto-equivalence of the
functor $\iota_{0,/Y} 
:\cC_{0,Y/} \to \cC_{\iota_0(Y)/}$ induced
by the natural auto-equivalence $\gamma_\alpha$
is equal to the identity for any object $f:Y \to X$
of $I_X$.
It therefore follows from Corollary \ref{cor:C0Y} that
the natural auto-equivalence $\gamma_\alpha$
is equal to the identity, which proves that
the homomorphism $\phi_X$ is injective.
This proves the claim.
\end{proof}

\begin{cor}
\label{cor:KX-invariant}
Let $F$ be a sheaf on $(\cC,J)$.
Then for any edge object $X$ of $\cC_0$,
the map $F(\iota_0(X)) \to \omega_\Cip(F)$
induces a bijection $F(\iota_0(X)) \xto{\cong} 
\omega_\Cip(F)^{\bK_X}$.
\end{cor}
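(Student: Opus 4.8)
The plan is to prove the two inclusions separately, after recording that the map in question is already injective with image contained in the invariants. For $X$ an edge object, injectivity of $F(\iota_0(X)) \to \omega_\Cip(F)$ is exactly Lemma~\ref{lem:injectivity}, and the containment of the image in $\omega_\Cip(F)^{\bK_X}$ was observed immediately after the definition of the $M_\Cip$-action. Hence the entire content is the reverse inclusion $\omega_\Cip(F)^{\bK_X} \subseteq \Image\bigl(F(\iota_0(X)) \to \omega_\Cip(F)\bigr)$, and I would treat the case of an edge object $X$ first, this being the essential one.

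So fix an edge object $X$ and an element $\zeta \in \omega_\Cip(F)^{\bK_X}$. First I would choose a convenient representative: since the objects $f\colon Y \to X$ of $I_X$ are cofinal in $\cC_{0,/X}$ (Lemma~\ref{lem:I_X cofinal}), each such $Y$ is again an edge object (Lemma~\ref{lem:lower}), and $\omega_\Cip(F)$ is a filtered colimit, I may write $\zeta$ as the class of some $s \in F(\iota_0(Y))$ for an object $f\colon Y \to X$ of $I_X$; here $\iota_0(f)$ is a Galois covering in $\cT$ with group $\Gal(f) = \Aut_{\iota_0(X)}(\iota_0(Y))$.

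The heart of the argument is to translate $\bK_X$-invariance of $\zeta$ into $\Gal(f)$-invariance of $s$. Using the isomorphism $\psi_X\colon H_X \xto{\cong} \bK_X$ of Lemma~\ref{lem:psi_isom} together with Lemma~\ref{lem:bKX1} (which gives $\alpha(Y)=Y$ and $\gamma_\alpha(Y)\in\Gal(f)$ for $(\alpha,\gamma_\alpha)\in\bK_X$), I would unwind the definition of the $M_\Cip$-action: for $\beta=(\beta_{f'})_{f'}\in H_X$ the automorphism $\psi_X(\beta)$ acts on the class of $s$ by $F(\beta_f)^{-1}$, because the $(\gamma_\alpha^{-1})^*$-step contributes $\gamma_{\psi_X(\beta)}(Y)=\beta_f$ (this is exactly the statement that $\phi_X$ inverts $\psi_X$) while the reindexing map $j$ keeps the class over $Y$ since $\alpha(Y)=Y$. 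Thus $\bK_X$-invariance means $[F(\beta_f)^{-1}s]=[s]$ in $\omega_\Cip(F)$ for all $\beta$; as $Y$ is an edge object the structure map $F(\iota_0(Y)) \to \omega_\Cip(F)$ is injective (Lemma~\ref{lem:injectivity}), so $F(\beta_f)^{-1}s=s$. By Lemma~\ref{lem:Gal gp surjective} the projection $H_X \to \Gal(f)$ is surjective, so $\beta_f$ ranges over all of $\Gal(f)$ and $s$ lies in $F(\iota_0(Y))^{\Gal(f)}=F(\iota_0(Y))^{\Aut_{\iota_0(X)}(\iota_0(Y))}$. Since $F$ is a sheaf and $\iota_0(f)$ is a Galois covering in $\cT(J)$, Corollary~\ref{cor:sheaf_criterion3} identifies this invariant subset with the image of $F(\iota_0(f))\colon F(\iota_0(X)) \to F(\iota_0(Y))$; hence $s=F(\iota_0(f))(t)$ for some $t\in F(\iota_0(X))$, and $\zeta=[s]$ is the image of $t$, which is what was needed.

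I expect the main obstacle to be the bookkeeping in the previous paragraph: pinning down precisely how $\psi_X(\beta)$ acts on a colimit representative, tracking the twist $(\gamma_\alpha^{-1})^*$ and the reindexing $j$ in the definition of the action and matching $\gamma_{\psi_X(\beta)}(Y)$ with the component $\beta_f$ through $\phi_X$, and, crucially, the appeal to Lemma~\ref{lem:Gal gp surjective}, which is where the cardinality hypothesis on $\cC(\cT(J))_{/X}$ enters (without it one only obtains invariance under the image of $H_X$ in $\Gal(f)$). For a general, not necessarily edge, object $X$ the same strategy applies once one has a morphism $f\colon Y \to X$ in $\cC_0$ with $\iota_0(f)$ a Galois covering in $\cT$ and with $Y$ available as an edge object (so that the injectivity of $F(\iota_0(Y)) \to \omega_\Cip(F)$ may be used); securing such a $Y$ for non-edge $X$, equivalently accommodating that $\bK_X$ need not be a group in that case, is the remaining delicate point.
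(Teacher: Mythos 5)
Your argument is correct and is essentially the paper's own proof unpacked elementwise: the paper establishes the levelwise bijection $F(\iota_0(X)) \xto{\cong} F(\iota_0(Y))^{\Gal(f)}$ via Corollary~\ref{cor:sheaf_criterion3}, passes to the filtered colimit over $I_X$ to obtain $F(\iota_0(X)) \xto{\cong} \omega_\Cip(F)^{H_X}$, and concludes by Lemma~\ref{lem:psi_isom} --- exactly the ingredients you deploy. Your explicit appeal to Lemma~\ref{lem:Gal gp surjective} correctly makes visible where the cardinality hypothesis enters (the paper's ``passing to the inductive limit'' silently needs the surjectivity of $H_X \to \Gal(f)$, for the reason you state), and your caveat about non-edge $X$ is apt, since the paper's proof, built on $I_X$, $H_X$ and $\psi_X$, likewise only treats edge objects even though the statement says ``any object.''
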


\begin{proof}
For any object $f:Y \to X$ of $I_X$,
the pullback map $F(\iota_0(X)) \to F(\iota_0(Y))$
induces an isomorphism $F(\iota_0(X)) \to F(\iota_0(Y))^{\Gal(f)}$.
Passing to the inductive limit with respect to $f$,
we see that the map $F(\iota_0(X)) \to \omega_\Cip(F)$
induces an isomorphism $F(\iota_0(X)) \xto{\cong}
\omega_\Cip(F)^{H_X}$.
Hence, the claim follows from
Lemma \ref{lem:psi_isom}.
\end{proof}

\subsection{The monoid $M_\Cip$ has sufficiently many elements}
From now until the end of 
Section~\ref{sec:enough points},
we will assume that, for any object $X$ of $\cC$,
the category $\cC(\cT(J))_{/X}$ satisfies 
at least one of the two conditions 
in Section \ref{sec:cardinality}.

\begin{lem}
\label{lem:sufficiently many}
Let $X$ be an edge object of $\cC_0$,
$X'$ an object of $\cC_0$,
and $\beta: \iota_0(X) \xto{\cong} \iota_0(X')$ 
be an isomorphism in $\cC$.
Then, there exists an element $(\alpha,\gamma_\alpha)$
of $M_\Cip$ that satisfies $\alpha(X) = X'$
and $\gamma_\alpha(X) = \beta$.
\end{lem}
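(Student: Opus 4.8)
The plan is to construct the endomorphism $\alpha$ together with the natural isomorphism $\gamma_\alpha$ by transporting, via $\beta$, the structure of $\cC_0$ as seen from the edge object $X$, closely imitating the construction of $\psi_X$ in Section~\ref{sec:defn phiX}. The organizing principle is Corollary~\ref{cor:C0Y}, which exhibits $\cC_0$ as the colimit $\varinjlim_{(f\colon Y\to X)\in \Obj I_X}\cC_{0,Y/}$; hence it suffices to define, for each object $f\colon Y\to X$ of $I_X$, a functor $\alpha_Y\colon \cC_{0,Y/}\to\cC_0$ compatible with the transition functors $-\circ h$, and then to assemble $\gamma_\alpha$ in the same fashion.

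First I would fix, for every object $f\colon Y\to X$ of $I_X$, the data produced by Lemma~\ref{lem:isom_over} applied to $\beta$: a unique morphism $f'\colon Y'\to X'$ in $\cC_0$ and an isomorphism $\beta_f\colon \iota_0(Y)\xto{\cong}\iota_0(Y')$ fitting into the commutative square whose bottom edge is $\beta$; uniqueness of $\beta_f$ holds since $\iota_0(f)$ is an epimorphism. For the identity $\id_X$, which is an object of $I_X$, the choice $\beta_{\id_X}=\beta$ forces $f'=\id_{X'}$, yielding the normalization $\alpha(X)=X'$. Using the isomorphism $\theta_{Y',Y,\beta_f^{-1}}\colon\cC_{0,Y/}\xto{\cong}\cC_{0,Y'/}$ of categories from Section~\ref{sec:theta}, I set $\alpha_Y$ to be the composite of $\theta_{Y',Y,\beta_f^{-1}}$ with the forgetful functor $\cC_{0,Y'/}\to\cC_0$.

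Next I would verify the compatibility needed to pass to the colimit: for a morphism $h\colon Y_1\to Y_2$ over $X$ in $I_X$ the transported morphism $h'\colon Y_1'\to Y_2'$ exists, and $\alpha_{Y_1}\circ(-\circ h)=\alpha_{Y_2}$. Because $\cC_0$ is a poset and the targets are pinned down by the uniqueness in Lemma~\ref{lem:isom_over}, this reduces to Lemma~\ref{lem:theta_descends} together with the cocycle identity $\theta_{Z,Z'',h'\circ h}=\theta_{Z,Z',h}\circ\theta_{Z',Z'',h'}$. Patching the $\alpha_Y$ via Corollary~\ref{cor:C0Y} produces the endofunctor $\alpha\colon\cC_0\to\cC_0$ with $\alpha(X)=X'$. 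In the same spirit, the natural equivalences $\xi_{Y',Y,\beta_f^{-1}}$ of Section~\ref{sec:theta} furnish, for each $f$, a natural isomorphism $\iota_{0,Y/}\Rightarrow\iota_0\circ\alpha_Y$; these are compatible over $I_X$ by the coherence of the $\xi$'s (exactly as in Section~\ref{sec:defn phiX}), so they glue to a natural isomorphism $\gamma_\alpha\colon\iota_0\xto{\cong}\iota_0\circ\alpha$, giving $(\alpha,\gamma_\alpha)\in M_\Cip$. Evaluating the component at the initial object $\id_X$ of $\cC_{0,X/}$ through the defining square of $\theta_{X',X,\beta^{-1}}$ gives $\gamma_\alpha(X)=\beta$, as required.

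The main obstacle is precisely the coherence bookkeeping in these two gluing steps: checking that the family $(\alpha_Y)$ respects the transition functors of $\varinjlim_{f\in I_X}\cC_{0,Y/}$, and that the family built from the $\xi_{Y',Y,\beta_f^{-1}}$ is likewise compatible. I expect this to be routine but delicate, mirroring the verifications already carried out for $\psi_X$; the thinness and skeletality of $\cC_0$ (so that morphisms, once shown to exist, are unique and all diagrams commute) are what make it go through. I note finally that $X'$ need not be an edge object, since the construction only uses the undercategories $\cC_{0,Y'/}$ and the forgetful functors into $\cC_0$, never a colimit presentation based at $X'$.
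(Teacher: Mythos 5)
There is a genuine gap, and it sits exactly where you declare the first step: your claim that the isomorphism $\beta_f$ supplied by Lemma~\ref{lem:isom_over} is unique ``since $\iota_0(f)$ is an epimorphism'' is false. In the square of Lemma~\ref{lem:isom_over} the commutativity condition reads $\iota_0(f')\circ\beta' = \beta\circ\iota_0(f)$, so the epimorphism sits on the wrong side for cancellation: two solutions $\beta'_1,\beta'_2$ satisfy $\iota_0(f')\circ(\beta'_1\circ{\beta'_2}^{-1})=\iota_0(f')$, hence differ by an arbitrary element of $\Aut_{\iota_0(X')}(\iota_0(Y'))=\Gal(\iota_0(f'))$ --- the lemma itself says ``not necessarily unique''. (The cancellation you have in mind is the one in Lemma~\ref{lem:isom_under}, where $\beta'$ is composed \emph{after} the epimorphism $\iota_0(f)$.) Consequently, choosing $\beta_f$ independently for each object $f$ of $I_X$ gives a family with no reason to be coherent: only the morphisms $f'$ are pinned down, but the functors $\theta_{Y',Y,\beta_f^{-1}}$ and the natural isomorphisms $\xi_{Y',Y,\beta_f^{-1}}$ depend on the actual $\beta_f$, and the compatibility square you want from Lemma~\ref{lem:theta_descends} commutes only when $\beta_{f_1}$ is the image of $\beta_{f_2}$ under the relevant transition map. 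For an incoherent family the gluing through Corollary~\ref{cor:C0Y} simply does not produce a functor, so your ``routine but delicate bookkeeping'' is not routine --- it is obstructed.

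The missing idea is the paper's torsor argument. The admissible pairs $(f',\beta')$ for a fixed $f$ form a set $S_f$ which is a left $\Gal(f)$-torsor, the $S_f$ assemble into a torsor under the pro-group $\wt{H}_X=(\Gal(f))_{f\in \Obj I_X}$, and one must produce a point of $\varprojlim_{f} S_f$; this is a nontrivial $R^1\varprojlim$ problem in the sense of Section~\ref{sec:torsors}, and it is precisely here that the standing cardinality hypothesis (in force for this lemma) enters via Lemma~\ref{lem:admissible}. Your proposal never invokes the cardinality conditions, which is a symptom of the gap: without them the lemma's conclusion can fail, since the obstruction to a coherent choice of the $\beta_f$ need not vanish. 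Once a compatible element $(\beta_f)_f$ of the limit is chosen, the remainder of your argument --- the $\theta$/$\xi$ gluing over $I_X$ via Corollary~\ref{cor:C0Y}, and the normalization at $\id_X\in I_X$ giving $\alpha(X)=X'$ and $\gamma_\alpha(X)=\beta$ --- coincides with the paper's construction and goes through.
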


\begin{proof}
For any object $f:Y \to X$ of $I_X$,
let $S_{f}$ denote the set of
pairs $(f',\beta')$ of a morphism $f': Y' \to X'$
in $\cC_0$ and an isomorphism
$\beta': \iota_0(Y) \xto{\cong} \iota_0(Y')$
in $\cC$ that make the diagram
$$
\begin{CD}
\iota_0(Y) @>{\beta'}>{\cong}> \iota_0(Y') \\
@V{\iota_0(f)}VV @VV{\iota_0(f')}V \\
\iota_0(X) @>{\beta}>{\cong}> \iota_0(X') \\
\end{CD}
$$
commutative. For $(f',\beta') \in S_f$
and for $\sigma \in \Gal(f)$, we set
$\sigma \cdot (f',\beta) = (f',\beta \circ \sigma^{-1})$.
This gives an action from the left of the group
$\Gal(f)$ on the set $S_f$.
It follows from 
Lemma \ref{lem:isom_over} that
the set $S_f$ is non-empty.
It follows from Corollary \ref{cor:Galois_over} that
for any two elements $(f', \beta'), (f'',\beta'')$
of $S_f$, we have $f' = f''$.
This shows that the set $S_f$ is a left $\Gal(f)$-torsor.
Let $g: Z \to X$ be another object of $I_X$ and let
$h:Z \to Y$ be a morphism in $I_X$.
Let $(g':Z' \to X',\beta'')$ be an element of $S_g$. 
Let us consider the isomorphism 
$\theta_{Z',Z,\beta''^{-1}}$ of categories, and let us regard $f$ as a morphism from $h$ to $g$
in $\cC_{0,Z/}$ and set 
$f'=\theta_{Z',Z,\beta''^{-1}}(f)$
and $\beta'=\xi_{Z',Z,\beta''^{-1}}$.
Then $(f',\beta')$ is an element of $S_f$.
Let us write $f':Y' \to X'$.
It then follows from Lemma \ref{lem:isom_under} that
the isomorphism $\beta':\iota_0(Y) \to \iota_0(Y')$ in
$\cC$ has the following characterization:
$\beta'$ is the unique isomorphism 
$\iota_0(Y) \xto{\cong} \iota_0(Y')$ 
such that there exists
a morphism $h':Z' \to Y'$ that makes the diagram
$$
\begin{CD}
\iota_0(Z) @>{\beta''}>{\cong}> \iota_0(Z') \\
@V{\iota_0(h)}VV @VV{\iota_0(h')}V \\
\iota_0(Y) @>{\beta'}>{\cong}> \iota_0(Y')
\end{CD}
$$
commutative.
By sending $(f'',\beta'')$ to $(f',\beta')$, we obtain
a map $S_{f''} \to S_f$, which we denote by $S(h)$.
The characterization of the isomorphism $\beta'$
given above shows that we have 
$S(h)(\sigma \cdot (f'',\beta''))
= h_*(\sigma) \cdot S(h)((f'',\beta''))$
for any $\sigma \in \Gal(g)$, where
$h_*: \Gal(g) \to \Gal(f)$
denotes the homomorphism in (2) of Lemma \ref{lem:descends}
, and that
we have $S(h \circ h')
=S(h) \circ S(h)$ for any composable morphisms
$h$, $h'$ in $I_X$.

Let $\wt{H}_X$ denote the projective system
$(\Gal(f))_{f \in \Obj\, I_X}$ of groups.
Then, $(S_f)_{f \in \Obj\, I_X}$ is
a left $\wt{H}_X$-torsor.
As $\cC_{/X}$ satisfies at least one of the conditions
in Section \ref{sec:existence of a pregrid}, the partially ordered set
corresponding to the poset category $I_X$
satisfies at least one of the conditions in 
Lemma \ref{lem:admissible}.
Hence, the limit $\varprojlim_{f \in \Obj\, I_X} S_f$
is non-empty.
Let us choose an element $(h_f,\beta_f)_{f \in \Obj\, I_X}$ 
of $\varprojlim_{f \in \Obj\, I_X} S_f$.
For an object $f:Y \to X$ of $I_X$, let
$Y'$ denote the domain of $h_f$. Let us consider
the isomorphism $\theta_{Y',Y,\beta_f^{-1}}:
\cC_{0,Y/} \xto{\cong} \cC_{0,Y'/}$ 
of categories. Let $g:Z \to X$ be another object of $I_X$
and let $h:Z \to Y$ be a morphism from $g$ to $f$
in $I_X$. Let $Z'$ denote the domain of $h_f$.
It follows from the definition of the transition map
of the projective system $(S_f)$ that
there exists a morphism $h':Z' \to Y'$ in $\cC_0$ 
that makes the diagram
$$
\begin{CD}
\iota_0(Z) @>{\beta_g}>{\cong}> \iota_0(Z') \\
@V{\iota_0(h)}VV @VV{\iota_0(h')}V \\
\iota_0(Y) @>{\beta_f}>{\cong}> \iota_0(Y')
\end{CD}
$$
commutative. It follows from Lemma \ref{lem:isom_under} that
$\theta_{Z',Z,\beta_g^{-1}}$ sends the object
$h$ of $\cC_{0,Z/}$ to the object $h'$ of $\cC_{0,Z/}$.
Hence, it follows from Lemma \ref{lem:theta_descends}
that the functor $\cC_{0,Y/} \to \cC_{0,Y'/}$
induced by the isomorphism 
$\theta_{Z',Z,\beta_g^{-1}}$ of categories
is equal to the isomorphism 
$\theta_{Y',Y,\beta_f^{-1}}$ of categories.
Thus, by taking the colimit with respect to $f$, we
obtain functors 
$$
\cC_0 = \varinjlim_{f \in \Obj\, I_X} \cC_{0,Y/}
\to \varinjlim_{f \in \Obj\, I_X} \cC_{0,Y'/} \to \cC_0,
$$
whose composite we denote by $\alpha: \cC_0 \to \cC_0$.
We have the natural isomorphism 
$\xi_{Y',Y,\beta_f^{-1}}^{-1}$
for each object $f:Y \to X$ of $I_X$.
By taking the colimit with respect to $f$, we
obtain a natural isomorphism $\gamma_\alpha$
from $\iota_0$ to $\iota_0\circ\alpha$.
We thus obtain an element $(\alpha,\gamma_\alpha) \in M_\Cip$.
One can check easily that the element $(\alpha,\gamma_\alpha)$
satisfies the desired properties.
\end{proof}

\subsection{Proof of Theorem~\ref{thm:Galois_main}: 
the functor $\omega_\Cip$ is full}
\label{sec:full}
Let us prove that the functor $\omega_\Cip$ is full.
Let $F_1$ and $F_2$ be sheaves on $(\cC,J)$, and
let $t:\omega_\Cip(F_1) \to \omega_\Cip(F_2)$ be
a morphism of left $M_\Cip$-sets.
For each object $X$ of $\cC$, let us choose an edge object
$E_X$ of $\cC_0$ and an isomorphism 
$\beta_X : \iota_0(E_X) \xto{\cong} X$ in $\cC$.
By Corollary \ref{cor:KX-invariant} we have
isomorphisms $F_1(\iota_0(E_X)) 
\cong \omega_\Cip(F_1)^{\bK_X}$
and $F_2(\iota_0(E_X)) \cong \omega_\Cip(F_2)^{\bK_X}$
We define the map $t_X: F_1(X) \to F_2(X)$ to be the
unique map that makes the diagram
$$
\begin{CD}
F_1(X) @>{\beta_X^*}>{\cong}> F_1(\iota_0(E_X))
@>{\cong}>> \omega(F_1)^{\bK_X} \\
@V{t_X}VV @. @VV{t}V \\
F_2(X) @>{\beta_X^*}>{\cong}> F_2(\iota_0(E_X))
@>{\cong}>> \omega(F_2)^{\bK_X}
\end{CD}
$$
commutative.
Let $f:X \to Y$ be a morphism in $\cC$.
It follows from (4) of Lemma \ref{lem:C0_core} that there
exists a unique morphism $f':E_X \to Y'$ in $\cC_0$
and an isomorphism $\beta' : \iota_0(Y') \cong Y$ in $\cC$ 
such that the diagram
$$
\begin{CD}
\iota_0(E_X) @>{\beta_X}>{\cong}> X \\
@V{\iota_0(f')}VV @VV{f}V \\
\iota_0(Y') @>{\beta'}>{\cong}> Y
\end{CD}
$$
is commutative. We set 
$\beta = {\beta'}^{-1}\circ \beta_Y: \iota_0(E_Y)
\xto{\cong} \iota_0(Y')$.
It follows from Lemma \ref{lem:sufficiently many} that
there exists an element $(\alpha,\gamma_\alpha)$ of
$M_\Cip$ satisfying $\alpha(E_Y) = Y'$ and
$\gamma_\alpha(E_Y)=\beta$.
It then follows from the definition of the action of
$(\alpha,\gamma_\alpha)$ on $\omega_\Cip(F_i)$ that
the diagram
$$
\begin{CD}
F_i(Y) @>{\beta_Y^*}>> F_i(\iota_0(E_Y)) @>{\cong}>> 
\omega_\Cip(F_i)^{\bK_{E_Y}}
@>{\subset}>> \omega_\Cip(F_i) \\
@V{f^*}VV @VV{\iota_0(f')^* \circ \beta^*}V @.
@VV{(\alpha,\gamma_\alpha)\cdot -}V \\
F_i(X) @>{\beta_X^*}>> F_i(\iota_0(E_X)) @>{\cong}>>
\omega_\Cip(F_i)^{\bK_{E_X}}
@>{\subset}>> \omega_\Cip(F_i)
\end{CD}
$$
is commutative for $i=1,2$. Hence, the diagram
$$
\begin{CD}
F_1(Y) @>{t_Y}>> F_2(Y) \\
@V{f^*}VV @VV{f^*}V \\
F_1(X) @>{t_X}>> F_2(X)
\end{CD}
$$
is commutative.
Therefore, the collection of maps
$(t_X : F_1(X) \to F_2(X))_{X \in \Obj \cC}$
gives a morphism $t': F_1 \to F_2$ of sheaves on
$(\cC,J)$ such that the map $\omega_\Cip (F_1)
\to \omega_\Cip (F_2)$ induced by $t'$ is equal to $t$.
This proves that the functor $\omega_\Cip$
is full.

\section{Proof of Theorem~\ref{thm:Galois_main}: 
the fiber functor $\omega_\Cip$ is essentially surjective}
\label{sec:ess_surj}
Until the end of 
Section~\ref{sec:enough points},
we assume that, for any object $X$ of $\cC$,
the category $\cC(\cT(J))_{/X}$ satisfies 
at least one of the two conditions 
in Section \ref{sec:cardinality}.

\subsection{Lemmas on edge objects}
\begin{lem} \label{lem:7-9-1}
Let $(\alpha,\gamma_\alpha)$ be an element of $M_\Cip$.
Then for any morphism $f:Y \to X$ in $\cC_0$ of type $J$,
the morphism $\alpha(f)$ is of type $J$.
\end{lem}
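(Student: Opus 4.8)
The plan is to reduce the statement to two elementary facts about $\cT(J)$: that it is closed under composition and that it contains every isomorphism. The only substantive input is the naturality of $\gamma_\alpha$.

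First I would record that every isomorphism in $\cC$ belongs to $\cT(J)$. Indeed, if $g:A \to B$ is an isomorphism, then any morphism $h:Z \to B$ factors as $g \circ (g^{-1}\circ h)$, so the sieve $R_g$ coincides with the whole overcategory $\cC_{/B}$, which lies in $J(B)$ by Condition~(1) of Definition~\ref{defn:topology}. Hence $g \in \cT(J)$ by Definition~\ref{defn:Tee jay}. Next, I would apply the naturality of the natural isomorphism $\gamma_\alpha:\iota_0 \xto{\cong} \iota_0\circ\alpha$ to the morphism $f:Y\to X$, which yields the commutative square
$$
\begin{CD}
\iota_0(Y) @>{\gamma_\alpha(Y)}>{\cong}> \iota_0(\alpha(Y)) \\
@V{\iota_0(f)}VV @VV{\iota_0(\alpha(f))}V \\
\iota_0(X) @>{\gamma_\alpha(X)}>{\cong}> \iota_0(\alpha(X)).
\end{CD}
$$
From commutativity one reads off $\iota_0(\alpha(f)) = \gamma_\alpha(X)\circ\iota_0(f)\circ\gamma_\alpha(Y)^{-1}$.

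Finally I would assemble these. Since $f$ is of type $J$, by definition $\iota_0(f)\in\cT(J)$; and $\gamma_\alpha(X)$ and $\gamma_\alpha(Y)^{-1}$ are isomorphisms, hence belong to $\cT(J)$ by the first step. Because $\cT(J)$ is semi-localizing (Proposition~\ref{prop:cTJ}), it is closed under composition by Condition~(2) of Definition~\ref{defn:semi-localizing}, so the three-fold composite $\iota_0(\alpha(f)) = \gamma_\alpha(X)\circ\iota_0(f)\circ\gamma_\alpha(Y)^{-1}$ lies in $\cT(J)$. By Definition~\ref{defn:edge objects} this means exactly that $\alpha(f)$ is of type $J$, as required.

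I do not anticipate any real obstacle here: the argument is a direct consequence of the naturality of $\gamma_\alpha$ together with the closure properties of $\cT(J)$. The only two points requiring care are getting the composition order in the naturality square correct (so that the pre- and post-composing isomorphisms appear on the right and left respectively) and explicitly noting that isomorphisms are covering morphisms, since that is what lets us absorb $\gamma_\alpha(X)$ and $\gamma_\alpha(Y)^{-1}$ into $\cT(J)$.
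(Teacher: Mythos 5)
Your proof is correct and follows essentially the same route as the paper's: the paper's entire argument is the naturality square for $\gamma_\alpha$ applied to $f$, concluding from the fact that the two vertical arrows are isomorphisms that the bottom arrow $\iota_0(\alpha(f))$ lies in $\cT(J)$. You have simply made explicit the two facts the paper leaves implicit --- that isomorphisms belong to $\cT(J)$ (via $R_g = \cC_{/B}$) and that $\cT(J)$ is closed under composition because it is semi-localizing by Proposition~\ref{prop:cTJ} --- and both of these details are handled correctly.
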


\begin{proof}
In the commutative diagram
$$
\begin{CD}
\iota_0(Y) @>{\iota_0(f)}>> \iota_0(X) \\
@V_{\gamma_\alpha(Y)}VV @VV{\gamma_\alpha(X)}V \\
\iota_0(\alpha(Y)) @>>> \iota_0(\alpha(X)),
\end{CD}
$$
the upper horizontal arrow belongs to $\cT(J)$,
and the vertical arrows are isomorphisms.
Hence, the lower horizontal arrow belongs to $\cT(J)$,
which proves the claim.
\end{proof}

\begin{lem} \label{lem:7-9-2}
Let $(\alpha,\gamma_\alpha)$ be an element of $M_\Cip$.
Let $X$ be an edge object of $\cC_0$ and set $X' = \alpha(X)$.
Then, for an object $Y'$ of $\cC_0$, the following two conditions
are equivalent.
\begin{enumerate}
\item There exists an object $Y$ of $\cC_0$ satisfying $Y'=\alpha(Y)$.
\item There exists an object $Z'$ of $\cC_0$ and a diagram
$X' \xleftarrow{f'} Z' \to Y'$ such that $f'$ is of type $J$.
\end{enumerate}
\end{lem}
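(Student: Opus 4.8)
The plan is to prove the two implications separately, the key structural input being that $\alpha\colon \cC_0 \to \cC_0$ is injective on objects (it is fully faithful by Lemma~\ref{lem:MC2} and $\cC_0$ is a poset) and that its image is \emph{upward closed}: if $\alpha(P)$ lies in the image and there is a morphism $\alpha(P) \to B$ in $\cC_0$, then $B$ lies in the image. This last fact is immediate from Lemma~\ref{lem:MC}, which identifies the functor $\cC_{0,P/} \to \cC_{0,\alpha(P)/}$ induced by $\alpha$ with the isomorphism of categories $\theta_{\alpha(P),P,\gamma_\alpha(P)^{-1}}$; being bijective on objects, every object $\alpha(P) \to B$ of $\cC_{0,\alpha(P)/}$ is $\alpha$ of some $P \to B_0$, whence $B = \alpha(B_0)$.

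For $(1)\Rightarrow(2)$, assuming $Y' = \alpha(Y)$, I would use that $\cC_0$ is $\Lambda$-connected to choose an object $Z$ with morphisms $u\colon Z \to X$ and $v\colon Z \to Y$. Since $X$ is an edge object, $u$ is of type $J$, so $\alpha(u)\colon \alpha(Z) \to X'$ is of type $J$ by Lemma~\ref{lem:7-9-1}; together with $\alpha(v)\colon \alpha(Z) \to Y'$ this exhibits the diagram $X' \xleftarrow{\alpha(u)} \alpha(Z) \to Y'$ required by~(2), with $Z' = \alpha(Z)$.

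For $(2)\Rightarrow(1)$, by upward closure it suffices to produce a single object in the image of $\alpha$ admitting a morphism to $Z'$ (hence, after composing with $Z' \to Y'$, to $Y'$). First I would refine $f'$ to a Galois covering: since $\cT(J)$ has enough Galois coverings, $\iota_0(f') \in \cT(J) = \wh{\cT'}$ (with $\cT'$ the collection of Galois coverings, cf.\ Definition~\ref{defn:enough Galois}) yields $r\colon R \to \iota_0(Z')$ with $\iota_0(f')\circ r$ a Galois covering, and $r \in \cT(J)$ by Condition~(3) of Definition~\ref{defn:B-site}. Lifting $r$ through the grid (Definition~\ref{defn:grids}(3), applied to $Z'$) gives $q\colon Q \to Z'$ in $\cC_0$ whose image under $\iota_0$ matches $r$ up to isomorphism, so $\iota_0(f'\circ q)$ is a Galois covering. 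On the other side, applying $\gamma_\alpha(X)^{-1}$ turns $\iota_0(f')\circ r$ into a Galois covering into $\iota_0(X)$; lifting it through the grid (applied to $X$) produces $g\colon P \to X$ in $\cC_0$ with $\iota_0(g)$ Galois, and then $\alpha(g)\colon \alpha(P) \to X'$ again has $\iota_0(\alpha(g))$ Galois, since composition with the isomorphisms $\gamma_\alpha$ preserves the pseudo-torsor condition of Definition~\ref{defn:Galois_cov}. The naturality square for $\gamma_\alpha$ then shows that $f'\circ q$ and $\alpha(g)$ are two morphisms of $\cC_0$ into $X'$ whose images under $\iota_0$ are Galois coverings and are isomorphic over $\iota_0(X')$. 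Corollary~\ref{cor:Galois_over} forces $Q = \alpha(P)$ and $f'\circ q = \alpha(g)$; thus $q$ is a morphism $\alpha(P) \to Z'$ out of an object of the image, and upward closure gives $Y' = \alpha(Y)$.

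The main obstacle is exactly the step where an abstract isomorphism in $\cC$ over $\iota_0(X')$ must be upgraded to an \emph{equality} of objects in the poset $\cC_0$. There is no grid axiom reflecting isomorphisms of \emph{over}categories (Definition~\ref{defn:grids} only provides essential surjectivity on overcategories, with a genuine equivalence reserved for undercategories), so this reflection cannot be carried out for plain type-$J$ morphisms; the entire point of descending to Galois coverings is to make Corollary~\ref{cor:Galois_over} applicable. The delicate bookkeeping is to track the three isomorphisms involved — the lift of $r$, the lift over $X$, and the naturality isomorphism $\gamma_\alpha$ — so that the comparison isomorphism between $\iota_0(Q)$ and $\iota_0(\alpha(P))$ is genuinely over $\iota_0(X')$, which is what Corollary~\ref{cor:Galois_over} requires.
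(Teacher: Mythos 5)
Your proof is correct and takes essentially the same route as the paper's: refine $f'$ to a Galois covering, use Corollary~\ref{cor:Galois_over} to upgrade the isomorphism over $\iota_0(X')$ to an equality in the poset $\cC_0$, and then propagate membership in the image of $\alpha$ along undercategories. The only repackaging is that the paper carries out the transport through Lemmas~\ref{lem:isom_over} and~\ref{lem:isom_under}, while you do the two grid-axiom~(3) lifts inline and phrase the final step as upward closure of the image of $\alpha$ via Lemma~\ref{lem:MC} --- the same underlying appeal to grid axiom~(4) and skeletality.
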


\begin{proof}
First we prove that condition (1) implies condition (2).
Suppose that condition (1) is satisfied.
Let us choose an object $Y$ of $\cC_0$ satisfying $Y'=\alpha(Y)$.
It follows from Lemma \ref{lem:yama2} that
there exists an object $Z$ of $\cC_0$ and a diagram
$X \xleftarrow{f} Z \to Y$ in $\cC_0$ such that $f$ is of type $J$.
We set $Z' = \alpha(Z)$ and $f'=\alpha(f)$.
By applying $\alpha$ to the diagram above, we obtain
a diagram $X' \xleftarrow{f'} Z' \to Y'$.
It follows from Lemma \ref{lem:7-9-1} that
the morphism $f'$ is of type $J$.
Hence, condition (2) is satisfied.

Next, we prove that condition (2) implies condition (1).
Suppose that condition (2) is satisfied.
Let us choose an object $Z'$ of $\cC_0$ and a diagram
$X' \xleftarrow{f'} Z' \xto{g'} Y'$ such that $f'$ is of type $J$.
By replacing $f'$ with its composite with a suitable morphism
$Z'' \to Z'$ in $\cC_0$, we may assume that $\iota_0(f')$ is
a Galois covering in $\cC$.
Let us consider the isomorphism
$\iota_0(X) \xto{\gamma_\alpha(X)} \iota_0(X')$ in $\cC$
and the morphism $f': Z' \to X'$ in $\cC_0$.
It follows from Lemma \ref{lem:isom_over} that
there exists a morphism $f:Z \to X$ in $\cC_0$
and an isomorphism $\gamma' : \iota_0 (Z) \cong \iota_0(Z')$
that make the diagram
$$
\begin{CD}
\iota_0(X) @<{\iota_0(f)}<< \iota_0(Z) \\
@V{\gamma_\alpha(X)}VV @VV{\gamma'}V \\
\iota_0(X') @<{\iota_0(f')}<< \iota_0(Z')
\end{CD}
$$
commutative. As $\iota_0(f') \circ \gamma'
= \gamma_\alpha(X) \circ \iota_0(f)
= \iota_0(\alpha(f)) \circ \gamma_\alpha(Z)$, we have
a commutative diagram
$$
\begin{CD}
\iota_0(\alpha(Z)) @>{\gamma' \circ \gamma_\alpha(Z)^{-1}}>> 
\iota_0(Z') \\
@V{\iota_0(\alpha(f))}VV @VV{\iota_0(f')}V \\
\iota_0(X') @= \iota_0(X')
\end{CD}
$$
in $\cC$. As we have assumed that $\iota_0(f')$ is a Galois
covering, $\iota_0(\alpha(f))$ is a Galois covering.
Hence, it follows from Corollary \ref{cor:Galois_over} that
$Z' = \alpha(Z)$ and $f' =\alpha(f)$.
Let us consider the isomorphism $\gamma_\alpha(Y): \iota_0(Y)
\xto{\cong} \iota_0(Y')$ and the morphism $g':Y' \to Z'$ in $\cC_0$.
It follows from Lemma \ref{lem:isom_under} that
there exist a morphism $g:Z \to Y$ in $\cC_0$
and an isomorphism $\gamma'' : \iota_0 (Y) \cong \iota_0(Y')$
that make the diagram
$$
\begin{CD}
\iota_0(Z) @>{\iota_0(g)}>> \iota_0(Y) \\
@V{\gamma_\alpha(Z)}VV @VV{\gamma''}V \\
\iota_0(Z') @>{\iota_0(g')}>> \iota_0(Y')
\end{CD}
$$
commutative. As $\gamma''^{-1} \circ \iota_0(g') 
= \iota_0(g) \circ \gamma_\alpha(Z)^{-1} 
= \gamma_\alpha(Y)^{-1} \circ \iota_0(\alpha(g))$, we have
a commutative diagram
$$
\begin{CD}
\iota_0(Z') @= \iota_0(Z') \\
@V{\iota_0(\alpha(g))}VV @VV{\iota_0(g')}V \\
\iota_0(\alpha(Y)) @>{\gamma'' \circ \gamma_\alpha(Y)}>> 
\iota_0(Y')
\end{CD}
$$
in $\cC$. Because the functor 
$\iota_{0,Z'/} :\cC_{0,Z'/} \to \cC_{\iota_0(Z')/}$ is
an equivalence of categories and $\cC_0$ is skeletal,
we have $Y'=\alpha(Y)$ and $g'=\alpha(g)$.
This in particular shows that condition (1) is satisfied.
\end{proof}

\begin{lem} \label{lem:7-9-3}
Let $X$ be an edge object of $\cC_0$ and let
$(\alpha,\gamma_\alpha)$ be an element of $\bK_X$.
Then for any morphism $f:X \to Y$ in $\cC_0$,
we have $\alpha(Y) = Y$ and $\gamma_\alpha(Y) = \id_{\iota_0(Y)}$.
\end{lem}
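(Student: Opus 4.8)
The plan is to reduce everything to the statement that an element of $\bK_X$ acts as the \emph{identity} on the undercategory $\cC_{0,X/}$, and then to extract the two conclusions from this together with the naturality of $\gamma_\alpha$ and the $E$-category axiom. First I would apply Lemma~\ref{lem:MC} to $(\alpha,\gamma_\alpha)\in\bK_X$. By definition of $\bK_X$ we have $\alpha(X)=X$ and $\gamma_\alpha(X)=\id_{\iota_0(X)}$, so $\gamma_\alpha(X)^{-1}=\id_{\iota_0(X)}$; hence Lemma~\ref{lem:MC} identifies the functor $\alpha':\cC_{0,X/}\to\cC_{0,\alpha(X)/}=\cC_{0,X/}$ induced by $\alpha$ with $\theta_{X,X,\id}$. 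The key observation is that $\theta_{X,X,\id}$ is the genuine identity functor: the bottom arrow $-\circ\id$ in the characterizing square of Lemma~\ref{lem:theta} is the identity on $\cC_{\iota_0(X)/}$, so $\id_{\cC_{0,X/}}$ satisfies the defining commutativity, and by the uniqueness in Lemma~\ref{lem:theta} we get $\theta_{X,X,\id}=\id$. Here I use that $\cC_0$ is a poset, so that ``equal up to natural equivalence'' means literally equal. Thus $\alpha'=\id_{\cC_{0,X/}}$.

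Next I would feed the given morphism $f:X\to Y$ through this. Regarded as an object of $\cC_{0,X/}$, the morphism $f$ is sent by $\alpha'$ to the object $\alpha(f):\alpha(X)\to\alpha(Y)$, i.e.\ to $\alpha(f):X\to\alpha(Y)$ since $\alpha(X)=X$. Because $\alpha'=\id$, this object coincides with $f:X\to Y$ as objects of the undercategory, which forces $\alpha(Y)=Y$ (equality of targets) and $\alpha(f)=f$. This already yields the first assertion $\alpha(Y)=Y$ and records the auxiliary fact $\alpha(f)=f$ needed below.

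Finally, for $\gamma_\alpha(Y)$ I would invoke the naturality of $\gamma_\alpha:\iota_0\xto{\cong}\iota_0\circ\alpha$ at the morphism $f$, which gives $\gamma_\alpha(Y)\circ\iota_0(f)=\iota_0(\alpha(f))\circ\gamma_\alpha(X)$. Substituting $\alpha(f)=f$ and $\gamma_\alpha(X)=\id_{\iota_0(X)}$ leaves $\gamma_\alpha(Y)\circ\iota_0(f)=\iota_0(f)$. Since $\cC$ is an $E$-category, $\iota_0(f)$ is an epimorphism, so $\gamma_\alpha(Y)=\id_{\iota_0(Y)}$, completing the proof. I expect the only genuinely delicate step to be the identification $\theta_{X,X,\id}=\id$ as an honest equality rather than a mere natural isomorphism; every other step is a direct substitution once Lemma~\ref{lem:MC} and the $E$-category property are in hand. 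Notably, the edge-object hypothesis on $X$ is not actually used in the argument beyond guaranteeing (via Lemma~\ref{lem:bKX2}) that $\bK_X$ is the group in which the element lives.
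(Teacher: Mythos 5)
Your proof is correct, and its second half (naturality of $\gamma_\alpha$ at $f$ followed by cancelling the epimorphism $\iota_0(f)$) is exactly the paper's. The first half takes a different route through previously established machinery: the paper writes the naturality square $\gamma_\alpha(Y)\circ\iota_0(f)=\iota_0(\alpha(f))\circ\gamma_\alpha(X)=\iota_0(\alpha(f))$, observes that it exhibits $\iota_0(f)$ and $\iota_0(\alpha(f))$ as isomorphic objects of $\cC_{\iota_0(X)/}$, and concludes $\alpha(Y)=Y$ and $\alpha(f)=f$ directly from the fact that $\iota_{0,X/}$ is an equivalence (condition (4) of a grid, via Lemma~\ref{lem:C0_core}) together with skeletality of $\cC_0$; you instead invoke Lemma~\ref{lem:MC} to identify the induced functor on $\cC_{0,X/}$ with $\theta_{X,X,\id}$ and then use the uniqueness clause of Lemma~\ref{lem:theta} to see that $\theta_{X,X,\id}=\id$ — and your worry about ``up to natural equivalence'' versus genuine equality is correctly settled by skeletality (alternatively, the cocycle identity $\theta_{Z,Z'',h'\circ h}=\theta_{Z,Z',h}\circ\theta_{Z',Z'',h'}$ with $h=h'=\id$, plus invertibility of $\theta_{Z,Z,\id}$, gives the same conclusion). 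Since Lemma~\ref{lem:MC} is itself proved from Lemma~\ref{lem:theta} by precisely the naturality-square argument, the two proofs share the same mathematical core; yours packages it, buying the cleaner global statement that an element of $\bK_X$ induces the identity functor on all of $\cC_{0,X/}$, while the paper's version is self-contained and avoids the $\theta$-machinery. Your closing observation is also accurate: the edge hypothesis on $X$ is inert in this lemma — the paper's own proof does not use it either, and $\bK_X$ is defined for an arbitrary object of $\cC_0$ — it appears in the statement only because the lemma is applied exclusively to edge objects.
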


\begin{proof}
We have a commutative diagram
$$
\begin{CD}
\iota_0(X) @= \iota_0(X) \\
@V{\iota_0(f)}VV @VV{\iota_0(\alpha(f))}V \\
\iota_0(Y) @>{\gamma_\alpha(Y)}>> \iota_0(\alpha(Y))
\end{CD}
$$
in $\cC$. As the functor 
$\iota_{0,X/} :\cC_{0,X/} \to \cC_{\iota_0(X)/}$ is
an equivalence of categories and $\cC_0$ is skeletal,
we have $\alpha(Y)=Y$ and $f =\alpha(f)$.
Hence, $\iota_0(f) = \gamma_\alpha(Y) \circ \iota_0(f)$.
As $\iota_0(f)$ is an epimorphism, we have
$\gamma_\alpha(Y) = \id_{\iota_0(Y)}$.
This proves the claim.
\end{proof}

\begin{lem} \label{lem:7-9-4}
Let $(\alpha,\gamma_{\alpha})$ and 
$(\alpha', \gamma_{\alpha'})$
be two elements of
$M_\Cip$. Suppose that for any object $X$ of $\cC_0$, there exists
an object $X'$ of $\cC_0$ satisfying 
$\alpha'(X) = \alpha(X')$.
Then, there exists a unique element 
$(\alpha'',\gamma_{\alpha''})$ of $M_\Cip$
satisfying $(\alpha',\gamma_{\alpha'})
= (\alpha,\gamma_{\alpha})\circ (\alpha'',\gamma_{\alpha''})$
\end{lem}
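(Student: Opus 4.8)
The plan is to construct $(\alpha'',\gamma_{\alpha''})$ explicitly by "dividing on the left" and then to check that it lies in $M_\Cip$ and is unique. Unwinding the composition law of Section~\ref{sec:MC}, the desired equation $(\alpha',\gamma_{\alpha'}) = (\alpha,\gamma_\alpha)\circ(\alpha'',\gamma_{\alpha''})$ is equivalent to the two conditions $\alpha\circ\alpha'' = \alpha'$ and $((\alpha'')^*\gamma_\alpha)\circ\gamma_{\alpha''} = \gamma_{\alpha'}$, where $(\alpha'')^*\gamma_\alpha$ is the whiskering whose value at an object $X$ is $\gamma_\alpha(\alpha''(X)):\iota_0(\alpha''(X))\xto{\cong}\iota_0(\alpha(\alpha''(X)))=\iota_0(\alpha'(X))$. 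So the problem reduces to producing a functor $\alpha''$ and a natural isomorphism $\gamma_{\alpha''}$ realizing these two equalities.

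First I would define $\alpha''$ on objects. By Lemma~\ref{lem:MC2} the functor $\alpha$ is fully faithful, and since $\cC_0$ is a skeletal poset this makes $\alpha$ injective on objects and order-reflecting: $\alpha(X_1)\le\alpha(X_2)$ forces $X_1\le X_2$. Hence the object $X'$ supplied by the hypothesis with $\alpha(X')=\alpha'(X)$ is \emph{unique}, and I set $\alpha''(X):=X'$. For functoriality it suffices, $\cC_0$ being a poset, to check $\alpha''$ is order-preserving: if $X\le Y$ then $\alpha'(X)\le\alpha'(Y)$, i.e.\ $\alpha(\alpha''(X))\le\alpha(\alpha''(Y))$, and order-reflection of $\alpha$ gives $\alpha''(X)\le\alpha''(Y)$. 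Since functors between posets are determined by their effect on objects, the identity $\alpha\circ\alpha''=\alpha'$ on objects upgrades to an equality of functors.

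Next I would define $\gamma_{\alpha''}$, whose value is forced by the second required equation: set $\gamma_{\alpha''}(X):=\gamma_\alpha(\alpha''(X))^{-1}\circ\gamma_{\alpha'}(X)$, an isomorphism $\iota_0(X)\xto{\cong}\iota_0(\alpha''(X))$. The one genuine verification is naturality, and this is the step I expect to require the most care, although it is only a diagram chase. For a morphism $u:X\to Y$ in $\cC_0$ I would first use naturality of $\gamma_{\alpha'}$ to rewrite $\gamma_{\alpha'}(Y)\circ\iota_0(u)=\iota_0(\alpha'(u))\circ\gamma_{\alpha'}(X)$, then invoke $\alpha'(u)=\alpha(\alpha''(u))$ together with naturality of $\gamma_\alpha$ at the morphism $\alpha''(u):\alpha''(X)\to\alpha''(Y)$, obtaining $\gamma_\alpha(\alpha''(Y))^{-1}\circ\iota_0(\alpha(\alpha''(u)))=\iota_0(\alpha''(u))\circ\gamma_\alpha(\alpha''(X))^{-1}$. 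Substituting these yields $\gamma_{\alpha''}(Y)\circ\iota_0(u)=\iota_0(\alpha''(u))\circ\gamma_{\alpha''}(X)$, the naturality square. This exhibits $\gamma_{\alpha''}$ as a natural isomorphism $\iota_0\xto{\cong}\iota_0\circ\alpha''$, so $(\alpha'',\gamma_{\alpha''})\in M_\Cip$, and by construction both defining equations hold, giving the required factorization.

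Finally, for uniqueness, suppose $(\alpha''_1,\gamma_{\alpha''_1})$ and $(\alpha''_2,\gamma_{\alpha''_2})$ both factor $(\alpha',\gamma_{\alpha'})$. Then $\alpha\circ\alpha''_1=\alpha'=\alpha\circ\alpha''_2$, and injectivity of $\alpha$ on objects (from full faithfulness plus skeletality) forces $\alpha''_1=\alpha''_2=:\alpha''$ on objects, hence as functors. The remaining equation $((\alpha'')^*\gamma_\alpha)\circ\gamma_{\alpha''_i}=\gamma_{\alpha'}$ then pins down $\gamma_{\alpha''_i}$ by cancelling the isomorphism $(\alpha'')^*\gamma_\alpha$, so $\gamma_{\alpha''_1}=\gamma_{\alpha''_2}$. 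The essential point throughout is the well-definedness and functoriality of $\alpha''$ via full faithfulness of $\alpha$ in the skeletal poset $\cC_0$; everything else is forced by the composition law.
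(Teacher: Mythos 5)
Your proposal is correct and follows essentially the same route as the paper's proof: both define $\alpha''(X)$ as the unique object with $\alpha(\alpha''(X))=\alpha'(X)$ (uniqueness via Lemma~\ref{lem:MC2} plus skeletality), obtain functoriality from full faithfulness of $\alpha$, set $\gamma_{\alpha''}(X)=\gamma_\alpha(\alpha''(X))^{-1}\circ\gamma_{\alpha'}(X)$ as forced by the composition law, and derive uniqueness from the uniqueness of the object assignment. Your version merely makes explicit two points the paper leaves implicit — the naturality diagram chase for $\gamma_{\alpha''}$ and the order-reflection argument for functoriality of $\alpha''$ — which is fine.
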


\begin{proof}
Let $X$ be an arbitrary object of $\cC_0$.
By assumption, there exists an object of $\cC_0$ whose image under 
the functor $\alpha$ is equal to $\alpha'(X)$.
We denote this object by $\alpha''(X)$.
It follows from Lemma \ref{lem:MC2} that the object
$\alpha''(X)$ is uniquely determined by this property
and that, if $X' \to X$ is a morphism in $\cC_0$, 
there exists a morphism from $\alpha''(X')$ to 
$\alpha''(X)$ in $\cC_0$.
Hence, by associating $\alpha''(X)$ to each object $X$
of $\cC_0$, we obtain a functor $\alpha'':\cC_0 \to \cC_0$.
By construction, we have $\alpha' = \alpha \circ \alpha''$.
For any object $X$ of $\cC_0$, we have the diagram
$$
\begin{CD}
\iota_0(X) @>{\gamma_{\alpha'}(X)}>{\cong}> 
\iota_0(\alpha'(X)) \\
@. @| \\
\iota_0(\alpha''(X))
@>{\gamma_{\alpha}(\alpha''(X))}>{\cong}> 
\iota_0(\alpha \circ \alpha''(X))
\end{CD}
$$
in $\cC$. Hence there exists a unique isomorphism
$\gamma_{\alpha''}(X): \iota_0(X) \xto{\cong}
\iota_0(\alpha''(X))$ such that
the equality $\gamma_{\alpha}(\alpha''(X))
\circ  \gamma_{\alpha''}(X) = 
\gamma_{\alpha'}(X)$ holds. 
As $\gamma_\alpha$ and $\gamma_{\alpha'}$ are
isomorphisms of functors, it follows that
the isomorphism $\gamma_{\alpha''}(X)$ is functorial in
$X$ in the following sense: for any morphism $f:X' \to X$ in $\cC$,
the diagram
$$
\begin{CD}
\iota_0(X') @>{\iota_0(f)}>> \iota_0(X) \\
@V{\gamma_{\alpha''}(X')}VV @VV{\gamma_{\alpha''}(X)}V \\
\iota_0(\alpha''(X')) @>{\iota_0(\alpha''(f))}>> \iota_0(\alpha''(X)) 
\end{CD}
$$
is commutative. Hence, the isomorphisms 
$\gamma_{\alpha''}(X)$ for the objects $X$ of $\cC_0$
give an isomorphism $\gamma_{\alpha''}: \iota_0 \xto{\cong}
\iota_0 \circ \alpha''$ of functors and the pair
$(\alpha'',\gamma_{\alpha''})$ is an element of $M_\Cip$
satisfying $(\alpha',\gamma_{\alpha'}) =
(\alpha,\gamma_{\alpha})\circ (\alpha'', \gamma_{\alpha''})$.
The uniqueness of $(\alpha'',\gamma_{\alpha''})$ follows from
the uniqueness of $\alpha''(X)$ for each object $X$ of $\cC_0$.
This completes the proof.
\end{proof}

\begin{lem} \label{lem:7-9-5}
Let $X$ be an edge object of $\cC_0$.
Let $(\alpha,\gamma_{\alpha})$ and
$(\alpha',\gamma_{\alpha'})$ be two elements of
$M_\Cip$. Suppose that $\alpha(X) = \alpha'(X)$
and $\gamma_{\alpha}(X) = \gamma_{\alpha'}(X)$.
Then, there exists an element $(\alpha'',\gamma_{\alpha''})$
of $\bK_X$ satisfying $(\alpha',\gamma_{\alpha'})
=(\alpha,\gamma_{\alpha})\circ (\alpha'',\gamma_{\alpha''})$.
\end{lem}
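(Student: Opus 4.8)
The plan is to realize $(\alpha'',\gamma_{\alpha''})$ as a left quotient of $(\alpha',\gamma_{\alpha'})$ by $(\alpha,\gamma_{\alpha})$ via the division Lemma~\ref{lem:7-9-4}, and then to verify that this quotient lands in the stabilizer $\bK_X$. Concretely, I would first check the hypothesis of Lemma~\ref{lem:7-9-4} for the pair $(\alpha,\gamma_{\alpha})$, $(\alpha',\gamma_{\alpha'})$: namely that for every object $Z$ of $\cC_0$ there is an object $Z'$ with $\alpha'(Z) = \alpha(Z')$. Granting this, Lemma~\ref{lem:7-9-4} produces a unique $(\alpha'',\gamma_{\alpha''}) \in M_\Cip$ with $(\alpha',\gamma_{\alpha'}) = (\alpha,\gamma_{\alpha})\circ(\alpha'',\gamma_{\alpha''})$, and it remains only to show that $(\alpha'',\gamma_{\alpha''})$ lies in $\bK_X$.

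The verification of this hypothesis is where I expect to spend the most care; it is the one place the assumption $\alpha(X)=\alpha'(X)$ and the edge-ness of $X$ are used. Fix $Z$ and set $Y' = \alpha'(Z)$. Since $Y'$ lies in the image of $\alpha'$, Lemma~\ref{lem:7-9-2} applied to $(\alpha',\gamma_{\alpha'})$ with the edge object $X$ (whose relevant target object is $\alpha'(X)$) yields a diagram $\alpha'(X) \xleftarrow{f'} W' \to Y'$ in $\cC_0$ with $f'$ of type $J$. Because $\alpha'(X) = \alpha(X)$, this is exactly a diagram $\alpha(X) \xleftarrow{f'} W' \to Y'$ with $f'$ of type $J$, which is condition (2) of Lemma~\ref{lem:7-9-2} for $(\alpha,\gamma_{\alpha})$. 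Hence condition (1) holds: there is an object $Z'$ with $Y' = \alpha(Z')$, that is $\alpha'(Z) = \alpha(Z')$, as required. The key observation making this work is that the ``reachability from $\alpha(X)$ by type-$J$ morphisms'' criterion of Lemma~\ref{lem:7-9-2} is literally the same criterion for $\alpha$ and for $\alpha'$ once $\alpha(X)=\alpha'(X)$.

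Finally I would confirm $(\alpha'',\gamma_{\alpha''}) \in \bK_X$ by reading off the construction in the proof of Lemma~\ref{lem:7-9-4}. There $\alpha''(X)$ is characterized as the unique object with $\alpha(\alpha''(X)) = \alpha'(X)$. By hypothesis $\alpha'(X) = \alpha(X)$, so $\alpha(\alpha''(X)) = \alpha(X)$; since $\alpha$ is fully faithful by Lemma~\ref{lem:MC2} and $\cC_0$ is a poset, $\alpha$ is injective on objects, whence $\alpha''(X) = X$. For the cocycle part, the defining relation of $\gamma_{\alpha''}$ supplied by Lemma~\ref{lem:7-9-4} reads $\gamma_{\alpha}(\alpha''(X)) \circ \gamma_{\alpha''}(X) = \gamma_{\alpha'}(X)$; substituting $\alpha''(X) = X$ gives $\gamma_{\alpha}(X) \circ \gamma_{\alpha''}(X) = \gamma_{\alpha'}(X)$, and since $\gamma_{\alpha}(X) = \gamma_{\alpha'}(X)$ by hypothesis and $\gamma_{\alpha}(X)$ is an isomorphism, we may cancel to obtain $\gamma_{\alpha''}(X) = \id_{\iota_0(X)}$. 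Thus $\alpha''(X) = X$ and $\gamma_{\alpha''}(X) = \id_{\iota_0(X)}$, which is exactly the condition for $(\alpha'',\gamma_{\alpha''})$ to lie in $\bK_X$. The only genuinely nontrivial step is the hypothesis check of the middle paragraph; the remainder is bookkeeping with the explicit formulas from Lemmas~\ref{lem:7-9-4} and~\ref{lem:MC2}.
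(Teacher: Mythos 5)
Your proposal is correct and takes essentially the same approach as the paper's proof: check the hypothesis of Lemma~\ref{lem:7-9-4} using the reachability criterion of Lemma~\ref{lem:7-9-2} together with $\alpha(X)=\alpha'(X)$, then deduce $\alpha''(X)=X$ from full faithfulness (Lemma~\ref{lem:MC2}) and $\gamma_{\alpha''}(X)=\id_{\iota_0(X)}$ from the composition law. The only cosmetic difference is that you invoke the implication (1)$\Rightarrow$(2) of Lemma~\ref{lem:7-9-2} for $(\alpha',\gamma_{\alpha'})$ as a black box, whereas the paper re-derives that step directly from Lemmas~\ref{lem:yama2} and~\ref{lem:7-9-1}.
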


\begin{proof}
Let $Y$ be an arbitrary object of $\cC_0$.
It follows from Lemma \ref{lem:yama2} that
there exists a diagram
$$
X \xleftarrow{f} Z \to Y
$$
in $\cC_0$ such that $f$ is of type $J$.
By applying $\alpha'$, we have a diagram
$$
\alpha'(X) \xleftarrow{\alpha'(f)} \alpha'(Z)
\to \alpha'(Y).
$$
It follows from Lemma \ref{lem:7-9-1} that
$\alpha'(f)$ is of type $J$.
Hence, it follows from Lemma \ref{lem:7-9-2} that
there exists an object of $\cC_0$ whose image under 
the functor $\alpha$ is equal to $\alpha'(Y)$.
Thus, it follows from Lemma \ref{lem:7-9-4} that
there exists a unique element 
$(\alpha'',\gamma_{\alpha''})$
of $M_\Cip$ satisfying $(\alpha',\gamma_{\alpha'})
=(\alpha,\gamma_{\alpha})\circ (\alpha'',\gamma_{\alpha''})$.

To prove the claim, it remains to be shown that
$(\alpha'', \gamma_{\alpha''})$ is an element of $\bK_X$.
As $\alpha' = \alpha \circ \alpha''$
and $\alpha(X) = \alpha'(X)$, we have 
$\alpha(\alpha''(X)) = \alpha(X)$.
Hence, it follows from Lemma \ref{lem:MC2} that we have
$\alpha''(X) = X$.
By applying the equality
$\gamma_{\alpha}(\alpha''(Y))
\circ  \gamma_{\alpha''}(Y) = 
\gamma_{\alpha'}(Y)$ to $Y=X$, we have
$\gamma_{\alpha}(X)
\circ  \gamma_{\alpha''}(X) = 
\gamma_{\alpha'}(X) = \gamma_{\alpha}(X)$.
Hence we have $\gamma_{\alpha''}(X) = \id_{\iota_0(X)}$,
which proves that $(\alpha'', \gamma_{\alpha''})$ 
is an element of $\bK_{X}$.
This completes the proof.
\end{proof}

\begin{rmk}
In the proof of the previous lemma, 
a stronger statement is proved:
There exists a unique element 
$(\alpha'',\gamma_{\alpha''})$
of $M_\Cip$ satisfying $(\alpha',\gamma_{\alpha'})
=(\alpha,\gamma_{\alpha})\circ (\alpha'',\gamma_{\alpha''})$.
Moreover, the element $(\alpha'',\gamma_{\alpha''})$ belongs to
$\bK_X$. 
As we do not need this statement,
the details are suppressed.
\end{rmk}

\begin{lem} \label{lem:7-9-6}
Let $(\alpha,\gamma_\alpha)$ be an element of $M_\Cip$
and 
$X$ be an edge object of $\cC_0$.
Suppose that $\alpha(X)$ is an edge object of $\cC_0$.
Then $(\alpha, \gamma_\alpha)$ is an invertible element
of $M_\Cip$.
\end{lem}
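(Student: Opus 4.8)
The plan is to show that the hypotheses force the endofunctor $\alpha \colon \cC_0 \to \cC_0$ to be an isomorphism of categories, and then to write down an explicit inverse of $(\alpha,\gamma_\alpha)$ in $M_\Cip$. The whole difficulty is concentrated in proving that $\alpha$ is surjective on objects; everything after that is either already available or a routine unwinding of the definition of composition in $M_\Cip$.

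First I would establish surjectivity of $\alpha$ on objects, and this is the one step that uses the hypothesis that $\alpha(X)$, and not merely $X$, is an edge object. Set $X' = \alpha(X)$. Given an arbitrary object $Y'$ of $\cC_0$, the $\Lambda$-connectedness of $\cC_0$ (Condition (1) of Definition~\ref{defn:grids}, via Lemma~\ref{lem:C0_core}) provides an object $Z'$ together with morphisms $f' \colon Z' \to X'$ and $Z' \to Y'$. Since $X' = \alpha(X)$ is assumed to be an edge object, every morphism into $X'$ is of type $J$; in particular $f'$ is of type $J$. Thus condition (2) of Lemma~\ref{lem:7-9-2} holds for $Y'$, and the equivalence stated there yields an object $Y$ of $\cC_0$ with $\alpha(Y) = Y'$. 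As $Y'$ was arbitrary, $\alpha$ is surjective on objects.

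Next I would combine this with the full faithfulness of $\alpha$, which is Lemma~\ref{lem:MC2} and needs no extra hypothesis. Because $\cC_0$ is a poset, hence skeletal and thin, full faithfulness forces $\alpha$ to be injective on objects: if $\alpha(X_1) = \alpha(X_2)$, then full faithfulness gives bijections $\Hom_{\cC_0}(X_1,X_2) \cong \Hom_{\cC_0}(\alpha X_1,\alpha X_2)$ and $\Hom_{\cC_0}(X_2,X_1) \cong \Hom_{\cC_0}(\alpha X_2,\alpha X_1)$, whose right-hand sides both equal $\Hom_{\cC_0}(\alpha X_1,\alpha X_1) \ni \id$; hence $X_1 \le X_2$ and $X_2 \le X_1$, so $X_1 = X_2$ by antisymmetry. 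Being bijective on objects and fully faithful, $\alpha$ is an isomorphism of categories, with inverse functor $\alpha^{-1}$.

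Finally I would produce the inverse inside $M_\Cip$. Define a natural isomorphism $\gamma_{\alpha^{-1}} \colon \iota_0 \xto{\cong} \iota_0 \circ \alpha^{-1}$ by setting $\gamma_{\alpha^{-1}}(c) = \gamma_\alpha(\alpha^{-1}(c))^{-1}$ for each object $c$ of $\cC_0$; naturality is immediate from the naturality of $\gamma_\alpha$ and the functoriality of $\alpha^{-1}$. A direct check against the composition law $(\alpha,\gamma_\alpha)\circ(\beta,\gamma_\beta) = (\alpha\circ\beta,(\beta^*\gamma_\alpha)\circ\gamma_\beta)$ of Section~\ref{sec:MC} shows that $(\alpha^{-1},\gamma_{\alpha^{-1}})$ is a two-sided inverse: in each of the two composites the functor part is $\id_{\cC_0}$, and evaluating the natural-transformation part at an object $c$ cancels $\gamma_\alpha$ against its own inverse to give $\id_{\iota_0(c)}$. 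Hence $(\alpha,\gamma_\alpha)$ is invertible in $M_\Cip$. The main obstacle, as noted, is the surjectivity step, which is where the edge-object hypothesis on $\alpha(X)$ is genuinely needed: it is exactly what guarantees the connecting morphism $f'$ lies in $\cT(J)$ so that Lemma~\ref{lem:7-9-2} can be invoked.
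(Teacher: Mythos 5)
Your proof is correct and follows essentially the same route as the paper's: full faithfulness of $\alpha$ via Lemma~\ref{lem:MC2}, surjectivity on objects by using the edge-object hypothesis on $\alpha(X)$ to produce a type-$J$ morphism into $\alpha(X)$ so that Lemma~\ref{lem:7-9-2} applies (the paper gets this diagram from Lemma~\ref{lem:yama2}, you get it directly from $\Lambda$-connectedness since every morphism into an edge object is of type $J$ --- an equivalent step), and the skeletal poset structure of $\cC_0$ to upgrade the resulting equivalence to an isomorphism of categories. The only deviation is the finish: the paper invokes Lemma~\ref{lem:7-9-4} to pass from ``$\alpha$ is an isomorphism of categories'' to invertibility in $M_\Cip$, whereas you write down the inverse $(\alpha^{-1},\gamma_{\alpha^{-1}})$ with $\gamma_{\alpha^{-1}}(c)=\gamma_\alpha(\alpha^{-1}(c))^{-1}$ explicitly and verify the composition law, which is a correct and slightly more self-contained alternative.
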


\begin{proof}
By Lemma \ref{lem:7-9-4}, we are reduced to proving that
the functor $\alpha : \cC_0 \to \cC_0$ is an isomorphism of categories.
It follows from Lemma \ref{lem:MC2} that $\alpha$ is
fully faithful. As the category $\cC_0$ is skeletal,
it suffices to show that $\alpha$ is essentially surjective.
Let $Y$ be an arbitrary object of $\cC_0$.
As $\alpha(X)$ is an edge object of $\cC_0$, it follows from
Lemma \ref{lem:yama2} that there exists a diagram
$$
\alpha(X) \xleftarrow{f} Z \to Y 
$$
in $\cC_0$ such that $f$ is of type $J$.
Hence, it follows from Lemma \ref{lem:7-9-2} that
there exists an object $Y'$ of $\cC_0$ satisfying 
$Y = \alpha(Y')$. This shows that $\alpha$ is essentially
surjective, which proves the claim.
\end{proof}

\begin{lem} \label{lem:7-9-7}
Let $Y$ be an edge object of $\cC_0$ and let
$f:Y \to X$ be a morphism in $\cC_0$ of type $J$.
Then $X$ is an edge object of $\cC_0$.
\end{lem}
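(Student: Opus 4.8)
The plan is to argue straight from the definition of edge object: I would fix an arbitrary morphism $g : Z \to X$ in $\cC_0$ and show that it is of type $J$, i.e.\ that $\iota_0(g) \in \cT(J)$. Since $g$ is arbitrary, this is exactly the assertion that $X$ is an edge object. Note that this is the \emph{upward} companion to Lemma~\ref{lem:lower}, which propagates the edge property downward; here the difficulty is that a given $g : Z \to X$ need not be related to the fixed edge object $Y$ in any obvious way.

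The device that overcomes this is the $\Lambda$-connectedness of $\cC_0$, which holds because $(\cC_0,\iota_0)$ is a grid (Condition~(1) of Definition~\ref{defn:grids}). First I would apply $\Lambda$-connectedness to the objects $Y$ and $Z$ to produce a common object $W$ together with morphisms $h_1 : W \to Y$ and $h_2 : W \to Z$ in $\cC_0$. Because $\cC_0$ is a poset, hence thin, the two morphisms $f \circ h_1$ and $g \circ h_2$ from $W$ to $X$ necessarily coincide. This coincidence is the bridge that transports the edge property of $Y$ over to the arbitrary morphism $g$.

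Next I would exploit that $Y$ is an edge object: the morphism $h_1 : W \to Y$ has target $Y$, hence is of type $J$, and $f$ is of type $J$ by hypothesis. Since $\cT(J)$ is semi-localizing (Proposition~\ref{prop:cTJ}) and therefore closed under composition, $\iota_0(f \circ h_1) = \iota_0(f)\circ\iota_0(h_1)$ lies in $\cT(J)$. Using $f \circ h_1 = g \circ h_2$ this reads $\iota_0(g)\circ\iota_0(h_2) \in \cT(J)$. Finally I would invoke Condition~(3) of Definition~\ref{defn:B-site} applied to the diagram $\iota_0(W)\xto{\iota_0(h_2)}\iota_0(Z)\xto{\iota_0(g)}\iota_0(X)$: as a composite lies in $\cT(J)$ if and only if both factors do, the factor $\iota_0(g)$ lies in $\cT(J)$, so $g$ is of type $J$, completing the argument.

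The only subtle point — and the step I would flag as the heart of the matter — is recognizing that $\Lambda$-connectedness together with thinness of $\cC_0$ is precisely what is needed to link an arbitrary $g : Z \to X$ to the fixed edge object $Y$ through the common lower bound $W$; once $W$ and the equality $f \circ h_1 = g \circ h_2$ are in hand, the remainder is routine. I would also remark, as a sanity check, that the instance of Condition~(3) used here is in fact the automatic half: $\iota_0(g)$ is the target-side factor of the composite, and a target-side factor of a morphism in $\cT(J)=\wh{\cT}$ is again in $\wh{\cT}$ directly from the definition of $\wh{\,\cdot\,}$. Thus the proof does not secretly rely on the harder direction of the $B$-site axiom.
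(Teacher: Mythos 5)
Your proof is correct and is essentially the paper's own argument: the paper applies the semi-cofilteredness of $\cC_0$ (established in the proof of Lemma~\ref{lem:core_cofiltered} from exactly the $\Lambda$-connectedness plus thinness you spell out) to produce the common object $W$ with $f\circ h_1 = g\circ h_2$, then uses the edge property of $Y$, closure of $\cT(J)$ under composition, and the automatic direction of the $B$-site axiom (via Proposition~\ref{prop:cTJ}, i.e.\ $\cT(J)=\wh{\cT(J)}$) to conclude $g$ is of type $J$. Your closing remark that only the easy, definitional half of Condition~(3) is needed matches the paper's citation of Proposition~\ref{prop:cTJ} at that step.
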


\begin{proof}
Let $g:X' \to X$ be an arbitrary morphism in $\cC_0$.
As $\cC_0$ is semi-cofiltered (see proof of 
Lemma \ref{lem:core_cofiltered}), 
there exist an object $Y'$ of $\cC_0$ and morphisms
$f':Y' \to X'$ and $g':Y' \to Y$
satisfying $f \circ g' = g \circ f'$.
As $Y$ is an edge object, $g'$ is of type $J$.
Hence, $f \circ g'$ is of type $J$.
It follows from Proposition \ref{prop:cTJ} that 
$g$ is of type $J$.
This proves that $X$ is an edge object of $\cC_0$.
\end{proof}

\subsection{Proof of Theorem~\ref{thm:Galois_main}: the fiber functor is essentially surjective}
\subsubsection{Proof: Step 1}
\label{sec:Step 1}
Let $T$ be an arbitrary smooth left $M_\Cip$-set.
For each object $X$ of $\cC$, let us choose
an edge object $E_X$ of $\cC_0$ and an isomorphism
$\beta_X: \iota_0(E_X) \xto{\cong} X$ in $\cC$.
We set $F_T(X) = T^{\bK_{E_X}}$.
One can check, by modifying the argument 
of the paragraph below, that $F_T(X)$ is independent of the
choice of the pair $(E_X, \beta_X)$ up to canonical isomorphisms.
However we will not use this independence 
in the proof of Theorem \ref{thm:Galois_main} given below.

Let $f:X \to Y$ be a morphism in $\cC$.
In this paragraph, we define a map $f^* : F_T(Y) \to F_T(X)$.
We use the following notation 
introduced in Section \ref{sec:full}.
It follows from (4) of Lemma \ref{lem:C0_core} that there
exist a unique morphism $f':E_X \to Y'$ in $\cC_0$
and an isomorphism $\beta' : \iota_0(Y') \cong Y$ in $\cC$ 
such that the diagram
$$
\begin{CD}
\iota_0(E_X) @>{\beta_X}>{\cong}> X \\
@V{\iota_0(f')}VV @VV{f}V \\
\iota_0(Y') @>{\beta'}>{\cong}> Y
\end{CD}
$$
is commutative. We set 
$\beta = {\beta'}^{-1}\circ \beta_Y: \iota_0(E_Y)
\xto{\cong} \iota_0(Y')$.
It follows from Lemma \ref{lem:sufficiently many} that
there exists an element $(\alpha,\gamma_\alpha)$ of
$M_\Cip$ satisfying $\alpha(E_Y) = Y'$ and
$\gamma_\alpha(E_Y)=\beta$.
We now show that, for any element 
$(\alpha',\gamma_{\alpha'})$ of $\bK_{E_X}$,
there exists a unique element 
$(\alpha'',\gamma_{\alpha''})$ of $\bK_{E_Y}$ 
satisfying
$(\alpha',\gamma_{\alpha'})
\circ (\alpha,\gamma_{\alpha}) =
(\alpha,\gamma_{\alpha})\circ (\alpha'', \gamma_{\alpha''})$.
Let $(\alpha',\gamma_{\alpha'})$ be an element
of $\bK_{E_X}$. It follows from Lemma \ref{lem:7-9-3} that
we have $\alpha'(Y') = Y'$ and $\gamma_{\alpha'}(Y')
= \id_{\iota_0(Y')}$.
Hence, it follows from Lemma \ref{lem:7-9-5} that
there exists an element 
$(\alpha'',\gamma_{\alpha''})$ of $\bK_{E_Y}$ 
satisfying
$(\alpha',\gamma_{\alpha'})
\circ (\alpha,\gamma_{\alpha}) =
(\alpha,\gamma_{\alpha})\circ 
(\alpha'', \gamma_{\alpha''})$.
Hence, the map $T \to T$
given by the multiplication by $(\alpha,\gamma_\alpha)$
induces a map $F_T(Y) = T^{\bK_{E_Y}}
\to T^{\bK_{E_X}} = F_T(X)$ which we
denote by $f^*$.

\subsubsection{Proof: Step 2}
\label{sec:Step 2}
We now show that the map $f^*$ is independent 
of the choice of the element $(\alpha,\gamma_{\alpha})$.
Suppose that $(\alpha_1,\gamma_{\alpha_1})$ is another
choice of an element $M_\Cip$ satisfying $\alpha_1(E_Y) = Y'$ and
$\gamma_{\alpha_1}(E_Y)=\beta$.
Then, it follows from Lemma \ref{lem:7-9-5} that
there exists an element 
$(\alpha'',\gamma_{\alpha''})$ of $\bK_{E_Y}$ 
satisfying
$(\alpha_1,\gamma_{\alpha_1}) =
(\alpha,\gamma_{\alpha})\circ (\alpha'', \gamma_{\alpha''})$.
This implies that the map $f^*$ is independent of
the choice of the element $(\alpha,\gamma_{\alpha})$.

\subsubsection{Proof: Step 3}

We now show that, for any morphism $g:Y \to Z$ in $\cC$, we have
$(g\circ f)^* = f^* \circ g^*$.
It follows from (4) of Lemma \ref{lem:C0_core} that there
exist a unique morphism $g':E_Y \to Z'$ in $\cC_0$
and an isomorphism $\beta'_1 : \iota_0(Z') \cong Z$ in $\cC$ 
such that the diagram
$$
\begin{CD}
\iota_0(E_Y) @>{\beta_Y}>{\cong}> Y \\
@V{\iota_0(g')}VV @VV{g}V \\
\iota_0(Z') @>{\beta'_1}>{\cong}> Z
\end{CD}
$$
is commutative. We set 
$\beta_1 = {\beta'_1}^{-1}\circ \beta_Z: \iota_0(E_Z)
\xto{\cong} \iota_0(Z')$.
It follows from Lemma \ref{lem:sufficiently many} that
there exists an element $(\alpha_1,\gamma_{\alpha_1})$ of
$M_\Cip$ satisfying $\alpha_1(E_Z) = Z'$ and
$\gamma_\alpha(E_Y)=\beta_1$.
We set $Z'' =\alpha(Z)$ and $h= \alpha(g') \circ f'$.
Then, $(h,\beta'_2)$ is the unique pair of 
a morphism $h: E_X \to Z''$ and an isomorphism
$\beta'_2 : \iota_0(Z'') \cong Z$ such that the diagram
$$
\begin{CD}
\iota_0(E_X) @>{\beta_X}>{\cong}> X \\
@V{\iota_0(h)}VV @VV{g\circ f}V \\
\iota_0(Z'') @>{\beta'_1}>{\cong}> Z
\end{CD}
$$
is commutative. We set 
$\beta_2 = {\beta'_2}^{-1}\circ \beta_Z: \iota_0(E_Z)
\xto{\cong} \iota_0(Z'')$.
Then, the element $(\alpha_2,\gamma_{\alpha_2})
= (\alpha,\gamma_{\alpha})\circ 
(\alpha_1,\gamma_{\alpha_1})$
of $M_\Cip$ satisfies $\alpha_2(E_Z) = Z''$ and
$\gamma_\alpha(E_Z)=\beta_2$.
It follows from the definition that
the map $(g \circ f)^*$ is given by multiplication 
by the element $(\alpha_2, \gamma_{\alpha_2})$.
As the maps $f^*$, $g^*$ are
given by multiplication by the elements
$(\alpha, \gamma_{\alpha})$ and
$(\alpha_1, \gamma_{\alpha_1})$,
respectively, we have the desired equality
$f^* \circ g^* = (g \circ f)^*$.
Thus, we obtain a presheaf $F_T$ on $\cC$.
As the action of $M_\Cip$ on $T$ is smooth,
we have $\omega_\Cip(F_T) = T$.

\subsubsection{Proof: Step 4}

Suppose that $f:X \to Y$ is a Galois covering in $\cT$.
Let $f':E_X \to Y'$, 
$\beta':\iota_0(Y') \xto{\cong} Y$, and 
$\beta: \iota_0(E_Y) \xto{\cong} \iota_0(Y')$ 
be as in Section~\ref{sec:Step 1} above. 
It then follows from Lemma \ref{lem:7-9-7} that
$Y'$ is an edge object of $\cC_0$.
Hence, it follows from Lemma \ref{lem:7-9-6} that
the element $(\alpha,\gamma_\alpha) \in M_\Cip$ is invertible.
As $f$ is a Galois covering in $\cT$,
the morphism $\iota_0(f')$ is a Galois
covering in $\cT$.
It follows from Lemma \ref{lem:7-9-3} that
$\bK_{E_X}$ is a subgroup of $\bK_{Y'}$.
Let $i : \bK_{E_X} \to \bK_{Y'}$ denote the inclusion.
Let $\rho : \bK_{Y'} \to \Gal(f')$
denote the composite of $\phi_{Y'}: \bK_{Y'}
\to H_{Y'}$ with the projection map 
$H_{Y'} \to \Gal(f')$.
Let us consider the sequence
\begin{equation} \label{seq:7-9}
1 \to \bK_{E_X} \xto{i} \bK_{Y'} 
\xto{\rho} \Gal(f') \to 1.
\end{equation}
It is thus obvious that the map $i$ is injective.
It follows from the definition of $\bK_{E_X}$ that the
kernel of $\rho$ is equal to the image of $i$.
We have shown in Lemma~\ref{lem:Gal gp surjective}
that $\rho$ is surjective.
when $\cC_{/Y}$ satisfies
one of the conditions in Section \ref{sec:existence of a pregrid}.
Hence, sequence \eqref{seq:7-9} is exact.
As
$(\alpha,\gamma_\alpha) \in M_\Cip$ is invertible,
this implies that the map $f^*: F_T(Y) \to F_T(X)$
induces a bijection $F_T(Y) \to F_T(X)^{\Gal(f)}$.
This shows that $F_T$ is a sheaf on $(\cC,J)$.

Let $X$ be an edge object of $\cC_0$.
Applying Lemma \ref{lem:sufficiently many} to
the isomorphism $\beta_{\iota_0(X)}: \iota_0(E_{\iota_0(X)})
\xto{\cong} \iota_0(X)$, we can choose an element
$(\alpha_X, \gamma_{\alpha_X})$ of $M_\Cip$
satisfying $\alpha_X(E_{\iota_0(X)}) = X$
and $\gamma_{\alpha_X} = \beta_{\iota_0(X)}$.
It follows from Lemma \ref{lem:7-9-6} that 
$(\alpha_X,\gamma_{\alpha_X})$ is an invertible element
of $M_\Cip$.
Hence the action of $(\alpha_X,\gamma_{\alpha_X})$ on $T$ induces
a bijection $F_T(X) = T^{\bK_{E_{\iota_0(X)}}}
\xto{\cong} T^{\bK_X}$, which we denote by $\epsilon_X$.
Let $f: X \to Y$ be a morphism in $\cCe$.
By Lemma \ref{lem:7-9-3} we have the inclusion map
$T^{\bK_Y} \subset T^{\bK_X}$.
We set $(\alpha,\gamma_\alpha)
= (\alpha_X,\gamma_{\alpha_X})^{-1} \circ
(\alpha_Y, \gamma_{\alpha_Y})$.
We set $f' = \alpha_X^{-1}(f)$. It is a morphism in $\cC_0$
whose domain is equal to $E_{\iota_0(X)}$
Let $Y'$ denote the codomain of the morphism $\alpha_X^{-1}(f)$
and set $\beta = \gamma_{\alpha_X}(Y'): \iota_0(Y')
\xto{\cong} \iota_0(Y)$.
We then have $\iota_0(f) \circ \beta_{\iota_0(X)}
= \beta' \circ \iota_0(f')$.
Hence, it follows from the argument in Section \ref{sec:Step 2} that
the map $f^*: F_T(Y) \to F_T(X)$ is given by multiplication
by $(\alpha,\gamma_\alpha)$.
This shows that the diagram
$$
\begin{CD}
F_T(Y) @>{f^*}>> F_T(X) \\
@V{\epsilon_Y}V{\cong}V @V{\cong}V{\epsilon_X}V \\
T^{\bK_Y} @>{\subset}>> T^{\bK_X}
\end{CD}
$$
is commutative.
Hence, the bijections $\epsilon_X$ give
a bijection $\omega_\Cip(F_T) \cong T$.
It is straightforward to check that this bijection is
an isomorphism of $M_\Cip$-sets.
Therefore, the functor $\omega_\Cip$ is essentially
surjective. This completes the proof of
Theorem \ref{thm:Galois_main}.

\section{On the uniqueness of grids}
\label{sec:grid uniqueness}
This section was added later at the request of the referee.
The referee asked to what extent the grids are unique.
We show below (Proposition~\ref{prop:grid uniqueness})
that two grids are isomorphic (in a sense made precise
below) under a certain finiteness condition.
When we remove the finiteness assumption,
there is a $Y$-site with more than one grids that are not 
isomorphic.   The key ingredient in the construction is the non-vanishing of the first derived limit.   We present the example in 
Section~\ref{sec:nonunique grid}.

We give a description of the abundance of  
grids in terms of derived limits in Section~\ref{sec:uniqueness and R1}.

\subsection{}
The aim of this paragraph is to prove the following
proposition, which gives a uniqueness of a grid up to certain
isomorphisms of a $Y$-site under the same cardinality condition
as in Proposition \ref{cor:grid existence}.

\begin{prop}
\label{prop:grid uniqueness}
Let $(\cC,J)$ be a $Y$-site.
Let $(\cC_{0,i},\iota_{0,i})$, $i=1,2$,
be two grids of $(\cC,J)$.
Suppose that there exists an object
$X_0$ of $\cC(\cT(J))$ such that the overcategory
$\cC(\cT(J))_{/X_0}$ satisfies at least one of the
two cardinality conditions in Section \ref{sec:cardinality}.
Then there exists a pair $(\alpha,\gamma_\alpha)$
of an isomorphism
$\alpha: \cC_{0,1} \xto{\cong} \cC_{0,2}$
of categories and a natural isomorphism
$\gamma_\alpha: \iota_{0,1} \xto{\cong} \iota_{0,2} \circ \alpha$.
\end{prop}

\begin{proof}
For $i=1,2$, let us choose an edge object 
$X_{0,i}$ of $\cC_{0,i}$ and an isomorphism
$\beta_i:\iota_{0,i}(X_{0,i}) \xto{\cong} X_0$.
Let $\cC'$ denote the
full subcategory of $\cC(\cT(J))_{/X_0}$ whose objects
are the morphisms in $\cC(\cT(J))_{/X_0}$
which are Galois coverings in $\cC$.
For $i=1,2$, let $\cC'_{0,i}$ denote the full subcategory
of the overcategory $(\cC_{0,i})_{/X_{0,i}}$
whose objects are the morphisms $f:X \to X_{0,i}$
such that the composite $\beta_i \circ \iota_{0,i}(f):
\iota_{0,i}(X) \to X_0$ is an object of $\cC'$
(thus $\cC'_{0,i} = I_{X_{0,i}}$ in the notation in Section \ref{sec:IX}).
Since $\cT(J)$ has enough Galois coverings,
Corollary \ref{cor:core_cofiltered} implies that
$\cC_{0,i}$ is cofiltered.
It follows from Lemma \ref{lem:lower} that
for any object $f:X \to X_{0,i}$ of $\cC'_{0,i}$,
the object $X$ of $\cC_{0,i}$ is an edge object.
Let $f: X \to X_{0,1}$ be an object of $\cC'_{0,1}$.
The condition (3) in the definition of grid
(Definition \ref{defn:grids}) implies that
there exists a pair $(f',\gamma')$ 
of an object $f': X' \to X_{0,2}$ of
$\cC'_{0,2}$ and an isomorphism
$\gamma' : \iota_{0,1}(X) \xto{\cong} \iota_{0,2}(X')$
satisfying 
$\beta_1 \circ \iota_{0,1}(f) = 
\beta_2 \circ \iota_{0,2}(f') \circ \gamma'$.
Let $S_f$ denote the set of such pairs $(f',\gamma')$
(this is a set since $\cC_{0,2}$ is $\frU$-small).
Corollary \ref{cor:Galois_over} implies that
the first component $f'$ of an element $(f',\gamma') \in S_f$
is uniquely determined from $f$ and is independent of $\gamma'$.
Hence $S_f$ forms a left $\Gal(\iota_{0,1}(f))$-torsor.
Let us write $f' = \alpha'(f)$.
Let $g: Y \to X_{0,1}$ be another object of
$\cC'_{0,1}$ and suppose that there exists a morphism
$h$ from $g$ to $f$ in $\cC'_{0,1}$.
It follows from Lemma \ref{lem:Galois_over} that
there exists a morphism $\alpha'(h)$ from $\alpha'(g)$ to $\alpha'(f)$
in $\cC'_{0,2}$.
For two composable morphisms $h,h'$ in 
$\cC'_{0,1}$, we have $\alpha'(h \circ h')
= \alpha'(h) \circ \alpha'(h')$.
Hence by sending $h$ to $\alpha'(h)$ 
we obtain a functor $\alpha' : \cC'_{0,1}
\to \cC'_{0,2}$. By construction, this functor has
the following characterization:
for any object $f$ of $\cC_{0,1}$, there exists
an isomorphism from $\beta_1 \circ \iota_{0,1}(f)$
to $\beta_2 \circ \iota_{0,2}(\alpha'(f))$ in $\cC'$.
By exchanging the roles of $(\cC_{0,1},\iota_{0,1})$ 
and $(\cC_{0,2},\iota_{0,2})$, we obtain a functor
$\alpha'' : \cC'_{0,2} \to \cC'_{0,1}$.
The characterization above implies that
the composites $\alpha' \circ \alpha''$
and $\alpha'' \circ \alpha'$ are identity functors.
In particular the functor $\alpha'$ is an isomorphism
of categories.

Let $f;X \to X_{0,1}$ and $g: Y \to X_{0,1}$ be objects of
$\cC'_{0,1}$ and suppose that there exists a morphism
$h$ from $g$ to $f$ in $\cC'_{0,1}$.
Let $(\alpha'(g),\delta') \in S_g$.
Then it follows from Lemma \ref{lem:isom_under} that
there exists a unique element $(\alpha'(f),\gamma') \in S_f$
satisfying $\iota_{0,2}(\alpha'(h)) \circ \delta'
= \gamma' \circ \iota_{0,1}(h)$.
By sending $(\alpha'(g),\delta')$ to 
$(\alpha'(f),\gamma')$, we obtain a map $h_* : S_g \to S_f$.
It is straightforward to check that, 
for two composable morphisms $h,h'$ in $\cC'_{0,1}$, 
we have $(h\circ h')_* = h_* \circ h'_*$. Hence
we have a filtered projective system
$(S_f)$ of nonempty sets indexed by the objects $f$ of $\cC'_{0,1}$.
The cardinality condition for $\cC(\cT(J))_{/X_0}$ implies that
we have either that each $S_f$ is a finite set or that
$(S_f)$ has a cofinal subsystem indexed by an at-most-countable
filtered set.
This shows that there exists an element 
$((\alpha'(f), \gamma'_f))_{f \in \Obj(\cC'_{0,1})}$ of the
projective limit $\varprojlim_f S_f$.
Let us fix such an element $((\alpha'(f), \gamma'_f))_f$.

Let $f: X \to X_{0,1}$ be an object of $\cC'_{0,1}$.
Since $(\cC_{0,1},\iota_{0,1})$ and
$(\cC_{0,2},\iota_{0,2})$ are grids,
$\iota_{0,1}$ and $\iota_{0,2}$ induce equivalences
$\psi_{f,1} : (\cC_{0,1})_{X/} \to \cC_{\iota_{0,1}(\alpha'(X))/}$
and
$\psi_{f,2} : (\cC_{0,2})_{\alpha'(X)/} \to \cC_{\iota_{0,2}(X)/}$
of categories.
The composition with $(\gamma'_f)^{-1}$ gives an equivalence
$\phi_f : \cC_{\iota_{0,1}(X)/} \to
\cC_{\iota_{0,2}(\alpha'(X))/}$ of categories. By taking composite of
$\psi_{f,1}$, $\phi_f$, and the quasi-inverse of $\psi_{f,2}$
(that is unique since $(\cC_{0,2})_{\alpha'(X)/}$ is a poset),
we obtain an equivalence $\theta_{\alpha',f} : (\cC_{0,1})_{X/}
\to (\cC_{0,2})_{\alpha'(X)/}$. 
By construction, the functor $\theta_{\alpha',f}$ has
the following characterization:
for any object $h:X \to Z$ of $(\cC_{0,1})_{X/}$, 
there exists an isomorphism 
$\delta: \iota_{0,1}(Z) \xto{\cong} \iota_{0,2}(Z')$ in
$\cC$, where $Z'$ is the codomain of $\theta_{\alpha',f}(h)$
regarded as a morphism in $\cC_{0,2}$, such that
$\delta \circ \iota_{0,1}(h)
= \iota_{0,2}(\theta_{\alpha',f}(h)) \circ \gamma'_f$.
Since $\cC$ is an $E$-category, such an isomorphism $\delta$
is unique. Hence by associating $\delta$ to each object of
$(\cC_{0,1})_{X/}$, we obtain a natural isomorphism
$\gamma'_{\alpha',f}: \jmath_{\iota_{0,1}(X)} 
\circ \psi_{f,1} \xto{\cong} 
\jmath_{\iota_{0,2}(\alpha'(X))} \circ \psi_{f,2}
\circ \theta_{\alpha',f}$, where for an object 
$Y$ of $\cC$ we let $\jmath_Y : \cC_{Y/} \to \cC$
denote the inclusion functor that sends
$Y \to Z$ to $Z$.
It follows from the charactrization
of the functor $\theta_{\alpha',f}$ above that
$\gamma'_{\alpha',f}$ is the unique natural isomorphism
from $\jmath_{\iota_{0,1}(X)} 
\circ \psi_{f,1}$ to
$\jmath_{\iota_{0,2}(\alpha'(X))} \circ \psi_{f,2}
\circ \theta_{\alpha',f}$
satisfying $\gamma'_{\alpha',f}(\id_X) = \gamma'_f$.

By exchanging the roles of $(\cC_{0,1},\iota_{0,1})$ 
and $(\cC_{0,2},\iota_{0,2})$, we obtain a functor
$\theta_{\alpha'',\alpha'(f)} : (\cC_{0,2})_{\alpha'(X)/} \to 
(\cC_{0,1})_{X/}$.
The characterization of the functor $\theta_{\alpha',f}$ above
implies that the composites 
$\theta_{\alpha',f} \circ \theta_{\alpha'',\alpha'(f)}$
and $\theta_{\alpha'',\alpha'(f)} \circ \theta_{\alpha',f}$
are the identity functors.
In particular $\theta_{\alpha',f}$ is an isomorphism
of categories.
Let $g: Y \to X_{0,1}$ be another object of $\cC'_{0,1}$
and suppose that there exists a morphism 
$h$ from $g$ to $f$ in $\cC'_{0,1}$.
It is then easy to see that the diagram
$$
\begin{CD}
(\cC_{0,1})_{X/}
@>{\theta_{\alpha',f}}>>
(\cC_{0,2})_{\alpha'(X)/} \\
@V{(1)}VV @VV{(2)}V \\
(\cC_{0,1})_{Y/} 
@>{\theta_{\alpha',g}}>> 
(\cC_{0,2})_{\alpha'(Y)/},
\end{CD}
$$
where the vertical arrows (1), (2) are functors 
given by the composites with $h$ and $\alpha'(h)$,
is commutative in the strict sense, 
not only up to natural isomorphisms.
Hence it follows from Lemma \ref{cor:C0Y}, the
isomorphism $\theta_{\alpha',f'}$ for each $f$
gives an isomorphism $\alpha : \cC_{0,1} \xto{\cong} \cC_{0,2}$
of categories.

Since $\gamma'_g$ is mapped to $\gamma'_f$ under the
map $S_g \to S_f$, it follows that the restriction
of the natural isomorphism $\gamma'_{\alpha',g}$ 
to $(\cC_{0,1})_{X/}$, where we regard
$(\cC_{0,1})_{X/}$ as a subcategory of $(\cC_{0,1})_{Y/}$
via the functor (1) in the diagram above,
is equal to $\gamma'_{\alpha',f}$.
Thus the natural isomorphisms $\gamma'_{\alpha',f}$
give rise to a natural isomorphism
$\gamma_\alpha : \iota_{0,1} \xto{\cong}
\iota_{0,2} \circ \alpha$.
This completes the proof.
\end{proof}

\subsection{On the non-uniqueness of grids}
\label{sec:nonunique grid}

Let $(\cC,J)$ be a $Y$-site and $X_0$ be an object of $(\cC,J)$.
We do not assume that $\cC(\cT(J))_{/X_0}$ satisfies
the cardinality condition in Section \ref{sec:cardinality}.
Let $(\cC_{0,1},\iota_{0,1})$, $(\cC_{0,2},\iota_{0,2})$
be two grids of $\cC$. 
For $i \in \{1,2\}$, choose an edge object
$X_{0,i}$ of $\cC_{0,i}$ and an isomorphism
$\beta_i : \iota_{0,i}(X_{0,i}) \xto{\cong} X_0$ in $\cC$.
Then the argument in the proof of Proposition \ref{prop:grid uniqueness} 
shows that there exists a unique isomorphism
$\alpha : \cC_{0,1} \to \cC_{0,2}$ of categories
satisfying the following condition:
$\alpha(X_{0,1}) = X_{0,2}$ and for any
morphism $f:X \to X_{0,1}$
in $\cC_{0,1}$ such that $\iota_{0,1}(f)$
is a Galois covering in $\cC$, there exist
an isomorphism $\gamma_X : \iota_{0,1}(X) \xto{\cong} 
\iota_{0,2}(\alpha(X))$ that makes the diagram
$$
\begin{CD}
\iota_{0,1}(X)
@>{\gamma_X}>> \iota_{0,2}(\alpha(X)) \\
@V{\iota_{0,1}(f)}VV
@VV{\iota_{0,2}(\alpha(f))}V \\
\iota_{0,1}(X_{0,1})
@>{\beta_2^{-1} \circ \beta_1}>>
\iota_{0,2}(X_{0,2})
\end{CD}
$$
commutative.

However there is a case where there cannot exist a
natural isomorphism from 
$\iota_{0,1}$ to $\iota_{0,2} \circ \alpha$.
We present such an example in this section.
The construction is based on a pro-group
such that the first derived limit is nontrivial.

\subsubsection{}
We will be using the following pro-group.
Let $I$ be a cofiltered poset with a final object $i_0$
and let $(G_i)_{i \in I}$ be a projective system 
$(G_i)_{i \in I}$ of discrete groups such that $G_{i_0} = \{1\}$,
that the transition homomorphism $f_{i,j} : G_i \to G_j$ is surjective
for any morphism $i \to j$ in $I$,
and that $R^1 \varprojlim_i G_i$ is non-empty.

The existence of such a pro-group is due to Todorcevic 
and is given in \cite[Ch.\ X, p 350, Lemma 4.4]{FS}.

\subsubsection{}
Let $\cC$ denote the following $\frU$-small category.
The set of objects of $\cC$ is equal to the set of objects of $I$.
When we regard $i \in I$ as an object of $\cC$, we write
$X_i$ for $i$. For $i,j \in I$, we set
$$
\Hom_{\cC}(X_i,X_j) = 
\begin{cases}
G_j, & \text{ if there exists a morphism }i \to j,\\
\emptyset, & \text{ otherwise}.
\end{cases}
$$
It follows from the cofilteredness of $I$ 
and the surjectivity of $f_{i,j}$ that
the category $\cC$ is semi-cofiltered.
Let $J$ be the atomic topology of $\cC$.
Then $(\cC,J)$ is a $Y$-site.

Let $\cC_{0,1} = \cC_{0,2} = I$ and let
$\iota_{0,1} : I \to \cC$ denote the functor
given as follows: $\iota_{0,1}(i) = X_i$ for $i \in I$
and $\iota_{0,1}(i \to j) = 1 \in G_j$ for any 
morphism $i \to j$ in $I$.
Then the pair $(\cC_{0,1},\iota_{0,1})$ is a grid of $(\cC,J)$.

By definition of the set $R^1 \varprojlim_i G_i$,
there exists a family $(g_{i,j})$ of
elements $g_{i,j} \in G_j$ indexed by the morphisms
$i \to j$ in $I$ such that for two composable morpihsm
$i \to j$ and $j \to k$ in $I$ we have
$g_{i,k} = g_{j,k} f_{j,k} (g_{i,j})$
and that there does not exist a family $(h_i)$
of elements $h_i \in G_i$ such that
$g_{i,j} = h_j^{-1} f_{i,j}(h_i)$ for any
morphism $i \to j$ in $I$.
Let
$\iota_{0,2} : I \to \cC$ denote the functor
given as follows: $\iota_{0,2}(i) = X_i$ for $i \in I$
and $\iota_{0,2}(i \to j) = g_{i,j} \in G_j$ for any 
morphism $i \to j$ in $I$.
Then the pair $(\cC_{0,2},\iota_{0,2})$ is a grid of 
$(\cC,J)$.
Let $X_0 = i_0$ and $X_{0,1}=X_{0,2} = X_{i_0}$.
For $i \in \{1,2\}$, let $\beta_{i} : \iota_{0,i} (X_{0,i}) \to X_0$
denote the unique morphism in $\cC$.
In this case the isomorphism $\alpha : \cC_{0,1} \to \cC_{0,2}$
of categories mentioned above is equal to the identity functor.
However the non-existence of $(h_i)$ described above implies
there does not exist a natural equivalence from
$\iota_{0,1}$ to $\iota_{0,2}\circ \alpha$.

\section{The topos has enough points}
\label{sec:enough points}
In this section, we show that the topos associated with a 
$Y$-site under cardinality conditions has enough points.
We show that the fiber functor 
$\omega_\Cip$
has a left 
adjoint.   It follows that the fiber functor is a point of the 
topos $\Shv(\cC, J)$.   
We then  obtain as a corollary to Theorem~\ref{thm:Galois_main}
that the topos has enough points.

Within this  
section, we assume that, for any object $X$ of $\cC$,
the category $\cC(\cT(J))_{/X}$ satisfies 
at least one of the two conditions 
in Section \ref{sec:cardinality}.

\subsection{ }
\label{sec:9.1}
\begin{lem} \label{lem:7-10-1}
The functor $\omega_\Cip:\Presh(\cC) \to (\Sets)$ 
commutes with finite limits and arbitrary colimits.
\end{lem}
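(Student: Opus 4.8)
The plan is to reduce the statement to two standard facts about the category of sets, exploiting that $\omega_\Cip$ is by definition an objectwise \emph{filtered} colimit.

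First I would recall that both arbitrary colimits and finite limits in the presheaf category $\Presh(\cC)$ are computed objectwise: for a diagram $(F_i)$ of presheaves and for each object $W$ of $\cC$, one has $(\varinjlim_i F_i)(W) = \varinjlim_i F_i(W)$, and for a finite diagram $(\varprojlim_i F_i)(W) = \varprojlim_i F_i(W)$. Evaluating at $W = \iota_0(X)$ as $X$ ranges over the edge objects of $\cC_0$, the assertion becomes the interchange of two colimits (respectively, of a colimit with a finite limit) inside $(\Sets)$.

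Next I would use that the colimit defining $\omega_\Cip$ is filtered. By Lemma~\ref{lem:core_cofiltered2} the category $\cCe$ of edge objects is cofiltered, so $\cCe^{\op}$ is filtered and $\omega_\Cip(F) = \varinjlim_{X \in \Obj(\cCe)} F(\iota_0(X))$ is a filtered colimit of sets. For arbitrary colimits the claim is then immediate, since colimits commute with colimits:
$$
\omega_\Cip(\varinjlim_i F_i) = \varinjlim_{X} \varinjlim_i F_i(\iota_0(X)) = \varinjlim_i \varinjlim_{X} F_i(\iota_0(X)) = \varinjlim_i \omega_\Cip(F_i),
$$
and I would note that this chain of identifications is the canonical comparison map induced by the relevant universal properties.

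Finally, for finite limits I would invoke the classical fact that in $(\Sets)$ filtered colimits commute with finite limits. Since $\varinjlim_{X \in \Obj(\cCe)}$ is filtered, for a finite diagram $(F_i)$ we obtain
$$
\omega_\Cip(\varprojlim_i F_i) = \varinjlim_{X} \varprojlim_i F_i(\iota_0(X)) = \varprojlim_i \varinjlim_{X} F_i(\iota_0(X)) = \varprojlim_i \omega_\Cip(F_i),
$$
again compatibly with the canonical map. There is no deep obstacle here; the only points requiring care are confirming that the defining colimit is genuinely filtered, which is exactly Lemma~\ref{lem:core_cofiltered2}, and verifying that the comparison morphisms produced by the universal properties coincide with the canonical interchange isomorphisms so that $\omega_\Cip$ preserves the limits and colimits on the nose.
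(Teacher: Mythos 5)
Your proof is correct and follows essentially the same route as the paper: both arguments observe that limits and colimits in $\Presh(\cC)$ are computed sectionwise, that $\omega_\Cip$ is by definition a filtered colimit of sections (the filteredness being Lemma~\ref{lem:core_cofiltered2}), and then invoke the standard facts that in $(\Sets)$ colimits commute with colimits and filtered colimits commute with finite limits. Your explicit appeal to Lemma~\ref{lem:core_cofiltered2} and your remark about checking the canonical comparison maps are just slightly more careful spellings-out of what the paper's proof leaves implicit.
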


\begin{proof}
Recall that we have defined, for any presheaf $F$ on $\cC$,
the set $\omega_\Cip(F)$ to be a filtered colimit of sections of $F$.
Observe that, in the category $\Presh(\cC)$, limits and colimits
can be taken in a section-wise manner. Hence, claim (1) follows from the fact that
filtered colimits of sets commute with finite limits and arbitrary colimits
in the following sense: for any filtered poset $I$, for any finite poset $J$
(\resp for any poset $J'$), and for any functor $S$ from 
$I \times J^\op$ (\resp $I \times J$) to the category of sets,
the natural map $\varinjlim_{i \in I} \varprojlim_{j\in J} S(i,j)
\to \varprojlim_{j \in J} \varinjlim_{i \in I} S(i,j)$ 
(\resp $\varinjlim_{i \in I} \varinjlim_{j' \in J'} S(i,j')
\to \varinjlim_{j' \in J'} \varinjlim_{i \in I} S(i,j')$)
is a bijection. This proves the claim.
\end{proof}

\begin{lem} \label{lem:7-10-2}
Let $F$ be a presheaf on $\cC$ and $a_J(F)$ its associated sheaf
on $(\cC,J)$. Then the adjunction morphism $F \to a_J(F)$ of presheaves
induces a bijection $\omega_\Cip(F) \xto{\cong} \omega_\Cip(a_J(F))$.
\end{lem}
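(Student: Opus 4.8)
The plan is to compute both sides as filtered colimits and compare them through the explicit description of the sheafification functor in Section~\ref{sec:sheafify}. Write $\eta_F : F \to a_J(F)$ for the adjunction morphism and $\lambda := \omega_\Cip(\eta_F) : \omega_\Cip(F) \to \omega_\Cip(a_J(F))$ for the induced map. By definition $\omega_\Cip(-) = \varinjlim_{X \in \Obj \cCe}(-)(\iota_0(X))$, a filtered colimit indexed by the cofiltered category $\cCe$ of edge objects (Lemma~\ref{lem:core_cofiltered2}). For an edge object $X$, the formula \eqref{eq:a_J} gives $a_J(F)(\iota_0(X)) \cong \varinjlim_{(Z,g) \in \Gal/\iota_0(X)} F(Z)^{\Aut_{\iota_0(X)}(Z)}$. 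The first, preliminary, step is to set up a dictionary: I would check that $\eta_F$ at $\iota_0(X)$ is the structural map of this colimit at the trivial object $(\iota_0(X),\id)$ (which has trivial Galois group), and that for a morphism $f : Y \to X$ in $\cCe$ the transition map $a_J(F)(\iota_0(f))$ sends the class represented over $(\iota_0(Y),\iota_0(f))$ to the class of the same section over $(\iota_0(Y),\id)$. Both facts are read off from the functoriality of the restriction maps in Section~\ref{sec:sheafify} (the fully faithful cofinal functors $\Gal/f \to \Gal/{X'}$) together with the naturality of $\eta_F$.

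For surjectivity, take $\xi \in \omega_\Cip(a_J(F))$, represented by some $\eta \in a_J(F)(\iota_0(X))$ with $X$ an edge object. By \eqref{eq:a_J}, $\eta$ is the class of a section $s$ over an object $(Z,g)$ of $\Gal/\iota_0(X)$. Applying Condition~(3) of Definition~\ref{defn:grids} to the Galois covering $g$ produces a morphism $f : Y \to X$ in $\cC_0$ with $\iota_0(f)$ a Galois covering in $\cT(J)$ (so $f$ is an object of $I_X$ and $Y$ is an edge object by Lemma~\ref{lem:lower}) and an isomorphism $(Z,g) \cong (\iota_0(Y),\iota_0(f))$ in $\Gal/\iota_0(X)$; this is the concrete form of the cofinality in Lemma~\ref{lem:I_X cofinal}. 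Hence we may take $Z = \iota_0(Y)$ and $s \in F(\iota_0(Y))^{\Gal(f)}$. Viewing $s$ in $F(\iota_0(Y))$ gives a class $\zeta \in \omega_\Cip(F)$, and by the dictionary the transition map of $\omega_\Cip(a_J(F))$ along $f$ carries $\eta$ to $\eta_F(s)$; thus $\lambda(\zeta) = [\eta_F(s)] = \xi$.

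For injectivity, suppose $\zeta, \zeta' \in \omega_\Cip(F)$ satisfy $\lambda(\zeta) = \lambda(\zeta')$. Since $\cCe$ is cofiltered and $\omega_\Cip(F)$ is filtered, I would restrict representatives along a common morphism (the target staying an edge object by Lemma~\ref{lem:lower}) to reduce to sections $s,s' \in F(\iota_0(X))$ for a single edge object $X$ with $[\eta_F(s)] = [\eta_F(s')]$ in $\omega_\Cip(a_J(F))$. Filteredness then yields $h : X'' \to X$ in $\cCe$ with $a_J(F)(\iota_0(h))\,\eta_F(s) = a_J(F)(\iota_0(h))\,\eta_F(s')$, which by naturality of $\eta_F$ becomes $\eta_F(t_1) = \eta_F(t_2)$ in $a_J(F)(\iota_0(X''))$, where $t_i$ are the restrictions of $s,s'$. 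Two sections sitting at the trivial object of the colimit \eqref{eq:a_J} agree exactly when they are equalized by restriction along some object $g : W \to X''$ of $I_{X''}$ (again reached from an arbitrary Galois covering via Condition~(3) of Definition~\ref{defn:grids}), with $W$ an edge object. Composing, $s$ and $s'$ agree after restriction along $\iota_0(h\circ g)$, so $\zeta = \zeta'$ in $\omega_\Cip(F)$, giving injectivity and hence bijectivity.

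The main obstacle is the bookkeeping of the first paragraph, namely pinning down $\eta_F$ and the transition maps of $\omega_\Cip(a_J(F))$ in terms of the colimit \eqref{eq:a_J}; this forces one to keep track of the opposite-category conventions and of the precise form of the restriction maps from Section~\ref{sec:sheafify}. Once that dictionary is established, the surjectivity and injectivity arguments are routine manipulations of filtered colimits, the only repeatedly used geometric input being the grid axiom that lets every Galois covering object of $\Gal/\iota_0(X)$ be replaced by one of the form $\iota_0$ of a morphism in $\cC_0$.
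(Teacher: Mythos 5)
Your argument is correct, but it takes a genuinely different route from the paper's. You run a direct element chase through the filtered colimits: you identify the adjunction map with the structural map of \eqref{eq:a_J} at the trivial object $(\iota_0(X),\id)$, identify the transition maps of $\omega_\Cip(a_J(F))$ via the restriction-map description of Section~\ref{sec:sheafify}, and then prove surjectivity and injectivity by hand, the only geometric input being Condition (3) of Definition~\ref{defn:grids} (plus Lemma~\ref{lem:lower}) to trade an arbitrary Galois covering $(Z,g)$ over $\iota_0(X)$ for one of the form $(\iota_0(Y),\iota_0(f))$ with $f\in I_X$ and $Y$ an edge object. All the steps check out against Section~\ref{sec:sheafify}; the one cosmetic slip is attributing that replacement to Lemma~\ref{lem:I_X cofinal}, which concerns cofinality of $I_X$ in $\cC_{0,/X}$ — what you actually use is the essential surjectivity of the functor $I_X \to \Gal/\iota_0(X)$. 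The paper makes precisely that functor the centerpiece: it proves $j_X\colon I_X \to \Gal/\iota_0(X)$ is an equivalence of categories (essential surjectivity by grid Condition (3), full faithfulness via Condition (4) and Corollary~\ref{cor:Galois_over}), combines \eqref{eq:a_J} with the cofinality of $I_X$ among the edge objects to get $a_J(F)(\iota_0(X)) \cong \omega_\Cip(F)^{H_X} \cong \omega_\Cip(F)^{\bK_X}$ (Lemma~\ref{lem:psi_isom}), composes with Corollary~\ref{cor:KX-invariant} to obtain bijections $\delta_{F,X}\colon \omega_\Cip(F)^{\bK_X} \xto{\cong} \omega_\Cip(a_J(F))^{\bK_X}$ compatible with the adjunction map, and then concludes bijectivity at one stroke from the smoothness of both $M_\Cip$-sets. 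Your route buys elementarity: it never touches $M_\Cip$, $\bK_X$, $H_X$, $\psi_X$, or smoothness, so the statement rests on the sheafification formula alone, at the cost of the representative-level bookkeeping you flag as the main obstacle. The paper's route buys structure and reuse: the equivalence $j_X$ and the identification of $a_J(F)(\iota_0(X))$ with a $\bK_X$-invariant part are exactly the interface exploited immediately afterwards (e.g.\ in Lemma~\ref{lem:7-10-4}), so the monoid-theoretic machinery is amortized across Section~\ref{sec:enough points}.
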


\begin{proof}
Let $\cCe$ denote the full subcategory of $\cC_0$
whose objects are the edge objects of $\cC_0$.
Let $X$ be an edge object of $\cC_0$.
As $\cC_0$ is a poset, the functor $\cC_{0,/X} \to \cC_0$
that associates to each object $f:Y \to X$ of $\cC_{0,/X}$,
the object $Y$ of $\cC_0$ is fully faithful.
It follows from Lemma \ref{lem:lower} that this functor induces
a fully faithful functor $\cC_{0,/X} \to \cCe$.
Via this functor we regard $\cC_{0,/X}$ as a full subcategory
of $\cCe$.
Lemma \ref{lem:core_cofiltered2} 
shows that $\cCe$ is $\Lambda$-connected.
Hence, the objects of $\cC_{0,/X}$ are cofinal in $\cCe$.
It then follows from Lemma \ref{lem:I_X cofinal} that the objects of $I_X$
are cofinal in $\cCe$.
Thus, the natural map
\begin{equation} \label{eq:7-10-1}
\varinjlim_{(f:Y \to X) \in \Obj I_X} F(\iota_0(Y))
\to \omega_\Cip(F)
\end{equation}
is bijective.

\begin{lem}
The functor $\iota_0$ induces a functor $I_X \to \Gal/\iota_0(X)$
which we denote by $j_X$.
The functor $j_X$ is an equivalence of categories.
\end{lem}
\begin{proof}
It follows from Condition (3) of Definition \ref{defn:grids}
that the functor $j_X$ is essentially surjective.
Let $f_1:Y_1 \to X$ and $f_2:Y_2 \to X$ be two objects
of $I_X$. Suppose that there exists a morphism from $j_X(f_1)$
to $j_X(f_2)$ in $\Gal/\iota_0(X)$. Then, there exists
a morphism $g: \iota_0(Y_1) \to \iota_0(Y_2)$ in $\cC$
satisfying $\iota_0(f_1) = \iota_0(f_2) \circ g$.
It follows from Condition (4) of Definition \ref{defn:grids}
that there exist an object $Y'_2$ of $\cC_0$, morphisms
$g' : Y_1 \to Y'_2$ and $f'_2:Y'_2 \to X$, and an isomorphism 
$\beta : \iota_0(Y_2) \xto{\cong}
\iota_0(Y'_2)$ satisfying $\iota_0(g') =\beta \circ g$
and $\iota_0(f_2) = \iota_0(f'_2) \circ \beta$.
As $\iota_0(f_2)$ is a Galois covering in $\cC$,
it follows that $\iota_0(f'_2)$ is a Galois covering in $\cC$.
Hence, it follows from Corollary \ref{cor:Galois_over}
that we have $Y_2 = Y'_2$ and $f_2 = f'_2$.
This shows that $\beta = \id_{\iota_0(Y_2)}$
and $g = \iota_0(g')$.
Hence, $g'$ gives a morphism from $f_1$ to $f_2$ in $I_X$.
As both $I_X$ and $\Gal/\iota_0(X)$ are thin, this shows
that the functor $j_X$ is fully faithful.
This completes the proof of the claim that
$j_X$ is an equivalence of categories.
\end{proof}

Therefore, the bijection \eqref{eq:a_J} gives a bijection
$$
a_J(F)(\iota_0(X)) 
\cong \varinjlim_{(f:Y \to X) \in \Obj I_X} 
F(\iota_0(Y))^{\Gal(f)}.
$$
As \eqref{eq:7-10-1} is bijective, we have a bijection
$$
a_J(F)(\iota_0(X)) 
\cong \omega_\Cip(F)^{H_X}
$$
where $H_X$ acts on $\omega_\Cip$ via the homomorphism $\psi_X$.
Hence, it follows from Lemma \ref{lem:psi_isom} that
we have a bijection
$$
a_J(F)(\iota_0(X)) 
\cong \omega_\Cip(F)^{\bK_X}.
$$
By composing the inverse of this bijection with
the bijection in Corollary \ref{cor:KX-invariant}, we obtain
a bijection
$$
\delta_{F,X} : \omega_\Cip(F)^{\bK_X} \xto{\cong} 
\omega_\Cip(a_J(F))^{\bK_X}.
$$
It is then straightforward to check that the diagram
$$
\begin{CD}
\omega_\Cip(F)^{\bK_X} @>{\delta_{F,X}}>>
\omega_\Cip(a_J(F))^{\bK_X} \\
@VVV @VVV \\
\omega_\Cip(F) @>>> \omega_\Cip(a_J(F)),
\end{CD}
$$
where the vertical arrows are inclusions and the
lower horizontal arrow is a map induced by the adjunction
morphism $F \to a_J(F)$, is commutative.
As $\omega_\Cip(F)$ and $\omega_\Cip(a_J(F))$ are
smooth $M_\Cip$-sets, this shows that the map
$\omega_\Cip(F) \to \omega_\Cip(a_J(F))$ is bijective,
which proves the claim.
\end{proof}

\subsection{ }
Let us consider the functor $\omega_\Cip$ restricted to 
the full subcategory $\Shv(\cC,J)$ of sheaves 
in $\Presh(\cC)$ denoting it by $F^*: \Shv(\cC,J) \to (\Sets)$.
Lemma \ref{lem:7-10-1} shows that $F^*$ commutes 
with fiber products.
Let us show that the functor $F^*$ has a right adjoint.
For a set $Y$, we construct a presheaf $F_*(Y)$ on $\cC$
by setting $F_*(Y)(X) = \Map(\omega_\Cip(\frh_\cC(X)),Y)$ 
for each object $X$ of $\cC$.
\begin{lem} \label{lem:7-10-3}
The presheaf $F_*(Y)$ is a sheaf on $(\cC,J)$. 
\end{lem}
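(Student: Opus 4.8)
The plan is to verify the sheaf condition directly from the criterion built into the definition of a sheaf: $F_*(Y)$ is a $J$-sheaf precisely when the map $c_{F_*(Y),X,R}$ is bijective for every object $X$ of $\cC$ and every $R \in J(X)$. I would first reduce this to a statement purely about the functor $\omega_\Cip$ applied to the inclusion $\frh_\cC(R) \hookrightarrow \frh_\cC(X)$ of presheaves.

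For the reduction, recall that the sieve presheaf $\frh_\cC(R)$ is the colimit of representables $\varinjlim_{(Z,g) \in R} \frh_\cC(Z)$ indexed by its category of elements, which is just $R$ (with objects the morphisms $g\colon Z \to X$ of $R$). Since $\Map(-,Y)$ converts colimits into limits and, by Lemma~\ref{lem:7-10-1}, $\omega_\Cip$ preserves arbitrary colimits, I would obtain natural bijections
\[
\Hom_{\Presh(\cC)}(\frh_\cC(R), F_*(Y)) \cong \varprojlim_{(Z,g) \in R} \Map(\omega_\Cip(\frh_\cC(Z)), Y) \cong \Map(\omega_\Cip(\frh_\cC(R)), Y),
\]
and likewise $\Hom_{\Presh(\cC)}(\frh_\cC(X), F_*(Y)) \cong \Map(\omega_\Cip(\frh_\cC(X)), Y)$ by Yoneda together with the definition of $F_*(Y)$. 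Under these identifications $c_{F_*(Y),X,R}$ becomes the map $\Map(\omega_\Cip(\frh_\cC(X)),Y) \to \Map(\omega_\Cip(\frh_\cC(R)),Y)$ obtained by precomposition with the natural morphism $\omega_\Cip(\frh_\cC(R)) \to \omega_\Cip(\frh_\cC(X))$. Hence $c_{F_*(Y),X,R}$ is bijective for every set $Y$ as soon as $\omega_\Cip$ carries the inclusion $\frh_\cC(R) \hookrightarrow \frh_\cC(X)$ to a bijection of sets.

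The heart of the argument, and the step I expect to be the main obstacle, is therefore showing that $\omega_\Cip(\frh_\cC(R)) \to \omega_\Cip(\frh_\cC(X))$ is bijective whenever $R \in J(X)$. Here I would invoke Lemma~\ref{lem:7-10-2}: the adjunction morphism $F \to a_J(F)$ induces bijections $\omega_\Cip(\frh_\cC(R)) \cong \omega_\Cip(a_J(\frh_\cC(R)))$ and $\omega_\Cip(\frh_\cC(X)) \cong \omega_\Cip(a_J(\frh_\cC(X)))$, which fit into a commutative square together with the map in question and its sheafified counterpart. Because $R$ is a covering sieve, the inclusion $\frh_\cC(R) \hookrightarrow \frh_\cC(X)$ becomes an isomorphism after applying $a_J$: for every sheaf $F$ the induced map $\Hom_{\Presh(\cC)}(\frh_\cC(X),F) \to \Hom_{\Presh(\cC)}(\frh_\cC(R),F)$ is exactly $c_{F,X,R}$, which is bijective by the sheaf property of $F$, so by the universal property of sheafification $a_J(\frh_\cC(R)) \to a_J(\frh_\cC(X))$ is an isomorphism of sheaves. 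Thus the bottom edge of the square is a bijection and the two vertical edges are bijections, whence the top edge $\omega_\Cip(\frh_\cC(R)) \to \omega_\Cip(\frh_\cC(X))$ is a bijection as well. This yields bijectivity of all the maps $c_{F_*(Y),X,R}$ and shows that $F_*(Y)$ is a sheaf on $(\cC,J)$.
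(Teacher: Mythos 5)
Your proof is correct, but it takes a genuinely different route from the paper's. The paper verifies the sheaf condition through the Galois-covering criterion (Corollary \ref{cor:sheaf_criterion3}): for a Galois covering $f\colon X' \to X$ in $\cT$ it observes that $\omega_\Cip(\frh_\cC(f))$ is a filtered colimit of pseudo $\Gal(f)$-torsors, hence a pseudo $\Gal(f)$-torsor, and that it is surjective (Condition (3) of Definition \ref{defn:semi-localizing} together with Condition (3) of Definition \ref{defn:grids}); thus $\omega_\Cip(\frh_\cC(X))$ is exhibited concretely as the quotient of $\omega_\Cip(\frh_\cC(X'))$ by $\Gal(f)$ in the category of sets, and applying $\Map(-,Y)$ turns this quotient into $\Gal(f)$-invariants, which is precisely the required bijection $F_*(Y)(X) \xto{\cong} F_*(Y)(X')^{\Gal(f)}$. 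You instead check the condition for an arbitrary covering sieve $R \in J(X)$: the density presentation $\frh_\cC(R) \cong \varinjlim_{(Z,g)\in R}\frh_\cC(Z)$ together with Lemma \ref{lem:7-10-1} gives the presheaf-level adjunction $\Hom_{\Presh(\cC)}(P,F_*(Y)) \cong \Map(\omega_\Cip(P),Y)$, reducing everything to showing that $\omega_\Cip$ inverts the inclusion $\frh_\cC(R) \inj \frh_\cC(X)$, which you deduce from Lemma \ref{lem:7-10-2} and the standard fact that covering sieve inclusions become isomorphisms after $a_J$. There is no circularity: Lemmas \ref{lem:7-10-1} and \ref{lem:7-10-2} precede the statement and do not use it. What your route buys: uniformity over all covering sieves, no torsor computation, and you have in effect already established the adjunction underlying Lemma \ref{lem:7-10-5} --- your argument is the familiar criterion that a colimit-preserving functor sending covering sieve inclusions to bijections defines a point. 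What the paper's route buys: explicitness --- the Galois-covering input that you absorb into Lemma \ref{lem:7-10-2} (whose proof rests on the sheafification formula of Section \ref{sec:sheafify}) appears there in the open, via the torsor structure. The one step you should spell out is the naturality check that, under your identifications, $c_{F_*(Y),X,R}$ really is precomposition with $\omega_\Cip(\frh_\cC(R)) \to \omega_\Cip(\frh_\cC(X))$; it is routine from the functoriality of the density colimit, but it is where the reduction actually happens.
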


\begin{proof}
Let $f:X' \to X$ in $\cC$ be an arbitrary 
Galois covering that belongs to $\cT$.
Let us consider the map $\omega_\Cip(\frh_\cC(f)): 
\omega_\Cip(\frh_\cC(X')) \to \omega_\Cip(\frh_\cC(X))$.
By definition, the map $\omega_\Cip(\frh_\cC(f))$ 
is equal to the map
$$
\varinjlim_{Z \in \Obj \cCe}
\Hom_\cC(\iota_0(Z),X') \to
\varinjlim_{Z \in \Obj \cCe}
\Hom_\cC(\iota_0(Z),X)
$$
induced by the composition with $f$.
The map $\omega_\Cip(\frh_\cC(f))$ 
is a pseudo $\Gal(f)$-torsor as it is
a filtered colimit of pseudo $\Gal(f)$-torsors.
It follows from Condition (3) of Definition \ref{defn:semi-localizing}
and Condition (3) of Definition \ref{defn:grids} that the map
$\omega_\Cip(\frh_\cC(f))$ is surjective.
This shows that the set $\omega_\Cip(\frh_\cC(X))$ together with the
map $\omega_\Cip(\frh_\cC(f))$ is a quotient object of 
$\omega_\Cip(\frh_\cC(X'))$ by
$\Gal(f)$ in the category of sets.
Hence,
the pullback map
$F_*(Y)(X) \to F_*(Y)(X')$ induces a bijection
$F_*(Y)(X) \xto{\cong} F_*(Y)(X')^{\Gal(f)}$.
This completes the proof of 
the claim that $F_*(Y)$ is a sheaf on $(\cC,J)$. 
\end{proof}

Let $Y$ be a set.  Using Lemma~\ref{lem:7-10-3}
above, we can apply $F^*$ to $F_*(Y)$.
\begin{lem} \label{lem:7-10-4}
Let $Y$ be a set. Then, we have a bijection
$$
F^* (F_*(Y)) \cong \varinjlim_{Z \in \Obj\cCe}
\Map(M_\Cip/\bK_Z, Y)
$$
that is functorial in $Y$.
\end{lem}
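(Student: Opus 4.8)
The plan is to reduce everything to a single natural identification of sets, carried out at the level of a fixed edge object and then pushed through the colimit. Concretely, since $F_*(Y)$ is a sheaf by Lemma~\ref{lem:7-10-3}, we have $F^*(F_*(Y)) = \omega_\Cip(F_*(Y)) = \varinjlim_{Z \in \Obj \cCe} F_*(Y)(\iota_0(Z)) = \varinjlim_{Z \in \Obj \cCe} \Map(\omega_\Cip(\frh_\cC(\iota_0(Z))), Y)$, directly from the definitions of $F^*$, of $\omega_\Cip$, and of $F_*(Y)$. Thus it suffices to construct, for each edge object $Z$, a bijection $\omega_\Cip(\frh_\cC(\iota_0(Z))) \cong M_\Cip/\bK_Z$ that is natural in $Z \in \cCe$; applying $\Map(-,Y)$ (contravariant) and taking the colimit over $\cCe$ then yields the desired bijection, with functoriality in $Y$ inherited from $\Map(-,Y)$.

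To build this bijection, let $e_Z \in \omega_\Cip(\frh_\cC(\iota_0(Z)))$ be the class of $\id_{\iota_0(Z)}$ and consider the orbit map $M_\Cip \to \omega_\Cip(\frh_\cC(\iota_0(Z)))$, $(\alpha,\gamma_\alpha) \mapsto (\alpha,\gamma_\alpha)\cdot e_Z$. Unwinding the definition of the $M_\Cip$-action shows $(\alpha,\gamma_\alpha)\cdot e_Z = [(\alpha(Z), \gamma_\alpha(Z)^{-1})]$, the class of the isomorphism $\gamma_\alpha(Z)^{-1}: \iota_0(\alpha(Z)) \to \iota_0(Z)$ in $\varinjlim_{Y \in \Obj \cC_0} \Hom_\cC(\iota_0(Y), \iota_0(Z))$, which computes $\omega_\Cip(\frh_\cC(\iota_0(Z)))$ because edge objects are cofinal (Lemma~\ref{lem:C1_cofinal}). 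Each element of $\bK_Z$ fixes $Z$ with $\gamma_\alpha(Z) = \id$, hence fixes $e_Z$, so the orbit map factors through $M_\Cip/\bK_Z$. For surjectivity, represent an arbitrary class by $(W,g)$ with $W$ edge and $g: \iota_0(W) \to \iota_0(Z)$; the equivalence $\iota_{0,W/}: \cC_{0,W/} \xto{\cong} \cC_{\iota_0(W)/}$ (Lemma~\ref{lem:C0_core}) lifts $g$ to $g_0: W \to Z_1$ in $\cC_0$ together with an isomorphism $\beta: \iota_0(Z_1) \xto{\cong} \iota_0(Z)$ satisfying $g = \beta \circ \iota_0(g_0)$; then Lemma~\ref{lem:sufficiently many}, applied to the edge object $Z$ and the isomorphism $\beta^{-1}$, gives $(\alpha,\gamma_\alpha)$ with $\alpha(Z) = Z_1$ and $\gamma_\alpha(Z) = \beta^{-1}$, so that $(\alpha,\gamma_\alpha)\cdot e_Z = [(Z_1,\beta)] = [(W,g)]$.

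The heart of the matter is injectivity, and I expect this to be the main obstacle, since $M_\Cip$ is only a monoid and the naive orbit--stabilizer argument is unavailable. Suppose $(\alpha,\gamma_\alpha)\cdot e_Z = (\alpha',\gamma_{\alpha'})\cdot e_Z$. By filteredness there are morphisms $a: W \to \alpha(Z)$ and $a': W \to \alpha'(Z)$ in $\cC_0$ with $\gamma_\alpha(Z)^{-1} \circ \iota_0(a) = \gamma_{\alpha'}(Z)^{-1} \circ \iota_0(a')$. Setting $\delta = \gamma_\alpha(Z)\circ\gamma_{\alpha'}(Z)^{-1}: \iota_0(\alpha'(Z)) \xto{\cong} \iota_0(\alpha(Z))$, this reads $\iota_0(a) = \delta \circ \iota_0(a')$, i.e. $\delta$ is an isomorphism from $\iota_0(a')$ to $\iota_0(a)$ in the thin undercategory $\cC_{\iota_0(W)/}$. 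Since $\iota_{0,W/}$ is an equivalence (Lemma~\ref{lem:C0_core}) and $\cC_0$ is a poset, the objects $a,a'$ of $\cC_{0,W/}$ coincide, so $\alpha(Z) = \alpha'(Z)$; then thinness of $\cC_{\iota_0(W)/}$ forces $\delta = \id$, whence $\gamma_\alpha(Z) = \gamma_{\alpha'}(Z)$. At this point Lemma~\ref{lem:7-9-5} provides $(\alpha'',\gamma_{\alpha''}) \in \bK_Z$ with $(\alpha',\gamma_{\alpha'}) = (\alpha,\gamma_\alpha)\circ(\alpha'',\gamma_{\alpha''})$, so the two elements lie in the same left coset of $\bK_Z$; this is precisely the step that accounts for the lack of invertibility. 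Finally, naturality in $Z$ follows from the inclusion $\bK_Z \subseteq \bK_{Z'}$ for a morphism $f: Z \to Z'$ in $\cCe$ (Lemma~\ref{lem:7-9-3}), combined with the naturality square $\gamma_\alpha(Z')\circ\iota_0(f) = \iota_0(\alpha(f))\circ\gamma_\alpha(Z)$, which identifies the transition map $m\bK_Z \mapsto m\bK_{Z'}$ with $\omega_\Cip(\frh_\cC(\iota_0(f)))$ under the two orbit bijections.
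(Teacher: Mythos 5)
Your proof is correct, but it takes a genuinely different route from the paper's. The paper obtains the key isomorphism $\omega_\Cip(\frh_\cC(\iota_0(Z))) \cong M_\Cip/\bK_Z$ abstractly: for an arbitrary sheaf $G$ it identifies $\Hom_{M_\Cip}(\omega_\Cip(\frh_\cC(\iota_0(Z))),\omega_\Cip(G))$ with $\Hom_{M_\Cip}(M_\Cip/\bK_Z,\omega_\Cip(G))$ through a chain using Lemma~\ref{lem:7-10-2} (invariance of $\omega_\Cip$ under sheafification), the full faithfulness in Theorem~\ref{thm:Galois_main}, the adjunction for $a_J$, Yoneda, and Corollary~\ref{cor:KX-invariant}, and then concludes by Yoneda in the category of smooth $M_\Cip$-sets (essential surjectivity of $\omega_\Cip$ making this legitimate). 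You instead construct the same bijection by hand as the orbit map at the class $e_Z$ of $\id_{\iota_0(Z)}$: your computation $(\alpha,\gamma_\alpha)\cdot e_Z = [(\alpha(Z),\gamma_\alpha(Z)^{-1})]$ is right (it does require passing to the colimit over all of $\cC_0$, since $\alpha(Z)$ need not be an edge object, and you correctly cite cofinality for this); your surjectivity argument via grid property (4) and Lemma~\ref{lem:sufficiently many} is sound; and your injectivity argument is the genuinely delicate point you handle well -- reducing, via thinness of $\cC_{\iota_0(W)/}$ and skeletality of $\cC_0$, to $\alpha(Z)=\alpha'(Z)$ and $\gamma_\alpha(Z)=\gamma_{\alpha'}(Z)$, at which point Lemma~\ref{lem:7-9-5} supplies exactly the coset relation, substituting for the orbit--stabilizer argument that is unavailable for a mere monoid. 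Your naturality check via Lemma~\ref{lem:7-9-3} and the naturality square for $\gamma_\alpha$ is also a point the paper leaves implicit. What each approach buys: the paper's proof is short because the heavy machinery is already in place, whereas yours is more self-contained (it never invokes Theorem~\ref{thm:Galois_main} or Lemma~\ref{lem:7-10-2}) and yields an explicit description of the bijection and of its transition maps $m\bK_Z \mapsto m\bK_{Z'}$; both ultimately rest on the standing cardinality hypotheses, yours through Lemma~\ref{lem:sufficiently many} and the paper's through the main theorem.
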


\begin{proof}
By definition, we have
$$
F^*(F_*(Y)) = \varinjlim_{Z \in \Obj\cCe}
\Map(\omega_\Cip(\frh_\cC(\iota_0(Z))), Y).
$$
For any sheaf $G$ on $(\cC,J)$, we have the isomorphisms
\begin{align*}
& \Hom_{M_\Cip}(\omega_\Cip(\frh_\cC(\iota_0(Z))),
\omega_\Cip(G)) \\
\xleftarrow[(1)]{\cong} 
& \Hom_{M_\Cip}(\omega_\Cip (a_J(\frh_\cC(\iota_0(Z)))),
\omega_\Cip(G)) \\
\xrightarrow[(2)]{\cong}
& \Hom_{\Shv(\cC,J)}(a_J(\frh_\cC(\iota_0(Z))),G) \\
\cong 
& \Hom_{\Presh(\cC)}(\frh_\cC(\iota_0(Z)),G) \\
\cong 
& G(\iota_0(Z))
\cong
\omega_\Cip(G)^{\bK_Z} \\
\cong &
\Hom_{M_\Cip}(M_\Cip/\bK_Z, \omega_\Cip(G)),
\end{align*}
where (1) and (2) are isomorphisms given by
Lemma \ref{lem:7-10-2} and Theorem \ref{thm:Galois_main}, respectively.
Hence, by Yoneda's lemma, we have an isomorphism
$$
\omega_\Cip(\frh_\cC(\iota_0(Z)))
\cong M_\Cip/\bK_Z.
$$
of smooth $M_\Cip$-sets. From this we obtain an isomorphism
$$
F^* (F_*(Y)) = \varinjlim_{Z \in \Obj\cCe}
\Map(M_\Cip/\bK_Z, Y).
$$
It is straightforward to check that the last
isomorphism is functorial in $Y$.
\end{proof}

\begin{lem} \label{lem:7-10-5}
Let $H$ be a sheaf on $(\cC,J)$ and let $Y$ be a set.
Then we have a bijection
$$
\Map(F^*(H),Y) \xto{\cong} \Hom_{\Shv(\cC,J)}(H,F_*(Y))
$$
that is functorial in $H$ and $Y$.
\end{lem}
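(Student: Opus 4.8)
The plan is to exhibit the bijection directly from the universal properties of the colimits that define the fiber functor, without invoking Theorem~\ref{thm:Galois_main}. First I would observe that, since $F_*(Y)$ is a sheaf by Lemma~\ref{lem:7-10-3} and $\Shv(\cC,J)$ is a full subcategory of $\Presh(\cC)$, we may replace $\Hom_{\Shv(\cC,J)}(H,F_*(Y))$ by $\Hom_{\Presh(\cC)}(H,F_*(Y))$. On the other side, by the definition of the fiber functor $F^*(H)=\omega_\Cip(H)=\varinjlim_{Z\in\Obj(\cCe)}H(\iota_0(Z))$, so that $\Map(F^*(H),Y)=\varprojlim_{Z\in\Obj(\cCe)}\Map(H(\iota_0(Z)),Y)$: such an element is a family $(\psi_Z)_{Z}$ of maps $\psi_Z\colon H(\iota_0(Z))\to Y$ with $\psi_Z=\psi_{Z'}\circ H(\iota_0(f))$ for every morphism $f\colon Z'\to Z$ in $\cCe$.

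For each edge object $Z$ the colimit $\omega_\Cip(\frh_\cC(\iota_0(Z)))=\varinjlim_{Z'}\Hom_\cC(\iota_0(Z'),\iota_0(Z))$ carries a distinguished element $e_Z$, the class of $\id_{\iota_0(Z)}$ coming from the index $Z'=Z$. Given $\phi\colon H\to F_*(Y)$ in $\Presh(\cC)$, I would define $\Theta(\phi)=(\psi_Z)_Z$ by $\psi_Z(t)=\phi_{\iota_0(Z)}(t)(e_Z)$, where $\phi_{\iota_0(Z)}\colon H(\iota_0(Z))\to\Map(\omega_\Cip(\frh_\cC(\iota_0(Z))),Y)$ is the component of $\phi$ at $\iota_0(Z)$. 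In the reverse direction, writing $\lambda_Z\colon H(\iota_0(Z))\to\omega_\Cip(H)$ for the structure map of the colimit, I would send $\psi\in\Map(F^*(H),Y)$ to the presheaf morphism $\Xi(\psi)$ whose component at an object $X$ carries $s\in H(X)$ to the map $\omega_\Cip(\frh_\cC(X))\to Y$ that assigns to the class of $u\colon\iota_0(Z)\to X$ the value $\psi(\lambda_Z(H(u)(s)))$.

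The heart of the argument is to check that $\Theta$ and $\Xi$ are well defined and mutually inverse. For $\Theta$, the required compatibility $\psi_Z=\psi_{Z'}\circ H(\iota_0(f))$ follows from the naturality of $\phi$ with respect to $\iota_0(f)$ together with the identity $\omega_\Cip(\frh_\cC(\iota_0(f)))(e_{Z'})=e_Z$, which holds because the colimit transition map sends $\id_{\iota_0(Z)}$ to $\iota_0(f)$. For $\Xi$, well-definedness on the colimit over $Z$ is the companion identity $\lambda_{Z'}(H(\iota_0(f))(w))=\lambda_Z(w)$, and naturality in $X$ is immediate from the formula. Both composites reduce to the same naturality computation: $\Theta\circ\Xi=\id$ because $\Xi(\psi)_{\iota_0(Z)}(t)(e_Z)=\psi(\lambda_Z(t))$, and $\Xi\circ\Theta=\id$ because naturality of $\phi$ along $u\colon\iota_0(Z)\to X$ gives $\phi_{\iota_0(Z)}(H(u)(s))(e_Z)=\phi_X(s)([u])$. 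Functoriality in $H$ and in $Y$ is then manifest from the formulas for $\Theta$ and $\Xi$.

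I expect the only real obstacle to be bookkeeping: one must keep track of the two separate colimits over $\cCe$ — the one defining $\omega_\Cip(H)$ and, for each $Z$, the one defining $\omega_\Cip(\frh_\cC(\iota_0(Z)))$ — and verify that the distinguished elements $e_Z$ transform correctly under the transition maps. There is no deeper difficulty; in particular neither the main theorem nor the explicit computation of $F^*(F_*(Y))$ in Lemma~\ref{lem:7-10-4} is needed, although one could alternatively derive the statement by combining Theorem~\ref{thm:Galois_main} with Lemma~\ref{lem:7-10-4}, which identifies $F_*(Y)$ with the sheaf corresponding to the cofree smooth $M_\Cip$-set on $Y$.
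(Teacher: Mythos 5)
Your proposal is correct, but it takes a genuinely different route from the paper's own proof. The paper deduces the bijection from the machinery already established: by Theorem~\ref{thm:Galois_main} it replaces $\Hom_{\Shv(\cC,J)}(H,F_*(Y))$ with $\Hom_{M_\Cip}(F^*(H),\omega_\Cip(F_*(Y)))$, identifies $\omega_\Cip(F_*(Y))$ with the smooth part of $\Map(M_\Cip,Y)$ (the computation recorded in Lemma~\ref{lem:7-10-4}), and then uses smoothness of $F^*(H)$ to see that the uncurrying map $j$ identifies $\Map(F^*(H),Y)$ with $\Hom_{M_\Cip}(F^*(H),\Map(M_\Cip,Y))$ --- in effect recognizing $F_*(Y)$ as the cofree smooth $M_\Cip$-set on $Y$. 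You instead verify the adjunction $F^*\dashv F_*$ directly at the presheaf level: since $F^*(H)=\varinjlim_{Z\in\Obj(\cCe)}H(\iota_0(Z))$ and $F_*(Y)(X)=\Map(\omega_\Cip(\frh_\cC(X)),Y)$, your maps $\Theta$ and $\Xi$ are exactly the unit/counit bookkeeping for the standard fact that a colimit-preserving functor on $\Presh(\cC)$ (guaranteed here by Lemma~\ref{lem:7-10-1}) is left adjoint to $Y\mapsto\Map(F^*(\frh_\cC(-)),Y)$, and the identities you isolate --- $\omega_\Cip(\frh_\cC(\iota_0(f)))(e_{Z'})=e_Z$, $\lambda_{Z'}\circ H(\iota_0(f))=\lambda_Z$, and naturality of $\phi$ along $u:\iota_0(Z)\to X$ --- are precisely the ones needed, and they do go through (compatibility with single transition maps suffices to define maps out of the filtered colimit, by its universal property). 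Your route buys generality and independence: it proves $\Map(F^*(H),Y)\cong\Hom_{\Presh(\cC)}(H,F_*(Y))$ for an arbitrary presheaf $H$, invoking neither Theorem~\ref{thm:Galois_main} nor Lemma~\ref{lem:7-10-4}, and needs only Lemma~\ref{lem:7-10-3} together with fullness of $\Shv(\cC,J)\subset\Presh(\cC)$ to specialize to sheaves. The paper's route is shorter given what is already proved and makes the $M_\Cip$-equivariant content of the adjunction explicit; your closing remark correctly identifies that combination as the alternative --- it is in fact the paper's actual argument.
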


\begin{proof}
It follows from Theorem \ref{thm:Galois_main}
that $\Hom_{\Shv(\cC,J)}(H,F_*(Y))$ is isomorphic to
$\Hom_{M_\Cip}(F^*(H), \omega_\Cip(F_*(Y)))$.
Let us consider the map
$$
j:\Map(F^*(H),Y) \to \Map(M_\Cip \times F^*(H), Y)
$$
given by the composition with the action
$M_\Cip \times F^*(H) \to F^*(H)$ of $M_\Cip$ on $F^*(H)$.
Let us regard the target
$\Map(M_\Cip \times F^*(H), Y)$ of this map as
the set $\Map(F^*(H), \Map(M_\Cip, Y))$.
Then the image of the map $j$ is contained in the
subset $\Hom_{M_\Cip}(F^*(H),\Map(M_\Cip, Y))$
of $\Map(F^*(H), \Map(M_\Cip, Y))$.
One can check easily that the map
$\Map(F^*(H),Y) \to \Hom_{M_\Cip}(F^*(H),\Map(M_\Cip, Y))$
induced by $j$ is bijective.
We have seen in the last paragraph that
$\omega_\Cip(F_*(Y))$ is equal to the smooth part of the
set $\Map(M_\Cip, Y)$.
As $F^*(H)$ is a smooth $M_\Cip$-set, we have
$$
\Hom_{M_\Cip}(F^*(H), \Map(M_\Cip, Y)) = 
\Hom_{M_\Cip}(F^*(H), \omega_\Cip(F_*(Y))).
$$
Thus we have bijections
$$
\Map(F^*(H),Y)
\cong \Hom_{M_\Cip}(F^*(H), \omega_\Cip(F_*(Y)))
\cong \Hom_{\Shv(\cC,J)}(H,F_*(Y)).
$$
It is straightforward to check that these bijections
are functorial with respect to $H$ and $Y$.
\end{proof}

\subsection{ }

\begin{thm} 
\label{thm:7-10-6}
The pair $(F_*,F^*)$ of functors gives 
a point of the topos $\Shv(\cC,J)$.
\end{thm}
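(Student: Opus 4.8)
The plan is to recognize that a point of the topos $\Shv(\cC,J)$ is, by definition, a geometric morphism $(\Sets) \to \Shv(\cC,J)$, that is, a pair of adjoint functors whose inverse image part $F^*:\Shv(\cC,J)\to(\Sets)$ is left adjoint to the direct image part $F_*:(\Sets)\to\Shv(\cC,J)$ and, moreover, is left exact (preserves finite limits). Accordingly, the proof reduces to verifying exactly two assertions, both of which have essentially already been prepared in the preceding lemmas: the adjunction $F^* \dashv F_*$, and the left exactness of $F^*$.

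First I would invoke Lemma~\ref{lem:7-10-5}, which supplies, for every sheaf $H$ and every set $Y$, a bijection
\[
\Map(F^*(H),Y) \xto{\cong} \Hom_{\Shv(\cC,J)}(H,F_*(Y))
\]
natural in both arguments. Since Lemma~\ref{lem:7-10-3} ensures that $F_*(Y)$ is genuinely a sheaf, so that $F_*$ does take values in $\Shv(\cC,J)$, this bijection is precisely an adjunction exhibiting $F^*$ as left adjoint of $F_*$.

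Next I would check left exactness. By construction $F^*$ is nothing but the restriction of $\omega_\Cip:\Presh(\cC)\to(\Sets)$ along the inclusion $\jmath:\Shv(\cC,J)\inj\Presh(\cC)$, i.e. $F^*=\omega_\Cip\circ\jmath$. The inclusion $\jmath$ is right adjoint to the sheafification functor $a_J$ and therefore preserves all limits; in particular finite limits in $\Shv(\cC,J)$ may be computed as finite limits in $\Presh(\cC)$. Combined with Lemma~\ref{lem:7-10-1}, which states that $\omega_\Cip$ commutes with finite limits on $\Presh(\cC)$, this shows that $F^*$ commutes with finite limits. With both properties in hand, $(F^*,F_*)$ is a geometric morphism $(\Sets)\to\Shv(\cC,J)$, hence a point of $\Shv(\cC,J)$.

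Since all the analytic work is already carried out in Lemmas~\ref{lem:7-10-1}, \ref{lem:7-10-3}, and \ref{lem:7-10-5}, I do not anticipate a genuine obstacle here; the proof is an assembly of these results against the definition of a point. The only step warranting an explicit remark is the comparison of finite limits in $\Shv(\cC,J)$ with those in $\Presh(\cC)$, which is a formal consequence of the reflectivity of the subcategory of sheaves, and which reduces the left exactness of $F^*$ to that of $\omega_\Cip$ as established in Lemma~\ref{lem:7-10-1}.
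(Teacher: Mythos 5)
Your proof is correct and takes essentially the same route as the paper: the adjunction $F^*\dashv F_*$ is exactly Lemma~\ref{lem:7-10-5} (with Lemma~\ref{lem:7-10-3} guaranteeing that $F_*$ lands in $\Shv(\cC,J)$), and your left-exactness check---restriction of $\omega_\Cip$ along the limit-preserving inclusion $\Shv(\cC,J)\inj\Presh(\cC)$ combined with Lemma~\ref{lem:7-10-1}---is precisely the observation the paper records at the opening of this subsection. If anything, your write-up is slightly more complete, since the paper's proof of the theorem itself cites only the adjunction (and states its handedness backwards, calling $F^*$ a right adjoint to $F_*$), leaving the finite-limit verification to that earlier remark.
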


\begin{proof}
Lemma~\ref{lem:7-10-5} implies 
that the functor $F^*$ is a right adjoint
to the functor $F_*$.
Thus, the pair $(F_*,F^*)$ of functors gives 
a point of the topos $\Shv(\cC,J)$.
\end{proof}

\begin{cor}
\label{cor:enough points}
The topos $\Shv(\cC,J)$ has enough points.
\end{cor}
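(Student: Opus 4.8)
The plan is to deduce this immediately from Theorem~\ref{thm:7-10-6} together with the fact, established in Theorem~\ref{thm:Galois_main}, that the fiber functor is an equivalence of categories. Recall that (cf.\ \cite[EXPOSE IV]{SGA4}) the topos $\Shv(\cC,J)$ is said to have enough points precisely when there exists a family of points whose inverse image functors jointly reflect isomorphisms; equivalently, a morphism $f$ of sheaves is an isomorphism as soon as $p^*(f)$ is a bijection for every point $p$ in the family. Thus the task reduces to exhibiting a conservative family of inverse image functors, and we already have a candidate in hand.

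First I would recall that Theorem~\ref{thm:7-10-6} produces a single point of $\Shv(\cC,J)$, namely the geometric morphism whose inverse image functor is $F^* = \omega_\Cip|_{\Shv(\cC,J)}$ and whose direct image functor is $F_*$. Here $F^*$ is the left adjoint supplied by Lemma~\ref{lem:7-10-5}, and by Lemma~\ref{lem:7-10-1} it commutes with finite limits, so it genuinely serves as the inverse image functor of a point. It therefore suffices to show that this one point already reflects isomorphisms, that is, that $F^*$ is conservative.

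Next, under the standing cardinality assumptions of this section, Theorem~\ref{thm:Galois_main} asserts that $\omega_\Cip \colon \Shv(\cC,J) \to \MSet$ is an equivalence of categories; in particular $F^*$ is fully faithful. A fully faithful functor is conservative: if $F^*(f)$ admits an inverse $g$, then full faithfulness yields a unique $h$ with $F^*(h)=g$, and faithfulness forces $h\circ f = \id$ and $f\circ h = \id$, so $f$ is an isomorphism. Hence the one-element family consisting of the point of Theorem~\ref{thm:7-10-6} already jointly reflects isomorphisms, and the topos has enough points.

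There is essentially no obstacle here once Theorems~\ref{thm:Galois_main} and~\ref{thm:7-10-6} are in place; the only point demanding care — and it is purely bookkeeping — is to match our notational conventions with the definition of enough points, i.e.\ to confirm that the functor whose conservativity we verify is indeed the inverse image functor $F^*$ of the point rather than its direct image $F_*$. Given the adjunction $F^*\dashv F_*$ of Lemma~\ref{lem:7-10-5} and the left-exactness from Lemma~\ref{lem:7-10-1}, this identification is immediate, and no computation beyond the cited results is required.
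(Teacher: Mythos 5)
Your overall strategy---take the single point $(F_*,F^*)$ of Theorem~\ref{thm:7-10-6} and show its inverse image functor reflects isomorphisms via Theorem~\ref{thm:Galois_main}---is exactly the paper's. But the step ``in particular $F^*$ is fully faithful'' is false as stated, and the argument you build on it does not apply. By the paper's definition, $F^*$ is the functor $\Shv(\cC,J) \to (\Sets)$, i.e.\ the composite of the equivalence $\omega_\Cip : \Shv(\cC,J) \to \MSet$ of Theorem~\ref{thm:Galois_main} with the forgetful functor $\MSet \to (\Sets)$. The forgetful functor is faithful but not full (a map of underlying sets need not be $M_\Cip$-equivariant), so $F^*$ is not fully faithful: one has $\Hom_{\Shv(\cC,J)}(F_1,F_2) \cong \Hom_{M_\Cip}(\omega_\Cip(F_1),\omega_\Cip(F_2))$, which is in general a proper subset of $\Map(F^*(F_1),F^*(F_2))$. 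Your closing ``bookkeeping'' remark locates the wrong ambiguity: the point requiring care is not $F^*$ versus $F_*$, but the $\MSet$-valued functor (the equivalence) versus the $\Sets$-valued one (the actual inverse image functor of the point).

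The gap is easy to close, and the paper's own proof supplies precisely the missing step: the forgetful functor $\MSet \to (\Sets)$ reflects isomorphisms, because a bijective $M_\Cip$-equivariant map $\phi$ has an automatically equivariant set-theoretic inverse ($\phi^{-1}(m\cdot t) = m\cdot\phi^{-1}(t)$ follows by applying $\phi^{-1}$ to $m\cdot t = m\cdot \phi(\phi^{-1}(t)) = \phi(m\cdot\phi^{-1}(t))$). Hence if $F^*(f)$ is a bijection, then $\omega_\Cip(f)$ is an isomorphism of smooth $M_\Cip$-sets, and the equivalence of Theorem~\ref{thm:Galois_main} (being in particular conservative) forces $f$ to be an isomorphism of sheaves. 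With this one observation inserted in place of the full-faithfulness claim, your proof is correct and coincides with the paper's.
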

\begin{proof}
Let $f:F_1 \to F_2$ be a morphism of sheaves.
Using Theorem~\ref{thm:Galois_main}, we know that
$f$ is an isomorphism if and only if 
$\omega_{(\cC_0, \iota_0)}(f)$
is an isomorphism of smooth $M_{(\cC_0, \iota_0)}$-sets.
This is an isomorphism if it is an isomorphism
of (the underlying) sets.  This implies the claim.
\end{proof}

\section{On locally profinite groups}
\label{sec:atomic topological}
Suppose we are given a $Y$-site and a grid.
Then, our absolute Galois monoid $M_\Cip$ 
comes with the set of subgroups 
indexed by the edge objects in the grid.
We show in 
Section~\ref{sec:topological monoid structure}
that $M_\Cip$ is naturally equipped with 
the structure of a topological monoid such that 
the category of smooth $M_\Cip$-sets is 
canonically equivalent to the category 
of discrete sets with continuous action 
of the topological monoid $M_\Cip$.

In Section~\ref{sec:atomic discussions}, 
we give one of our main theorems, which states that the topos associated with
a $Y$-site with an atomic topology
that satisfies cardinality condition (1)
is equivalent to the category of discrete sets
with continuous action of some locally profinite
group.   
This may be regarded as a reconstruction theorem
for locally profinite topological groups.   

\subsection{The absolute 
Galois monoid as a topological monoid}
\label{sec:topological monoid structure}
Let $(\cC, J)$ be a $Y$-site.
Suppose we are given a grid 
$(\cC_0, \iota_0)$
for this $Y$-site.
Let us equip the associated
Galois monoid $M_\Cip$
with the structure of a topological 
monoid as follows.

For each element $m \in M_\Cip$,
consider the set
\[
\frV_m=\{m \bK_X \,|\,
X \text{ an edge object}\
\}
\]
of subsets of $M_\Cip$.
Let $\frV=(\frV_m)_{m \in M_\Cip}$.

\begin{lem}
The set $\frV$ is a fundamental system of neighborhood for 
some topology on (the underlying set of) $M_\Cip$.
\end{lem}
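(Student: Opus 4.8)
The plan is to realize $\frV$ as the neighborhood system attached to a base of open sets. Concretely, I would set $\mathcal{B} = \{\, m\bK_X \mid m \in M_\Cip,\ X \text{ an edge object of } \cC_0 \,\}$, prove that $\mathcal{B}$ is a base for a (unique) topology on the underlying set of $M_\Cip$, and then check that for each $m$ the family $\frV_m$ coincides with the set of members of $\mathcal{B}$ containing $m$; the lemma follows at once from the standard fact that the base members through a point form a fundamental system of neighborhoods.

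First I would record the structural input that makes the left cosets behave. By Lemma~\ref{lem:bKX2} each $\bK_X$ with $X$ an edge object is a group, and being a submonoid its identity is the unit $(\id_{\cC_0},\id_{\iota_0})$ of $M_\Cip$; hence every element of $\bK_X$ is invertible in $M_\Cip$. This yields the usual coset dichotomy even though $M_\Cip$ is only a monoid: if $m \in m'\bK_X$, say $m = m'h$ with $h \in \bK_X$, then $m\bK_X = m'h\bK_X = m'\bK_X$, and since $m' = m h^{-1} \in m\bK_X$ the two cosets agree. In particular any $B \in \mathcal{B}$ containing $m$ has the form $m\bK_X$, so $\{\, B \in \mathcal{B} \mid m \in B \,\} = \frV_m$, which is the identification I will need at the end.

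Next I would verify the two base axioms. Covering is immediate: an edge object exists (Definition~\ref{defn:grids}(2), for $\cC$ non-empty), and $m = m\cdot(\id_{\cC_0},\id_{\iota_0}) \in m\bK_X$. For the refinement axiom, take $m \in B_1 \cap B_2$ with $B_i = m_i\bK_{X_i}$; by the coset dichotomy $B_i = m\bK_{X_i}$. The crux is to dominate $X_1$ and $X_2$ by a single edge object: since the full subcategory of edge objects is cofiltered (Lemma~\ref{lem:core_cofiltered2}) there is an edge object $Z$ together with morphisms $Z \to X_1$ and $Z \to X_2$ in $\cC_0$. Applying Lemma~\ref{lem:7-9-3} to each of these morphisms gives $\bK_Z \subseteq \bK_{X_1} \cap \bK_{X_2}$, hence $m\bK_Z \subseteq m\bK_{X_1} \cap m\bK_{X_2} = B_1 \cap B_2$ with $m \in m\bK_Z \in \mathcal{B}$. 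This is the only step carrying genuine content.

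Finally I would invoke the standard theorem that a covering family satisfying the refinement axiom is the base of a unique topology, for which the members of the base containing a point constitute a fundamental system of neighborhoods of that point. Combined with the identification $\frV_m = \{\, B \in \mathcal{B} \mid m \in B \,\}$ established above, this exhibits $\frV$ as a fundamental system of neighborhoods for that topology, proving the lemma. I expect the main obstacle to be exactly the refinement step, resting on the compatibility $\bK_Z \subseteq \bK_{X_1} \cap \bK_{X_2}$ from Lemma~\ref{lem:7-9-3} and the cofiltering of edge objects; by contrast, all the coset bookkeeping is routine once Lemma~\ref{lem:bKX2} guarantees that each $\bK_X$ is a group.
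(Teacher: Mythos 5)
Your proof is correct and takes essentially the same route as the paper: the paper likewise isolates the refinement step as the only real content, producing a single edge object $Z$ with morphisms to $X_1$ and $X_2$ (via $\Lambda$-connectedness of $\cC_0$ together with Lemma~\ref{lem:lower}, which is exactly what Lemma~\ref{lem:core_cofiltered2} packages) to get $\bK_Z \subset \bK_{X_1} \cap \bK_{X_2}$, with Lemma~\ref{lem:7-9-3} supplying the inclusion $\bK_Z \subset \bK_{X_i}$ that the paper leaves implicit. The only cosmetic difference is that you organize the verification as base axioms for the coset base and invoke the group property of Lemma~\ref{lem:bKX2} for the coset dichotomy, whereas the paper checks the neighborhood-filter condition directly, for which the submonoid property of $\bK_{X_i}$ already suffices.
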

\begin{proof}
We check below only some of the axioms for the set of subsets
to be a fundamental system of neighborhoods.
The rest is left to the reader.

Let $m, m_1, m_2 \in M_\Cip$
and $X_1, X_2$ be edge object.
Let us show that, 
if $m \in m_1 \bK_{X_1} \cap m_2 \bK_{X_2}$,
there exists an edge object 
$Y$ such that 
$Y \subset m_1 \bK_{X_1} \cap m_2 \bK_{X_2}$.
We can write $m=m_1 k_1=m_2 k_2$ for 
some $k_1 \in \bK_1$ and $k_2 \in \bK_2$.
We want to show that there exists 
an edge object $Y$ such that 
$k_1 \bK_Y \subset \bK_{X_1}$ 
and $k_2 \bK_Y \subset \bK_{X_2}$.
It suffices to find $Y$ such that 
$\bK_Y \subset \bK_{X_1} \cap \bK_{X_2}$.

Using the $\Lambda$-connectedness
of the grid, we see that 
for any $X, X' \in \cC_0$, there exists 
an object $Y \in \cC_0$ such that 
there are morphisms 
$Y \to X$ and $Y \to X'$.
Hence we have $\bK_Y \subset
\bK_X \cap \bK_{X'}$.
By Lemma~\ref{lem:lower}, $Y$ is an edge
object, and therefore the claim follows.
\end{proof}

\begin{lem}
The product map $M_\Cip \times M_\Cip \to M_\Cip$
is continuous when $M_\Cip$ is equipped with
the topology as above.
\end{lem}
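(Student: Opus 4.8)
The plan is to verify continuity of the product map $M_\Cip\times M_\Cip\to M_\Cip$ at an arbitrary point $(m_1,m_2)$. By the preceding lemma the sets $m\bK_X$ with $X$ an edge object form a fundamental system of neighborhoods of $m$, so it suffices, given an edge object $X$, to produce edge objects $X_1,X_2$ with
\[
(m_1\bK_{X_1})\,(m_2\bK_{X_2})\subseteq m_1 m_2\,\bK_X .
\]
I would take $X_2=X$ and reduce the whole assertion to the single inclusion $\bK_{X_1}\,m_2\subseteq m_2\,\bK_X$. Indeed, once this holds, any element $m_1 k_1 m_2 k_2$ with $k_1\in\bK_{X_1}$ and $k_2\in\bK_X$ can be rewritten, using associativity, as $m_1 m_2\,k' k_2$ with $k'\in\bK_X$; and $k' k_2\in\bK_X$ since $\bK_X$ is a group for $X$ edge (Lemma~\ref{lem:bKX2}), so the product lies in $m_1 m_2\,\bK_X$.

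The key tool is the coset description furnished by Lemma~\ref{lem:7-9-5}: for an edge object $X$ and $m=(\alpha,\gamma_\alpha)$, an element $m'=(\alpha',\gamma_{\alpha'})$ lies in $m\bK_X$ if and only if $\alpha'(X)=\alpha(X)$ and $\gamma_{\alpha'}(X)=\gamma_\alpha(X)$; one implication is Lemma~\ref{lem:7-9-5}, the other follows at once from the definition of $\bK_X$ and the composition law of Section~\ref{sec:MC}. Writing $m_2=(\alpha_2,\gamma_{\alpha_2})$ and taking $k=(\beta,\gamma_\beta)\in\bK_{\alpha_2(X)}$, I would evaluate $k\circ m_2$ on $X$ by means of $\gamma_{k\circ m_2}(X)=\gamma_\beta(\alpha_2(X))\circ\gamma_{\alpha_2}(X)$: since $\beta(\alpha_2(X))=\alpha_2(X)$ and $\gamma_\beta(\alpha_2(X))=\id$, one gets $(k m_2)(X)=\alpha_2(X)=m_2(X)$ and $\gamma_{k m_2}(X)=\gamma_{\alpha_2}(X)=\gamma_{m_2}(X)$. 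Thus $k m_2$ and $m_2$ agree on $X$, so the coset description yields $k m_2\in m_2\,\bK_X$; that is, $\bK_{\alpha_2(X)}\,m_2\subseteq m_2\,\bK_X$.

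The one genuine obstacle is that $\alpha_2(X)$ need not be an edge object (by Lemma~\ref{lem:7-9-6} it is one precisely when $m_2$ is invertible), so $m_1\bK_{\alpha_2(X)}$ is not an admissible basic neighborhood and I cannot simply set $X_1=\alpha_2(X)$. I would circumvent this using the cofinality of the edge objects in $\cC_0$ (Lemma~\ref{lem:C1_cofinal}): choose an edge object $X_1$ admitting a morphism $X_1\to\alpha_2(X)$ in $\cC_0$. Then Lemma~\ref{lem:7-9-3} gives $\bK_{X_1}\subseteq\bK_{\alpha_2(X)}$, whence $\bK_{X_1}\,m_2\subseteq\bK_{\alpha_2(X)}\,m_2\subseteq m_2\,\bK_X$. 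With this $X_1$ and $X_2=X$ the displayed inclusion holds, which establishes continuity of the product map. The remaining care is purely the bookkeeping of the natural isomorphisms under the composition law, which is routine given Section~\ref{sec:MC}.
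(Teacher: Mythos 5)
Your proof is correct and follows essentially the same route as the paper's: both reduce continuity to the one-sided inclusion $\bK_Y m \subset m\bK_X$ for an edge object $Y$ chosen via cofinality (Lemma~\ref{lem:C1_cofinal}) to admit a morphism to $\alpha(X)$, and both verify it by combining Lemma~\ref{lem:7-9-3} with the coset description of $m\bK_X$ coming from Lemma~\ref{lem:7-9-5} and the composition law. Your only additions---spelling out the reduction from the two-variable product map to this inclusion, and routing through the auxiliary submonoid $\bK_{\alpha_2(X)}$ for a possibly non-edge object---are harmless elaborations of steps the paper leaves implicit.
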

\begin{proof}
We will prove the following claim:
for any 
$m=(\alpha, \gamma_\alpha)
\in M_\Cip$ and any edge object $X$,
there exists an edge object $Y$ such that
$\bK_Y m \subset m \bK_X$.  
From Lemma~\ref{lem:7-9-5}, it follows that
the set $m\bK_X$
equals
\[
\{ (\beta, \gamma_\beta) \,|\,
\beta(X)=\alpha(X), 
\gamma_\beta(X)=\gamma_\alpha(X)
\}.
\]
From Lemma~\ref{lem:C1_cofinal}, 
it follows that 
there exists an edge object $Y$
such that there is a morphism 
$Y \to \alpha(X)$.
Now, take $(\delta, \gamma_\delta) \in \bK_Y$; from Lemma~\ref{lem:7-9-3}, it
follows that
$\delta(\alpha(X)=\alpha(X)$
and $\gamma_\delta(\alpha(X))=\id_{\alpha(X)}$.
Let $(\beta, \gamma_\beta)=(\delta, \gamma_\delta) \circ m$.
We then have
$\beta(X)=\alpha(X)$ and $\gamma_\beta(X)=\gamma_\alpha(X)$.
Hence, $(\beta, \gamma_\beta)\in m \bK_X$,
and the claim follows.
\end{proof}
\begin{cor}
The absolute Galois monoid is a topological monoid
for the topology constructed as above.
\end{cor}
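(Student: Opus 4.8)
The plan is to treat this corollary as a formal consequence of the two lemmas just proved, together with the definition of a topological monoid. Recall that a topological monoid is nothing more than a monoid whose underlying set carries a topology for which the multiplication map is continuous; since a monoid has no inversion, there is no further continuity condition to impose (this is the only point where the situation is lighter than for a topological group). The monoid structure on $M_\Cip$ was constructed in Section~\ref{sec:MC}, the topology on its underlying set was produced in the first of the two preceding lemmas (as the topology generated by the fundamental system of neighborhoods $\frV$), and the continuity of the multiplication map $M_\Cip \times M_\Cip \to M_\Cip$ for that topology is precisely the content of the second preceding lemma. Assembling these three facts yields the corollary.

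Concretely, first I would invoke the first lemma to fix the topology on $M_\Cip$, then invoke the second lemma to conclude that multiplication is continuous; since $M_\Cip$ is already a monoid, the definition of a topological monoid is met and the proof is complete. There is no residual step.

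If one wished to make the continuity of multiplication visible at the level of this corollary rather than merely citing the second lemma, one would verify joint continuity directly. Given a basic neighborhood $m_1 m_2 \bK_X$ of a product, apply the second lemma to $m_2$ and $X$ to obtain an edge object $X_1$ with $\bK_{X_1} m_2 \subseteq m_2 \bK_X$, and use $\Lambda$-connectedness together with Lemma~\ref{lem:7-9-3} (which gives $\bK_{X_2} \subseteq \bK_X$ whenever there is a morphism $X_2 \to X$ in $\cC_0$) to obtain an edge object $X_2$ admitting a morphism to $X$. Then for $k_1 \in \bK_{X_1}$ and $k_2 \in \bK_{X_2}$ one writes $m_1 k_1 m_2 k_2 = m_1 (k_1 m_2) k_2 = m_1 m_2 k' k_2$ with $k' \in \bK_X$; since $\bK_X$ is a group (Lemma~\ref{lem:bKX2}) and $k_2 \in \bK_{X_2} \subseteq \bK_X$, one gets $k' k_2 \in \bK_X$, so $(m_1 \bK_{X_1})(m_2 \bK_{X_2}) \subseteq m_1 m_2 \bK_X$. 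This is exactly the mechanism underlying the second lemma.

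Because everything of substance has already been established in the two preceding lemmas, I expect no genuine obstacle at this stage. The only care required is to state the definition of topological monoid correctly and to note that no inversion-continuity hypothesis enters, which is consistent with $M_\Cip$ being a monoid and not, in general, a group.
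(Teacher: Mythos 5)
Your proposal is correct and matches the paper's proof, which simply says the corollary is immediate from the preceding lemma on continuity of multiplication (together with the topology fixed by the lemma before it). Your optional third paragraph goes slightly beyond the paper by spelling out the joint-continuity verification that the lemma's one-sided coset inclusion $\bK_{X_1} m_2 \subseteq m_2 \bK_X$ leaves implicit, and it does so correctly, using $\bK_{X_2} \subseteq \bK_X$ via Lemma~\ref{lem:7-9-3} and the fact that $\bK_X$ is closed under composition.
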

\begin{proof}
This follows immediately from the previous lemma.
\end{proof}

\begin{rmk}
\label{rmk:equivalence smooth continuous}
The category of smooth $M_\Cip$-sets defined in 
Section~\ref{defn:smooth sets} is canonically
equivalent to the category of discrete sets 
with continuous action of the topological 
monoid $M_\Cip$.  
This follows from the definitions; see also \cite[p.151]{MM}.
\end{rmk}

\subsection{Locally profinite groups}
\label{sec:atomic discussions}
As an application of our main theorem, 
we obtain a `reconstruction' theorem
as follows.
\begin{thm}
\label{thm:reconstruction}
Let $(\cC, J)$ be a $Y$-site.
Suppose that the topology is atomic
and suppose that Condition (1) of the 
cardinality conditions holds true.
Then, there exists a locally 
profinite group $G$ 
such that the topos $\Shv(\cC,J)$
is equivalent to the category of 
discrete sets with continuous action
of $G$.

If, moreover, there exists a final object
in $\cC$, then the locally profinite group
is profinite.
\end{thm}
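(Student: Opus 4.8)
The plan is to identify the desired group $G$ with the absolute Galois monoid $M_\Cip$ of a suitably chosen grid, and then to verify the topological hypotheses. Since Condition~(1) guarantees finite hom sets, for any object $X_0$ the overcategory $\cC(\cT(J))_{/X_0}$ has finite hom sets and so satisfies cardinality Condition~(1); hence Proposition~\ref{cor:grid existence} produces a grid $(\cC_0,\iota_0)$. (If $\cC$ is empty the topos is trivial, so we assume $\cC$ nonempty.) Condition~(1) likewise supplies the hypothesis of the second assertion of Theorem~\ref{thm:Galois_main}, so $\omega_\Cip : \Shv(\cC,J)\to\MSet$ is an equivalence, and by Remark~\ref{rmk:equivalence smooth continuous} the target is canonically the category of discrete sets with continuous action of the topological monoid $M_\Cip$ of Section~\ref{sec:topological monoid structure}. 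Thus everything reduces to showing that, in the atomic case, $M_\Cip$ is a group which is locally profinite under Condition~(1).

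That $M_\Cip$ is a group is where atomicity enters. Since $J$ is atomic, $\cT(J)=\Mor(\cC)$, so every morphism of $\cC_0$ is of type $J$ and every object is an edge object. Given $(\alpha,\gamma_\alpha)\in M_\Cip$, pick any object $X$ of $\cC_0$; then $X$ and $\alpha(X)$ are both edge objects, so Lemma~\ref{lem:7-9-6} shows $(\alpha,\gamma_\alpha)$ is invertible. Hence $M_\Cip$ is a group, and it is Hausdorff because $\bigcap_X\bK_X=\{1\}$: an element lying in every $\bK_X$ fixes every object with $\gamma_\alpha(X)=\id$, and since $\cC_0$ is a skeletal poset this forces $\alpha=\id_{\cC_0}$ and $\gamma_\alpha=\id$.

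The core of the argument is that each $\bK_X$ is a compact open subgroup and that these form a neighbourhood basis of $1$. By the construction of Section~\ref{sec:topological monoid structure} the $\bK_X$ are open and form a basis at $1$, so the issue is compactness. Lemma~\ref{lem:psi_isom} gives a group isomorphism $\psi_X : H_X=\varprojlim_{f\in\Obj I_X}\Gal(f)\xto{\cong}\bK_X$, and under Condition~(1) each $\Gal(f)=\Aut_{\iota_0(X)}(\iota_0(Y))$ is finite, so $H_X$ is profinite. The step I expect to be the main obstacle is checking that $\psi_X$ is a \emph{homeomorphism}, i.e.\ that the subspace topology on $\bK_X$ agrees with the profinite topology on $H_X$. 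For this I would show that for each object $f:Y\to X$ of $I_X$ the subgroup $\bK_Y\subset\bK_X$ equals the kernel of the projection $\bK_X\to\Gal(f)$: by Lemma~\ref{lem:bKX1} any element of $\bK_X$ fixes $Y$ with $\gamma_\alpha(Y)\in\Gal(f)$, so it lies in $\bK_Y$ exactly when $\gamma_\alpha(Y)=\id$; and these kernels are the standard basic open subgroups of $H_X$. Cofinality of the $\bK_Y$ among all $\bK_Z$ inside $\bK_X$ follows from $\Lambda$-connectedness together with Lemma~\ref{lem:I_X cofinal} and Lemma~\ref{lem:lower}. Consequently each $\bK_X$ is compact, $M_\Cip$ is locally compact Hausdorff with a basis of compact open subgroups at $1$, hence locally profinite in the sense of Remark~\ref{rmk:smooth}. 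Taking $G=M_\Cip$ gives the first assertion.

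Finally, suppose $\cC$ has a final object, and let $X_*$ be the edge object with $\iota_0(X_*)$ final. Every object $Y$ of $\cC_0$ admits a morphism $Y\to X_*$, unique since $\cC_0$ is thin: the unique map $\iota_0(Y)\to\iota_0(X_*)$ lifts through the equivalence $\iota_{0,Y/}$ of Definition~\ref{defn:grids}(4). Thus $X_*$ is terminal in $\cC_0$. For any $(\alpha,\gamma_\alpha)\in M_\Cip$, the functor $\alpha$ is an automorphism of the skeletal poset $\cC_0$ and hence fixes its unique terminal object, so $\alpha(X_*)=X_*$; moreover $\gamma_\alpha(X_*)$ is an automorphism of the final object $\iota_0(X_*)$ and is therefore the identity. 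Hence $(\alpha,\gamma_\alpha)\in\bK_{X_*}$, giving $M_\Cip=\bK_{X_*}$, which is profinite by the previous paragraph. This shows $G$ is profinite.
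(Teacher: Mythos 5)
Your proposal is correct and follows essentially the same route as the paper's own proof: a grid exists by Proposition~\ref{cor:grid existence} under Condition (1), the equivalence comes from Theorem~\ref{thm:Galois_main} together with Remark~\ref{rmk:equivalence smooth continuous}, atomicity forces $M_\Cip$ to be a group via Lemma~\ref{lem:7-9-6}, and local profiniteness is obtained by transporting the profinite topology of $H_X$ to $\bK_X$ through $\psi_X$ --- your identification $\bK_Y=\ker(\bK_X\to\Gal(f))$ for $f:Y\to X$ in $I_X$ and the cofinality argument via Lemma~\ref{lem:yama2} make explicit precisely the two steps that the paper's Lemma~\ref{prop:Galois is locally profinite} leaves as ``one can check''. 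Your final-object argument also matches the paper's, with the same small tacit point in both (that any object of $\cC_0$ sent by $\iota_0$ to a final object must equal $X_*$, which one deduces from $\Lambda$-connectedness, the undercategory equivalence of Definition~\ref{defn:grids}(4), and skeletality of $\cC_0$).
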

\begin{proof}
As the cardinality Condition (1) holds true,
by Proposition~\ref{cor:grid existence}, 
there exists 
a grid $(\cC_0, \iota_0)$ 
for this $Y$-site.
From Theorem~\ref{thm:Galois_main},
it follows that 
the topos is equivalent 
to the category of smooth $M_\Cip$-sets.
Now as the topology is atomic
(Section~\ref{sec:atomic topology}),
by definition all objects of $\cC$
are edge objects.
It follows from Lemma~\ref{lem:7-9-6}
that the associated absolute 
Galois monoid $M_\Cip$ is a group.

We can use the procedure in Section~\ref{sec:topological monoid structure} to equip $M_\Cip$ with 
the structure of a topological group.
We noted in Remark~\ref{rmk:equivalence smooth continuous} 
that the category of smooth $M_\Cip$-sets
is canonically equivalent to the category of 
discrete sets with continuous action of $M_\Cip$
for this topological group structure.
The claim then follows from the following 
Lemma~\ref{prop:Galois is locally profinite}.
\end{proof}
\begin{lem}
\label{prop:Galois is locally profinite} 
Let the setup be as above. Then, equipped with the structure 
of a topological group as in 
Section~\ref{sec:topological monoid structure}, the absolute Galois monoid $M_\Cip$ is locally profinite.
Moreover, if there exists a final object in $\cC$,
then $M_\Cip$ is profinite.
\end{lem}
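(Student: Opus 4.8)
The plan is to check the three defining properties of a locally profinite group for $M_\Cip$ with the topology of Section~\ref{sec:topological monoid structure}: that it is Hausdorff, that the subgroups $\bK_X$ (for $X$ an edge object) are compact open, and that they form a neighbourhood basis of the unit. The last is built into the topology, since $\frV_{\id}=\{\bK_X\}$; and each $\bK_X$, being a subgroup which is a neighbourhood of $\id$, is open. Recall that because the topology is atomic every object of $\cC_0$ is an edge object, so by Lemma~\ref{lem:7-9-6} $M_\Cip$ is a group. For the Hausdorff property I would verify $\bigcap_X\bK_X=\{\id\}$: an element lying in every $\bK_X$ has $\alpha(X)=X$ and $\gamma_\alpha(X)=\id$ for all $X$, and since $\cC_0$ is a poset a functor fixing every object is the identity, forcing $(\alpha,\gamma_\alpha)=(\id,\id)$.

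The heart of the matter is the compactness of each $\bK_X$. I would use the group isomorphism $\psi_X\colon H_X\xto{\cong}\bK_X$ of Lemma~\ref{lem:psi_isom}, where $H_X=\varprojlim_{f\in\Obj I_X}\Gal(f)$. Under cardinality Condition~(1) each $\Gal(f)=\Aut_{\iota_0(X)}(\iota_0(Y))$ is finite, being a subset of $\Hom_\cC(\iota_0(Y),\iota_0(Y))$, so $H_X$ is profinite in its projective-limit topology. The step requiring care is that the subspace topology on $\bK_X$ inherited from $M_\Cip$ coincides with the profinite topology carried over by $\psi_X$. By $\Lambda$-connectedness of $\cC_0$ and Lemma~\ref{lem:I_X cofinal}, the subgroups $\bK_Y$ with $(f\colon Y\to X)\in\Obj I_X$ are cofinal among the basic neighbourhoods of $\id$ in $\bK_X$; and combining Lemma~\ref{lem:bKX1} (giving $\alpha(Y)=Y$) with Lemma~\ref{lem:7-9-3} (giving $\bK_Y\subseteq\bK_X$) one identifies $\bK_Y$ with the kernel of $\bK_X\xto{\phi_X}H_X\to\Gal(f)$, which is exactly a basic open neighbourhood of $\id$ for the profinite topology of $H_X$. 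Hence $\psi_X$ is a homeomorphism and $\bK_X$ is compact. Together with the first paragraph this exhibits $M_\Cip$ as a locally compact Hausdorff group with a neighbourhood basis of the unit by compact open subgroups, i.e. as locally profinite.

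For the final assertion, assume $\cC$ has a final object $*$. As $\iota_0$ is essentially surjective there is an edge object $E$ with $\iota_0(E)\cong *$, and I claim $M_\Cip=\bK_E$, so that $M_\Cip$ is compact and therefore profinite. Since $*$ is final, $\Hom_\cC(*,*)=\{\id_*\}$, so $\Aut_\cC(\iota_0(E))$ is trivial; and $E$ is the unique object of $\cC_0$ with $\iota_0$-image $\cong *$. To see uniqueness I would take two such objects $E,E'$, choose a common lower bound $W$ by $\Lambda$-connectedness, and observe that inside $\cC_{\iota_0(W)/}$ the two morphisms $\iota_0(W)\to\iota_0(E)$ and $\iota_0(W)\to\iota_0(E')$ are isomorphic, via the unique isomorphism between two objects mapping to a final object; transporting this isomorphism through the equivalence $\cC_{0,W/}\xto{\cong}\cC_{\iota_0(W)/}$ of Lemma~\ref{lem:C0_core}(4) and using that $\cC_0$ is skeletal gives $E=E'$. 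Granting this, for any $(\alpha,\gamma_\alpha)\in M_\Cip$ the object $\alpha(E)$ again maps to $*$, whence $\alpha(E)=E$, and $\gamma_\alpha(E)\in\Aut_\cC(\iota_0(E))=\{\id\}$; thus $(\alpha,\gamma_\alpha)\in\bK_E$ and $M_\Cip=\bK_E$ is compact.

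The hard part will be the two identification steps: matching the subspace topology of $\bK_X$ with the profinite topology of $H_X$, and pinning down the uniqueness of the object over the final object. Both amount to combining the grid axiom of Lemma~\ref{lem:C0_core}(4) with the facts that $\cC$ is an $E$-category and $\cC_0$ is a skeletal poset.
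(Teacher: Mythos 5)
Your proof is correct and follows essentially the same route as the paper's: transport the profinite topology of $H_X$ to $\bK_X$ via $\psi_X$ (Lemma~\ref{lem:psi_isom}), show the inclusion $\bK_X \subset M_\Cip$ is a homeomorphism onto a compact open subgroup by matching the neighbourhood basis $\{\bK_Y\}_{(f:Y\to X)\in \Obj I_X}$ against the kernels of $H_X \to \Gal(f)$ (the identification $\bK_Y = \Ker(\bK_X \to \Gal(f))$ is the same fact underlying the exact sequence \eqref{seq:7-9}), and prove $M_\Cip = \bK_E$ when a final object exists. You merely spell out details the paper leaves as ``one can check'' --- the Hausdorff verification, the two-sided cofinality argument, and the uniqueness of the object of $\cC_0$ lying over the final object --- and these fillings-in are accurate.
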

\begin{proof}
Let $X \in \cC_0$.
Note that under the cardinality condition,
$H_X$ is by definition a profinite group.
Let us equip $\bK_X$ with the structure of 
a profinite group via the isomorphism
$\psi_X: H_X \to \bK_X$ 
(see Section~\ref{sec:defn phiX}).
To prove the proposition, it suffices to 
show that the inclusion
$\bK_X \subset M_\Cip$ is a continuous and open
map of topological groups.

First, for any morphism $Y \to X$, one can check that the 
induced inclusion $\bK_X \to \bK_Y$ 
is a continuous open map.  
Second, 
given an open subgroup $\bK' \subset \bK_X$,
one can find a morphism $Y \to X$
such that $\bK_Y \subset \bK'$.
These two statements can be used 
to prove the claim.

Now suppose that there exists a final object.
As $\iota_0$ is essentially surjective,
there is an object $X \in \cC_0$ that
is sent to a final object in $\cC$.
Let $(\alpha, \gamma_\alpha) \in M_\Cip$.
Then, as $\cC_0$ is thin,
we have $\alpha(X)=X$ and $\gamma_\alpha(X)=\id_X$.
This means that the inclusion
$\bK_X \subset M_\Cip$ is an equality.
We saw above 
that the inclusion is a homeomorphism onto its image.
As $\bK_X$ is profinite, the claim follows.
\end{proof}

\section{$Y$-sites and grids for locally prodiscrete groups}
\label{sec:locally prodiscrete}
We define locally prodiscrete groups and 
a class of topological groups in Section~\ref{sec:certain top groups}
which includes locally prodiscrete groups.
We then construct a $Y$-site and a grid from a group $G$ in this class.
It shown that the absolute Galois monoid is $\wh{G}$ 
(to be defined below), and the fiber functor induces 
an equivalence.

In \cite[p.150, Section 9]{MM}, given a topological group $G$,
MacLane and Moerdijk construct a site such that the associated
topos is equivalent to the category of discrete sets with 
continuous $G$-action.     We use essentially the same site 
and view their fiber functor as a guide to the construction 
of our grid.    As the equivalence is already proved in \cite{MM}, 
the 
emphasis of this section 
is on the computation of the absolute Galois monoid.

In Section~\ref{sec:locally prodiscrete monoids},
we have theorems (Theorems~\ref{thm:CM_Y-site},~\ref{thm:CM_grid}, \ref{thm:locally prodiscrete monoid})
generalizing Lemma~\ref{lem:Ghat isom to M}
to the case of locally prodiscrete monoids.
A different proof (not using \cite{MM})
is provided there.

\subsection{The construction of 
a $Y$-site and a grid}

First, we give the definition of a locally prodiscrete
group.
\begin{defn}
\label{def:prodiscrete groups}
By a {\it prodiscrete group},
we mean a topological group
that is a filtered 
limit of discrete groups in the 
category of topological groups.
A {\it locally prodiscrete group} 
is a topological group
such that there exists an open subgroup which is 
a prodiscrete group.  
\end{defn}

\subsubsection{A certain class of topological groups}
\label{sec:certain top groups}
We consider the following class of topological
groups, which is more general than 
the class of locally prodiscrete groups.

Let $G$ be a topological group.   Consider the 
set of open subgroups
$\frV=\{
H \subset G
\}$,
where $H$ satisfies the following property:
For any open subgroup $U \subset G$,
there exists an open subgroup $K$
such that $K \subset U \cap H$
and $K$ is a normal subgroup of $H$.
We consider those topological groups
such that the set $\frV$ is non-empty.

\subsubsection{}
\label{sec:site from G}
We can construct a $Y$-site and a grid,
starting from a topological group as 
in Section~\ref{sec:certain top groups}.

Let us construct the site.
Let $G$ be a topological group as in 
Section~\ref{sec:certain top groups}.
Let $\cC$ be the category of discrete left $G$-sets
consisting of a single $G$-orbit, which is isomorphic to 
the $G$-set of the form $G/H$ for some $H \in \frV$.
Then the category $\cC$ is semi-cofiltered.   If we 
equip $\cC$ with the atomic topology $J$, 
we obtain a $Y$-site.

We can construct a grid of the $Y$-site 
in the following manner.
Let $P_G$ denote the set of open subgroups which 
belongs to $\frV$.
We regard $P_G$ as a partially ordered set with respect 
to the inclusions.
We denote by $\cC_0$ the poset (viewed as a category)
$\cC_{P_G}$ associated with 
the partially ordered set $P_G$.
The group $G$ acts from the left on the set 
$P_G$ by conjugation, i.e., 
$g \cdot \bK:=g \bK g^{-1}$.
By associating $G/\bK$ to each element 
$\bK$ of $P_G$,
we obtain a functor $\iota_0 : \cC_0 \to \cC$.
Then the pair $(\cC_0,\iota_0)$ is a grid for
the $Y$-site $(\cC,J)$.

\subsubsection{}
Given a topological group $G$ 
as in Section~\ref{sec:certain top groups},
we construct a locally prodiscrete group $\wh{G}$
as follows.    

We set $\widehat{G}=\varprojlim_{H \in \frV} G/H$
to be the limit of discrete sets $G/H$ 
in the category of topological spaces.
We can equip $\widehat{G}$ with the 
structure of a topological group as follows.
For two elements 
$g_1=(g_{1,H})_{H\in \frV}, 
g_2=(g_{2,H})_{H\in \frV} 
\in \widehat{G}$,
we define the product $g_1g_2 \in \widehat{G}$
as follows.   We set the $H$-component
of $g_1g_2$ to be 
$\tilde{g}_{1,H'}g_{2,H} H \in G/H$.
Here, we first take a lift 
$\tilde{g}_{2,H}\in G$ of the 
element $g_{2,H} \in G/H$,
set $H'=\tilde{g}_{2,H} H \tilde{g}_{2,H}^{-1}$,
and take a lift 
$\tilde{g}_{1,H'} \in G$
of $g_{H'} \in G/H'$.
The resulting element does not depend on 
the choice of the lifts.

Let us construct the inverse $s=(s_H)_{H \in \frV}$
of $g=(g_H)_{H \in \frV}$ as follows.
For  $H \in \frV$, we take a lift $\wt{g}_H \in G$ 
of $g_H \in G/H$.   
Take $K \in \frV$.  By the definition of $\frV$ 
(Section~\ref{sec:certain top groups}),
there exists a normal subgroup $K'$
of $K$ such that 
$K' \subset \wt{g}_K^{-1} K \wt{g}_K \cap K$.
Then, we set $s_K=\wt{g}_{K'}^{-1}K \in G/K$.

The topological group $\widehat{G}$
is locally prodiscrete.   Take an open subgroup $H \subset G$
belonging to $\frV$.  Consider its image in $\wh{G}$, which is open.
As $H$ belongs to $\frV$, the image is a prodiscrete group.

There is a canonical morphism of topological 
groups $G \to \widehat{G}$.  We note that this morphism
induces an equivalence of categories from the category of 
discrete $\wh{G}$-sets
to the category of discrete $G$-sets.

\begin{lem}
\label{lem:G isom to Ghat}
Let $G$ be a topological group.
Then $G$ is locally prodiscrete 
if and only if
$G$ satisfies the condition in
Section~\ref{sec:certain top groups} and 
the morphism $G \to \wh{G}$
is an isomorphism of topological groups.
\end{lem}
\begin{proof}
The ``if'' part follows from the argument preceding this lemma.
Let us prove the ``only if'' direction.
If $G$ is locally prodiscrete,
then there exists an open subgroup $H$
which is prodiscrete.
Write $H=\varprojlim_i H_i$
with $H_i$ discrete.
Without loss of generality, we may and will assume each
$H \to H_i$ is surjective.
Let $U_i=\Ker (H \to H_i)$.
Then $U_i$ is a normal subgroup of $H$
and $\{U_i\}_i$ forms a fundamental system
of neighborhoods of 1 in $G$.
It follows that each $U_i$ belongs to 
$\mathfrak{V}$; 
we see that $\{U_i\}$ is cofinal in $\mathfrak{V}$.
Hence $G$ satisfies the condition in Section~\ref{sec:certain top groups},
and we see that $G \to \wh{G}$ 
is an isomorphism.
\end{proof}

\subsubsection{}
We compute the absolute Galois monoid for the grid above.
\begin{lem}
\label{lem:Ghat isom to M}
Let $G$ be a topological group
as in Section~\ref{sec:certain top groups}.
The absolute Galois monoid $M_\Cip$
associated with the grid constructed as in Section~\ref{sec:site from G}
is isomorphic to $\widehat{G}$.
\end{lem}
\begin{proof}
We construct an isomorphism 
$\widehat{G} \to M_\Cip$ as 
follows.
Let $g=(g_H)_{H\in \frV} \in \widehat{G}$.
For an object $H$ of $P_G$,
we set $\alpha_g H=\tilde{g}_H H \tilde{g}_H^{-1}$
for some lift $\tilde{g}_H$ of $g_H \in G/H$.
We have 
$\iota_0(H)=G/H$
and $\iota_0(\alpha_g(H))=G/(gHg^{-1})$.
We can construct a map
\[
\iota_0(H) \to \iota_0(\alpha_g(H))
\]
by sending the coset 
$h H$
to the coset 
$h g^{-1}\cdot g H g^{-1}$.
Then, one can check that these
form a natural isomorphism $\gamma_{\alpha_g}$.
Thus we have a map that sends $g \in \widehat{G}$
to $(\alpha_g, \gamma_{\alpha_g}) \in M_\Cip$.

The proof that this map is an isomorphism is left to the readers.
\end{proof}

\subsection{}
For the associated fiber functor and the topos,
we have the following claim.
\begin{prop}
\label{prop:locally prodiscrete}
Let $G$ be a locally prodiscrete group.
Then there exists a 
$Y$-site and a grid such that 
the fiber functor associated with the grid 
induces an equivalence
of the sheaves and the category of 
discrete sets with continuous $G$-action.
\end{prop}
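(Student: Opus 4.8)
The plan is to take for the $Y$-site and grid exactly those produced by the construction of Section~\ref{sec:locally prodiscrete}, and to deduce the equivalence from the already-known computation of the topos in \cite{MM} together with the identification $M_\Cip \cong \widehat{G}$ of Lemma~\ref{lem:Ghat isom to M}.

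First I would check that a locally prodiscrete group $G$ belongs to the class of topological groups considered in Section~\ref{sec:certain top groups}, i.e.\ that the set $\frV$ is non-empty. Let $U_0 \subseteq G$ be an open prodiscrete subgroup. As a filtered limit of discrete groups, $U_0$ admits a fundamental system of neighborhoods of the identity consisting of open normal subgroups; hence for every open subgroup $U \subseteq G$ the intersection $U \cap U_0$, being open in $U_0$, contains an open normal subgroup $K$ of $U_0$ with $K \subseteq U \cap U_0$. Thus $U_0 \in \frV$ and $\frV \neq \emptyset$. (In particular $G$ is automatically separated, though not necessarily complete; this is the only difference from the hypotheses of the preceding corollary.) Applying the construction of Section~\ref{sec:locally prodiscrete} then yields a $Y$-site $(\cC, J)$ --- the atomic topology on the category of transitive discrete $G$-sets $G/H$, $H \in \frV$ --- together with the grid $(\cC_0, \iota_0) = (\cC_{P_G},\, H \mapsto G/H)$.

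Next I would assemble the target of the fiber functor. Since $J$ is atomic, every object of $\cC_0$ is an edge object and, by Lemma~\ref{lem:7-9-6}, the monoid $M_\Cip$ is a group; by Lemma~\ref{lem:Ghat isom to M} it is isomorphic, as a topological group for the topology of Section~\ref{sec:topological monoid structure}, to $\widehat{G}$. By Remark~\ref{rmk:equivalence smooth continuous} the category $\MSet$ of smooth $M_\Cip$-sets is canonically the category of discrete sets with continuous $\widehat{G}$-action, and the canonical continuous homomorphism $G \to \widehat{G}$ induces an equivalence between discrete continuous $G$-sets and discrete continuous $\widehat{G}$-sets. Composing these identifications, the fiber functor $\omega_\Cip \colon \Shv(\cC,J) \to \MSet$ becomes a functor from $\Shv(\cC,J)$ to the category of discrete continuous $G$-sets; it then remains to see that it is an equivalence.

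For this last point the cardinality hypotheses of Theorem~\ref{thm:Galois_main} may fail, so I would not appeal to that theorem but instead to \cite{MM}. Our site is essentially the one MacLane and Moerdijk attach to $G$, and their result already identifies $\Shv(\cC,J)$ with the category of discrete continuous $G$-sets via the stalk at the canonical point. By the formula in Section~\ref{sec:fiber functor}, $\omega_\Cip(F) = \varinjlim_{H \in \frV} F(G/H)$, the colimit being taken along the transition maps $F(G/H') \to F(G/H)$ for $H \subseteq H'$; this is precisely that stalk functor. I would therefore identify $\omega_\Cip$ with the MacLane--Moerdijk fiber functor and conclude that it is an equivalence. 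The main obstacle, and the step deserving the most care, is the compatibility of the two group actions: one must verify that the action of $M_\Cip \cong \widehat{G}$ on $\varinjlim_{H} F(G/H)$ defined in Section~\ref{sec:fiber functor}, transported to $G$ along $G \to \widehat{G}$, coincides with the natural translation action of $G$ on the stalk used in \cite{MM}. Once this compatibility is checked, the chain of equivalences above shows that $\omega_\Cip$ induces the asserted equivalence between $\Shv(\cC,J)$ and the category of discrete sets with continuous $G$-action.
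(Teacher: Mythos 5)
Your proposal is correct and takes essentially the same route as the paper's own proof: both use the site and grid of Section~\ref{sec:locally prodiscrete}, identify the fiber functor $\omega_\Cip(F)=\varinjlim_{H\in\frV}F(G/H)$ with the MacLane--Moerdijk stalk functor, invoke \cite[p.154, Theorem 2]{MM} for the equivalence, and transport it along the equivalence between discrete continuous $G$-sets and $\wh{G}$-sets; your explicit check that $\frV\neq\emptyset$ for locally prodiscrete $G$ and your flagged action-compatibility verification are precisely the points the paper compresses into ``essentially.'' One minor caution: Lemma~\ref{lem:7-9-6} is stated under the standing cardinality hypotheses of Sections~\ref{sec:ess_surj}--\ref{sec:enough points}, which may fail here, but this invocation is dispensable since $M_\Cip$ is already a group by the isomorphism with $\wh{G}$ of Lemma~\ref{lem:Ghat isom to M}.
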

\begin{proof}
This proposition essentially follows from 
\cite[p.154, Theorem 2]{MM}.
The site constructed above 
is essentially that of loc.\ cit.\ and one can 
also check that the fiber functor is essentially
the functor considered there.
Their theorem says that 
the functor induces an equivalence.
Using the equivalence of the categories of discrete $G$-sets 
and $\wh{G}$-sets,
we obtain the proposition.
\end{proof}

\section{Examples}
\label{sec:examples}
We give two examples.
The examples in Section~\ref{sec:monoid example} 
are the simplest for which the Galois groups
are the abelian group of integers and 
the monoid of natural numbers.
The example in Section~\ref{sec:motivation example}
served as the motivation to write this paper.

\subsection{The simplest examples}
\label{sec:monoid example}
Let $\cC$ be the following category.
The objects of $\cC$ are the sets $[0]$,
$[1]$, $[2]$, $\ldots$ where $[n]$ denote the
set $\{0,1,\ldots,n-1\}$ for $n \ge 0$. 
For two integers $m,n \ge 0$, the morphisms 
from $[m]$ to $[n]$ in $\cC$ are the maps
$f:[n] \to [m]$ satisfying $f(i+1)=f(i)+1$
for $i=0,\ldots,n$ (this is not a typo: the morphisms
go in the ``opposite'' direction).
The category $\cC$ is $\frU$-small, $\Lambda$-connected, semi-cofiltered,
and is an $E$-category.
One can check that any morphism in $\cC$ is a monomorphism.
Hence, any morphism in $\cC$ is a Galois covering whose Galois group
is isomorphic to $\{1\}$.

Let $\cT=\Mor(\cC)$ and let $\cT_{+}$ denote the set of morphisms $f$ in $\cC$ satisfying $f(0)=0$.
Then both $\cT$ and $\cT_{+}$ are semi-localizing collections of
morphisms in $\cC$.
The pairs $(\cC,J_\cT)$ and $(\cC,J_{\cT_+})$
are $B$-sites
that have enough Galois coverings.
We note that $\cT = \cT(J_\cT)$ and $\cT_+ = \cT(J_{\cT_+})$.
In particular the notation $\cC(\cT_+)$ makes sense.
As the set $\Hom_\cC([m],[n])$ is a finite set for
any $m,n \ge 0$, it follows from Lemma \ref{lem:C0_core} that
both $(\cC, J_\cT)$ and 
$(\cC, J_{\cT_+})$ admit grids.
We can explicitly construct grids as follows.

Let $\cC_0$ be the following category: the objects of $\cC_0$
are the finite sets $S$ of the form $S = \{a,a+1,\ldots,b\}$ for some integers
$a,b \in \Z$ with $a \le b$. The morphisms in $\cC_0$ are the opposite of 
the inclusions, i.e., the category $\cC_0$ is thin, and for any two objects 
$S_1$, $S_2$ of $\cC_0$, there exists a morphism from $S_1$ to $S_2$ in $\cC_0$ 
if and only if $S_1 \supset S_2$.
Let $\cC'_{+,0}$ denote the full subcategory of $\cC_0$ whose
objects are $[0]$, $[1]$, $[2]$, $\ldots$.
Let $\iota: \cC_0 \to \cC$ denote the functor which sends
$\{a,a+1,\ldots,b\}$ to $[b-a]$.
Then one can check easily that the pair $(\cC_0,\iota)$
is a grid for $\cC$.  
The pair $(\cC'_{+,0},\iota|_{\cC'_{+,0}})$
is a pregrid for $(\cC, J_{\cT_+})$.
Let $\cC_{+,0}$ denote the full subcategory of $\cC_0$
whose objects are the sets of the form 
$\{a,a+1,\ldots,b\}$ for some $a,b \in \Z$ with $0 \le a \le b$.
Then the pair $(\cC_{+,0},\iota|_{\cC_{+,0}})$ is
a grid of $(\cC, J_{\cT_+})$.

Let $\alpha : \cC_0 \to \cC_0$ denote the isomorphism of
categories that sends $\{a,a+1,\ldots,b\}$ to
$\{a+1,a+2,\ldots,b+1\}$. The isomorphism $\alpha$ induces the
functor $\cC_{+,0} \to \cC_{+,0}$, which we denote by $\alpha_{+}$.
We have $\iota = \iota \circ \alpha$ and $\iota|_{\cC_{+,0}} \circ \alpha_+
= \iota|_{\cC_{+,0}}$.
Hence, the pairs $(\alpha,\id)$ and $(\alpha_+,\id)$
are elements of the monoids
$M_{(\cC_0,\iota)}$ and $M_{(\cC_{+,0},\iota|_{\cC_{+,0}})}$,
respectively.
We then have isomorphisms $\Z \cong M_{(\cC_0,\iota)}$
and $\Z_{\ge 0} \cong M_{(\cC_{+,0},\iota|_{\cC_{+,0}})}$,
which sends $1$ to $(\alpha,\id)$ and $(\alpha_+,\id)$,
respectively.

\subsubsection{}
We give another example that is essentially the same 
as the one above.   Below is the less ad hoc, `coordinate-free' 
version. Note that 
the underlying category is essentially
$\frU$-small but not $\frU$-small in general, 
while in the previous example it was $\frU$-small.

Let $\cC$ denote the following category.
The objects are finite well-ordered sets.
The set of morphisms is the set of maps 
of sets that sends the successor (if it exists)
to the successor.
We set $\cT=\Mor(\cC)$ and 
$\cT_+$ to be those morphisms that send the 
least element to the least element.

We regard the totally ordered sets $[n]$ 
as objects of $\cC$ in a natural manner.
Note that each object of $\cC$ 
is isomorphic to the 
well-ordered set $[n]$ for some $n$.
The pairs $(\cC_0, \iota_0)$
and $(\cC_{+,0}', \iota|_{\cC'_{+,0}})$
defined above make sense in this setup 
and form the grids for 
$(\cC,J_\cT)$ and $(\cC, J_{\cT_+})$,
respectively.
The absolute Galois monoids are therefore 
respectively $\Z$ and $\Z_{\ge 0}$, as above.

\subsection{Our starting example}
\label{sec:motivation example}
The following example was the starting point
of this project.   The Galois group 
is the finite adele valued points of the
general linear group.   
We will come back to this in a future paper. 

Let $d \ge 1$ be an integer.
We define the category $\cCo{d}$ as follows.
An object in $\cCo{d}$ is a finite abelian group
that is generated by at most $d$ elements.
For two objects, $N$ and $N'$ in $\cCo{d}$,
the set $\Hom_{\cCo{d}}(N,N')$ of morphisms from
$N$ to $N'$ is the set of isomorphism classes of diagrams
\[
N' \twoheadleftarrow N''  \hookrightarrow N
\]
in the category of abelian groups,
where the left arrow is surjective and
the right arrow is injective. 
Here, two diagrams
$N' \twoheadleftarrow N''  \hookrightarrow N$
and $N' \twoheadleftarrow N'''  \hookrightarrow N$
are considered to be isomorphic if there exists
an isomorphism $N''\xto{\cong} N'''$ of 
abelian groups such that the diagram
$$
\begin{array}{ccccc}
N' & \twoheadleftarrow & N'' & \inj & N \\
{\Large \parallel} & & 
{\Large \downarrow} \cong & & 
{\Large \parallel} \\
N' & \twoheadleftarrow & N''' & \inj & N \\
\end{array}
$$
is commutative.
The composition of two morphisms 
$N' \twoheadleftarrow M \hookrightarrow N$
and 
$N'' \twoheadleftarrow M' \hookrightarrow N'$
is seen in the following diagram:
\[
\begin{array}{ccccc}
 &&&& N \\
 &&&& \uparrow \\
 &&N' &\twoheadleftarrow & M \\
 &&\uparrow & \ \begin{small}\square\end{small} & \uparrow \\
 N'' & \twoheadleftarrow & M' & \twoheadleftarrow & M\times_{N'} M'
 \end{array}
 \]
where the small box means that the square is cartesian.
This definition of
morphisms is from Quillen (\cite{Qu}),
except that here we take morphisms in the opposite
direction.

Let $\cT=\Mor(\cCo{d})$ and let $\cT_{+}$ denote 
the set of morphisms in $\cC$ represented by diagrams
$N' \twoheadleftarrow N'' \xto{i} N$ of abelian groups with
$i$ bijective.
Then, both $\cT$ and $\cT_{+}$ are semi-localizing collections of
morphisms in $\cCo{d}$.
In a future paper, we shall show that
the pairs $(\cC,J_\cT)$ and $(\cC,J_{\cT_+})$
are $B$-sites
that have enough Galois coverings.
We note that $\cT = \cT(J_\cT)$ and $\cT_+ = \cT(J_{\cT_+})$.
In particular, the notation $\cCo{d}(\cT_+)$ makes sense.
As the set $\Hom_\cCo{d}(M,N)$ is a finite set for
any object $M$, $N$ of $\cCo{d}$, it follows from Lemma \ref{lem:C0_core} that
both $\cCo{d}$ and $\cCo{d}(\cT_+)$ admit grids.
We can explicitly construct grids for $(\cCo{d}, J_{\cT})$ and for $(\cCo{d}, J_{\cT_+})$
as follows.

Let $\Lat^d$ denote the set of $\Z$-submodules
of $\Q^{\oplus d}$ that are free and of rank $d$.
We regard $\Lat^d$ as a partially ordered set
with respect to the inclusions.
We let $\Pair^d$ denote the following poset.
The elements of $\Pair^d$ are the pairs 
$(L_1,L_2)$ of elements in $\Lat^d$ with $L_1 \le L_2$.
For two elements $(L_1,L_2)$ and $(L'_1,L'_2)$
in $\Pair^d$, we have $(L_1,L_2) \le (L'_1,L'_2)$
if and only if $L'_1 \le L_1 \le L_2 \le L'_2$.
Let $\cCo{d}_0$ denote the poset category corresponding
to the order dual of $\Pair^d$.
Let $\cC^{',d}_{+,0}$ denote the full subcategory of $\cCo{d}_0$ whose
objects are the pairs $(L_1, L_2)$ with $L_2 = \Z^{\oplus d}$
(we regard $\Z^{\oplus d} \subset \Q^{\oplus d}$ as the standard lattice).

Let $\iota: \cCo{d}_0 \to \cCo{d}$ denote the functor that sends
an object $(L_1,L_2)$ of $\cCo{d}_0$ to $L_2/L_1$ and  
sends a morphism from $(L_1,L_2)$ to $(L'_1,L'_2)$ in $\cCo{d}_0$
to a morphism in $\cCo{d}$ represented by the diagram
$L'_2/L'_1 \twoheadleftarrow L'_2/L_1 \inj L_2/L_1$.
In our future paper, we shall show that the pair $(\cCo{d}_0,\iota)$
is a grid for $(\cCo{d},J_\cT)$ and that the pair 
$(\cC^{',d}_{+,0},\iota|_{\cC^{',d}_{+,0}})$
is a pregrid for $(\cCo{d}, J_{\cT_+})$.
Let $\cCo{d}_{+,0}$ denote the full subcategory of $\cCo{d}_0$
whose objects are the pairs $(L_1,L_2)$ satisfying
$L_2 \subset \Z^{\oplus d}$.
Then the pair $(\cCo{d}_{+,0},\iota|_{\cCo{d}_{+,0}})$ 
is a grid for $(\cCo{d}, J_{\cT_+})$.

Let $\wh{\Z}= \varprojlim_{n \ge 1} \Z/n\Z$ be the profinite completion
of $\Z$ and let $\A^\infty = \wh{\Z} \otimes_\Z \Q$ denote the ring
of finite adeles over $\Q$.
Let us consider the group $\GL_d(\A^\infty)$.
It is a locally compact, totally disconnected topological group.
We set $\Mat^{-} = \{ g \in \GL_d(\A^\infty)\ 
|\ g^{-1} \in \Mat_d(\wh{\Z}) \}$.
We have inclusions $\GL_d(\wh{\Z}) \subset \Mat^{-}
\subset \GL_d(\A^\infty)$ of monoids.
Let $M$ be $\GL_d(\A^\infty)$ or $\Mat^{-}$.
We say that a left $M$-set $S$ is smooth if, for any
$s \in S$, the $\GL_d(\wh{\Z})$-orbit of $s$ in $S$
is a finite set. If $M=\GL_d(\A^\infty)$ and $S$
is a left $\Z[M]$-module, this coincides
with the usual notion of smoothness given, e.g.\ in 
\cite[I,4.1]{V}.
In a future paper, when $M = \GL_d(\A^\infty)$ (\resp when $M = \Mat^-$), 
we shall construct an isomorphism
$M \cong M_{(\cC_0,\iota)}$ 
(\resp $M \cong M_{(\cC'_{+,0},\iota|_{\cC'_{+,0}})}$)
of monoids and show that this isomorphism induces
a one-to-one correspondence between smooth 
$M$-modules and smooth $M_{(\cC_0,\iota)}$-sets
(\resp smooth $M_{(\cC'_{+,0},\iota|_{\cC'_{+,0}})}$-sets).
Therefore, Theorem \ref{thm:Galois_main} gives an equivalence
from the category $\Shv(\cCo{d},J_\cT)$
(\resp $\Shv(\cCo{d},J_{\cT_+})$) to the category
of smooth left $M$-sets.

\section{Connection with Caramello's ultrahomogeneous objects}
\label{sec:Caramello tech}
As discussed in Section~\ref{sec:Caramello intro}, there is a high degree of overlap between ours and Caramello's work \cite{Caramello}, \cite{CaramelloFraisse}.    In this section, we illustrate this relation by constructing from our setup an object in $\ProC$ that is a key part of the input data in Caramello's theorem.

We remark here that the underlying category $\cC$ of a site $(\cC, J)$ that we consider is opposite to that in \cite{Caramello}.   

\subsection{}
Let $(\cC, J)$ be a $Y$-site.   We assume the following finiteness condition (Section~\ref{sec:cardinality}(1)): for any two objects $X, Y \in \cC$, the Hom set $\Hom_\cC(X,Y)$ is finite.
We assume also that $J$ is the atomic topology.   In this case, $\cT(J)$ consists of all of the morphisms of $\cC$.
By Proposition~\ref{cor:grid existence}, 
there exists a grid $(\cC_0, \iota_0)$ of $(\cC, J)$.
Set $u$ to be the object in $\ProC$ defined by $\{\iota_0(X)\}_{X\in \cC_0}$.
We will show below that this object satisfies Caramello's criteria (note that we work with the opposite category).

The main theorem in \cite{Caramello}
is Theorem 3.5, but we look instead at the author's Proposition 4.3,
which is very close to the former theorem, 
and closer to our setup.


Caramello considers two conditions for the underlying category $\cC$:
the amalgamation property (AP) (p.660), and
the joint embedding property (JEP).
These are satisfied in our setup 
by the definition of $Y$-site.
The author also considers an object $u$ 
in the ind-category 
that is $\cC$-ultrahomogeneous.
We construct such an object below.

\begin{prop}
\label{prop:ultrahomogeneous}
The object $u \in \ProC$ is $\cC$-ultrahomogeneous in the sense of opposite of \cite[p.661]{Caramello}
\end{prop}
\
We regard an object in $\cC$ as an object in $\ProC$ in a natural manner (the constant).
Let $a \in \cC$.   By definition,
\[
\Hom_{\ProC}(u, a)= 
\varinjlim_{X \in \cC_0}
\Hom_\cC(\iota_0(X), a).
\]
Suppose we are given $u \xto{f} a$ and $u\xto{g} a$.
We wish to find a morphism $u \xto{\alpha_{f,g}} u$
such that $f=g \circ \alpha_{f,g}$.

Let $g' \in \Hom_{\cC}(\iota_0(X'), a)$
be a representative of $g$.
We set
\[
S_{g'}=
\{h \in \Hom_{\ProC}(u,\iota_0(X')) \,|\, g' \circ h=f
\}.
\]

\begin{lem}
\label{lem:lift non-empty}
For any representative $g'$, the set $S_{g'}$ is non-empty.
\end{lem}
\begin{proof}
Take a representative $f': \iota_0(Y_0) \to a$ of $f:u\to a$.
As the topology is atomic, $g \in \cT(J)$.
Then, by Definition~\ref{defn:Y-sites}(1)
there exists a commutative diagram:
\[
\begin{CD}
Z @>{\alpha}>> \iota_0(X')
\\
@VVV    @VV{g'}V
\\
\iota_0(Y_0) @>>{f'}> a.
\end{CD}
\]
By Definition~\ref{defn:grids}(3),
there exists $f_0: Z_0 \to Y_0 \in \cC_0$ and
a commutative diagram extending the preceding diagram:
\[
\begin{CD}
\iota_0(Z_0) @>{\cong, \beta}>> Z @>{\alpha}>> \iota_0(X')
\\
@V{\iota_0(f_0)}VV                       @VVV        @VV{g'}V
\\
\iota_0(Y_0) @>{=}>>   \iota_0(Y_0)  @>>{f'}> a.
\end{CD}
\]
Note that $f' \circ \iota_0(f_0)$ represents $f: u\to a$.
Let $\gamma: u\to \iota_0(X')$ be the morphism
represented by $\alpha\circ \beta$.
Then, by definition, we have $f=g' \circ \gamma$; hence, the claim
follows.
\end{proof}
\begin{lem}
The set $S_{g'}$ is finite.
\end{lem}
\begin{proof}
As $(\cC,J)$ is a $Y$-site, there are enough coverings
(Definition~\ref{defn:Y-sites}(3), 
Definition~\ref{defn:enough Galois}). 
Hence, there exists a morphism 
$\varphi: S \to \iota_0(X')$,
which is a Galois covering of 
Galois group $G$
such that
the composite $g' \circ \varphi$
is also a Galois covering of, e.g., group $H$.
From the finiteness assumption, 
it follows that 
the groups $G$ and $H$ are finite.
Let $T_0 \in \cC_0$ 
and consider the maps
\[
\Hom_\cC(T_0, S) \to
\Hom_\cC(T_0, \iota_0(X')) \xto{u_{T_0}}
\Hom_\cC(T_0, a)
\]
obtained by the compositions with $\varphi$ and with $g'$.
The fiber $u_{T_0}^{-1}(g')$ of the map $g' \in \Hom_\cC(T_0, a)$ is then 
isomorphic to $G/H$.
Now take a morphism $\delta: T_0' \to T_0 \in \cC_0$.
In this case, there is a map $u_{T_0}^{-1}(g') \to u_{T_0'}^{-1}(g')$
which respects the $G$-structure.
This proves that the cardinality of $S_{g'}$ is less than the cardinality of $G$. 
\end{proof}

\begin{lem}
The sets $S_{g'}$ form a filtered projective system.
\end{lem}
\begin{proof}
Let 
$u \to \iota_0(X') \in S_{g'}$.
Suppose that we are given a morphism 
$\psi: Q \to X'$.
We want to find a morphism
$u \to \iota_0(Q)$ 
that, 
when composed with $\iota_0(\psi)$,
gives $f: u \to a$.
This can be constructed as was done in the proof of 
Lemma~\ref{lem:lift non-empty}.
\end{proof}

\begin{proof}[Proof of Proposition~\ref{prop:ultrahomogeneous}]
It follows from the previous lemmas that 
$\varprojlim_{g'} S_{g'}$ 
is nonempty, as it is the projective limit of 
nonempty finite sets.
Take any element in 
$\varprojlim_{g'} S_{g'}\subset \Hom_{\ProC}(u,u)$.
\end{proof}

\section{An example where the fiber functor is not full}
\label{sec:fiber not full}
Our main theorem (Theorem~\ref{thm:Galois_main}) says that
under certain cardinality conditions the fiber functor 
$\omega_{(\cC_0, \iota_0)}$
associated to a grid $(\cC_0, \iota_0)$
is an equivalence of categories.
We have also seen the cases (Proposition~\ref{prop:locally prodiscrete})
where the 
conditions on cardinality 
are not satisfied yet the fiber functor gives an 
equivalence.   This is the case where the absolute Galois monoid is a locally prodiscrete group, and the equivalence was proved by a different 
method than our theorems.   
In this section, 
we give examples where we are given a $Y$-site and a grid yet the fiber 
functor does not induce an equivalence.  

The key input in the construction is a pro-group $(G_i)$ 
with surjective transition maps such that the limit in the 
category of topological groups (each $G_i$ is a discrete 
topological group) is trivial  (cf. \cite[p.223, Section 1.3]{M}).

We refer to Section~\ref{sec:full ess surj} 
where we give a sufficient condition 
in terms of derived limits 
for the fiber functor 
to be full and essentially surjective.
In this section, we treat the case where the surjectivity of \eqref{eq:canproj_surj}
fails.
\subsection{}
A pro-group is by definition a functor
$\alpha: J \to (Grp)$
from a cofiltered category to the category of groups.
For an object $j$ of $J$, we sometimes write $G_j$ for $\alpha(j)$.
We assume that the transition maps are surjective.
\subsection{the $Y$-site and the grid}
Let $\alpha$ be a pro-group.
We consider the following category $\cC$.
The set of objects of $\cC$ is the same as the set of 
objects of $J$.
We define the morphisms as follows:
\[
\Hom_\cC(j, j')
=
\left\{
\begin{array}{ll}
\alpha(j)  & \text{if } \Hom_J(j,j')\neq \emptyset,
\\
\emptyset &\text{otherwise.}
\end{array}
\right.
\]
The composition is defined using the group structure of each $\alpha(j)$.
Since $J$ is cofiltered, the category $\cC$ is semi-cofiltered 
(Definition~\ref{defn:semi-cofiltered}).
Let $J$ be the atomic topology on $\cC$.
Using that each $\alpha(j)$ is a group, 
one can check that 
the category $\cC$ is an $E$-category.
It follows that $(\cC, J)$ is a $B$-site.

Let $f: j \to j'$ be a morphism in $\cC$,
i.e., $f$ is an element of $G_{j'}$.
One can check that 
$\Aut_f(j)=\Ker(G_{j} \to G_{j'})$.
Thus, we see that every morphism in $\cC$ is 
a Galois covering.
By definition, $(\cC, J)$ is then a $Y$-site.

We construct a grid for this $Y$-site as follows.
Let $\cC_0$ be the category $J$.
We define a functor 
$\iota_0: \cC_0 \to \cC$.
Let $j \to j'$ be a morphism in $\cC$.
Then we define $\iota_0(j\to j')$ 
to be the unit element in $G_{j'}=\Hom_\cC(j, j')$.
Since the topology is atomic, 
to show that $(\cC_0, \iota_0)$
is a grid, it suffices to check that 
it is a pregrid.    This follows by checking the conditions
directly.

We may without loss of generality assume that 
$J$ has a final object corresponding to the trivial 
group.
This in turn gives a final object $j_0$ of $\cC$.
Then 
the absolute Galois group 
is isomorphic to $\bK_{j_0}$,
which in turn is isomorphic to 
$H_X= \varprojlim_{f \in I_X} \Gal(f)$
by Lemma~\ref{lem:psi_isom} (notation in Section~\ref{sec:Gal gp surjective}).
This means that the absolute Galois monoid is
isomorphic to the limit $\varprojlim_{j \in J} G_j$.

\subsection{}
The aim of this section is to prove the following proposition.
\begin{prop}
Let $\alpha$ be a pro-group with surjective 
transition maps such that
there exists some $j \in J$ for which $G_j$ is nontrivial and 
that 
$\varprojlim_{j\in J} G_j$
is trivial.   
Let $(\cC, J)$ be the Y-site and 
$(\cC_0, \iota_0)$ be the grid constructed in the
previous section.
Then the associated fiber functor
$\omega=\omega_{(\cC_0, \iota_0)}$ 
is not full.
\end{prop}
\begin{proof}
By the argument above, the absolute Galois monoid is trivial.
This implies that 
the category of smooth sets equals the category of sets.

Let $j \in \cC$ be an object.   We claim that the 
presheaf represented by $j$ is a sheaf.
It suffices to check that
for any morphism $f: j_1 \to j_2$
(which is Galois as observed above),
$h_j(j_1)^{\Gal(f)}=h_j(j_2)$.
This holds true because if one side is not 
empty, then both sides equal $G_j$.
We also see that $\omega(h_j)=G_j$.

By Yoneda's lemma (for sheaves),
 we have
\[
\Hom_{\Shv(\cC)}(h_j, h_j) \cong \Hom_\cC(j,j) =G_j .
\]
On the other hand, we have
\[
\Hom_{\text{smooth} M\text{-sets}}(\omega(h_j), \omega(h_j))
=
\Hom_{\text{Sets}}(\omega(h_j), \omega(h_j))
=
\Hom_{\text{Sets}}(G_j, G_j).
\]
These are not equal when $G_j$ is not trivial, hence the functor $\omega$ is not full as claimed.
\end{proof}

\section{$Y$-sites and grids for locally prodiscrete monoids}
\label{sec:locally prodiscrete monoids}
We define a class of topological monoids $M$ (admissible topological monoids) 
which includes the class of groups of Section~\ref{sec:certain top groups}
(in particular, prodiscrete groups of Definition~\ref{def:prodiscrete groups}).
We construct a $Y$-site and a grid such that the associated absolute Galois monoid is isomorphic to $\wh{M}$.    This generalizes Lemma~\ref{lem:Ghat isom to M}.
The fiber functor is automatically faithful (Theorem~\ref{thm:Galois_main}).
We show that the fiber functor is an equivalence.

We record this result in a separate section than Section~\ref{sec:locally prodiscrete} because the result in this section was obtained during revision.
We will use the argument used in this section in Section~\ref{sec:fiber to grid}.

\subsection{Admissible topological monoids}

\subsubsection{}
For a topological monoid $M$, we use the following terminology:
an open submonoid of $M$ which is a group 
is called an open subgroup of $M$.
We let $\mathfrak{V}(M)$ denote the set of
open subgroups $H \subset M$ satisfying the
following property: for any open submonoid 
$U \subset M$, there exists
an open subgroup $K\subset M$ such that $K \subset U \cap H$ 
and $K$ is a normal subgroup of $H$.

\begin{defn}
Let $M$ be a topological monoid.
\begin{enumerate}
\item We say that $M$ is left-integral
if for any element $m \in M$, the subset
$mM$ of $M$ is open and the map $m\cdot - : M \to M$ given 
by the multiplication by $m$ from the left 
induces a homeomorphism from $M$ to $mM$.
\item We say that $M$ is admissible if $M$ is left-integral
and $\mathfrak{V}(M)$ is non-empty.
\item We say that $M$ is locally prodiscrete if
$M$ is left-integral and there exists an open subgroup 
$U \subset M$ which is a prodiscrete group.
\end{enumerate}
\end{defn}

We note that any prodiscrete monoid $M$ is admissible,
since any prodiscrete open subgroup is an element of $\mathfrak{V}(M)$.

\subsubsection{The category $BM$}
For a topological monoid $M$, we let $BM$ denote the
category of discrete sets equipped with continuous
left $M$-actions.
\begin{lem}
Let $H$ be an open subgroup of an admissible 
topological monoid $M$. Then the quotient set $M/H$ 
equipped with the quotient topology is an object of $BM$.
\end{lem}

\begin{proof}
The left-integrality of $M$ implies that
$m H$ is an open subset of $M$ for any $m \in M$.
This shows that $M/H$ is discrete and that 
for any $m \in M$ there
exists an open neighborhood $U$ of $1 \in M$
satisfying $U m \subset m H$.
Hence $M/H$ is an object of $BM$.
\end{proof}

\subsection{The construction of a $Y$-site and a grid}
We can construct a $Y$-site and a grid, 
starting from an admissible topological monoid $M$.

\subsubsection{Statements}
We say that an object $V$ of $BM$ is strongly homogeneous if
$V$ is isomorphic to $M/H$ for some $H \in \mathfrak{V}(M)$.
Let $\cC = \cC_M$ denote the full subcategory of
$BM$ whose objects are the strongly homogeneous object of $V$.

The following proposition will be proved in
Section \ref{sec:CM_E-cat}:

\begin{prop} \label{prop:CM_E-cat}
$\cC$ is an $E$-category, i.e., any morphism
in $\cC$ is an epimorphism.
\end{prop}

\begin{defn} \label{defn:CM_type_J}
Let $V$, $W$ be objects of $\cC$ and
let $f: V \to W$ be a morphism in $\cC$.
We say that $f$ is of type $J$ if
$f$ is surjective.
\end{defn}

Let $\cT$ denote the collection of morphisms
of type $J$ in $\cC$.
The following theorem will be proved in
Section \ref{sec:CM_Y-site}:

\begin{thm} \label{thm:CM_Y-site}
The collection 
$\cT$ is semi-localizing and 
$\wh{\cT} = \cT$
holds.
Moreover, the pair $(\cC,J_\cT)$ is a $Y$-site.
\end{thm}

Let $\cC'_0$ denote the following category.
The objects of $\cC'_0$ are the pairs
$(V,a)$ of an object $V$ of $\cC$ and an element $a \in V$.
For two objects $(V,a)$, $(W,b)$ of $\cC'_0$,
the morphisms from $(V,a)$ to $(W,b)$ in $\cC'_0$
are the morphisms $f:V \to W$ in $\cC$ satisfying
$f(a)=b$.
Let $\iota'_0 : \cC'_0 \to \cC$ denote the functor
that sends an object $(V,a)$ of $\cC'_0$ to $V$.
The following proposition will be proved in 
Section \ref{sec:CM_quasi-poset}.

\begin{prop} \label{prop:CM_quasi-poset}
The category $\cC'_0$ is a quasi-poset.
\end{prop}

Let us choose a skeletal subcategory $\cC_0$ of
$\cC'_0$ such that the inclusion functor $\cC_0 \inj \cC'_0$
is an equivalence of categories.
Let $\iota_0 :\cC_0 \to \cC$ denote the restriction of 
$\iota'_0$ to $\cC_0$.

The following theorem will be proved in 
Section \ref{sec:CM_grid}

\begin{thm} \label{thm:CM_grid}
$(\cC_0,\iota_0)$ is a grid of $(\cC,J_\cT)$.
\end{thm}

\subsubsection{Preliminary}
For an admissible topological monoid $M$, we let
$M^*$ denote the set of elements of $M$ having
left inverses.

\begin{lem} \label{lem:M*}
Let $M$ be an admissible topological monoid.
Then $M^*$ is an open subgroup of $M$.
\end{lem}

\begin{proof}
Since $\mathfrak{V}(M)$ is non-empty,
there exists an open subgroup $H \subset M$.
We then have $H \subset M^*$.
The left-integrality of $M$ 
implies that $m H$ is an open subset of $M$ for any $m \in M$.
This shows that $M^*$ is an open submonoid of $M$.
It remains to prove that any element of $M^*$ is
invertible.
Let $m \in M^*$ and let $n$ be a left inverse of $m$.
Since $nm=1$ we have $nmn = n$.
Then the left-integrality of $M$ implies that
$mn=1$. Hence $m$ is invertible.
\end{proof}

Let $V$ be a strongly homogeneous object of $BM$.
An element $v \in V$ is called a generator of $V$
if the map $M \to V$ that sends $m \in M$ to $mv$
is surjective.
Let $V^* \subset V$ denote the set of generators of $V$.

\begin{lem}
Let $V$ be an object of $\cC$. Let us choose an element
$H \in \mathfrak{V}(M)$ and an isomorphism
$\alpha : M/H \xto{\cong} V$. Then we have $V^* = \alpha(M^*/H)$.
\end{lem}

\begin{proof}
Let $m\in M$. Then it follows from the definition
of a generator that $m H$ is a generator of $M/H$
if and only if $m$ has a left inverse. Hence
the claim follows.
\end{proof}

\begin{cor} \label{cor:CM_U_open}
Let $V$ be an object of $\cC$ and let 
$v \in V^*$. Then the set
$U = \{m \in M\ |\ mv =v \}$ is an open subgroup of
$M$ that belongs to $\mathfrak{V}(M)$.
\end{cor}

\begin{proof}
We may assume that $V= M/H$ for some $H \in \mathfrak{V}(M)$.
Choose an element $m\in M^*$ satisfying $v = m H$.
Then we have $U = m H m^{-1}$.
This shows that $U$ is an open subgroup of $M$
that belongs to $\mathfrak{V}(M)$.
\end{proof}

\begin{lem} \label{lem:CM_f_property}
Let $f:V \to W$ be a morphism in $\cC$.
Let $w \in W^*$ and suppose that there exists
an element $v \in V$ satisfying $f(v)=w$.
Then $f$ is of type $J$ and $v \in V^*$.
\end{lem}

\begin{proof}
It is clear the $f$ is of type $J$.
Let us choose $v_0 \in V^*$ and let us
choose $m, m' \in M$ satisfying
$v =m v_0$ and $f(v_0) = m' w$.
Then we have $m m' w = w$.
Hence it follows from Corollary \ref{cor:CM_U_open}
that $m m' \in M^*$.
This shows that $m'$ has a left inverse.
Hence $m' \in M^*$. Since $M^*$ is a group,
we have $m \in M^*$. Hence $v = mv_0 \in M^*$.
\end{proof}

\begin{lem} \label{lem:CM_type_J_B-site}
Let $f:V \to W$ and $g:W \to Z$ be morphisms in $\cC$.
Then the composite $g \circ f$ is of type $J$
if and only if $f$ and $g$ are of type $J$.
\end{lem}

\begin{proof}
The ``if" part is clear.
We prove the ``only if" part.

Suppose that $g \circ f$ is of type $J$.
It is clear that $g$ is of type $J$.
We prove that $f$ is of type $J$.
Let us choose $z \in Z^*$ and
$v \in V$ satisfying $g(f(v)) =z$.
It then follows from Lemma \ref{lem:CM_f_property} that
$f(v) \in W^*$. This shows that $f$ is of type $J$.
\end{proof}

\begin{lem} \label{lem:CM_W^*}
Let $(V,a)$ be an object of $\cC'_0$.
Then there exists a morphism 
$f:(W,b) \to (V,a)$ in $\cC'_0$ satisfying
$b \in W^*$.
\end{lem}

\begin{proof}
We may assume that $V=M/H$ for some $H \in \mathfrak{V}(M)$.
Let us choose an element $m_0 \in M$ satisfying $a = m_0 H$.
Since the set $U = \{m \in M \ |\ m a = a\}$
is an open submonoid of $M$, there exists an open
subgroup $K \subset M$ satisfying $K \subset U \cap H$.
It is easy to check that $K \in \mathfrak{V}(M)$.
Let $W = M/K$. Then the map $M \to M$ given by
the multiplication by $m_0$ from the right
induces a morphism $f:W \to V$ in $\cC$.
Then the object $(W,1\cdot K)$ of $\cC'_0$ has
the desired property.
\end{proof}

\subsubsection{Proof of Proposition \ref{prop:CM_E-cat}} \label{sec:CM_E-cat}

\begin{proof}
Let $f:V \to W$ be a morphism in $\cC$,
and let $g_1, g_2 : W \to Z$ be morphisms in $\cC$
satisfying $g_1 \circ f = g_2 \circ f$.
It then suffices to prove that $g_1 = g_2$.
We may assume that there exist
$H,H',H'' \in \mathfrak{V}(M)$
satisfying $V=M/H$, $W= M/H'$, and $Z =M/H''$.
Let us choose $m,m_1,m_2 \in M$ such that
$f$ sends $1 \cdot H$ to $m \cdot H'$
and that $g_i$ sends $1 \cdot H'$ to
$m_i \cdot H''$ for $i \in \{1,2\}$.
Since $g_1 \circ f = g_2 \circ f$, we have
$m m_1 H'' = m m_2 H''$.
Since $M$ is left-integral, we have
$m_1 H'' = m_2 H''$. Hence we have $g_1=g_2$.
\end{proof}

\subsubsection{Proof of Theorem \ref{thm:CM_Y-site}} \label{sec:CM_Y-site}

\begin{proof}
First we prove that $\cT$ is semi-localizing.
Let $V_1 \xto{f_1} V \xleftarrow{f_2} V_2$
be a diagram in $\cC$ such that $f_2$ is of type $J$.
We prove that there exists an object $W$ of $\cC$
and morphisms $g_i: W \to V_i$ for $i\in \{1,2\}$
such that $g_1$ is of type $J$ and $f_1 \circ g_1 
= f_2 \circ g_2$.
To prove this, we may assume that there exist
$H,H_1,H_2 \in \mathfrak{V}(M)$
with $H_2 \subset H$ satisfying $V=M/H$, $V_1= M/H_1$, 
$V_2 =M/H_2$ and that $f_2 : V/H_2 \to V/H$ 
is the canonical quotient map.
Let us choose $m \in M$ such that
$f_1 : M/H_1 \to M/H$ sends $1 \cdot H_1$ to $m H$.
Since $M$ is left integral, the set
$U = \{ n \in M\ |\ nm \in m H_2\}$ is an open
mononid of $M$. Hence there exists an open subgroup
$H'$ of $M$ satisfying $H' \subset H_1 \cap U$.
Set $W = M/H'$ and write $g_1$ for the canonical
quotient map $W \to V_1$.
Let $g_2 : W \to V_2$ denote the map induced by
the map $M \to M$ given by the multiplication by $m$
from the right.
Then $g_2$ is well-defined and $W$, $g_1$, and $g_2$
have the desired property.

Next we prove that $\cT =\wh{\cT}$.
Let $f: V \to W$ and $g: Z \to V$ be morphisms in $\cC$.
Then it is clear from definition that if
$f \circ g$ is of type $J$, then $f$ is also of type $J$.
Hence we have $\cT = \wh{\cT}$.
Next we prove that $(\cC,J_\cT)$ is a $B$-site.
It follows from Proposition \ref{prop:CM_E-cat}
and Lemma \ref{lem:CM_type_J_B-site} that $(\cC,J_\cT)$ is a $B$-site.

Next we prove that $\cC(\cT)$ is $\Lambda$-connected.
Let $V_1$ and $V_2$ be objects of $\cC$.
We prove that there exist an object $V$ of $\cC$
and morphisms $f_1: V \to V_1$ and $f_2:V \to V_2$.
We may assume that $V_1= M/H_1$ and $V_2 = M/H_2$
for some $V_1, V_2 \in \mathfrak{V}(M)$.
Set $H = H_1 \cap H_2$. Then we have $H \in \mathfrak{V}(M)$.
Hence $V := M/H$ is an object of $\cC$ and we have
canonical projections $V \to V_i$ for $i \in \{1,2\}$.

It remains to prove that $\cT$ has enough Galois
coverings. Let $f: V \to W$ be a morphism in $\cC(J_\cT)$.
We prove that there exists a morphism 
$g : V' \to V$ in $\cC(J_\cT)$ such that
the composite $f\circ g$ is a Galois covering in $\cC$.
We may assume that there exist $H, H' \in \mathfrak{V}(M)$
with $H \subset H'$ satisfying $V=M/H$, $W=M/H'$, and
that $f:M/H \to M/H'$ is the canonical quotient map.
Since $H' \in \mathfrak{V}$, there exists an open
subgroup $K \subset M$ such that $K \subset H$ and
$K$ is normal in $H'$.
We set $V' = M/K$ and write $g'$ for the canonical
quotient map $V' \to V$. Then $g$ satisfies the
desired property.
\end{proof}

\subsubsection{Proof of Proposition \ref{prop:CM_quasi-poset}} \label{sec:CM_quasi-poset}

\begin{proof}
It is clear that the category $\cC'_0$ is 
essentially $\frU$-small.
Let $(V,a)$ and $(W,b)$ be two objects of $\cC'_0$.
It suffices to show that there exists at most one
morphism from $(V,a)$ to $(W,b)$.
Let $f_1,f_2 : (V,a) \to (W,b)$ be two morphisms in $\cC'_0$.
We prove that $f_1 = f_2$.
It follows from Lemma \ref{lem:CM_W^*} that
there exists a morphism $g:(Z,c) \to (V,a)$
satisfying $c \in Z^*$.
The two morphisms $f_1 \circ g$ and $f_2 \circ g$
send $c \in Z$ to $b \in W$.
Since $c \in Z^*$, we have $f_1 \circ g = f_2 \circ g$.
It follows from Proposition \ref{prop:CM_E-cat}
that $g$ is an epimorphism as a morphism in $\cC$.
Hence we have $f_1 = f_2$.
\end{proof}

\subsubsection{Proof of Theorem \ref{thm:CM_grid}} \label{sec:CM_grid}

\begin{lem} \label{lem:CM_edge_objects}
Let $(V,a)$ is an object of $\cC'_0$.
Then $(V,a)$ is an edge object if and only if $a \in V^*$.
\end{lem}

\begin{proof}
First we prove the ``if" part.
Suppose that $a \in V^*$.
Let $f: (W,b) \to (V,a)$ be a morphism in $\cC'_0$.
Since $f(b)=a$ and $a \in V^*$, it follows that
$f$ is surjective. Hence $f$, regarded a morphism in $\cC$,
is of type $J$.

Next we prove the ``only if" part.
Suppose that $a \not \in V^*$.
We prove that there exists a morphism
$f: (W,b) \to (V,a)$ such that $f$, regarded as a morphism
in $\cC$, is not of type $J$.
Let $U = \{ m \in M\ |\ ma=a \}$.
Then $U$ is an open submonoid of $M$.
Since $\mathfrak{V}(M)$ is non-empty,
there exists $H \in \mathfrak{V}(M)$
satisfying $H \subset U$.
Set $(W,b) = (M/H,1 \cdot H)$.
Then it follows from the construction of $H$
that there exists a morphism
$f: W \to V$ that sends $b$ to $a$.
Since $b \in W^*$, and $a \not\in V^*$ it follows that
$f$ is not surjective. Hence $f$ is a morphism from
$(W,b)$ to $(V,a)$ in $\cC'_0$ which is,
regarded as a morphism in $\cC$, is not of type $J$.
This completes the proof.
\end{proof}

\begin{proof}[{Proof of Theorem \ref{thm:CM_grid}}]
It is clear from the definition of $\cC_0$ that
$\cC_0$ is a poset.
Hence it suffices to prove that $\cC'_0$ is $\Lambda$-connected
and the pair $(\cC'_0,\iota'_0)$ satisfies Conditions (2), (3), and (4)
in Definition \ref{defn:grids}.

First we prove that $\cC'_0$ is $\Lambda$-connected.
Let $(V_1,a_1)$ and $(V_2,a_2)$ be two objects of $\cC'_0$.
We prove that there exist an object $(V,a)$ of $\cC'_0$
and morphisms $f_1 :(V,a) \to (V_1,a_1)$ and
$f_2 : (V,a) \to (V_2,a_2)$ in $\cC'_0$.
For $i =1,2$, the set
$\{ m \in M\ |\ ma_i = a_i \}$ is an open submonoid
of $M$. Since $\mathfrak{V}(M)$ is non-empty,
there exists $H \subset \mathfrak{V}(M)$ satisfying
$H \subset U_1 \cap U_2$.
Set $V = M/H$ and $a =1 \cdot H \in V$. 
It follows from the construction
of $H$ that, for $i \in \{1,2\}$, 
there exists a morphism $f_i : V \to V_i$ in $\cC$
that sends $a$ to $a_i$.
Since $f_i$ is a morphism from $(V,a)$ to $(V_i,a_i)$,
the object $(V,a)$, and the morphisms $f_1$ and $f_2$
have the desired property.

Next we prove that $(\cC'_0,\iota'_0)$ satisfies Condition (2).
Let $V$ be an object of $\cC$.
Let us choose $a \in V^*$.
Then it follows from Lemma \ref{lem:CM_edge_objects} that
$(V,a)$ is an edge object of $\cC'_0$.
Hence $(\cC'_0,\iota'_0)$ satisfies Condition (2).

Next we prove that $(\cC'_0,\iota'_0)$ satisfies Condition (3).
Let $(V,a)$ be an object of $\cC'_0$ and let
$f:W \to V$ be a morphism of type $J$ in $\cC$.
Since $f$ is surjective, there exists $b \in W$
satisfying $f(b)=a$. Hence $f$ is a morphism
from $(W,b)$ to $(V,a)$, which proves that
$(\cC'_0,\iota'_0)$ satisfies Condition (3).

Next we prove that $(\cC'_0,\iota'_0)$ satisfies Condition (4).
Let $(V,a)$ be an object of $\cC'_0$.
Let $\iota'_{0,(V,a)/}$ 
denote the functor $\cC'_{0,(V,a)/} \to \cC_{V/}$
induced by $\iota'_0$.
We prove that $\iota'_{0,(V,a)/}$ is an equivalence of categories.
We note that, since $\cC'_0$ is a quasi-poset
and $\cC$ is an essentially $\frU$-small $E$-category, 
both $\cC'_{0,(V,a)/}$ and $\cC_{V/}$ are quasi-posets.
Let $f:V \to W$ be a morphism in $\cC$.
Then $f$ is a morphism from $(V,a)$ to $(W,f(a))$
in $\cC'_0$. This prove that $\iota'_{0,(V,a)/}$ is essentially surjective.
It remains to prove that $\iota'_{0,(V,a)/}$ is fully faithful.
For $i \in \{1,2\}$ let $f_i: (V,a) \to (W_i,b_i)$
be two morphisms if $\cC'_0$.
Suppose that there exists a morphism 
$g: W_1 \to W_2$ in $\cC$ satisfying $g \circ f_1 = f_2$
in $\cC$. Since $f_i(a) = b_i$ for $i=1,2$,
we have $g(b_1) = b_2$, which shows that
$g$ is a morphism from $f_1$ to $f_2$ in
$\cC'_{0,(V,a)/}$.
This proves that $\iota'_{0,(V,a)/}$ is fully faithful.

This completes the proof.
\end{proof}

\subsection{The associated topological monoid}
Given an admissible topological monoid $M$,
we construct a locally prodiscrete monoid $\wh{M}$
as follows.

We set 
$\wh{M}=\lim_{H \in \mathfrak{V}(M)} M/H$
to be the limit of discrete sets $M/H$ in the category
of topological spaces. We can equip $\wh{M}$
with the structure of a topological monoid as follows. 
For two elements $m_1 = (m_{1,H})_{H \in \mathfrak{V}(M)}$,
$m_2 = (m_{2,H})_{H \in \mathfrak{V}(M)}$, we define the product
$m_1 m_2 \in \wh{M}$ as follows. 
Let $H \in \mathfrak{V}(M)$.
Since $m_2 H$ is an open subset of $M$,
the set $U = \{ m \in M\ | mm_2 \in m_2 H\}$ is
an open submonoid of $M$.
%
Since $H \in \mathfrak{V}(M)$, there exists
an open subgroup $K \subset M$ such that
$K \subset H \cap U$. It is easy to see that
$K \in \mathfrak{V}(M)$.
Let us take a lift $\wt{m}_{1,K} \in M$ 
of $m_{1,K} \in M/K$.
We set the H-component $(m_1 m_2)_H$ of $m_1 m_2$ to be 
$\wt{m}_{1,K} m_{2,H} \in M/H$.
The element $(m_1 m_2)_H$ of $M/H$ does
not depend on the choice of $K$ 
or the lift $\wt{m}_{1,K}$ and, when $H$ varies,
the elements $(m_1 m_2)_H$ gives an element
$m_1 m_2$ of $\wh{H}$.

It is not obvious but one can check
that $\wh{M}$ is a prodiscrete monoid.
There is a canonical morphism of topological monoids
$M \to \wh{M}$.
It is clear from the definition of $\wh{M}$ that,
if $M$ is a prodiscrete monoid, then the morphism
$M \to \wh{M}$ is an isomorphism.
We note that the morphism $M \to \wh{M}$ induces 
an equivalence $B\wh{M} \xto{\cong} BM$ of categories.

\begin{prop}
\label{prop:Galois monoid locally prodiscrete}
A monoid $M$ 
is the absolute Galois monoid $M_{(\cC_0, \iota_0)}$
associated to a grid $(\cC_0, \iota_0)$ of some $Y$-site
if and only if 
$M$ is locally prodiscrete.
\end{prop}
\begin{proof}
We have already mentioned that the ``if" part" is true. 
We prove the ``only if " part.
Suppose that a $Y$-site  $(\cC,J)$ has a grid $(\cC_0,\iota_0)$. 
We prove that the absolute Galois monoid $M_\Cip$ is locally prodiscrete.
It follows from Lemma \ref{lem:MC2} 
and the definition of the topology on $M_\Cip$ given in Section \ref{sec:topological monoid structure} 
that $M_\Cip$ is left integral.
Let us choose an edge object $X$ of $\cC_0$.
It suffices to prove that $\bK_X$ is a prodiscrete group. 
In Section \ref{sec:Gal gp surjective} 
we have introduced a prodiscrete group $H_X$ 
and constructed an isomorphism $\psi_X : H_X \xto{\cong} \bK_X$ as abstract groups. 
It suffices to show that $\psi_X$ is an isomorphism of topological groups.
It follows from the construction of $\psi_X$ that, 
for any object $f:Y\to X$ of $I_X$,
the image under $\psi_X$ of the kernel of $H_X \to \Gal(f)$ 
is equal to $\bK_Y$. 
Since $\{ \Ker\, H_X \to \Gal(f)\ | \ f \in I_X \}$ and $\{\bK_Y\ |\ (f: Y \to X) \in I_X\}$ 
form fundamental systems of neighborhoods of $1$ in $H_X$ and $\bK_X$, respectively, 
the isomorphism $\psi_X$ is an isomorphism of topological groups.
This completes the proof.
\end{proof}

The following is the
main result of this paragraph:
 \begin{thm}
\label{thm:locally prodiscrete monoid}
Let $M$ be an admissible topological monoid.
Then the absolute Galois monoid $M_\Cip$ 
associated with the grid $(\cC_0,\iota_0)$
constructed above is isomorphic to $\wh{M}$.
\end{thm}

\begin{proof}
First we construct a continuous homomorphism
$\xi : \wh{M} \to M_\Cip$.
Let $m=(m_H)_{H \in \mathfrak{V}(M)}$ be an element of 
$\wh{M}$.
Let $v = (V,a)$ be an object of $\cC_0$.
Let us choose $H \in \mathfrak{V}(M)$ satisfying
$H \cdot a =a$ and set $m a = m_H a$. 
Note that $m a \in V$ is independent of
the choice of $H$.
Let $\alpha(v)$ 
denote the unique object of $\cC_0$
that is isomorphic to $(V, m a)$ in $\cC'_0$.
Let $\gamma_v : (V,ma) \xto{\cong} \alpha(v)$
denote the unique isomorphism in $\cC'_0$.
If $g: v'= (V',a') \to v$ is a morphism in $\cC_0$,
it is then easy to see that $g$ is also a morphism
from $(V', m a')$ to $(V,ma)$ in $\cC'_0$.
By sending $v$ to $\alpha(v)$ and by sending
$f: v \to v'$ to $\gamma_v \circ f \circ \gamma_{v'}^{-1}$,
we obtain a functor $\alpha : \cC_0 \to \cC_0$.
By associating any object $v$ of $\cC_0$ to an 
isomorphism $\gamma_v$, we obtain a natural 
isomorphism $\gamma_\alpha : \iota_0 \to \iota_0 \circ \alpha$.
By sending $m \in \wh{M}$ to the pair
$(\alpha,\gamma_\alpha)$ constructed above, we obtain a
map $\xi : \wh{M} \to M_\Cip$.
It is straightforward to check that
$\xi$ is a continuous homomorphism of topological monoids.

Next we construct a continuous homomorphism
$z: M_\Cip \to \wh{M}$.
Let $(\alpha,\gamma_\alpha) \in M_\Cip$.
For $H \in \mathfrak{V}(M)$, let $v_H$ denote the
unique object of $\cC_0$ such that there exists
an isomorphism $\beta: (M/H,1 \cdot H) \xto{\cong} v_H$
in $\cC'_0$. We note that such an isomorphism $\beta$
is unique since $\cC'_0$ is a quasi-poset.
Set $\alpha(v_H) = (W,b)$ 
and let $m_H \in M/H$ denote the unique coset that is
mapped to $b$ under the composite 
$\gamma_\alpha(v_H) \circ \beta: M/H \xto{\cong} W$.
When $H,H' \in \mathfrak{V}(M)$ with $H' \subset H$,
the canonical quotient map
$M/H' \to M/H$ sends $m_{H'}$ to $m_H$.
Hence the family $(m_H)_{H \in \mathfrak{V}(M)}$
gives an element $m \in \wh{M}$.
By sending $(\alpha,\gamma_\alpha)$ to $s$ we obtain
a map $z : M_\Cip \to \wh{M}$.
It is straightforward to see that $z$ is a 
continuous homomorphism of topological monoids
and that the composite $z \circ \xi$ is equal to
the identity map.

It remains to prove that the composite
$\xi \circ z$ is equal to the identity map.
Let $(\alpha,\gamma) \in M_\Cip$ and set
$m = (m_H)_{H \in \mathfrak{V}(M)} = z(\alpha,\gamma)$
and $(\alpha',\gamma'_{\alpha'}) = \xi(m)$.
We prove that $(\alpha',\gamma'_{\alpha'})
= (\alpha,\gamma_\alpha)$.
Let $v = (V,a)$ be an edge object of $\cC_0$.
Then $v = v_H$ for some $H \in \mathfrak{V}(M)$.
Let us write $\alpha(v) = (W,b)$.
It follows from the construction of $m$ that
$\gamma_\alpha(v) : V \to W$ sends $ma$ to $b$.
Set $\alpha'(v) = (W',b')$.
It follow from the construction of $(\alpha',\gamma'_{\alpha'})$
that $(V,ma)$ is isomorphic to $(W',b')$ and
$\gamma'_{\alpha'}(v) : V \to W'$ sends $ma$ to $b'$.
Since $\gamma'_{\alpha'}(v) \circ \gamma_\alpha(v)$
gives an isomorphism $(W,b) \xto{\cong} (W',b')$
in $\cC_0$ and $\cC_0$ is a poset, we have
$(W,b) = (W',b')$. Since both $\gamma_\alpha(v)$
and $\gamma'_{\alpha'}(v)$ sends $ma \in V$ to $b$,
we have $\gamma_\alpha(v) = \gamma'_{\alpha'}(v)$.

Now let $v=(V,a)$ be an arbitrary object of $\cC_0$.
It follows from Lemma \ref{lem:yama2} that there exists an edge object
$v_1=(V_1,a_1)$ of $\cC_0$ and a morphism
$f:v_1 \to v$ in $\cC_0$.
Set $\alpha(v) = (W,b)$, $\alpha'(v)=(W',b')$ 
and $\alpha(v_1) = \alpha'(v_1) = (W_1,b_1)$
Let us consider the commutative diagram 
$$
\begin{CD}
V @<{f}<< V_1 @>{f}>> V \\
@V{\gamma_\alpha(v)}V{\cong}V @V{\gamma_\alpha(v_1)}V{\cong}V 
@V{\cong}V{\gamma'_{\alpha'}(v)}V \\
W @<{\alpha(f)}<< W_1 @>{\alpha'(f)}>> W'
\end{CD}
$$
It follows from the definition of $m$ that
$\gamma_\alpha(v_1)$ sends $m a_1$ to $b_1$.
Since $f(a_1)=a$ and $\alpha(f)(b_1)=b$,
we have $\alpha(v)(ma) = b$.
By construction of $(\alpha', \gamma'_{\alpha'})$,
$\alpha'(v)$ sends $ma \in V$ to $b' \in W'$.
Hence both $\alpha(v)$ and $\alpha'(v)$ are objects
of $\cC'_0$ that are isomorphic to $(V,ma)$ in $\cC'_0$.
Since $\cC_0$ is a skeletal full subcategory of $\cC'_0$,
we have $\alpha(v) = \alpha'(v)$.
Let us choose an edge object $v_2 = (V_2,a_2)$
in $\cC'_0$ and a morphism $g: v_2 \to (V,ma)$ in $\cC'_0$.
Since both $\gamma_\alpha(v) \circ g$ and
$\gamma'_{\alpha'}(v) \circ g$ are morphisms $V_2 \to W$
in $\cC$ that sends $a_2$ to $b$, we have
$\gamma_\alpha(v) \circ g = \gamma'_{\alpha'}(v) \circ g$.
Since $\cC$ is an $E$-category, we have 
$\gamma_\alpha(v) = \gamma'_{\alpha'}(v)$.
Since $v$ is arbitrary, we have $\alpha = \alpha'$
and $\gamma_\alpha = \gamma'_{\alpha'}$.
This completes the proof.
\end{proof}

\begin{prop}
\label{prop:equiv locally prodiscrete monoids}
Let $M$ be an admissible topological monoid.
Consider the $Y$-site and grid that were
constructed above.
Then the fiber functor
$\omega_\Cip:BM \to BM_\Cip$
is an equivalence of categories.
\end{prop}
\begin{proof}
Let us consider the following diagram of categories and functors:
\[
BM \xto{\omega_\Cip} BM_\Cip \xleftarrow{\cong} B\wh{M} \xto{\cong} BM.
\]
Here the second functor is the equivalence of categories 
given by the isomorphism $\xi : \wh{M} \to M_\Cip$ 
constructed in the proof of Theorem \ref{thm:locally prodiscrete monoid},
and the last functor is the equivalence of categories 
given by the homomorphism $M \to \wh{M}$.
Let $F : BM \to BM$ denote the composite. 
It suffices to prove that $F$ is isomorphic to the identity functor on $BM$. 
Let $V$ of $BM$. By definition of $F$, we have 
$F(V) = \varinjlim_{H \in \mathfrak{V}(M)} V^H$. 
Hence we have a bijection 
$\alpha_V : F(V) \xto{\cong} V$ that is functorial with respect to $V$.
It is straightforward from the definition of the homomorphism $\xi$ to see
that the bijection $\alpha_V$ is compatible with the actions of $M$.
Hence the collection $(\alpha_V)_V$ of bijections
gives an isomorphism of functors from $F$ to the identity functor on $BM$.
This completes the proof.
\end{proof}

\section{Construction of a grid from a fiber functor}
\label{sec:fiber to grid}
Given a $Y$-site and a grid, one can construct the absolute Galois monoid $M$ and the fiber functor to the category of smooth $M$-sets.
In this section, we give a statement in the converse direction.   

Suppose we are given a $Y$-site, an admissible topological monoid $M$,
and a functor from the associated topos to the 
category of continuous $M$-sets which is an equivalence.
The goal of this section is to construct a grid for the $Y$-site
whose associated fiber functor is the given functor.

\subsection{Setting and notation}
Let $(\cC, J)$ be a $Y$-site.
Let $M$ be an admissible topological monoid.
Let $BM$ denote the category of discrete left $M$-sets.
Suppose one has a functor
$\omega: \Shv(\cC, J) \to BM$.
We assume that $\omega$ is an equivalence.
Then we show that there exists a grid whose 
associated absolute Galois monoid is $M$ and 
the given equivalence is given by the fiber 
functor associated with the grid.

We use the following terminology: for two open subgroups
$H, H' \subset M$ with $H,H' \in \mathfrak{V}(M)$ and $H' \subset H$, 
the unique morphism $M/H' \to M/H$ in $BM$ that sends
the coset $1 \cdot H'$ to $1 \cdot H$ is called the
canonical quotient map.

\subsection{Construction of a grid}
We let $\omega_\cC :\cC \to BM$ denote the functor
that sends an object $X$ of $\cC$ to
$\omega(a_J(\frh_\cC(X)))$.
We define $\cC'_0$ to be the following category.
An object of $\cC'_0$ is a pair $(X, a)$ of an
object $X$ of $\cC$ and an element $b \in \omega_\cC(X)$.
A morphism $(X,a) \to (Y,b)$
is a morphism $f: X \to Y$ in $\cC$ such that
$\omega_\cC(f)$ sends $a$ to $b$.
The functor $\iota'_0:\cC'_0 \to \cC$ is defined by
$\iota'_0(X, a)=X$.
Let us choose a skeletal subcategory $\cC_0$ of $\cC'_0$
and let $\iota_0$ denote the restriction of $\iota'_0$
to $\cC_0$.
The following is the main result of this paragraph:

\begin{thm} \label{thm:15-Cip}
The pair $(\cC_0, \iota_0)$ is a grid for 
the $Y$-site $(\cC, J)$.
\end{thm}

\subsection{Preliminary}
\begin{lem} \label{lem:ac faithful}
The functor $\omega_\cC$ is faithful.
\end{lem}

\begin{proof}
Let $X,Y$ be objects of $\cC$.
It suffices to show that the map
\begin{equation} \label{eq:15-1}
\Hom_\cC(X,Y) \to \Hom_{\Shv(\cC,J)}
(a_J(\frh_\cC(X)), a_J(\frh_\cC(Y)))
\end{equation}
is injective. Let $F=\frh_\cC(Y)$.
By Yoneda's lemma and adjunction we have 
bijections $\Hom_\cC(X,Y) \cong F(X)$ and
$$
\Hom_{\Shv(\cC,J)}
(a_J(\frh_\cC(X)), a_J(F))
\cong \Hom_{\Presh(\cC)}
(\frh_\cC(X), a_J(F))
\cong a_J(F)(X),
$$
and the map \eqref{eq:15-1} can be identified with the
map $F(X) \to a_J(F)(X)$ given by the adjunction morphism
$F \to a_J(F)$ of presheaves.
Since $\cC$ is an $E$-category, it follows that
$F(f) : F(X'_2) \to F(X'_1)$ is injective for any
morphism $f:X'_1 \to X'_2$ in $\cC$.
Hence it follows from the description \eqref{eq:a_J}
of the sheafification functor $a_J$ that the
map $F(X) \to a_J(F)(X)$ is injective,
which proves the claim.
\end{proof}

\begin{prop} \label{prop:B-site_epi}
Let $(\cC,J)$ be a $B$-site and let $f:Y \to X$ be
a morphism in $\cC$. Then $f$ belongs to $\cT(J)$ if and
only if $a_J \frh_\cC(f) : a_J \frh_\cC(Y) \to
a_J \frh_\cC(X)$ is an epimorphism in $\Shv(\cC,J)$.
\end{prop}

\begin{proof}
First we prove the ``only if" part.
Suppose that $f$ belongs to $\cT(J)$.
Then for an arbitrary sheaf $\cF$ on $(\cC,J)$,
the restriction map $f^* : \cF(X) \to \cF(Y)$
is injective.
Note that we have natural bijections
$\cF(X) \cong \Hom_{\Shv(\cC,J)}(a_J \frh_\cC(X),\cF)$.
It follows that $a_J \frh_\cC(f)$ is an epimorphism.

Next we prove the ``if" part.
Suppose that $a_J \frh_\cC(f)$ is an epimorphism.
Let $a \in (a_J \frh_\cC(X))(X)$ denote the
section of the sheaf $a_J \frh_\cC(X)$
given by the identity morphism $\id:X \to X$
in $\cC$.
Since $a_J \frh_\cC(f)$ is an epimorphism,
there exists a morphism $g: Z \to X$ that belongs to $\cT(J)$
and an element $b \in (a_J \frh_\cC(Y))(Z)$ such that
The map $(a_J \frh_\cC(Y))(Z)
\to (a_J \frh_\cC(X))(Z)$ given by $f$ sends $b$ to the
pullback of $a$ with respect to $g$.
Let us choose a diagram $Z \xleftarrow{h} W \xto{h'} Y$
in $\cC$ with $h$ in $\cT(J)$ that represents $b$.
Then there exists a morphism $t: W' \to W$ in $\cT(J)$
which makes the diagram
$$
\begin{CD}
W' @>{h' \circ t}>> Y \\
@V{h \circ t}VV @VV{f}V \\
Z @>{g}>> X
\end{CD}
$$
commutative. Since $g$, $h'$ and $t$ belong to $\cT(J)$,
the composite $f \circ (h' \circ t)$ belongs to $\cT(J)$.
Since $(\cC,J)$ is a $B$-site, it follows that
$f$ belongs to $\cT(J)$.
\end{proof}

\begin{defn}
Let $\cD$ be a category having small colimits.
Let $X$ be an object of $\cD$. 
We say that $X$ is a cyclic object 
in $\cD$
if for any small (not necessarily filtered) 
inductive system $(Y_i)_{i \in I}$
of objects whose transition maps are monomorphisms,
the map $\varinjlim_{i \in I} \Hom_\cD(X,Y_i)
\to \Hom_{\cD}(X,\varinjlim_i Y_i)$ is bijective.
\end{defn}
\begin{rmk}
We give a remark to justify the terminology.
Let $BM$ be the category of continuous left $M$-sets.
An object $V$ of $BM$ is said to be generated by one element
if there exists a surjection $M \to V$ 
in the category of left $M$-sets, where 
we view $M$ canonically as a left $M$-set.
Then, an object in $BM$ is cyclic if and only if it is generated by one element.
In the proof of Lemma~\ref{lem:15-isom}, we see the proof 
of the `if' part.   The proof of the `only if' part is omitted
as we will not use the claim.

It is immediate that a cyclic object is connected,
in the sense that 
$\Hom_\cD(X, \coprod_i Y_i) =\coprod_i \Hom_\cD(X, Y_i)$
holds for a family of objects $Y_i$ indexed by $i$.
A cyclic object $X$ need not be compact or finitely presented in the 
sense that the canonical map
$\varinjlim \Hom_\cD(X, Y_i) \to \Hom_\cD(X, \varinjlim Y_i)$
for a filtered inductive system $Y_i$ need not be bijective in general.
For example, consider the category of 
$R$-modules for a commutative ring $R$ that has an ideal $\mathfrak{a}$ 
which is not finitely generated.   Then the module $R/\mathfrak{a}$ is 
cyclic, connected, not compact, and not finitely presented.
\end{rmk}

\begin{prop}\label{prop:cyclic}
Let $(\cC,J)$ be a $Y$-site. Then for any object
$X$ of $\cC$, the sheaf $a_J \frh_\cC(X)$ is a cyclic
object of $\Shv(\cC,J)$.
\end{prop}

\begin{rmk}
The main reason that we restrict ourselves to the case
where $(\cC,J)$ is a $Y$-site is that we use 
Corollary \ref{cor:sheaf_criterion3}
where $J$ is assumed to have enough Galois coverings.
It seems very likely that we can generalize
Corollary \ref{cor:sheaf_criterion3} 
to an arbitrary category with an $A$-topology
by proving a statement 
analogous to that of Lemma 2
in \cite[p.\ 126]{MM}.
By using such a generalization,
it is very probable that one can extend
Proposition \ref{prop:cyclic} 
for an arbitrary $B$-site $(\cC,J)$.
\end{rmk}

\begin{proof}
Let $(\cF_i)_{i \in I}$
be an arbitrary small inductive system of
sheaves on $(\cC,J)$
whose transition maps are monomorphisms.
We prove that the map
$\varinjlim_i \cF_i(X) \to 
(\varinjlim_i \cF_i)(X)$ is bijective.
Let $\cF$ denote the colimit of $\cF_i$
in the category $\Presh(\cC)$.
It suffices to show that $\cF$ is a sheaf on $(\cC,J)$.
Observe that, for any group $G$ and for any
filtered inductive system $(V_j)_{j \in J}$ of left $G$-sets
whose transition maps are injective,
the map $\varinjlim_j V_j^G \to (\varinjlim_j V_j)^G$
is bijective.
Hence by using Corollary \ref{cor:sheaf_criterion3}, 
we conclude that $\cF$ is a sheaf on $(\cC,J)$.
This completes the proof.
\end{proof}

\begin{lem} \label{lem:15-isom}
Let $X$ be an object of $\cC$. Then there exists an
open subgroup $H \in \mathfrak{V}(M)$ and
a surjective map $M/H \to \omega_\cC(X)$ of
left $M$-sets.
\end{lem}

\begin{proof}
It follows from Proposition \ref{prop:cyclic}
that $\omega_\cC(X)$ is a cyclic object of $BM$.
Thus it suffices to prove that for any
non-empty cyclic object $V$ of $BM$,
there exists an
open subgroup $H \in \mathfrak{V}(M)$ and
a surjective map $M/H \to V$ of
left $M$-sets.

Let $V$ be a non-empty cyclic object of $BM$.
Let $I$ denote the set of subsets
$S \subset V$ of the form
$S = \{ m v |\ m \in M \}$ for some
$v \in V$.
Then any $S \in I$ is a subobject of $V$ in $BM$.
Moreover $I$ is a poset with respect to
inclusions and we have $V = \varinjlim_{S \in I} S$.
Since $V$ is a cyclic object, there exists
$S_0 \in I$ and a morphism $V \to S_0$ in $BM$
whose composite with the inclusion $S_0 \to V$
is equal to the identity map $V \to V$.
This implies $S_0 = V$. Choose $v \in V$ satisfying
$S_0 = \{ m v |\ m \in M \}$ and an open subgroup
$H \in \mathfrak{V}(M)$ satisfying $Hv =v$.
Then the map $M/H \to V$ that sends $1\cdot H$
to $v$ satisfies the desired property.
\end{proof}

\begin{lem} \label{lem:above-below}
Let $x = (X, a)$ be an object of $\cC'_0$.
Let $X_1 \xto{f_1} X \xto{f_2} X_2$ be a diagram in $\cC$.
Suppose that $f_1$ belongs to $\cT(J)$.
Then there exist objects
$x_1 = (X_1, a_1)$ and $x_2 = (X_2, a_2)$
of $\cC'_0$ such that $f_1$ is a morphism from $x_1$ to 
$x$ in $\cC'_0$ and that $f_2$ is a morphism from 
$x$ to $x_2$
in $\cC'_0$.
\end{lem}

\begin{proof}
If follows from Proposition \ref{prop:B-site_epi} that
the map $\omega_\cC(f_1)$ is surjective. 
Let us choose an element $a_1 \in \omega_\cC(X_1)$
which is sent to $a$ under the map $\omega_\cC(f_1)$.
We set $a_2 = \omega_\cC(f_2)(a)$.
Then the objects $x_1 = (X_1,a_1)$ and $x_2 = (X_2,a_2)$ 
of $\cC'_0$
satisfy the desired property.
\end{proof}

\begin{lem} \label{lem:15-cover}
Let $V$ be an object of $BM$ and let $a \in V$.
Then there exist an object $X$ of $\cC$ and 
a morphism $\omega_\cC(X) \to V$ in $BM$
such that $a$ belongs to its image.
\end{lem}

\begin{proof}
Let $F$ be a sheaf on $(\cC,J)$ such that
$\omega(F)$ is isomorphic to $V$.
Since
 $\cC$ is essentially $\frU$-small
and any presheaf is a colimit of representables,
there exists a family $((X_i,a_i))_{i \in I}$ of
pairs $(X_i,a_i)$ of objects $X_i$ of $\cC$ 
and elements $a_i \in F(X_i)$ such that,
if we denote by $f_i$ the morphisms
$a_J(\frh_\cC(X_i)) \to F$ in $\Shv(\cC,J)$
given by $a_i$, then the morphism
$\coprod_i f_i : \coprod_i a_J(\frh_\cC(X_i)) \to F$
is an epimorphism.
It follows that there exists
$i_0 \in I$ such that $a$ is contained in
the image of the composite
$\omega_\cC(X_{i_0}) \xto{\omega(f_{i_0})}
\omega(F) \cong V$.
Set $X=X_{i_0}$ and $f=f_{i_0}$. Then the composite
$\omega_\cC(X) \xto{\omega(f)} \omega(F) \cong V$
has the desired property.
\end{proof}

\begin{lem} \label{lem:15-edge}
Let $(X,a)$ be an object of $\cC'_0$.
Then $(X,a)$ is an edge object of $\cC'_0$
if and only if the map
$f:M \to \omega_\cC(X)$ that sends
$m \in M$ to $ma$ is surjective.
\end{lem}

\begin{proof}
The ``if" part follows from Proposition Lemma \ref{prop:B-site_epi}.
We prove the ``only if" part.
Suppose that the map $f$ is not surjective.
Let us choose an open subgroup 
$H \in \mathfrak{V}(M)$ satisfying $Ha = \{a\}$
and set $V=M/H$.
Let $g: V \to \omega_\cC(X)$ denote the
morphism in $BM$ that sends $m H$ to $ma$.
It follows from Lemma \ref{lem:15-cover} that
there exists an object $Y$ of $\cC$ and
a morphism $h: \omega_\cC(Y) \to V$ in $BM$
such that $1 \cdot H$ belongs to the image of $h$.
Let us choose $b \in \omega_\cC(Y)$
satisfying $h(b) = 1 \cdot H$.
Let $s:a_J\frh_\cC(Y) \to a_J\frh_\cC(X)$ denote
the morphism in $\Shv(\cC,J)$ such that
$\omega(s)$ is equal to the composite 
$g \circ h : \omega_\cC(Y) \to \omega_\cC(X)$.
Let us choose a diagram
$Y \xleftarrow{t} Y' \xto{u} X$ in $\cC$ 
with $t \in \cT(J)$ which represents $s$.
It follows from Proposition \ref{prop:B-site_epi} that
there exists $b' \in \omega_\cC(Y')$
satisfying $b = \omega_\cC(t)(b')$.
Then $u$ is a morphism from $(Y',b')$
to $(X,a)$ in $\cC'_0$.
Since $g$ is not surjective, it follows that
$\omega_\cC(u)$ is not surjective.
Hence $(X,a)$ is not an edge object of $\cC'_0$.
\end{proof}

\subsection{Proof of Theorem \ref{thm:15-Cip}}

\begin{proof}
It suffices to prove that the pair $(\cC'_0,\iota'_0)$
satisfies the conditions in the definition of a grid
with the word ``poset" replaced by the word ``quasi-poset".

First we prove that $\cC'_0$ is a quasi-poset.
The existence of $\omega$ implies that
the category $\cC$ is non-empty. 
Hence $\cC'_0$ is non-empty.
Let 
$(X_1, a_1)$ and $(X_2, a_2)$
be two objects of $\cC'_0$ and let
$f_1,f_2: X_1 \to X_2$ be two morphisms in $\cC'_0$.
Let us choose an open subgroup $H \subset \mathfrak{V}(M)$
satisfying $H a_1 = \{a_1\}$.
Let $g: M/H \to X_1$ denote the morphism in $BM$
that sends $m H$ to $m a_1$ for any $m \in M$.
It follows from Lemma \ref{lem:15-cover} that
there exist an object $Y$ of $\cC$ and
a morphism $h: \omega_\cC(Y) \to M/H$ in $BM$ 
such that $1 \cdot H$ belongs to the image of $h$.
Let $s : a_J\frh_\cC(Y) \to a_J \frh_\cC(X)$ denote
the morphism satisfying $\omega(s) = g \circ h$.
Let us choose a diagram
$Y \xleftarrow{t} Z \xto{u} X$
in $\cC$ with $t \in \cT(J)$ which represents $s$.
Then we have $\omega_\cC(u) = g \circ h \circ \omega_\cC(t)$.
Since $t \in \cT(J)$, it follows from Proposition \ref{prop:B-site_epi} that
the image of $\omega_\cC(u)$ is equal to $\{ m a_1\ |\ m \in M \}$.
Since both $\omega_\cC(f_1) = \omega_\cC(f_2)$
sends $a_1$ to $a_2$, we have
$\omega_\cC(f_1 \circ u) = \omega_\cC(f_2 \circ u)$.
Hence it follows from Lemma \ref{lem:ac faithful} that
$f_1 \circ u = f_2 \circ u$. 
Since $\cC$ is an $E$-category, we have $f_1 = f_2$.
Thus $\cC'_0$ is a quasi-poset.

Let $X$ be an arbitrary object of $\cC$.
It follows from Lemma \ref{lem:15-isom} that there exists
$a \in \omega_\cC(X)$ satisfying $M a = \omega_\cC(X)$.
It then follows from Lemma \ref{lem:15-edge} that the pair
$(X,a)$ is an edge object of $\cC'_0$.
Hence $(\cC'_0,\iota'_0)$ satisfies Condition (2)
in Definition \ref{defn:grids}.

Let $x=(X,a)$ be an object of $\cC'_0$
and let $f: Y \to X$ be a morphism in $\cC$
that belongs to $\cT(J)$.
It then follows from Proposition \ref{prop:B-site_epi} that
$\omega_\cC(f)$ is surjective.
Hence it follows from Lemma \ref{lem:15-cover}
that there exists an object $y$ of $\cC'_0$
satisfying $\iota'_0(y)=Y$ and
$f$ is a morphism from $y$ to $x$ in $\cC'_0$.
Hence $(\cC'_0,\iota'_0)$ satisfies Condition (3)
in Definition \ref{defn:grids}.

Let $x=(X,a)$ be an object of $\cC'_0$.
We claim that the functor
$\iota'_{0,x/} : \cC'_{0,x/} \to \cC_{X/}$ induced by $\iota'_0$
is an equivalence of categories.
It follows from Lemma \ref{lem:above-below} that
this functor is essentially surjective.
Since both $\cC'_{0,x/}$ and $\cC_{X/}$ are
quasi-posets, it suffices to prove that for any two objects
$f_1:x \to x_1$ and $f_2: x \to x_2$ of $\cC'_{0,x/}$
such that there exists a morphism from $\iota'_{0,x/}(f_1)$ 
to $\iota'_{0,x/}(f_2)$ in $\cC_{X/}$, 
there exists a morphism $h$ from $f_1$ to $f_2$ in $\cC'_{0,x/}$.
Let us write
$x_1=(X_1,a_1)$ and $x_2=(X_2,a_2)$.
Then we have a commutative diagram
$$
\begin{CD}
\omega_\cC(X) @>{\omega_\cC(f_1)}>> 
\omega_\cC(X_1) \\
@| @VV{\omega_\cC(h)}V \\
\omega_\cC(X) @>{\omega_\cC(f_2)}>> 
\omega_\cC(X_2).
\end{CD}
$$
in $BM$.
Since the map $\omega_\cC(f_i)$ sends $a$ to $a_i$
for $i \in \{1,2\}$, the map
$\omega_\cC(h)$ sends $a_1$ to $a_2$.
Hence $h$ is a morphism from $x_1$ to $x_2$
in $\cC'_0$.
This proves the claim
that $\iota'_{0,x/}$ is an equivalence of categories.

It remains to prove that $\cC'_0$ is $\Lambda$-connected.
Let 
$x_1 =(X_1, a_1)$ and $x_2 =(X_2, a_2)$
be two objects of $\cC'_0$.
For $i=1,2$ let us choose an open subgroup 
$H_i \in  \mathfrak{V}(M)$ satisfying $H_i a_i = \{a_i\}$.
It follows from Lemma \ref{lem:15-isom} that
the map $M \to \omega_\cC(X_i)$ that sends
$m \in M$ to $m a_i$ gives a surjective morphism
$\epsilon_{i} : M/H_i \to \omega_\cC(X_i)$.
It follows from Lemma \ref{lem:15-cover} that
there exist an object $Y$ of $\cC$ and a 
surjective morphism
$\epsilon: \omega_\cC(Y) \to M/(H_1 \cap H_2)$ in $BM$.
Let us choose an element $b \in \omega_\cC(Y)$
satisfying $\epsilon(b) = 1 \cdot (H_1 \cap H_2)$
and set $y=(Y, b)$, which is an object of $\cC'_0$.
Since $\omega$ is an equivalence of category, there
exists a diagram
$$
a_J(\frh_\cC(X_1)) \xleftarrow{h_1} a_J(\frh_\cC(Y))
\xto{h_2} a_J(\frh_\cC(X_2))
$$
in $\Shv(\cC,J)$ such that the diagram
$$
\begin{CD}
\omega_\cC(X_1) @<{\omega(h_1)}<< \omega_\cC(Y) 
@>{\omega(h_2)}>>
\omega_\cC(X_2) \\
@A{\epsilon_1}AA @V{\epsilon}VV @AA{\epsilon_2}A \\
M/H_1 @<<< M/(H_1 \cap H_2) @>>> M/H_2
\end{CD}
$$
is commutative in $BM$, where the bottom arrows are the 
canonical quotient maps.
This commutativity implies that the map
$\omega(h_i)$ sends $b$ to $a_i$ for $i \in \{1,2\}$.

Note that $\Hom_{\Shv(\cC,J)}(a_J(\frh_\cC(Y)),
a_J(\frh_\cC(X_i))) \cong a_J(\frh_\cC(X_i))(Y)$
for $i \in \{1,2\}$.
Hence it follows from the description 
\eqref{eq:a_J} of the sheafification functor $a_J$ that
there exist a morphism $f: Z \to Y$ in $\cC$ and morphisms
$h'_i : Z \to X_i$ for $i \in \{1,2\}$ 
satisfying the following conditions:
\begin{itemize}
\item $f$ is a Galois covering in $\cC$,
\item $h'_i = h'_i \circ \sigma$ for $i \in \{1,2\}$ 
and for any $\sigma \in \Gal(f)$,
\item $\omega_\cC(h'_i) = \omega(h_i) \circ \omega_\cC(f)$
holds for $i \in \{1,2\}$.
\end{itemize}
It follows from Lemma \ref{lem:above-below} that
there exists an object $z = (Z,c)$
of $\cC'_0$ such that
$f$ is a morphism from $z$ to $y$ in $\cC'_0$.
By construction $h'_i$ is a morphism from
$z$ to $x_i$ in $\cC'_0$ for $i=1,2$.
This shows that $\cC'_0$ is $\Lambda$-connected.
\end{proof}

\subsection{Computation of the absolute Galois monoid}
Let $m \in M$. 
%
%
Let $\alpha'_m : \cC'_0 \to \cC'_0$ denote the
following functor: for an object
$x=(X,a)$ of $\cC'_0$, the object $\alpha'_m(x)$ is $(X,ma)$.
For another object $x'=(X',a')$, 
any morphism $f:X' \to X$ from $x'$ to $x$ in $\cC'_0$
can also be regarded as a morphism from $\alpha'_m(x')$ to $\alpha'_m(x)$,
since $\omega_\cC(f)$ sends $ma'$ to $ma$.
We define $\alpha'_m(f)$ to be $f$ regarded
as a morphism from $\alpha'_m(x')$ to $\alpha'_m(x)$.
By associating the identity map of
$\iota'_0(x) = \iota'_0(\alpha'_m(x))$
for each object $x$ of $\cC'_0$, we obtain
a natural isomorphism 
$\gamma'_m : \iota'_0 \to \iota'_0 \circ \alpha'_m$ of functors.

Let $(\cC_0,\iota_0)$ be the grid of $(\cC,J)$ in the statement of
Theorem \ref{thm:15-Cip}.
Let $\alpha_m : \cC_0 \to \cC_0$ denote the composite
of the inclusion $\cC_0 \inj \cC'_0$, the functor
$\alpha'_m$, and the unique quasi-inverse of the inclusion
$\cC_0 \to \cC'_0$. (Note that, since $\cC_0$ is thin, 
the quasi-inverse of 
the inclusion functor, which is an isomorphism,
is unique.)
The natural isomorphism $\gamma'_m$ induces a
natural isomorphism 
$\gamma_m : \iota_0 \to \iota_0 \circ \alpha_m$ of functors.

By construction, for two elements $m,m' \in M$,
the composite $\alpha'_m \circ \alpha'_{m'}$ is equal to
the functor $\alpha'_{mm'}$.
This implies that $\alpha_m \circ \alpha_{m'} = \alpha_{mm'}$
and that $\alpha_m$ is an isomorphism of functors.
Hence by sending $m \in M$ to
the pair $(\alpha_m,\gamma_m)$, we obtain a homomorphism
$\xi : M \to M_{(\cC_0,\iota_0)}$ of monoids.
The following is the main result in this paragraph:

\begin{thm} \label{thm:15-G}
Let the notation be as above. Then
the homomorphism $\xi$ induces an isomorphism
$\wh{M} \xto{\cong} M_{(\cC_0,\iota_0)}$ of monoids.
\end{thm}

Before proving Theorem \ref{thm:15-G},
we need the following proposition.

\begin{prop} \label{prop:M/H}
Let $X$ be an object of $\cC$.
Then there exists an open subgroup $H \subset M$
such that $\omega_\cC(X)$ is isomorphic to $M/H$.
\end{prop}

\begin{proof}
It follows from Lemma \ref{lem:15-isom} that
there exists $a \in \omega_\cC(X)$ satisfying
$M a = \omega_\cC(X)$.
Let us choose $H \in \mathfrak{V}(M)$
satisfying $H a = \{a\}$ and let
$f: M/H \to \omega_\cC(X)$ denote the morphism
in $BM$ that sends $m H$ to $ma$ for any
$m \in M$.
It follows from Lemma \ref{lem:15-cover} that
there exist an object $Y$ of $\cC$
and a morphism $g: \omega_\cC(Y) \to M/H$
such that $1\cdot H$ belongs to the image of $g$.
Let $s: a_J \frh_\cC(Y) \to a_J \frh_\cC(X)$
denote the morphism in $\Shv(\cC,J)$
satisfying $\omega(s) = g\circ f$.
Let $Y \xleftarrow{t} Z \xto{s'} X$ denote the
diagram in $\cC$ with $t \in \cT(J)$ that represents $s$.
By construction the composite $g\circ f$ is surjective.
It follows from Proposition \ref{prop:B-site_epi} that $\omega_\cC(t)$
is surjective. It follows that $\omega_\cC(t)$
is surjective. Hence it follows again from Proposition \ref{prop:B-site_epi}
that $s'$ belongs to $\cT(J)$.
Hence there exists a morphism $u:Z' \to Z$ in $\cC$
with $u \in \cT(J)$ such that $s' \circ u$ is a Galois covering in $\cC$.
Set $G = \Gal(s' \circ u)$.
Let us choose $b \in \omega_\cC(Z')$ satisfying
$M b = \omega_\cC(Z')$.
Since $M g(\omega_\cC(t \circ u)(b)) = M/H$,
we have $g(\omega_\cC(t \circ u)(b)) \in M^*/H$.
By replacing $b$ by $m' b$ for some suitable $m' \in M^*$,
we may and will assume that
$g(\omega_\cC(t \circ u)(b)) \in 1 \cdot H$.
Set $H' = \{ m \in M\ |\ m b =b \}$.
Our assumption implies $H' \subset H$.
Hence $H'$ is equal to the stabilizer of $b$
with respect to the $H$-action on $\omega_\cC(Z')$.
In particular $H'$ be an open subgroup of $H$.

Let $\sigma \in G$. Let $m_\sigma \in M$ be an element
satisfying $\omega_\cC(\sigma)(b) = m_\sigma b$.
Since $\omega_\cC(\sigma)$ is an automorphism in $\omega_\cC(Z')$,
we have $M m_\sigma b = \omega_\cC(Z')$.
In particular we have $M m_\sigma H = M$.
This shows that $m_\sigma \in M^*$.
Since $\omega_cC(\sigma)$ is an automorphism of 
$\omega_\cC(Z')$, the stabilizer of $m_\sigma b$
with respect to the $M^*$-action on $\omega_\cC(Z')$
is equal to $H'$.
This shows that $m_\sigma$ belongs to the normalizer
$N_{M^*}(H')$ of $H'$ in $M^*$.
\newcommand{\mbar}{\overline{m}}
Set $J = N_{M^*}(H')$ and let $\mbar_\sigma$ denote the
class of $m_\sigma$ in $J$.
Then $\mbar_\sigma$ is independent on the choice of $m_\sigma$
and the map $\mbar : G^\op \to J$ that sends
$\sigma$ to $\mbar_\sigma$ is a homomorphism of groups.

Since $s'\circ u$ is a Galois covering in $\cC$ that belongs to
$\cT(J)$, it follows that $\omega_\cC(X)$ is a quotient object
of $\omega_\cC(Z')$ with respect to the action of $G$.
This in particular shows that the set
$U = \{ m \in M\ | ma =a \}$ is equal to
the inverse image of $\mbar(G) \subset J$ 
under the quotient map $N_{M^*}(H') \surj J$.
Since the quotient map
$\omega_\cC(s' \circ u)$ factors through $f$,
we have $H \subset U \subset N_{M^*}(H')$.

We claim that, for two element $m_1, m_2 \in M$,
we have $m_1 a = m_2 a$ if and only if $m_1 U = m_2 U$.
The ``if" part is clear. We prove the ``only if" part.
Suppose that $m_1 a = m_2 a$. Then there exists
$\sigma \in G$ satisfying $m_2 b = m_1 \sigma(b)$.
This implies $m_2 H = m_1 m_\sigma H$.
Hence we have $m_2 U = m_1 U$.
This shows that $\omega_\cC(X)$ is isomorphic to
$M/U$, which proves the claim.
\end{proof}

\begin{proof}[{Proof of Theorem \ref{thm:15-G}}]
First we introduce categories $\cD$, $\cD'_0$
and a functor $\jmath'_0: \cD'_0 \to \cD$.
Let $\cD$ denote the full subcategory of
$BM$ whose objects are those objects $V$ of $BM$
that are isomorphic to $\omega_\cC(X)$ 
for some objects $X$ of $\cC$.
Let $\cD'_0$ denote the following category.
The objects of $\cD'_0$ are the pairs $(V,a)$
of objects $V$ of $\cD$ and elements
$a \in V$.
For two objects $(V,a)$ and $(V',a')$ of $\cD_0$,
a morphism from $(V',a)$ to $(V,a)$
is a morphism $V' \to V$ in $BM$ that sends
$a'$ to $a$.
Let $\jmath'_0 : \cD'_0 \to \cD$ denote the
functor that sends an object $(V,a)$ of
$\cD_0$ to $V$.
Let us choose a skeletal subcategory $\cD_0$ of $\cD'_0$
and let $\jmath_0$ denote the restriction of $\jmath'_0$
to $\cD_0$.

It follows from Proposition \ref{prop:M/H} that any object $V$ of $\cD$
is isomorphic to $M/H$ for some open subgroup $H$
of $M$.

It follows from Lemma \ref{lem:15-cover} that for any 
$H \in \mathfrak{V}(M)$, there exists an
open subgroup $H' \subset H$ such that
$M/H'$ is an object of $\cD$.
Let $\cT_\cD$ denote the collection of surjective
morphisms in $\cD$.
By the argument similar to that in the proofs of
Theorem \ref{thm:CM_Y-site} and Theorem \ref{thm:CM_grid},
one can check that $\cT_\cD$ is semi-localizing, 
that $\cD$ equipped with the topology $J_\cD := J_{\cT_\cD}$ is a $Y$-site,
and that $(\cD_0,\jmath_0)$ is a grid for $(\cD,J_D)$.

Let $M_{(\cD_0,\jmath_0)}$ denote the
absolute Galois monoid associated with the grid $(\cD_0,\jmath_0)$.
We construct a homomorphism
$M \to M_{(\cD_0,\jmath_0)}$.
Let $m \in M$. 
Let $\beta_m : \cD_0 \to \cD_0$ denote the functor
that sends an object $(V,a)$ of $\cD_0$ to 
the unique object of $\cD_0$ isomorphic to $(V,ma)$
in $\cD'_0$.
For an object $(V,a)$ of $\cD_0$, we denote by $\delta_{m,H}$
the unique isomorphism from $(V,ma)$ to $\beta_m((V,a))$,
regarded as a morphism from $V$ to $\jmath_0(\beta_m(V,a))$
in $BM$.
When $(V,a)$ varies, the isomorphisms $\delta_{m,H}$
give a natural isomorphism $\delta_m : \jmath_0 \xto{\cong}
\jmath_0 \circ \beta_m$.
By sending $m \in M$ to $(\beta_m,\delta_m) \in M_{(\cD_0,\jmath_0)}$,
we obtain a map $\xi'':M \to M_{(\cD_0,\jmath_0)}$.
It is straightforward to check that $\xi''$ is a continuous
homomorphism of monoids, and moreover,
by the argument similar to that in the proof of
Theorem~\ref{thm:locally prodiscrete monoid},
one can check that the map $\xi''$ gives an isomorphism from
$\wh{M}$ to $M_{(\cD_0,\jmath_0)}$.

Next we construct a homomorphism
$\xi': M_{(\cD_0,\jmath_0)} \to M_\Cip$ of topological monoids.
Let $p' : \cC'_0 \to \cD_0$ denote the functor 
that sends an object $(X,a)$ of $\cC'_0$ to
the unique object $p'(x)$ of $\cD_0$
which is isomorphic to $(\omega_\cC(X),a)$ in $\cD'_0$.
It is easy to see that $p'$ is essentially
surjective.
Let $x=(X,a)$ be an object of $\cC'_0$.
Let $\epsilon_x:(\omega_\cC(X),a) \to p'(x)$ denote the
unique isomorphism in $\cD'_0$.
Let $(\beta,\delta_\beta) \in M_{(\cD_0,\jmath_0)}$.
For an object $x=(X,a)$ of $\cC'_0$, 
set $p'(x) = (V,b)$ and $\beta(p'(x)) = (V',b')$,
and let $c$ denote the
unique element of $\omega_\cC(X)$ that is mapped to
$b'$ under the composite
$$
\omega_\cC(X) \xto{\epsilon_x} V
\xto{\delta_\beta(p(x))} V'
$$
and set $\alpha'(x) = (X,c)$.
Let $x'=(X',a')$ be another object of $\cC'_0$.
If $f:X' \to X$ is a morphism from $x'$ to $x$ in
$\cC'_0$, then as is easily seen, $f$ is also a
morphism from $\alpha'(x')$ to $\alpha'(x)$
in $\cC'_0$. Hence we obtain a functor 
$\alpha' : \cC'_0 \to \cC'_0$.
By associating the identity morphism
$X \to X$ for any object $x=(X,a)$ of $\cC'_0$,
we obtain a natural isomorphism 
$\gamma'_{\alpha'} : \iota'_0 \xto{\cong}
\iota'_0 \circ \alpha'$.
Let $\alpha : \cC_0 \to \cC_0$ denote the
composite of the inclusion functor 
$\cC_0 \inj \cC'_0$, the functor $\alpha'$,
and the unique quasi-inverse of the inclusion
functor $\cC_0 \inj \cC'_0$.
Then $\gamma'_{\alpha'}$ induces a
natural isomorphism $\gamma_\alpha : \iota_0
\xto{\cong} \iota_0 \circ \alpha$
and the pair $(\alpha,\gamma_\alpha)$
is an element of $M_\Cip$.
By sending $(\beta,\delta_\beta)$ to
$(\alpha,\gamma_\alpha)$, we obtain a map
$\xi': M_{(\cD_0,\jmath_0)} \to M_\Cip$.
It is straightforward to check that $\xi'$ is
a continuous homomorphism of topological monoids.

It is easy to see that the map
$\xi$ is equal to the composite $\xi' \circ \xi''$.
Hence to prove the theorem, it suffices to
show that $\xi'$ has a continuous
inverse homomorphism.

We construct a homomorphism
$\zeta: M_\Cip \to M_{(\cD_0,\jmath_0)}$ of topological monoids
which is a continuous inverse to $\xi'$.
We let $p: \cC_0 \to \cD_0$ denote the restriction
of $p'$ to $\cC_0$.
For an object $x=(X,a)$ of $\cC'_0$,
we introduced an isomorphism
$\epsilon_x : \omega_\cC(X) \xto{\cong} \jmath_0(p'(x))$.
When $x$ varies, the isomorphisms $\epsilon_x$ give 
rise to natural isomorphisms 
$\epsilon': \iota'_0 \xto{\cong} \jmath_0 \circ p'$
and $\epsilon : \iota_0 \xto{\cong} \jmath_0 \circ p$.

Let $(\alpha, \gamma_\alpha)$ be an element of the
absolute Galois monoid $M_\Cip$.
Let $x = (X,a)$
be an object of $\cC_0$.
Let us write $\alpha(x) = (X',a')$.
%
Let us consider the composite
$$
\delta_x : \jmath_0(p(x)) \xto{\epsilon_x^{-1}} \omega_\cC(X)
\xto{\omega_\cC(\gamma_\alpha(X))} 
\omega_\cC(X') \xto{\epsilon_{\alpha(x)}}
\jmath_0(p(\alpha(x))).
$$
We claim that the isomorphism $\delta_x$ 
depends only on $p(x)$, i.e. if $y$ is an object
of $\cC_0$ with $p(x)=p(y)$, then
$p(\alpha(x)) = p(\alpha(y))$ and
$\delta_x = \delta_y$.
In fact, since $\cC_0$ is $\Lambda$-connected,
there exists a diagram $x \xleftarrow{f} z \xto{g} y$
in $\cC_0$. Then we have a commutative diagram
$$
\begin{CD}
\iota_0(x) @<{\iota_0(f)}<< \iota_0(z) @>{\iota_0(g)}>> \iota_0(y) \\
@V{\gamma_\alpha(x)}V{\cong}V @V{\gamma_\alpha(z)}V{\cong}V 
@V{\cong}V{\gamma_\alpha(y)}V \\
\iota_0(\alpha(x)) @<{\iota_0(\alpha(f))}<< \iota_0(\alpha(y)) 
@>{\iota_0(\alpha(g))}>> \iota_0(\alpha(z))
\end{CD}
$$
in $\cC$ that induces a commutative diagram 
$$
\begin{CD}
\jmath_0(p(x)) @<<< \jmath_0(p(z)) @>>> \jmath_0(p(y)) \\
@V{\delta_x}V{\cong}V @V{\delta_z}V{\cong}V @V{\cong}V{\delta_y}V \\
\jmath_0(p(\alpha(x))) @<<< \jmath_0(p(\alpha(z))) @>>>
\jmath_0(p(\alpha(y)))
\end{CD}
$$
in $BM$
where the horizontal arrows are the canonical
quotient maps.
This shows that $p(\alpha(x))=p(\alpha(y))$
and $\delta_x = \delta_y$.
For an object $(V,a)$ of $\cD_0$, choose an
element $x$ of $\cC_0$ satisfying
$p(x)=(V,a)$ and set $\beta((V,a)) = p(\alpha(x))$
and $\delta_\beta((V,a)) = \delta_x$.
Then $\beta((V,a))$ and $\delta_\beta((V,a))$ 
do not depend on the choice of $x$.
Since $\cC_0$ is cofiltered, it follows that
$\delta_\beta((V,a))$ for various $(V,a)$ gives
a natural isomorphism 
$\delta_\beta : \jmath_0 \xto{\cong} \jmath_0\circ \beta$.
Hence $(\beta,\delta_\beta)$ is an element of
$M_{(\cD_0,\jmath_0)}$. By sending $(\alpha,\gamma_\alpha)$
to $(\beta,\delta_\beta)$, we obtain a map
$\zeta : M_\Cip \to M_{(\cD_0,\jmath_0)}$.
It is straightforward to check that $\zeta$ is
a continuous homomorphism of topological monoids.

It is easy to see that both $\xi' \circ \zeta$
and $\zeta \circ \xi$ are the identity maps.
Hence $\xi'$ is an isomorphism of monoids.
This completes the proof of Theorem \ref{thm:15-G}
\end{proof}

\begin{prop}
Let $\omega_\Cip : \Shv(\cC,J) \to BM_\Cip$ 
denote the fiber functor associated with the 
grid $(\cC_0,\iota_0)$ constructed above.
Let $\xi: M \to M_\Cip$ be the homomorphism of 
topological monoids introduced above.
Let $\xi_*: BM_\Cip \xto{\cong} BM$ denote 
the isomorphism of categories induced by $\xi$.
Then we have a canonical natural isomorphism from 
$\Theta : \xi_* \circ \omega_\Cip \xto{\cong} \omega$.
\end{prop}

\begin{proof}
Let $F$ be a sheaf on $(\cC,J)$.
By definition we have
$\omega_\Cip(F) = \varinjlim_{x \in \cC_1} F(\iota_0(x))$,
where $\cC_1$ denotes the full subcategory of $\cC_0$
whose objects are the edge objects of $\cC_0$.
Let $x=(X,a) \in \cC_0$ be an edge object.
Set $H_x = \{ m \in M\ |\ ma = a \}$.
Then it follows from Proposition \ref{prop:M/H} that 
the map $M \to \omega_\cC(X)$ that sends $m \in M$
to $ma$ induces an isomorphism
$M/H_x \xto{\cong} \omega_\cC(X)$ in $BM$.
Then we have isomorphisms
\begin{align*}
F(\iota_0(x))
& = \Hom_{\Shv(\cC,J)}(a_J(\frh_\cC(X)),F)
\cong \Hom_{BM}(\omega_\cC(X),\omega(F)) \\
& \cong \Hom_{BM}(\jmath_0(p(x)), \omega(F))
\cong \omega(F)^{H_x},
\end{align*}
where 
$p : \cC_0 \to \cD_0$ is the functor
introduced in the proof of Theorem \ref{thm:15-G}.
Hence
$$
\omega_\Cip(F) \cong \varinjlim_{x \in \cC_1} \omega(F)^{H_x}.
$$

Let $I$ denote the set of open subgroups $H$ of $M$ such that
$M/H$ is an object of $\cD$.
Since $\cC_1$ is cofiltered, the functor $p$ induces a bijection
$$
\varinjlim_{x \in \cC_1} \omega(F)^{H_x}
\cong \varinjlim_{H \in I} \omega(F)^{H}
$$
It follows from Lemma \ref{lem:15-cover} that 
$I$ is cofinal in the poset of all open subgroups
of $M$. Hence we have
$$
\varinjlim_{H \in I} \omega(F)^{H}
= \omega(F).
$$
Hence we obtain a bijection
$$
\Theta_F : \omega_\Cip(F) \cong \omega(F).
$$
It is then straightforward to check that
for any $m \in M$ and for any $s \in \omega_\Cip(F)$
we have $\Theta_F (\xi(m) s) = m \Theta_F(s)$.
Hence $\Theta_F$ is an isomorphism
from $\xi_* \circ \omega_\Cip(F)$ to $\omega(F)$ in $BM$.
By associating $\Theta_F$ to each object $F$ of
$\Shv(\cC,J)$, we obtain a desired
natural isomorphism $\Theta: \xi_* \circ \omega_\Cip \xto{\cong}
\omega$.
\end{proof}

\section{On the Enough Galois property}
\label{sec:on enough Galois}
The referee asks if the property ``enough Galois" is necessary.
Our answer is negative.
Below, we give examples of 
$B$-sites which do not have
enough Galois coverings (i.e., not a $Y$-site) 
yet the topos is equivalent to 
the category of continuous $G$-sets
for some profinite group $G$.

In the earlier subsections of this section,
we give conditions on a profinite group $G$ such that
if the conditions are met, then  
there is a $B$-site which does not have
enough Galois coverings yet the topos is equivalent to 
the category of continuous $G$-sets.
In Section~\ref{sec:construction profinite}, 
we provide such profinite groups.
We thank Yuichiro Hoshi for an idea on the construction.

\subsection{Conditions on a profinite group}
\label{sec:conditions on profinite}
We give a condition on a profinite group in this section.
Let $G$ be a profinite group.  
For an open subgroup $H \subset G$, 
we consider the following condition:

(*1) For any open subgroup $U$ of $G$,
there exists an open normal subgroup
$N$ of $H$ such that $N \subset U$ 
and that $N$ is not normal in $G$.

In other words, there exists a fundamental system
of neighborhoods of 1 in $G$ 
where each is a normal subgroup of $H$ 
which is not normal in $G$.

Throughout this section, 
we assume that the following properties (*2)(*3) 
hold for $G$:

(*2) For any 
$g_1, g_2 \in G$,
there exists 
an open subgroup $H \subsetneq G$ which is not normal
such that $H$ satisfies (*1) and $g_1, g_2 \in H$.

(*3)
For any proper open subgroup $H \subset G$,
the property (*1) for $H$ holds.

We remark that the condition ``$H$ satisfies (*1)'' in (*2) 
is superfluous under (*3).    
We leave it as it is because the argument below may work under 
weaker assumptions.  For example, 
it is possible that we may weaken (*3) to a certain class
of open subgroups, in which case we need the condition (*2)
as stated.

\subsection{}
Let $\BG$ denote the category of 
continuous left $G$-sets.
Let $\cC$ be the full subcategory of $\BG$
consisting of object $G/G$ and the objects of the form 
$G/\bK$ where $\bK\subsetneq G$ runs over proper open
subgroups that are not normal in $G$.
Using the assumptions on $G$, 
one can check that the category is semi-cofiltered, hence
it can be equipped with the atomic topology $J$.

\begin{lem}
The site $(\cC, J)$ is a $B$-site.  It does not have enough 
Galois coverings.
\end{lem}
\begin{proof}
It is straightforward to check that it is a $B$-site.
Because we take $\bK$'s to be non-normal subgroups, 
there is no Galois covering of the object 
$G/G$, except the identity map.
\end{proof}

\subsection{}
Let $\cC_0$ denote the subcategory of $\cC$
whose objects are the objects of $\cC$, and the 
morphisms are given by
\[
\Hom_{\cC_0}
(G/\bK_1, G/\bK_2)=
\left\{
\begin{array}{ll}
f & \text{if }\bK_1 \subset \bK_2,
\\
\emptyset
& \text{otherwise},
\end{array}
\right.
\]
where $f:G/\bK_1 \to G/\bK_2$
is the map induced by the identity map
$G \to G$.
Let $\iota_0: \cC_0 \to \cC$ denote
the inclusion functor.

\subsection{}
We construct a functor 
$\omega: \Shv(\cC, J) \to \BG$ as follows.
Let $F \in Shv(\cC, J)$ be a sheaf.
We set 
$\omega(F)=\varinjlim_{X \in \cC_0}F(X)$ as a set.

Let us define a continuous left action of $G$ on 
$\omega(F)$ as follows.
Let $g \in G$.   
Take an auxiliary element $g_1 \in G$ and 
take an open $H_{g, g_1} \subset G$, which meets (*1) and $g, g_1 \in H_{g, g_1}$, 
that exists by (*2)
for $g$ and $g_1$.
Let $U \subset G$ run over the set of open subgroups of $G$.
To each $U$, there is an open subgroup $N_{U,g,g_1} \subset U \cap H_{g,g_1}$ which is normal in $H_{g, g_1}$ and not normal in $G$.
Let $N_{H, g, g_1}$ denote the set of such $N_{U, g, g_1}$'s.
Then we have
$\omega(F)=\varinjlim_{N \in N_{H_{g,g_1}}}F(N)$.
The action of an element $h \in H_{g,g_1}$ is given
as the composite
\[
\varinjlim F(N) \xto{h^*} \varinjlim F(hNh^{-1}) \stackrel{f}{=} 
\varinjlim F(N).
\]
Here, the three colimits are over $N \in N_{H_{g,g_1}}$.
There is a map of left $G$-sets $G/(hNh^{-1}) \to G/N$
given by sending the coset $1 \cdot (hNh^{-1})$
to the coset $h \cdot N$.    The map $h^*$ is the limit 
of the pullback maps with respect to this map at each $N$.
The map $f$ is given by the identification $hNh^{-1}=N$
since $N$ is normal in $H_{g, g_1}$.
This gives the left action of $H_{g, g_1}$ on $\omega(F)$,
and in particular the action of the element $g$ on $\omega(F)$.

One can check that the action of $g$ does not depend on the choice of $g_1$ and of $H_{g, g_1}$.

To see that it actually defines a left group action, 
one needs to check that 
$g_1 \circ g_2=g_1\cdot g_2$.   
Note that $g_1, g_2, g_1 \cdot g_2$ all 
belong to $H_{g_1, g_2}$.   Hence the equality above holds true.

One can check that this map actually gives a functor.

\subsection{}
We recall the construction (slightly modified) 
in Section~\ref{sec:site from G}.
Let $\cD$ be the full category of the 
category of continuous left $G$-sets 
where the objects are 
$G/\bK$ for all open subgroups $\bK \subset G$.
The category is cofiltered and may be equipped with
the atomic topology $J_\cD$.
Then $(\cD, J_\cD)$ is a $Y$-site.   
A grid $\cD_0$ can be defined in a manner similar to that 
of $\cC_0$.   The fiber functor will be denoted 
$\omega_\cD$.   Since $G$ is profinite, the hom sets of
the category $\cD$ are finite.   It follows from Theorem~\ref{thm:Galois_main} 
that $\omega_\cD: \Shv(\cD, J_\cD) \to BG$
is an equivalence of categories.

\subsubsection{}
There is a natural inclusion functor $\cC \subset \cD$.
This defines a restriction functor
$r: \Shv(\cD, J_\cD) \to \Pre(\cC)$
to the category of presheaves on $\cC$.
\begin{lem}
For $F \in \Shv(\cD, J_\cD)$, the presheaf
$rF$ is a sheaf on $\cC$.
\end{lem}
\begin{proof}
This follows directly from 
\cite[Lemma 2, p.126]{MM}
that gives a sheaf criterion in 
atomic topology,
since $\cC \subset \cD$ 
is a full subcategory.
\end{proof}
By abuse of notation, we denote the restriction 
functor
$r: \Shv(\cD, J_\cD)
\to \Shv(\cC, J_\cC)$
also by $r$.

\subsubsection{}
We define a functor 
$\tau:\BG
\to \Shv(\cD)$
by 
$\tau(V)(G/\bK)=V^\bK$
(the part fixed by the action of $\bK$).
This is a quasi-inverse of $\omega_\cD$.

\subsection{}
The goal is to prove the following.  The following corollary 
means that 
there exists (depending on the existence of the profinite
group $G$ satisfying some conditions) 
a $B$-site with hom finite underlying category
which does not have enough Galois covering yet 
the topos is equivalent to the category $\BG$.
\begin{prop}
\label{prop:restriction equivalence}
The functor $r$ is an equivalence of categories.
\end{prop}

Let us prove that 
$\tau \omega$ is a quasi-inverse of $r$.
We have 
$\tau \omega r = \tau \omega_\cD \cong \id_{\Shv(\cD, J_\cD)}$.
Let us construct a morphism of functors
$\id_{\Shv(\cC, J_\cC)} \to r\tau \omega$
and show that it is an isomorphism.

\subsubsection{}
Let $F \in \Shv(\cC, J_\cC)$ be a sheaf
and $G/\bK \in \cC$ be an object.
The universality of colimit gives a 
map
$f: F(G/\bK) \to
\omega(F)=\varinjlim_{\bK' \in \cC}
F(G/\bK')$.
\begin{lem}
The image of $f$ is contained in
the fixed part $\omega(F)^\bK$.
\end{lem}
\begin{proof}
Let $\bK \subset G$ be an open subgroup
that satisfies (*1).
Then there is a set $T$ of open subgroups of $G$ 
that is contained in $\bK$ and normal in $\bK$
and is a fundamental system of neighborhoods.
In this case, $\omega(F)^\bK=(\varinjlim_{N \in T}F(G/N))^\bK$.
Now, since $F$ is a sheaf, $F(G/\bK)=F(G/N)^{\bK/N}$ for all $N \in T$.
By taking the limit, one obtains the claim in this case.

Let $\bK$ be general (by assumption (*3), only the case $\bK=G$ remains).
Take a set of (proper) open subgroups $\{\bK_i\}_{i \in I}$ indexed by a set $I$
such that $G$ is generated by $\cup_{i\in I}\bK_i$ and each $\bK_i$ satisfies (*1).
Then since $\omega(F)^G=\bigcap_{i \in I}
\omega(F)^{\bK_i}$
and $F(G/G) \subset \bigcap_{i \in I} F(G/\bK_i)$,
the claim holds true.
\end{proof}

Let us write 
$f': F(G/\bK) \to \omega(F)^\bK$
for the map given by $f$ and the lemma above.

\begin{lem}
\label{lem:rtw isom}
The map $f'$ is an isomorphism.
\end{lem}

\begin{proof}
The map $f'$ is injective since $F$ is a sheaf and the topology is atomic so that 
$F(G/\bK') \to F(G/\bK'')$ injective for any 
morphism $G/\bK'' \to G/\bK'$.

Let us prove that the map $f'$ is surjective.
It suffices to treat the case 
$\bK=G$.
Take an element $x \in \omega(F)^G$.
We want to show that there exists an element $x_0
\in F(G/G)$ such that $f'(x_0)=x$.
Let us take a representative 
$y \in F(G/\bK)$ of $x$ for some $\bK$.

By \cite[p.126, Lemma 2]{MM},
it suffices to show the following:
For any two morphisms
$\varphi_1, \varphi_2: G/\bK' \to G/\bK$,
the equality 
$\varphi_1^*(y)=\varphi_2^*(y)$
holds.

We note here that if there is a map $G/\bK' \to G/\bK$
that sends $1\cdot \bK'$ to $\sigma \cdot \bK$,
then $\bK' \subset \sigma \bK \sigma^{-1}$ 
must hold.

We may without loss of generality assume that 
$\bK' \subset \bK$.
We may moreover assume 
(by replacing $\bK'$ by $\alpha^{-1} \bK' \alpha$ 
for some $\alpha \in G$)
that $\varphi_1$ sends $1\cdot \bK'$ to $1 \cdot \bK$.
Take $\sigma \in G$ such that 
$\varphi_2(1 \cdot \bK')=\sigma\cdot \bK$.
The map $\varphi_2$ 
factors as
$G/\bK' \to G/\sigma \bK \sigma^{-1} 
\cong G/\bK$
where the first map is the map induced by the identity map on $G$ 
and the second map is the map that sends
$1 \cdot (\sigma \bK \sigma^{-1})$ 
to $\sigma \cdot \bK$.
Hence the map $\varphi_2^*$ factors as
$F(G/\bK) \to F(G/\sigma^{-1}\bK \sigma)
\to F(G/\bK')$.
This means that the images of $\varphi_1^*(y)$ 
and $\varphi_2^*(y)$ in $\omega(F)$
differ by the action of $\sigma$.   Since they both represent $x \in 
\omega(F)$ and $x$ is $G$-invariant, they must be equal.
This proves the claim.
\end{proof}

\begin{proof}[Proof of Proposition~\ref{prop:restriction equivalence}]
From Lemma~\ref{lem:rtw isom} it follows that 
$F(G/\bK) \cong \omega(F)^\bK$
for all $\bK$.
By definition the right hand side equals
$(r \tau \omega(F))(G/\bK)$.
This implies that
$\id_{\Shv(\cC, J_\cC)} \to r \tau \omega$
is an isomorphism.
\end{proof}

\begin{cor}
$\omega$ is an equivalence.
\end{cor}
\begin{proof}
By Proposition~\ref{prop:restriction equivalence} 
and by that $\omega_\cD$ is 
an equivalence, 
we see that $r$ is an equivalence of categories.
Since $\omega_\cD$ is an equivalence of categories,
we see that $\omega$ is also an equivalence of categories.
\end{proof}

\subsection{Construction}
\label{sec:construction profinite}
\newcommand{\etale}{\text{\'{e}t}}
\newcommand{\gbar}{\overline{g}}
\newcommand{\Hbar}{\overline{H}}
In this subsection, we give a construction of profinite groups
$G$ meeting the conditions in Section~\ref{sec:conditions on profinite}.

Let $G$ be a topologically finitely generated free profinite group.
A finite subset $S$ of $G$ is called a set of free generators of $G$
if the homomorphism from the free profinite group generated by $S$ to
$G$ that sends any $s \in S$ to $s$ 
is an isomorphism of topological groups.
A finite subset $T \subset G$ is called a part of free generators of $G$
if there exists a set $S$ of free generators of $G$ that contains $T$.

Let $G$ be a topologically finitely generated free profinite group.
Note that any open subgroup of $G$ is a topologically finitely generated 
free profinite group.
For an open subgroup $H$ of $G$, let us consider the following condition:
\begin{description}
\item[(*)] there exist elements $h_1,h_2 \in H$ with $h_1 \neq h_2$ 
and positive integers $n_1,n_2 \ge 1$
such that $h_1^{n_2}$ and $h_2^{n_1}$ are conjugate in $G$ 
and that $\{h_1,h_2\}$ is a part of free generators of $H$.
\end{description}

\begin{lem} \label{lem:condition*1}
Let $H$ be an open subgroup of $G$ satisfying Condition (*).
Then $H$ satisfies Condition (*1) in 
Section~\ref{sec:conditions on profinite}, i.e.,
for any open subgroup $U \subset G$, there exists
an open normal subgroup $N \subset H$ with $N \subset U$ 
such that $N$ is not normal in $G$.
\end{lem}

\begin{proof}
We may and will assume that $U \subset H$ and $U$ is normal in $H$.
Let us choose $h_1,h_2 \in H$ and $n_1,n_2 \ge 1$ satisfying Condition (*)
and a set $S$ of generators of $H$ with $h,h' \in S$.
Let $p$ be a prime number which does not divide $n_1 n_2 [H:U]$ 
and let $f: H \to \Z/p\Z$ denote the homomorphism
of groups that sends $h_1$ to $1 \mod{p\Z}$
and sends any element in $S \setminus \{h_1\}$
to $0 \mod{p\Z}$.
Let $N$ denote the kernel of the homomorphism
$H \to H/U \times \Z/p\Z$ that sends
$g \in H$ to $(g U, f(g))$.
Then the order of $h_1 N$ in $H/N$ is a multiple of $p$
and the order of $h_2 N$ in $H/N$ is prime to $p$.
Since $h_1^{n_2}$ and $h_2^{n_1}$ are conjugate in $G$, 
this shows that $N$ is not a normal subgroup of $G$.
Thus $N$ has the desired property.
\end{proof}

\begin{prop} \label{prop:condition*1}
Let $G$ be a topologically finitely generated
free profinite group, and let $H \subset G$ be an
open subgroup. Suppose that $G$ is non-abelian and
$H \neq G$. Then 
\begin{enumerate}
\item There exists an element $s \in G$
such that $\{s\}$ is a part of free generators of $G$
and that $s$ does not act transitively on $G/H$.
\item $H$ satisfies Condition (*).
\end{enumerate}
\end{prop}

\begin{proof}
First we prove the claim (1).
Let us choose a set $S$ of free generators of $G$.
Since $G$ is non-abelian, $S$ is not a singleton.
Choose elements $s_1,s_2 \in S$ with $s_1 \neq s_2$.
If there exists $i \in \{1,2\}$ such that $s_i$ does
not act transitively on $G/H$, then we may choose $s=s_i$.
Suppose otherwise. There exists an integer $i \in H$
satisfying $s_1 H = s_2^i H$. Then $s = s_1 s_2^{-i}$
satisfies the desired property.

Next we prove the claim (2).
Let us choose $s \in G$ satisfying the condition in (1)
and a set $S$ of free generators of $G$ with $s \in S$.
Let us consider the affine line $\A^1_\C$ over the field
$\C$ of complex number.
Let $\xi$ be a generic geometric point of $\A^1_\C$.
Let $\iota : S \to \A^1_\C(\C) = \C$ be an injective map
and set $C = \A^1_\C \setminus \iota(S)$.
Then there exists an isomorphism $\alpha: G \xto{\cong} \pi_1^{\etale}(C,\xi)$
of topological groups such that for every $h \in S$,
the closed subgroup generated by $\alpha(h)$ is the inertia 
subgroup at
$\iota(h)$.
Let $f: X \to \A^1_\C$ denote the finite morphism of
smooth complex curves, \'{e}tale outside $\iota(S)$,
corresponding to the open subgroup $\alpha(H)$ of $\pi_1^{\etale}(C,\xi)$.
Then the condition in (1) implies that $f^{-1}(\iota(s))$ consists
of at least two points.
Set $U = f^{-1}(C)$ and identify $\alpha(H)$ with $\pi_1^{\etale}(U,\xi)$.
Let us choose $x_1,x_2 \in f^{-1}(\iota(s))$ with $x_1 \neq x_2$,
and let $n_1$ and $n_2$ denote the ramification index of $f$ at
$x_1$ and $x_2$, respectively.
For $i \in \{1,2\}$, one can choose a conjugate $g_i$ of $\alpha(s)$ 
in $\pi_1^\etale(C,\xi)$ in such a way that 
$g_i^{n_i}$ belongs to $\pi_1^\etale(U,\xi)$, that $g_i^{n_i}$
is a generator of a inertia subgroup at $x_i$, and that
$\{g_1^{n_1}, g_2^{n_2}\}$ is a part of free generators of
$\pi_1^\etale(U,\xi)$.
Set $h_i = \alpha^{-1}(g_i^{n_i})$ for $i \in \{1,2\}$.
Then the elements $h_1,h_2 \in H$ and
the integers $n_1,n_2 \ge 1$ have the desired properties.
\end{proof}

\begin{cor}
Let $G$ be a non-abelian topologically finitely generated
free profinite group, and let $H \subsetneqq G$ be a
proper open subgroup. Then $H$ 
satisfies Condition (*1) in Section~\ref{sec:conditions on profinite},
\qed
\end{cor}

\begin{lem}
Let $G$ be a topologically finitely generated
free profinite group. Let $S$ be a set of free generators
of $G$. Suppose that $S$ consists of at least three elements.
Then $G$ satisfies Condition (*2) in 
Section~\ref{sec:conditions on profinite}, i.e.,
for any $g_1,g_2 \in G$, there exists a non-normal open
subgroup $H \subset G$ satisfying $g_1,g_2 \in H$.
\end{lem}

\begin{proof}
Let $G_\ab$ denote the quotient of $G$ by the closure of
$[G,G]$. For $g \in G$, let $\gbar$ denote the class of $g$
in $G_\ab$. Since $G_\ab$ is isomorphic to $\prod_{s \in S}\wh{\Z}$,
our assumption on $S$ shows that 
there exist an element $h \in G$ satisfying the
following properties:
\begin{itemize}
\item $\{h\}$ is a part of free generators of $G$,
\item the closed subgroup of $G_\ab$ generated by
$\gbar_1$, $\gbar_2$ and $\overline{h}$ is not open in $G_\ab$.
\end{itemize}
Hence there exists a proper open subgroup $\Hbar' \subsetneqq G_\ab$
of index at least four such that $\gbar_1,\gbar_2, \overline{h} \in \Hbar'$.
Let $H' \subset G$ denote the inverse image of 
$\Hbar'$ under the quotient 
homomorphism $G \to G_\ab$.
Then there exist four conjugates  
$h_1, h_2, h_3, h_4$, pairwise distinct, 
of $h$ in $G$
such that $\{h_1, h_2, h_3, h_4 \}$ is a part of free generators of $H'$.
Let $H'_\ab$ denote the quotient of $H'$ by the closure of
$[H',H']$. For $g \in H'$, let $\wt{g}$ denote the class of $g$
in $H'_\ab$. 
Then there exists $i,j \in \{1,2,3,4\}$ such that
$\wt{h}_j$ does not belong to the closed subgroup of $H'_\ab$ generated by
$\wt{g}_1, \wt{g}_2$ and $\wt{h}_i$.
Hence one can choose an open subgroup
$\wt{H} \subset H'_\ab$ satisfying
$\wt{g}_1, \wt{g}_2, \wt{h}_i \in \wt{H}$
and $\wt{h}_j \not\in \wt{H}$.
Let $H$ denote the inverse image of $\wt{H}$ under the
quotient homomorphism $H' \to H'_\ab$.
Then we have $g_1, g_2, h_i \in H$ and $h_j \not\in H$.
Since $h_i$ and $h_j$ are conjugate in $G$, this shows that
$H$ is not a normal subgroup of $G$.
\end{proof}

\section{Higher derived limits}
\label{sec:higher derived limits}
In this section, we formulate our results using higher derived limits 
of pro-groups.    The referee pointed out that our cardinality conditions
are merely sufficient conditions for some statements to hold 
and asked for necessary conditions.  
Using higher derived limits,
we are able to give necessary and sufficient 
conditions in some cases (e.g.\ for the existence and uniqueness of grids).
It then becomes clear that our cardinality conditions are sufficient 
conditions for some higher derived limits to be trivial (in the sense to be 
made precise below).
We also provide an example of a $Y$-site such that a grid does not exist.
One problem that remains is to find a necessary condition for 
the fiber functor to be full and essentially surjective.

\subsection{Higher derived limits of pro-groups}

\subsubsection{ }
Let $I$ be a cofiltered quasi-poset and $(G_i)_{i \in I}$
a projective system of groups.
For an integer $n \ge 0$, let $N_n(I)$ denote the set of
$(n+1)$-tuples $(i_0,\ldots,i_n)$ of elements of $I$
such that for $j=1,\ldots,n$, there exists
a morphism from $i_{j-1}$ to $i_j$ in $I$.
For $(i,j) \in N_1(I)$,
we let $\phi_{ij} : G_i \to G_j$ denote the transition homomorphism.
\subsubsection{First derived limits}
We have introduced this notion already in Section~\ref{sec:torsors}
but since we wish to use $I^{op}$ instead of $I$,
we reintroduce it here again.
Let $Z^1((G_i)_{i \in I})$ denote the set of elements
$(g_{ij})_{(i,j) \in N_1(I)} \in \prod_{(i,j) \in N_1(I)} G_j$ 
satisfying $g_{ik} = g_{jk} \phi_{jk}(g_{ij})$ for any
$(i,j,k) \in N_2(I)$.
Two elements $(g_{ij})_{(i,j) \in N_1(I)}$
and $(g'_{ij})_{(i,j) \in N_1(I)}$ of
$Z^1((G_i)_{i \in I})$ are called equivalent
if there exists an element $(h_i)_{i \in I} \in \prod_{i \in I} G_i$
such that $g'_{ij} = h_i g_{ij} \phi_{ij}(h_j)^{-1}$
holds for any $(i,j) \in N_1(I)$.
This gives an equivalence relation on the set
$Z^1((G_i)_{i \in I})$.
We write $R^1 \varprojlim_{i \in I} G_i$ for the
quotient of $Z^1((G_i)_{i \in I})$ with respect to this equivalence
relation.

If $G_i$ is an abelian group for any $i \in I$, then
this pointed set 
$R^1\varprojlim_i G_i$
can be identified with the
derived projective limit $\varprojlim^1_i G_i$,
regarded as a pointed set by forgetting the group structure.

\subsubsection{Surjectivity of 
the canonical projection and triviality of the 
first derived limit}\label{sec:surj and R1}
Let $I$ be a cofiltered quasi-poset and
let $(G_i)_{i \in I}$ be an $I$-projective system
of groups with surjective transition homomorphisms.
%

Now let us fix an object $i \in I$ and
let $J = I_{/i} \subset I$ denote the full subcategory of
objects $j \in I$ that admit a morphism to $i$.
For $j \in J$, set $H_j = \Ker\, \phi_{ji}$.
Then $J$ is a cofiltered poset and 
$(H_j)_{j \in J}$ is a $J$-projective
system of groups.

\begin{lem}
Under the notation above, 
the surjectivity of the canonical projection 
homomorphism
$$
p_i :\varprojlim_{j \in I} G_j \to G_i
$$
is equivalent to the non-emptiness of the limit
$\varprojlim_{j \in J} \phi_{ji}^{-1}(g)$
of the $(H_j)_{j \in J}$-torsor
$(\phi_{ji}^{-1}(g))_{j \in J}$
for every $g \in G_i$.
In particular, if $R^1 \varprojlim_{j \in J} H_j$
is trivial, then the homomorphism $p_i$ is surjective.
\end{lem}
\begin{proof}
Since $J$ is cofinal in $I$, the map
$\varprojlim_{j \in I} G_j \to \varprojlim_{j \in J} G_j$
is bijective.
Hence the inclusion $\phi_{ji}^{-1}(g) \subset G_j$
for each object $j$ of $I_{/i}$ induces a
bijection $\varprojlim_{j \in J} \phi_{ji}^{-1}(g)
\xto{\cong} p_i^{-1}(g)$.
Hence the claim follows.
\end{proof}

\subsubsection{Second derived limits}
\newcommand{\Gpbar}{\overline{Grp}}
Let $(\Gpbar)$ denote the following category.
An object of $(\Gpbar)$ is a group.
For two groups $G$, $H$, a morphism from $G$
to $H$ in $(\Gpbar)$ is an $H$-conjugacy class of
homomorphisms from $G$ to $H$ of groups.

Let $I$ be a cofiltered quasi-poset and $(G_i)_{i \in I}$
a projective system in the category $(\Gpbar)$.
In this paragraph we introduce a set
$R^2 \varprojlim_i G_i$
and a subset 
$(R^2 \varprojlim_i G_i)_*
\subset 
R^2 \varprojlim_i G_i$.
If $G_i$ is abelian for any $i \in I$, then
the set $R^2\varprojlim_i G_i$ can be identified with the
derived projective limit $\varprojlim^2_i G_i$,
and, under this identification, the subset
$(R^2\varprojlim_i G_i)_*$
is the singleton that consists of the unit element 
of the abelian group $\varprojlim^2_i G_i$.

For $(i,j) \in N_1(I)$,
let $\overline{\phi}_{ij} : G_i \to G_j$ 
denote the transition homomorphism in $(\Gpbar)$.
Let $Z^2((G_i)_{i \in I})$ denote the set of 
pairs 
$((\phi_{ij})_{(i,j) \in N_1(I)}, (g_{ijk})_{(i,j,k) \in N_2(I)})$
satisfying the following properties:
\begin{itemize}
\item $\phi_{ij}: G_i \to G_j$ is a homomorphism of groups
whose $G_j$-conjugacy class is equal to $\overline{\phi}_{ij} $
for any $(i,j) \in N_1(I)$.
\item $g_{ijk}$ is an element of $G_k$ for any
$(i,j,k) \in N_2(I)$.
\item We have $\phi_{jk} \circ \phi_{ij}
= g_{ijk} \phi_{ik} g_{ijk}^{-1}$
for any $(i,j,k) \in N_2(I)$.
\item We have $g_{jkl} g_{ijl} = \phi_{kl}(g_{ijk}) g_{ikl}$
for any $(i,j,k,l) \in N_3(I)$.
\end{itemize}

Two elements 
$((\phi_{ij})_{(i,j) \in N_1(I)}, (g_{ijk})_{(i,j,k) \in N_2(I)})$
and
$((\phi'_{ij})_{(i,j) \in N_1(I)}, (g'_{ijk})_{(i,j,k) \in N_2(I)})$
of $Z^2((G_i)_{i \in I})$ are called equivalent
if there exists an element $(h_{ij})_{(i,j) \in N_1(I)} 
\in \prod_{(i,j) \in N_1(I)} G_j$ satisfying the following
properties:
\begin{itemize}
\item We have $\phi'_{ij}(g) = h_{ij} \phi_{ij}(g) h_{ij}^{-1}$
for any $(i,j) \in N_1(I)$ and for any $g \in G_i$.
\item We have $g'_{ijk} = h_{jk} \phi_{jk}(h_{ij}) g_{ijk} h_{ik}^{-1}$
for any $(i,j,k) \in N_2(I)$.
\end{itemize}

This gives an equivalence relation on the set
$Z^2((G_i)_{i \in I})$.
We write $R^2 \varprojlim_{i \in I} G_i$ for the
quotient of $Z^2((G_i)_{i \in I})$ with respect to this equivalence
relation.

The set $R^2 \varprojlim_{i \in I} G_i$ does not always have
a canonical structure of pointed set. However
one can define a canonical subset 
$(R^2 \varprojlim_{i \in I} G_i)_* \subset R^2 \varprojlim_{i \in I} G_i$
as the set of equivalence classes in
$Z^2((G_i)_{i \in I})$ that contains
an element 
$((\phi_{ij})_{(i,j) \in N_1(I)}, (g_{ijk})_{(i,j,k) \in N_2(I)})$
of $Z^2((G_i)_{i \in I})$
with $g_{ijk}=1$ for any $(i,j,k) \in N_2(I)$.

\subsection{Existence of a grid and the second derived limit}
\label{sec:grid and R2}
\subsubsection{}
Let  $(\cC,J)$ be a $Y$-site.
Let $X$ be an object of $\cC$ and let us consider
the category $I=\Gal/X$ introduced in Section \ref{sec:Gal/X}.
Recall that $I$ is a cofiltered quasi-poset.
For an object $i = (Y,f)$ of $I$, let us write 
$G_i = \Aut_X(Y)$. Then any morphism
$i \to j$ in $I$ induces a morphism $\overline{\phi}_{ij}$ from
$G_i$ to $G_j$ in $(\Gpbar)$.
Thus $(G_i)_{i \in I}$ forms a projective system in $(\Gpbar)$.

For each $(i,j) \in N_1(I)$, let us choose
a morphism $f_{ij}$ from $i$ to $j$ in $\cC_{/X}$.
Then it follows from Lemma \ref{lem:descends} that
for any element $g\in G_i$, there exists a unique element
$\phi_{ij}(g)$ satisfying 
$f_{ij} \circ g = \phi_{ij}(g) \circ f_{ij}$.
The uniqueness shows that the map $\phi_{ij} :G_i \to G_j$
that sends $g \in G_i$ to $\phi_{ij}(g)$ is a
homomorphism of groups.
For each $(i,j,k) \in N_2(I)$, let $g_{ijk} \in G_k$ denote
the unique element satisfying $f_{jk} \circ f_{ij} = g_{ijk} \circ f_{ik}$.
Then the pair 
$$
z((f_{ij})_{(i,j) \in N_1(I)}) 
:= ((\phi_{ij})_{(i,j) \in N_1(I)}, (g_{ijk})_{(i,j,k)\in N_2(I)})
$$
is an element of $Z^2((G_i)_{i \in I})$.
If $(f'_{ij})_{(i,j) \in N_1(I)}$ is another choice of morphisms
in $\cC_{/X}$, then it is straightforward to check that
$z((f_{ij})_{(i,j)\in N_1(I)})$ and
$z((f'_{ij})_{(i,j) \in N_1(I)})$ are equivalent.
Hence the class $z$ of $z((f_{i,j})_{(i,j) \in N_1(I)})$
in $R^2 \varprojlim_{i \in I} G_i$ does not depend on
the choice of $f_{ij}$.
\begin{lem} \label{lem:criterion R2}
The class $z$ belongs to
the subset $(R^2 \varprojlim_{i\in I} G_i)_*$
if and only if there exists a grid of $(\cC,J)$.
\end{lem}

\begin{proof}
First we prove the ``only if" part.
Suppose that the class $z$ belongs to the subset 
$(R^2 \varprojlim_{i\in I} G_i)_*$.
We prove that there exists a grid of $(\cC,J)$.

In Proposition \ref{cor:grid existence}, we have proved the
existence of a grid of $(\cC,J)$ under the assumption that
the category $\cC(\cT(J))_{/X}$ satisfies at least one of
the two cardinality conditions in Section \ref{sec:cardinality}.
In the argument of the proof of Proposition \ref{cor:grid existence},
we have used the cardinality conditions only in the proof of
Lemma~\ref{lem:US non-empty}.
In the notation of 
Section~\ref{sec:construction 1},
we take $X_0 = X$ and choose $V$ as the set of objects of $I$.
It suffices to prove that the statement of Lemma~\ref{lem:US non-empty} 
is true for this choice of $X_0$ and $V$.
Then the remaining argument in the
proof of Proposition \ref{cor:grid existence}
proves the existence of a grid of $(\cC,J)$.

By assumption, there exists an element 
$(h_{ij})_{(i,j)\in N_1(I)} \in \prod_{(i,j) \in N_1(I)} G_j$
such that
the equality $1 = h_{jk} \phi_{jk}(h_{ij}) g_{ijk} h_{ik}^{-1}$
holds for any $(i,j,k) \in N_2(I)$.
For $(i,j) \in N_1(I)$, set $f'_{ij} = h_{ij} \circ f_{ij}$.
Then for any $g \in G_i$, we have 
\begin{equation} \label{eq:f'ij}
f'_{jk} \circ f'_{ij} = f'_{ik}.
\end{equation}

For a finite subset $S \subset V$, let us choose $j \in I$ such that
$(j,i) \in N_1(I)$ for any $i \in S$.
Then $(f_{ji})_{i \in S}$ gives an element $v_S$ of
the set $U(S)$ in introduced in
Section~\ref{sec:construction 1}.
It follows from \eqref{eq:f'ij} that the element $v_S$
does not depend on the choice of $j$ and,
when $S$ varies, the family $(v_S)_S$ is an element of
the set $U$ in Lemma~\ref{lem:US non-empty}.
Thus the statement of Lemma~\ref{lem:US non-empty}
is true for this choice of $X_0$ and $V$, 
as desired.

Next we prove the ``if" part.
Suppose that a grid $(\cC_0,\iota_0)$
of $(\cC,J)$ exists.
We prove the class $z$ belongs to 
$(R^2 \varprojlim_{i \in I} G_i)_*$.
Let us choose an edge object $X_0$ of $\cC_0$ and
an isomorphism $\beta_X : \iota_0(X_0) \xto{\cong} X$ in $\cC$.
Let $i=(Y,f)$ be an object of $I$. It follows from
Condition (3) in Definition \ref{defn:grids} that
there exist a morphism $f_{0,i} : Y_i \to X_0$
in $\cC_0$ and an isomorphism $\beta_i : \iota_0(Y_i) \xto{\cong} Y$
satisfying $f \circ \beta_i = \beta_X \circ \iota_0(f_{0,i})$.
Let $(i,j) \in N_1(I)$. Since $\beta_j^{-1} \circ f_{ij} \circ \beta_i$
is a morphism from $\iota_0(Y_i)$ to $\iota_0(Y_j)$, it follows
from Condition (3) in Definition \ref{defn:grids} that
there exists a unique morphism $f_{0,ij}$
from $Y_i$ to $Y_j$ in $\cC_0$. Set
$f'_{ij} = \beta_j \circ \iota_0(f_{0,ij}) \circ \beta_i^{-1}$.
By construction we have $f'_{jk} \circ f'_{ij} = f'_{ik}$
for $(i,j,k) \in N_2(I)$.
Since $z$ is equal to the class of $z((f'_{ij})_{(i,j) \in N_1(I)})$,
the class $z$ belongs to $(R^2 \varprojlim_{i \in I} G_i)_*$.
This completes the proof.
\end{proof}

\subsubsection{A $Y$-site from a pro-group}
\label{sec:pro-group to Y-site}
Let $I$ be a cofiltered poset with a final object $i_0$ and
$(G_i)_{i \in I}$ a projective system in $(\Gpbar)$ with
$G_{i_0} = \{1\}$.
Suppose that the 
transition morphism $G_i \to G_j$ is  
the class of a surjective homomorphism of groups 
for each $(i,j) \in N_1(I)$.
Let $\wt{z}=((\phi_{ij})_{(i,j) \in N_1(I)}, (g_{ijk})_{(i,j,k) \in N_2(I)})$
be an element of $Z^2((G_i)_{i \in I})$.
Let us consider the following category $\cC$.
The objects of $\cC$ are the objects of $I$.
For two objects $i,j$ of $I$, the set of morphisms from
$i$ to $j$ in $\cC$ is given as follows:
$$
\Hom_\cC(i,j) = \begin{cases}
G_j, & \text{ if }(i,j) \in N_1(I) \\
\emptyset, & \text{ otherwise}.
\end{cases}
$$
For $(i,j,k) \in N_2(I)$, the composition
$\Hom_{\cC}(j,k) \times \Hom_\cC(i,j) \to \Hom_\cC(i,k)$
is the map $G_k \times G_j \to G_k$ that sends
$(g,h) \in G_k \times G_j$ to $g \phi_{jk}(h) g_{ijk}$.
Let $J$ denote the atomic topology on $\cC$.
Then $(\cC,J)$ is a $Y$-site.

\subsubsection{$Y$-sites with no grid}
One can construct a $Y$-site
for which there does not exist a grid 
as follows.
Let $(G_i)_{i \in I}$ be 
a pro-group as in 
Section~\ref{sec:pro-group to Y-site}.
Suppose that 
$R^2\varprojlim_i ((G_i)_{i \in I}) \supsetneq 
R^2\varprojlim_i ((G_i)_{i \in I})_*$.
Take $\tilde{z} \in 
R^2\varprojlim_i ((G_i)_{i \in I}) \setminus
R^2\varprojlim_i ((G_i)_{i \in I})_*$.
Construct a $Y$-site as in the procedure above 
using this $\tilde{z}$.
For $(i,j) \in N_1(I)$, 
Let $f_{ij} \in \Hom_\cC(i,j)=G_j$ be the element 
corresponding to the identity element in $G_j$.
Then, the class of $z((f_{i,j})_{(i,j) \in N_1(I)})$ equals the given $\tilde{z}$.
It follows from Lemma~\ref{lem:criterion R2}
that a grid for this $Y$-site does not exist.

For the existence of a surjective pro-group with
$R^2\varprojlim_i ((G_i)_{i \in I}) \supsetneq 
R^2\varprojlim_i ((G_i)_{i \in I})_*$, we could not find a 
published reference
but there is an online note by Ziegler \cite{Ziegler}
which mentions the existence under the diamond principle.

\subsubsection{}
We see below that our 
cardinality conditions imply
$R^2 \varprojlim_{i \in I} G_i
= (R^2 \varprojlim_{i \in I} G_i)_*$.
For example, any locally profinite groups
have this property.
 
\begin{lem}
Let $I$ be a cofiltered poset with a final object $i_0$ and
$(G_i)_{i \in I}$ a projective system in $(\Gpbar)$ with
$G_{i_0} = \{1\}$.
Suppose either that $G_i$ is finite for every $i \in I$, 
or that $I$ is at most countable, 
and the transition morphism $G_i \to G_j$ are 
the class of a surjective homomorphism of groups 
for each $(i,j) \in N_1(I)$.
Then we have $R^2 \varprojlim_{i \in I} G_i
= (R^2 \varprojlim_{i \in I} G_i)_*$.
\end{lem}

\begin{proof}
Let $\wt{z}=((\phi_{ij})_{(i,j) \in N_1(I)}, (g_{ijk})_{(i,j,k) \in N_2(I)})$
be an element of $Z^2((G_i)_{i \in I})$. We prove that
the class of $\wt{z}$
in $R^2 \varprojlim_{i \in I}  G_i$ belongs to
$(R^2 \varprojlim_{i \in I}  G_i)_*$.

Construct a $Y$-site $(\cC,J)$ as in Section~\ref{sec:pro-group to Y-site}.
Then $\cC = \cC_{/i_0}$ satisfies at least one 
of the two cardinality conditions
in Section \ref{sec:cardinality}.
Hence Proposition \ref{cor:grid existence}
shows that there exists a grid $(\cC_0,\iota_0)$
of $(\cC,J)$.
It follows from the construction of $(\cC,J)$
that the class of $\wt{z}$ 
in $R^2 \varprojlim_{i \in I}  G_i$
is equal to the class $z$ in the statement of
Lemma \ref{lem:criterion R2} associated
with the category $\cC = \cC(\cT(J))_{/i_0}$.
Hence Lemma \ref{lem:criterion R2} implies
that the class of $\wt{z}$ belongs to
$(R^2 \varprojlim_{i \in I}  G_i)_*$.
Since $\wt{z}$ is arbitrary, we have
$R^2 \varprojlim_{i \in I}  G_i
= (R^2 \varprojlim_{i \in I}  G_i)_*$.
\end{proof}


\subsection{Uniqueness of a grid and the first derived limit}
\label{sec:uniqueness and R1}
In Section~\ref{sec:grid uniqueness}, 
we showed the uniqueness of grids under
our cardinality conditions,
and gave a counterexample when the condtions are not satisfied.
In this section, we give a more precise criterion on the 
uniqueness of grids in terms of the first derived limit
by introducing the notion of a grid pinned at an edge object.

\subsubsection{}
We say that two grids $(\cC_{0,1},\iota_{0,1})$ and
$(\cC_{0,2},\iota_{0,2})$ of $(\cC,J)$ are equivalent if
there exist an isomorphism $\alpha:\cC_{0,1} \xto{\cong} \cC_{0,2}$ 
of categories and a natural isomorphism $\gamma : \iota_{0,1} 
\xto{\cong} \iota_{0,2} \circ \alpha$ of functors.

Suppose that the $Y$-site $(\cC,J)$ has a grid $(\cC_0,\iota_0)$.
Let us fix an edge object $X_0$ of $\cC_0$ and set $X= \iota_0(X_0)$.
Let $\iota_{0,X_0} : \cC_{0,/X_0} \to \cC_{/X}$ denote the
functor induced by $\iota_0$.
The axioms in the definition of a grid imply that
for any object $i$ of $\cC_{/X}$, there exists a unique
object $s(i)$ of $\cC_{0,/X_0}$ such that $\iota_{0,X_0}(i_0)$
is isomorphic to $i$.
Moreover, for any two objects $i$, $j$ of $\cC_{/X}$ such
that there exists a morphism from $i$ to $j$,
the image under $\iota_{0,X_0}$ of the unique morphism
from $s(i)$ to $s(j)$ gives a specified morphism
$f_{i,j}$ from $i$ to $j$ in $\cC_{/X}$.

Let us consider the cofiltered quasi-poset $I = \Gal/X$
introduced in Section \ref{sec:Gal/X}.
For an object $i = (Y,f)$ of $I$, let us write 
$G_i = \Aut_X(Y)$. Then any morphism
$i \to j$ in $I$ induces a homomorphism 
$\phi_{ij} : G_i \to G_j$ of groups characterized by
the following property:
we have $f_{ij} \circ g = \phi_{ij}(g)\circ f_{ij}$
for any $g \in G_i$.
The morphisms $\phi_{ij}$ for each $(i,j) \in N_1(I)$
gives $(G_i)_{i \in I}$ a structure of projective
system of groups.

\begin{defn}[pinned grid]
A grid of $(\cC,J)$ pinned at $X$ is a triple
$\wt{\cC}'_0 = ((\cC'_0, \iota'_0), X'_0, \beta')$ of a grid
$(\cC'_0,\iota'_0)$ of $(\cC,J)$,
an edge object $X'_0$ of $\cC'_0$, and
an isomorphism $\beta' : \iota'_0(X'_0) \xto{\cong} X$.

We say that two grids
$((\cC'_0, \iota'_0), X'_0, \beta')$ and
$((\cC''_0, \iota''_0), X''_0, \beta'')$
of $(\cC,J)$ pinned at $X$ are equivalent
if there exists an isomorphism
$\alpha : \cC'_0 \xto{\cong} \cC''_0$
of categories and a natural isomorphism
$\gamma_\alpha : \iota'_0 \xto{\cong}
\iota''_0 \circ \alpha$ satisfying
$\alpha(X'_0) = X''_0$ and
$\gamma_\alpha(X'_0) = {\beta''}^{-1} \circ \beta'$.
\end{defn}
For example, $\wt{\cC}_0 = ((\cC_0,\iota_0),X_0,\id_X)$
is a grid of $(\cC,J)$ pinned at $X$.

Let $\wt{\cC}'_0 = ((\cC'_0, \iota'_0),X'_0,\beta')$ be 
another grid of $(\cC,J)$ pinned at $X$.
In the notation in the proof of Proposition \ref{prop:grid uniqueness},
we choose $(\cC_{0,1},\iota_{0,1})$,
$(\cC_{0,2},\iota_{0,2})$, $X_{0,1}$, $\beta_1$,
$X_{0,2}$, $\beta_2$ to be
$(\cC_0,\iota_0)$, $(\cC'_0,\iota'_0)$,
$X_0$, $\id_X$, $X'_0$, and $\beta'$,
respectively.
Observe that the functor $\iota_0$ induces an
equivalence of categories from the poset $\cC'_{0,1}$
in the proof of Proposition \ref{prop:grid uniqueness}
to the quasi-poset $I$.
Via this equivalence, we obtain
a $(G_i)_{i \in I}$-torsor 
$S(\wt{\cC}'_0)$ in the sense of
Section \ref{sec:torsors}
from the filtered projective system $(S_f)$ introduced
in the proof of Proposition \ref{prop:grid uniqueness}.
An explicit description of $S(\wt{\cC}'_0)$
is given as follows: for each object $i=(Y,f)$ of $I$,
the $i$-th component $S(\wt{\cC}'_0)_i$ is
the left $G_i$-torsor of pairs $(f',\gamma')$ 
of a morphism $f':Y' \to X'_0$ in $\cC'_0$
and an isomorphism $\gamma' : Y \xto{\cong} \iota'_0(Y')$
satisfying $f = \beta' \circ \iota'_0(f') \gamma'$.
Here the action of $G_i$ on $S(\wt{\cC}'_0)$ is given by
$g(f',\gamma') = (f', \gamma \circ g^{-1})$
for $g \in G_i$.
Then, the argument in the proof of Lemma \ref{lem:non-empty}
gives an element $z(\wt{\cC}'_0)$ of $R^1 \varprojlim_{i \in I} G_i$
corresponding to the $(G_i)_{i \in I}$-torsor $S(\wt{\cC}'_0)$.

It is straightforward to check that,
if two grids $\wt{\cC}'_0$ and 
$\wt{\cC}''_0$ of $(\cC,J)$ pinned at $X$ are
equivalent, then
$z(\wt{\cC}'_0)$ and $z(\wt{\cC}''_0)$ are equal.

Thus by sending $\wt{\cC}'_0$ to $z(\wt{\cC}'_0)$,
we obtain a map $z$ from the set of equivalence classes of grids
pinned at $X$ to $R^1 \varprojlim_{i \in I} G_i$.

\begin{prop}
\label{prop:pinned grid}
The map $z$ from the set of equivalence classes of 
grids pinned at $X$
to 
$R^1\varprojlim_{i \in I}G_i$ is bijective.
\end{prop}

\begin{proof}
First we prove that the map $z$ is injective.
Let $\wt{\cC}'_0 = ((\cC'_0,\iota'_0),X'_0,\beta')$ and 
$\wt{\cC}''_0 = ((\cC''_0,\iota''_0),X''_0,\beta'')$
be two grids of $(\cC,J)$ pinned at $X$, satisfying
$z(\wt{\cC}'_0) = z(\wt{\cC}''_0)$.
We prove that $\wt{\cC}'_0$ and $\wt{\cC}''_0$
are equivalent.

Let us consider the posets $I_{X'_0}$ and $I_{X''_0}$
introduced at the beginning of Section \ref{sec:IX}.
Then the functor $\iota'_0$ and $\iota''_0$ induce
equivalences of categories $I_{X'_0} \to I$ and
$I_{X''_0} \to I$. Since $I_{X''_0}$ is a poset,
these equivalences of categories induces an
isomorphism $\alpha': I_{X'_0} \xto{\cong} I_{X''_0}$ 
of posets satisfying $\alpha'(X'_0) = X''_0$.

By assumption, the $(G_i)_{i \in I}$-torsors $S(\wt{\cC}'_0)$
and $S(\wt{\cC}''_0)$ are isomorphic.
Let us choose an isomorphism $\eta : S(\wt{\cC}'_0) \cong
S(\wt{\cC}''_0)$ of $(G_i)_{i \in I}$-torsors.
%
%
The isomorphism $\eta$ gives, for each $i=(Y,f) \in I$,
an isomorphism $\eta_i : S(\wt{\cC'}_0)_i \to S(\wt{\cC}''_0)_i$ 
of left $G_i$-torsors.
Choose an element $(f',\gamma') \in S(\wt{\cC}'_0)_i$ and
let $(f'',\gamma'') = \eta_i(f',\gamma')$.
Then $f'$, $f''$, and the composite 
$\gamma_i =\gamma'' \circ {\gamma'}^{-1}$
depends only on $\eta$ and $i$, and is independent of
the choice of $\gamma'$.
It follows from the construction of $\eta_i$ that
we have $f'' = \alpha'(f')$.
Hence the isomorphisms $\gamma_i$ for various $i$
gives a natural isomorphism $\gamma_{\alpha'} 
: \iota'_0|_{I_{X'_0}}  \xto{\cong}
\iota''_0|_{I_{X''_0}} \circ \alpha'$ of functors
satisfying
$\gamma_{\alpha'}(X'_0) ={\beta''}^{-1} \circ \beta'$.

Note that $\cC'_0$, $\cC''_0$ are isomorphic to
the $2$-colimits 
$$\varinjlim_{(f':Y' \to X'_0) \in I_{X'_0}} \cC'_{0,/Y'},
\varinjlim_{(f:Y''\to X''_0) \in I_{X''_0}} \cC'_{0,/Y''},$$ 
respectively.
By Condition (4) in Definition \ref{defn:grids}, we have
equivalences
$$
\cC'_{0,/Y'} \xto{\cong}
\cC_{/\iota'_0(Y')}
\xleftarrow[\cong]{-\circ \gamma_{\alpha'}(Y')}
\cC_{/\iota''_0(\alpha'(Y'))}
\xleftarrow{\cong}
\cC''_{0,/\alpha'(Y')}
$$
of categories for any object $Y' \to X'_0$ of $I_{X'_0}$.
These equivalences induce an isomorphism
$\alpha_{Y'} :\cC'_{0,/Y'} \xto{\cong} \cC''_{0,/\alpha'(Y')}$
of posets and a natural isomorphism
$\iota'_0|_{\cC'_{0,/Y'}} \xto{\cong} 
\iota''_0|_{\cC''_{0,/\alpha'(Y')}} \circ \alpha_{Y'}$
of functors.
Passing to the colimit with respect to $Y'$,
we obtain an isomorphism $\alpha: \cC'_0 \xto{\cong}
\cC''_0$ and a natural isomorphism
$\gamma_\alpha : \iota'_0 \xto{\cong} 
\iota''_0 \circ \alpha$ of functors satisfying
$\alpha(X'_0) = X''_0$ and 
$\gamma_\alpha(X'_0) = {\beta''}^{-1} \circ \beta'$.
This proves that
$\wt{\cC}'_0$ and $\wt{\cC}''_0$
are equivalent.

Next we prove that $z$ is surjective.
Let $z$ be an arbitrary element of $R^1 \varprojlim_{i \in I} G_i$.
Choose an element $(g_{ij}) \in Z^1((G_i)_{i \in I})$ that represents $z$.
Let us consider the posets $I_{X_0}$
introduced at the beginning of Section \ref{sec:IX}.
Let us introduce the functor $\iota' : I_{X_0} \to \cC$
defined as follows: for any object $f:Y \to X_0$ of $I_{X_0}$, 
we set $\iota'(f) = \iota_0(Y)$.
Let $f_1: Y_1 \to X_0$ and $f_2 : Y_2 \to X_0$ be two
objects of $I_{X_0}$ and suppose that there exists a
morphism $g:Y_1 \to Y_2$ from $f_1$ to $f_2$ in $I_{X_0}$.
Set $i_1 = \iota_0(f_1)$ and $i_2 = \iota_0(f_2)$.
Then $i_1$ and $i_2$ are objects of $I$ and we have
$(i_1,i_2) \in N_1(I)$.
We then set $\iota'(g) = g_{i_1 i_2} \circ \iota_0(g)$.
Let $f_1$, $f_2$, $f_3$ be three objects of $I_{X_0}$
such that there exist morphisms $g:f_1 \to f_2$ and $h:f_2 \to f_3$.
Set $i_j = \iota_0(f_j)$ for $j \in \{1,2,3\}$.
Then the equality $g_{i_1 i_3} = g_{i_2 i_3} \phi_{i_2 i_3}(g_{i_1 i_2})$
implies that $\iota'(h \circ g) = \iota'(h) \circ \iota'(g)$.

Note that $\cC_0$ is isomorphic to
the $2$-colimit $\varinjlim_{(f:Y \to X_0) \in I_{X_0}} \cC_{0,/Y}$.
By Condition (4) in Definition \ref{defn:grids}, we have
equivalences
$\cC_{0,/Y} \xto{\cong}
\cC_{/\iota_0(Y)}$
of categories for any object $Y \to X_0$ of $I_{X_0}$.
Then one can extend the functor $\iota'$
to obtain a functor $\iota'_0 : \cC_0 \to \cC$
satisfying $\iota'_0(X_0) =X$.
Set $\wt{\cC}'_0 = ((\cC_0,\iota'_0),X_0,\id_X)$.
Then it is straightforward to check that $z(\wt{\cC}'_0) = z$.
This proves that the map $z$ is surjective.
\end{proof}

\subsection{Fullness and essential surjectivity of $\omega_\Cip$ 
and the first derived limit}
\label{sec:full ess surj}
Let us discuss below a sufficient condition for the fiber functor 
$\omega_{\cC_0, \iota_0}$ to be an equivalence.   We do 
not have a necessary condition.
\subsubsection{}
Suppose that the $Y$-site $(\cC,J)$ has a grid $(\cC_0,\iota_0)$.
Then by Theorem \ref{thm:Galois_main}, we 
have a faithful functor $\omega_\Cip : \Shv(\cC,J) \to 
(M_\Cip\text{-set})_{\text{sm}}$.

Moreover, $\omega_\Cip$ gives an equivalence of categories
under the assumption that for any object $X$ of $\cC$, the category
$\cC(\cT(J))_{/X}$ satisfies at least one of the two conditions 
in Section \ref{sec:cardinality}.

In the proof of Theorem \ref{thm:Galois_main},
we have used this assumption 
only in Lemma \ref{lem:Gal gp surjective}
and Lemma \ref{lem:sufficiently many}.
Let us choose an edge object $X_0$ of $\cC_0$ 
such that $\iota_0(X)$ is isomorphic to $X$
and fix an isomorphism $\iota_0(X) \cong X$.
Let us consider the cofiltered quasi-poset $I = \Gal/X$
introduced in Section \ref{sec:Gal/X}.
For each object $i=(Y,f)$ of $I$, set $G_i = \Aut_X(Y)$.
Then, as we have seen in Section \ref{sec:uniqueness and R1},
the grid $(\cC_0,\iota_0)$ gives $(G_i)_{i \in I}$ a structure
of an $I$-projective system of groups with surjective
transition homomorphisms.
In the proof of Lemma \ref{lem:Gal gp surjective},
we have used the assumption to assure that the
canonical projection
\begin{equation} \label{eq:canproj_surj}
\varprojlim_{j \in I} G_j \to G_i
\end{equation}
is surjective for any object $i$ of $I$.
In the proof of Lemma \ref{lem:sufficiently many},
we have used the assumption to assure that 
the first derived projective limit
\begin{equation} \label{eq:R1_triviality}
R^1 \varprojlim_{j \in I} G_j
\end{equation}
is trivial. As we have explained in
Section \ref{sec:surj and R1}, one can deduce the surjectivity
of \eqref{eq:canproj_surj} from the triviality
of \eqref{eq:R1_triviality} for some other $X$.
Moreover, the statement of Lemma \ref{lem:Gal gp surjective}
can be regarded as a special case of
Lemma \ref{lem:sufficiently many}.
As summarized above, the argument in the proof
of Theorem \ref{thm:Galois_main} shows that one can prove
that $\omega_\Cip$ is an equivalence of categories
without assuming any of the conditions 
in Section \ref{sec:cardinality}, if we
assume that the first derived limit \eqref{eq:R1_triviality} is trivial
for any object $X$ of $\cC$.

\subsubsection{}
We have cases where the fiber functor is an equivalence without
knowing the triviality of the first derived limits as discussed above.
These are the cases of $Y$-sites constructed from 
locally prodiscrete monoids (see Proposition~\ref{prop:equiv locally prodiscrete monoids}).   

As seen in Proposition~\ref{prop:Galois monoid locally prodiscrete},
when the first derived limit is trivial and the equivalence holds true,
the absolute Galois monoid is locally prodiscrete.    We do not know 
wheather the converse, that is, locally prodiscrete implies triviality of 
the first derived limit, holds true.

\subsection{An application: weakly flasque projective systems of abelian groups}
When the groups in a projective system are abelian
and the system is weakly flasque (\cite[p.6]{Jensen}) 
we can use the 
criterion given in this section to obtain some results as follows.
We note that this assumption is weaker than the cardinality conditions.

Let $(\cC, J)$ be a $Y$-site.
Let $X$ be an object of $\cC$.
Then we obtain a projective system of groups indexed by 
$I=\Gal/X$.
Suppose all the groups in the system are abelian.
Assume that the projective system is weakly flasque.
Then all the higher derived limits are trivial 
(\cite[p.9, Thm.\ 1.8.]{Jensen}).
Hence from Lemma~\ref{lem:criterion R2} and 
Proposition~\ref{prop:pinned grid}, it follows that
 there exists a unique grid.
From the argument given in Section~\ref{sec:full ess surj},
it follows that the fiber functor is an equivalence.

\end{document}